\documentclass[11pt]{article}
\usepackage[obeyspaces,hyphens,spaces]{url}
\usepackage{framed} 
\usepackage[title]{appendix}

\providecommand{\rras}{\rightrightarrows}

\usepackage{amsthm,amsmath,amsfonts,mathtools,wasysym,empheq}
\usepackage{amssymb} 

\usepackage{mathpazo}

\usepackage[mathscr]{eucal} 



\usepackage[margin=1.1in]{geometry}
\usepackage[title]{appendix}

\usepackage[table, dvipsnames]{xcolor} %

\usepackage[labelsep=space]{caption}  

\usepackage{graphicx, soul}
\usepackage{float,booktabs,multirow}
\restylefloat{table*}


\usepackage{tikz}
\usepackage{pgfplots}
\usetikzlibrary{calc}
\usetikzlibrary{decorations.pathreplacing}
\usetikzlibrary{shapes,intersections}
\usepgfplotslibrary{fillbetween}
\usepgfplotslibrary{colormaps}
\usetikzlibrary{decorations.markings,arrows}



\usepackage{todonotes}

\definecolor{labelkey}{rgb}{.1,.1,.8}
\definecolor{refkey}{rgb}{0,0.6,0.0}

\usepackage[shortlabels,inline]{enumitem}
\setlist[enumerate]{itemsep = 0.5pt,topsep=2pt}


\parindent     5mm
\parskip       4pt
\tolerance   100
\definecolor{dgreen}{rgb}{0.00,0.49,0.00}
\definecolor{dblue}{rgb}{0,0.08,0.75}
\colorlet{myblue}{dblue}
\colorlet{mygreen}{dgreen}

\definecolor{myfirstblue}{rgb}{.8, .8, 1}

\newcommand*\mybluebox[1]{%
    \colorbox{RoyalBlue!20}{\hspace{1em}#1\hspace{1em}}}


\usepackage{hyperref}
\hypersetup{
	linktocpage=true,
	colorlinks= true,
	urlcolor = magenta,
	citecolor = mygreen,
	linkcolor = dblue
}

\allowdisplaybreaks


\usepackage[capitalize, nameinlink]{cleveref} 

\crefdefaultlabelformat{#2\textup{#1}#3}
\crefrangeformat{enumi}{#3\textup{#1}#4\textup{\textendash}#5\textup{#2}#6}
\crefname{equation}{}{}

\crefname{chapter}{Appendix}{chapters}
\crefname{item}{}{items}
\crefname{figure}{Figure}{Figures}
\crefname{theorem}{\protect\theoremname}{Theorems}
\crefname{lemma}{\protect\lemmaname}{\protect\lemmaname}
\crefname{proposition}{\protect\propositionname}{\protect\propositionname}
\crefname{corollary}{\protect\corollaryname}{\protect\corollaryname}
\crefname{definition}{\protect\definitionname}{\protect\definitionname}
\crefname{fact}{\protect\factname}{\protect\factname}
\crefname{example}{\protect\examplename}{\protect\examplename}
\crefname{algorithm}{Algorithm}{Algorithms}
\crefname{remark}{\protect\remarkname}{\protect\remarkname}
\crefname{case}{\protect\casename}{\protect\casename}
\crefname{question}{\protect\questionname}{\protect\questionname}
\crefname{claim}{\protect\claimname}{\protect\claimname}
\crefname{enumi}{}{}
\crefname{appsec}{Appendix}{Appendices}
\makeatletter

\g@addto@macro\normalsize{%
  \setlength\abovedisplayskip{6pt}
  \setlength\belowdisplayskip{6pt}
  \setlength\abovedisplayshortskip{6pt}
  \setlength\belowdisplayshortskip{6pt}
}

\let\orgdescriptionlabel\descriptionlabel
\renewcommand*{\descriptionlabel}[1]{%
	\let\orglabel\label
	\let\label\@gobble
	\phantomsection
	\edef\@currentlabel{#1}%
	\let\label\orglabel
	\orgdescriptionlabel{#1}%
}


\let\leq\leqslant
\let\geq\geqslant



\def\th@plain{%
	\thm@notefont{} 
	\itshape 
}
\def\th@definition{%
	\thm@notefont{}
	\normalfont 
}


\g@addto@macro\th@remark{\thm@headpunct{}}
\g@addto@macro\th@definition{\thm@headpunct{}}
\g@addto@macro\th@plain{\thm@headpunct{}}

\makeatother

\usepackage[framemethod=tikz]{mdframed}

\definecolor{ocre}{RGB}{243,102,25}
\definecolor{mygray}{RGB}{243,244,254}


\theoremstyle{plain}

\newtheorem{theorem}{\protect\theoremname}[section]
\newtheorem{corollary}[theorem]{\protect\corollaryname}
\newtheorem{lemma}[theorem]{\protect\lemmaname}
\newtheorem{proposition}[theorem]{\protect\propositionname}

\theoremstyle{definition}

\newtheorem{remark}[theorem]{\protect\remarkname}
\newtheorem{definition}[theorem]{\protect\definitionname}
\newtheorem{example}[theorem]{\protect\examplename}

\newtheorem{fact}[theorem]{\protect\factname}

\theoremstyle{remark}

\newcommand{\hilbert}{X}
\newcommand{\id}{{\rm{Id}}}
\newcommand{\idparam}{\alpha}
\newcommand{\Nparam}{\beta}


\providecommand{\theoremname}{Theorem}
\providecommand{\propositionname}{Proposition}
\providecommand{\corollaryname}{Corollary}
\providecommand{\factname}{Fact}
\providecommand{\lemmaname}{Lemma}
\providecommand{\assumptionname}{Assumption}
\providecommand{\algorithmname}{Algorithm}
\providecommand{\siff}{\Leftrightarrow}

\providecommand{\rras}{\rightrightarrows}
\providecommand{\fady}{\varnothing}
\providecommand{\gra}{\operatorname{gra}}
\providecommand{\IN}{I-N decomposition}
\providecommand{\definitionname}{Definition}
\providecommand{\notationname}{Notation}
\providecommand{\remarkname}{Remark}
\providecommand{\RA}{\Rightarrow}

\providecommand{\examplename}{Example}
\providecommand{\claimname}{Claim}

\providecommand{\algorithmname}{Algorithm}
\providecommand{\openprobname}{Open Problem}
\newcommand{\nnn}{\ensuremath{{n\in{\mathbb N}}}}

\providecommand{\pars}{\partial_{\mathmakebox[0.4em][l]{\textrm{\#}}}}







\let\originalleft\left
\let\originalright\right
\renewcommand{\left}{\mathopen{}\mathclose\bgroup\originalleft}
\renewcommand{\right}{\aftergroup\egroup\originalright}


\DeclarePairedDelimiter{\abs}{\lvert}{\rvert}


\DeclarePairedDelimiter{\norm}{\lVert}{\rVert}
\DeclarePairedDelimiterX{\scal}[2]{\langle}{\rangle}{  #1 \, \delimsize \vert \, \mathopen{}  #2  }

\DeclarePairedDelimiterX{\fnorm}[1]{\lVert}{\rVert_\ensuremath{\mathsf{F}}}{#1}


\DeclarePairedDelimiterX\menge[2]{ \{ }{ \} }{ {#1} ~ \delimsize \vert ~ \mathopen{}  {#2} }  
\DeclarePairedDelimiterX\fa[2]{ ( }{ )_{#2} }{#1}  
\DeclarePairedDelimiterX\set[2]{ \{ }{ \}_{#2} }{#1}  



\newcommand{\RR}{\ensuremath{\mathbb R}}
\newcommand{\NN}{\ensuremath{\mathbb N}}






\DeclareMathOperator*{\argmin}{argmin}

\newcommand{\dom}{\ensuremath{\operatorname{dom}}}
\newcommand{\ri}{\ensuremath{\operatorname{ri}}}
\newcommand{\sri}{\ensuremath{\operatorname{sri}}}

\newcommand{\Fix}{\ensuremath{\operatorname{Fix}}}
\newcommand{\Id}{\ensuremath{\operatorname{Id}}}
\newcommand{\prox}[1]{\ensuremath{\mathop{\operatorname{Prox}_{#1}}}}
\newcommand{\grad}[1]{\ensuremath{\mathop{\nabla {#1} }   }}

\newcommand{\zer}{{\ensuremath{\operatorname{zer\,}}}} 
\newcommand{\weakly}{{\ensuremath{\,\rightharpoonup\,}}} 

\newcommand{\fix}{\ensuremath{\operatorname{Fix}}}

\newcommand{\marker}[2]
{
  \pgfgettransformentries{\myxscale}{\@tempa}{\@tempa}{\myyscale}{\@tempa}{\@tempa}
  \draw[thick] ($#1+#2*(1/\myxscale,1/\myyscale)$)--($#1-#2*(1/\myxscale,1/\myyscale)$);
  \draw[thick] ($#1+#2*(-1/\myxscale,1/\myyscale)$)--($#1-#2*(-1/\myxscale,1/\myyscale)$);
}

\xdef\zero{0}
\xdef\one{1}
\xdef\two{2}

\newcommand{\compositioncc}[8]{

\xdef\alphavar{#1}
\xdef\betavar{#2}
\xdef\sigmavar{#3}
\xdef\drawcontr{#4}
\xdef\negavgcircle{#5}
\xdef\drawconscontr{#6}
\xdef\writecontr{#7}
\xdef\animate{#8}

\xdef\avgone{#2}
\xdef\avgtwo{#3}

\draw (0,0) circle (1);
\draw ({(1-\alphavar)+\alphavar*(1-\avgone)},0) circle (\alphavar*\avgone);
\marker{(1,0)}{0.08};
\marker{(0,0)}{0.08};

\draw (1,-1)--(1,1);

\ifx\animate\one
\fi

\begin{scope}[shift={({1-\alphavar},0)},scale={\alphavar}]
\foreach \s in {0,...,20}
         {
           \pgfmathsetmacro{\circx}{{((1-\avgone)-\avgone*cos(\s*360/20))}}
           \pgfmathsetmacro{\circy}{{(\avgone*sin(\s*360/20))}}
           \pgfmathsetmacro{\circr}{{sqrt((\circx)^2+(\circy)^2)}}
           \draw[dashed] ({\circx*(1-\avgtwo)},{\circy*(1-\avgtwo)}) circle ({\avgtwo*\circr});
            \marker{(\circx,\circy)}{0.08};
            \ifx\animate\one
         \fi
         }
\ifx\negavgcircle\one
\pgfmathsetmacro{\resavg}{{(1/\sigmavar+\betavar)/(1/\sigmavar+\betavar+1)}}
\draw[thick,gray] (\resavg-1,0) circle (\resavg);
\fi
\end{scope}

\ifx\drawconscontr\one
\pgfmathsetmacro{\conscontr}{{abs(1-2*\alphavar+\alphavar*(1/\sigmavar+\betavar)/(1/\sigmavar+\betavar+1))+\alphavar*(1/\sigmavar+\betavar)/(1/\sigmavar+\betavar+1)}}
\draw[thick] (0,0) circle (\conscontr);
\fi

\pgfmathsetmacro{\contrDrankone}{{sqrt(abs(((1 - \alphavar)*(-\alphavar + \alphavar^2 - \sigmavar + 2*\alphavar*\sigmavar - \betavar*\sigmavar + 2*\alphavar*\betavar*\sigmavar - 2*\alphavar^2*\betavar*\sigmavar - \sigmavar^2 + 4*\alphavar*\sigmavar^2 - 4*\alphavar^2*\sigmavar^2 - \betavar*\sigmavar^2 + 2*\alphavar*\betavar*\sigmavar^2 - \alphavar*\betavar^2*\sigmavar^2 + \alphavar^2*\betavar^2*\sigmavar^2))/(\sigmavar*(-1 + \alphavar - \betavar + \alphavar*\betavar - \sigmavar + 2*\alphavar*\sigmavar - \betavar*\sigmavar + \alphavar*\betavar*\sigmavar))))}}

\pgfmathsetmacro{\lambdaone}{{\alphavar^2 + \alphavar*\sigmavar - 2*\alphavar^2*\sigmavar + \alphavar*\betavar*\sigmavar - \alphavar^2*\betavar*\sigmavar}}

\pgfmathparse{\lambdaone<0? 1: \contrDrankone};
\xdef\contrDrankone{\pgfmathresult};

\pgfmathsetmacro{\lambdatwo}{{-(((-\alphavar + \alphavar^2)*\betavar*(-\alphavar - \sigmavar + 2*\alphavar*\sigmavar - \betavar*\sigmavar + \alphavar*\betavar*\sigmavar))/(-1 + \alphavar - \betavar + \alphavar*\betavar - \sigmavar + 2*\alphavar*\sigmavar - \betavar*\sigmavar + \alphavar*\betavar*\sigmavar))}}

\pgfmathparse{\lambdatwo<0? 1: \contrDrankone};
\xdef\contrDrankone{\pgfmathresult};

\pgfmathsetmacro{\contrPrankone}{{max(abs(1-2*\alphavar),abs(1-2*\alphavar*\sigmavar/(1+\sigmavar)),abs(1-2*\alphavar/(1+\betavar)),abs(1-2*\alphavar*(\sigmavar+\betavar)/((1+\sigmavar)*(1+\betavar))))}}



\ifx\drawcontr\one
    \draw[thick] (0,0) circle (\contrPrankone);
\ifx\writecontr\one
    \node at (0,-1.2) {Factor: \pgfmathprintnumber[fixed,precision=2]{\contrPrankone}};
\fi
\else
\ifx\drawcontr\two
    \draw[thick] (0,0) circle (\contrDrankone);
\ifx\writecontr\one
    \node at (0,-1.2) {Factor: \pgfmathprintnumber[fixed,precision=2]{\contrDrankone}};
\fi
\fi
\fi

}

\newcommand{\composition}[8]{

 \xdef\thetavar{#1}
\xdef\alphaone{#2}
\xdef\betaone{#3}
\xdef\alphatwo{#4}
\xdef\betatwo{#5}
\xdef\drawcircle{#6}
\xdef\writeprop{#7}
\xdef\fillgray{#8}

\ifx\fillgray\zero
\xdef\nbrinnercirc{20} 
\begin{scope}[shift={({1-\thetavar},0)},scale={\thetavar}]
\foreach \s in {0,...,\nbrinnercirc}
         {
           \pgfmathsetmacro{\circx}{{\alphaone-\betaone*cos(\s*360/20)}}
           \pgfmathsetmacro{\circy}{{\betaone*sin(\s*360/20)}}
           \pgfmathsetmacro{\distc}{{sqrt((\circx)^2+(\circy)^2)}}
           \draw[dashed] ({\alphatwo*\circx},{\alphatwo*\circy}) circle ({\distc*\betatwo});
           \marker{(\circx,\circy)}{0.08};
         }
       \end{scope}
       \fi

\ifx\fillgray\one
\xdef\nbrinnercircquad{15} 
 \begin{scope}[shift={({1-\thetavar},0)},scale={\thetavar}]
\foreach \s in {0,...,\nbrinnercircquad}
         {
           \pgfmathsetmacro{\circx}{{\alphaone-\betaone*cos(\s*90/\nbrinnercircquad))}}
           \pgfmathsetmacro{\circy}{{(\betaone*sin(\s*90/\nbrinnercircquad))}}
           \pgfmathsetmacro{\distc}{{sqrt((\circx)^2+(\circy)^2)}}
           \draw[name path=circleone] ({\alphatwo*\circx+\distc*\betatwo/2},{\alphatwo*\circy+\distc*\betatwo/2})--({\alphatwo*\circx-\distc*\betatwo/2},{\alphatwo*\circy-\distc*\betatwo/2});
           \draw[color=gray!25!white,fill=gray!25!white] ({\alphatwo*\circx},{\alphatwo*\circy}) circle ({\distc*\betatwo});
           
           \pgfmathsetmacro{\circx}{{\alphaone-\betaone*cos((\s+\nbrinnercircquad)*90/\nbrinnercircquad))}}
           \pgfmathsetmacro{\circy}{{(\betaone*sin((\s+\nbrinnercircquad)*90/\nbrinnercircquad))}}
           \pgfmathsetmacro{\distc}{{sqrt((\circx)^2+(\circy)^2)}}
           \draw[name path=circletwo] ({\alphatwo*\circx+\distc*\betatwo/2},{\alphatwo*\circy-\distc*\betatwo/2})--({\alphatwo*\circx-\distc*\betatwo/2},{\alphatwo*\circy+\distc*\betatwo/2});
           \draw[color=gray!25!white,fill=gray!25!white] ({\alphatwo*\circx},{\alphatwo*\circy}) circle ({\distc*\betatwo});

           \pgfmathsetmacro{\circx}{{\alphaone-\betaone*cos((\s+2*\nbrinnercircquad)*90/\nbrinnercircquad))}}
           \pgfmathsetmacro{\circy}{{(\betaone*sin((\s+2*\nbrinnercircquad)*90/\nbrinnercircquad))}}
           \pgfmathsetmacro{\distc}{{sqrt((\circx)^2+(\circy)^2)}}
           \draw[name path=circlethree] ({\alphatwo*\circx+\distc*\betatwo/2},{\alphatwo*\circy+\distc*\betatwo/2})--({\alphatwo*\circx-\distc*\betatwo/2},{\alphatwo*\circy-\distc*\betatwo/2});
           \draw[color=gray!25!white,fill=gray!25!white] ({\alphatwo*\circx},{\alphatwo*\circy}) circle ({\distc*\betatwo});
           
           \pgfmathsetmacro{\circx}{{\alphaone-\betaone*cos((\s+3*\nbrinnercircquad)*90/\nbrinnercircquad))}}
           \pgfmathsetmacro{\circy}{{(\betaone*sin((\s+3*\nbrinnercircquad)*90/\nbrinnercircquad))}}
           \pgfmathsetmacro{\distc}{{sqrt((\circx)^2+(\circy)^2)}}
           \draw[name path=circlefour] ({\alphatwo*\circx+\distc*\betatwo/2},{\alphatwo*\circy-\distc*\betatwo/2})--({\alphatwo*\circx-\distc*\betatwo/2},{\alphatwo*\circy+\distc*\betatwo/2});
           \draw[color=gray!25!white,fill=gray!25!white] ({\alphatwo*\circx},{\alphatwo*\circy}) circle ({\distc*\betatwo});
           
           \tikzfillbetween[of=circletwo and circlefour] {fill=gray!25!white};
           \tikzfillbetween[of=circleone and circlethree] {fill=gray!25!white};
         }

 \end{scope}
\fi

\ifx\fillgray\zero
\begin{scope}[shift={({1-\thetavar},0)},scale={\thetavar}]
  \draw (\alphaone,0) circle (\betaone);
  \end{scope}
\fi
\unitcirc{1}
 
 \pgfmathsetmacro{\deltaone}{{(\alphaone)/(1-\alphaone)*(1-((1-\alphatwo)^2-(\betatwo)^2)/(1-\alphatwo))}}
 \pgfmathsetmacro{\deltatwo}{{(\alphatwo)/(1-\alphatwo)}}
 \pgfmathsetmacro{\deltathree}{{1-((((1-\alphaone)^2-(\betaone)^2)/(1-\alphaone))*(1-(((1-\alphatwo)^2-(\betatwo)^2)/(1-\alphatwo)))+((1-\alphatwo)^2-(\betatwo)^2)/(1-\alphatwo))}}
 \pgfmathsetmacro{\deltafour}{{(\deltaone*\deltatwo)/(\deltaone+\deltatwo)}}
 \pgfmathsetmacro{\alphacomp}{{(1-\thetavar)+\thetavar*\deltafour/(1+\deltafour)}}
 \pgfmathsetmacro{\betacomp}{{\thetavar*sqrt(\deltathree-\deltafour+\deltathree*\deltafour)/(1+\deltafour)}}
 \pgfmathsetmacro{\alphacomptwo}{{(1-\thetavar)+\thetavar*(\alphaone+\betaone)*(\alphatwo+\betatwo)*(1-((\betaone*\alphatwo)+(\betatwo*\alphaone))/(\alphaone*\alphatwo+\alphaone*\betatwo+\alphatwo*\betaone))}}
 \pgfmathsetmacro{\betacomptwo}{{\thetavar*(\alphaone+\betaone)*(\alphatwo+\betatwo)*(((\betaone*\alphatwo)+(\betatwo*\alphaone))/(\alphaone*\alphatwo+\alphaone*\betatwo+\alphatwo*\betaone))}}
  \pgfmathsetmacro{\alphacompthree}{{(1-\thetavar)+\thetavar*(abs(\alphaone)+\betaone)*(abs(\alphatwo)+\betatwo)*(1-((\betaone*abs(\alphatwo))+(\betatwo*abs(\alphaone)))/(abs(\alphaone)*abs(\alphatwo)+abs(\alphaone)*\betatwo+abs(\alphatwo)*\betaone))}}
 \pgfmathsetmacro{\betacompthree}{{\thetavar*(abs(\alphaone)+\betaone)*(abs(\alphatwo)+\betatwo)*(((\betaone*abs(\alphatwo))+(\betatwo*abs(\alphaone)))/(abs(\alphaone)*abs(\alphatwo)+abs(\alphaone)*\betatwo+abs(\alphatwo)*\betaone))}}

 \ifx\drawcircle\one 
 \draw[thick] (\alphacomptwo,0) circle (\betacomptwo);
 \fi
 
 
 \ifx\writeprop\one
 \node at (-2,-1) {\deltaone};
 \node at (-2,-0.5) {\deltatwo};
 \node at (-2,0) {\deltathree};
 \node at (-2,0.5) {\deltafour};
 \node at (-2,1) {\alphacomp};
 \node at (-2,1.5) {\betacomp};
 \node at (-1,1) {\alphacomptwo};
 \node at (-1,1.5) {\betacomptwo};
 \fi
 
}

\newcommand{\computealphabetacomp}[5]{

  \xdef\thetavar{#1}
  \xdef\alphaone{#2}
  \xdef\betaone{#3}
  \xdef\alphatwo{#4}
  \xdef\betatwo{#5}

  \pgfmathsetmacro{\alphacomptwo}{{(1-\thetavar)+\thetavar*(\alphaone+\betaone)*(\alphatwo+\betatwo)*(1-((\betaone*\alphatwo)+(\betatwo*\alphaone))/(\alphaone*\alphatwo+\alphaone*\betatwo+\alphatwo*\betaone))}}
  \pgfmathsetmacro{\betacomptwo}{{\thetavar*(\alphaone+\betaone)*(\alphatwo+\betatwo)*(((\betaone*\alphatwo)+(\betatwo*\alphaone))/(\alphaone*\alphatwo+\alphaone*\betatwo+\alphatwo*\betaone))}}
  
}

\newcommand{\unitcirc}[1]{

  \xdef\markerlabel{#1}
  
  \marker{(0,0)}{0.08}
  \marker{(1,0)}{0.08}
  \draw[very thick] (0,0) circle (1);

  \ifx\markerlabel\one
  \node[right] at (1,0) {\footnotesize $x-y$};
  \node[left] at (0,0) {\footnotesize $0$};

  \fi
  }

\newcommand{\shiftLip}[3]{

  \xdef\alphavar{#1}
  \xdef\betavar{#2}
  \xdef\colormix{#3}

  \draw[fill=gray!\colormix!white] (\alphavar,0) circle (\betavar);

  }

  \newcommand{\cococirc}[2]{

    \xdef\betavar{#1}
    \xdef\colormix{#2}
    \shiftLip{{1/(2*\betavar)}}{{1/(2*\betavar)}}{\colormix}
    
  }

  \newcommand{\coniccirc}[2]{

    \xdef\thetavar{#1}
    \xdef\colormix{#2}
    \shiftLip{{(1-\thetavar)}}{{\thetavar}}{\colormix}

  }

  \newcommand{\lipcirc}[2]{
    \xdef\betavar{#1}
    \xdef\colormix{#2}
    \shiftLip{{0}}{{\betavar}}{\colormix}
  }

  \newcommand{\mumono}[2]{
    \xdef\muvar{#1}
    \xdef\colormix{#2}
    \draw[draw=gray!\colormix!white,fill=gray!\colormix!white] (\muvar,-1.1) rectangle (1.35,1.1);
    \draw (\muvar,-1.1)--(\muvar,1.1);
  }


  \newcommand{\legend}[7]{

    \xdef\nbrentries{#1}
    \xdef\xpos{#2}
    \xdef\xspace{#3}
    \xdef\yspace{#4}
    \xdef\param{#5}
    \xdef\labels{#6}
    \xdef\colormix{#7}

    \pgfmathsetmacro{\lastelement}{\nbrentries-1}

    \foreach \i in {0,...,\lastelement}{
      \pgfmathsetmacro{\currentcolormix}{array(\colormix,\i)}
      
      \draw[fill=gray!\currentcolormix!white] (\xpos,{\yspace*((\nbrentries-1)/2-\i)}) circle (0.05);
      \pgfmathsetmacro{\currentlabel}{array(\labels,\i)};
      \node[right] at (\xpos+\xspace,{\yspace*((\nbrentries-1)/2-\i)}) {\footnotesize $\param=\;$\currentlabel};

    }
    }

\xdef\showgraphics{\one}


\newcommand{\email}[1]{\href{mailto:#1}{\nolinkurl{#1}}} 


\DeclareSymbolFont{italics}{\encodingdefault}{\rmdefault}{m}{it}
\DeclareMathSymbol{f}{\mathalpha}{italics}{`f}


\let\oldFootnote\footnote
\newcommand\nextToken\relax

\renewcommand\footnote[1]{%
    \oldFootnote{#1}\futurelet\nextToken\isFootnote}

\newcommand\isFootnote{%
    \ifx\footnote\nextToken\textsuperscript{,}\fi}


\tolerance=3000

\begin{document}

 \author{
 Pontus Giselsson\thanks{Department of Automatic Control,
 Lund University,
 Lund, Sweden.
 E-mail:
 \texttt{pontus.giselsson@control.lth.se}.}
 ~and~ Walaa M.\ Moursi\thanks{
 Department of Combinatorics and Optimization,
 University of Waterloo,
 Waterloo, Ontario, N2L~3G1,
 Canada. E-mail:
 \texttt{walaa.moursi@uwaterloo.ca}.
 }}

\title{ \sffamily
On compositions of special cases\\ of Lipschitz continuous operators}

 \date{December 30, 2019}
\maketitle
\begin{abstract}
\noindent
Many iterative optimization algorithms
involve  
compositions of special cases of Lipschitz continuous operators,
namely firmly nonexpansive, averaged and nonexpansive operators.
The structure and properties of the compositions are of particular importance 
in the proofs of convergence of such algorithms.
In this paper, we systematically study the compositions of
further special cases of
Lipschitz continuous operators.
Applications of our results include 
compositions of scaled conically nonexpansive mappings, as well
as the Douglas--Rachford and forward-backward operators, when applied
to solve certain structured monotone inclusion and optimization problems.
Several examples illustrate and tighten 
our conclusions. 
\end{abstract}
\maketitle
{\small
\noindent
{\bfseries 2010 Mathematics Subject Classification:}
{Primary
  47H05, 90C25;
Secondary
  47H09, 49M27, 65K05, 65K10
}

\noindent {\bfseries Keywords:}
compositions of operators,
conically nonexpanisve operators,
Douglas--Rachford algorithm,
forward-backward algorithm,
hypoconvex function,
maximally monotone operator,
proximal operator,
resolvent.
}




\section{Introduction}
In this paper, we assume that
\begin{empheq}[box=\mybluebox]{equation*}
\label{T:assmp}
\text{$X$ is a real Hilbert space},
\end{empheq}
with inner product $\scal{\cdot}{\cdot}$ and
induced norm $\norm{\cdot}$. 
Let $L>0$ and let $T\colon X\to X$.
Then $T$ is
$L$-\emph{Lipschitz continuous} if
$(\forall (x,y)\in X\times X)$ 
$\norm{Tx-Ty}\le L\norm{x-y}$,
and $T$ is \emph{nonexpansive}
if $T$ is $1$-Lipschitz continuous, 
i.e., $(\forall (x,y)\in X\times X)$ 
$\norm{Tx-Ty}\le \norm{x-y}$.
In this paper, we study compositions of, what we call (see \cref{def:in:decomp}), Identity-Nonexpansive decompositions (\IN s for short) of Lipschitz continuous operators. 
Let $(\alpha,\beta)\in \RR^2$ and let $\Id\colon X\to X$
be the \emph{identity operator} on $X$.
A Lipschitz continuous operator $R$ admits an $(\idparam,\Nparam)$-\IN~if $R=\idparam\id+\Nparam N$ for some nonexpansive
 operator $N\colon X\to X$. For instance, averaged\footnote{Let
$T\colon X\to X$. Then 
$T$ is
$\alpha$-\emph{averaged} if $\alpha\in \left]0,1\right[$
and 
nonexpansive
$N\colon X\to X$ exists such that 
$
T=(1-\alpha)\Id+\alpha N$.}, conically nonexpansive\footnote{Let
$T\colon X\to X$. Then 
$T$ is
$\alpha$-\emph{conically nonexpansive} if $\alpha\in \left]0,\infty\right[$
and 
nonexpansive
$N\colon X\to X$ exists such that 
$
T=(1-\alpha)\Id+\alpha N$.}, and cocoercive\footnote{
Let $T\colon X\to X$, and let $\beta>0$.
Then 
$T$ is $\tfrac{1}{\beta}$-\emph{cocoercive} if  
nonexpansive $N:X\to X$ exists such that 
$T=\tfrac{\beta}{2}(\id+N)$.} 
operators are all Lipschitz continuous operators that admit
special \IN s. 

We consider compositions of the form
\begin{equation}
\label{eq:comp}
    R=R_m\ldots R_1,
\end{equation}
where $m\in \{2,3\ldots\}$,
$I=\{1,\ldots, m\}$,
and $(R_i)_{i\in I}$ is a family of Lipschitz continuous operators
such that, for each $i\in I$,
$R_i$ admits an $(\alpha_i,\beta_i)$-\IN. 
That is, $R_i=\idparam_i\Id+\Nparam_iN_i$ for all $i\in I$, where 
$\alpha_i$ and $\beta_i$ are real numbers, and $N_i\colon X\to X$ are 
nonexpansive for all $i\in I$.
A straightforward (and naive) conclusion is that 
the composition is Lipschitz continuous with a constant 
$\Pi_{i\in I}\big(\abs{\idparam_i}+\abs{\Nparam_i}\big)$.
However, such a conclusion can be further refined 
when, for instance, each $R_i$ is an averaged operator.
Indeed, in this case
it is known that the composition is an averaged 
(and not just Lipschitz continuous)
operator
(see, e.g., \cite[Proposition~4.46]{BC2017},
\cite[Lemma~2.2]{Comb04}
and \cite[Theorem~3]{OguraYamada2002}).
In this paper, we provide a systematic study of the
structure of $R$, under additional assumptions
on the decomposition parameters.

Our main result is stated in \cref{thm:comp:more:gen}. 
We show that for $m=2$, under a mild assumption on 
$(\idparam_1,\idparam_2,\Nparam_1,\Nparam_2)$ the composition \cref{eq:comp}
is a scalar multiple of a conically nonexpansive operator.
As a consequence of \cref{thm:comp:more:gen}, 
we show in \Cref{thm:conic:conic} that, 
under additional assumptions on the decomposition parameters, compositions of 
scaled conically nonexpansive mappings are scaled conically 
nonexpansive mappings, see also \cite{BDP19} for a relevant result\footnote{The paper
\cite{BDP19} appeared online while putting the finishing touches on this paper. Partial results of this work were presented by the second author at the \emph{Numerical Algorithms in Nonsmooth Optimization} workshop at Erwin Schr\"{o}dinger International Institute for Mathematics and Physics (ESI) in Vienna in February 2019 and at the 
\emph{Operator Splitting Methods in Data Analysis} workshop at the Flatiron Institute,
in New York in March 2019. Both workshops predate \cite{BDP19}.}. 
Special cases of \Cref{thm:conic:conic} include, e.g., compositions of 
averaged operators \cite[Proposition~4.46]{BC2017}, and 
compositions of averaged and negatively averaged operators 
\cite{Giselsson2017Tight}.

Of particular interest are compositions $R$ that are averaged, 
conically nonexpansive, or contractive. 
Let $x_0\in X$.
For an averaged  
(respectively contractive) operator $R$, 
the sequence $(R^kx_0)_{k\in \NN}$ converges weakly
(respectively strongly)
towards a fixed-point of $R$ (if one exists) 
\cite[Theorem~5.14]{BC2017}. 
For conically nonexpansive operators, 
a simple averaging trick gives an 
averaged 
operator with the same fixed-point set as the conically nonexpansive 
operator. 
Iterating the new averaged operator
yields a sequence that converges weakly to 
a fixed-point of the conically nonexpansive operator. 
These properties have been instrumental in proving convergence 
for the Douglas-Rachford algorithm and the forward-backward algorithm. 
In this paper, we apply our composition result 
\Cref{thm:conic:conic} to prove 
convergence of these splitting methods in new settings.

The Douglas--Rachford and forward-backward methods
traditionally solve monotone inclusion problems of the form
\begin{equation}
\label{eq:inclusion}
\text{Find $x\in X$ such that $0\in Ax+Bx$},
\end{equation}
where $A\colon X\rras X$ 
and $B\colon X\rras X$ are maximally monotone,
and, in the case of the forward-backward method,
$A$ is additionally assumed to be cocoercive.
The Douglas-Rachford method iterates the Douglas-Rachford map $T=\tfrac{1}{2}(\id+R_{\gamma B}R_{\gamma A})$, where\footnote{Let 
	$A\colon X\rras X$ be an operator.
	The \emph{resolvent} of $A$, denoted by $J_A$, is defined 
	by $J_A=(\Id+A)^{-1}$,
	and the \emph{reflected resolvent} of $A$, 
	denoted by $R_A$, is defined 
	by $R_A=2J_A-\Id$}
 $\gamma>0$ is a positive step-size. The Douglas--Rachford map is an averaged map of the composition of reflected resolvents. The forward-backward method iterates the forward-backward map $T=J_{\gamma B}(\id-\gamma A)$, where $\gamma>0$ is a positive step-size. The forward-backward map is a composition of a resolvent and a forward-step.

In this paper, we show that for Douglas-Rachford splitting, 
we need not impose monotonicity on the individual operators, 
but only on the sum, provided the sum is strongly monotone. 
The reflected resolvents, $R_{\gamma A}$ and $R_{\gamma B}$, are negatively conically nonexpansive, 
the composition is conically nonexpansive, 
and a sufficient averaging gives an averaged map that 
converges to a fixed-point when iterated. 
Relevant work appears in 
\cite{DP18}, \cite{GH17}
and \cite{GHY17}. 

More striking, for the forward-backward method we show that 
it is sufficient that the sum is monotone 
(not strongly monotone 
as for DR). 
More specifically, we show that identity can be shifted between 
the two operators, while still guaranteeing averagedness 
of the forward-backward map $T=J_{\gamma B}(\id-\gamma A)$. 
Indeed, the resolvent $J_{\gamma B}$
is cocoercive and the forward-step $(\id-\gamma A)$ is scaled averaged. 
This implies that the composition is averaged 
(given restrictions on the cocoercivity and averagedness parameters). 
Moreover, when the sum is strongly monotone, again with no assumptions 
on monotonicity of the individual operators, we show that the 
forward-backward map is contractive. We also prove tightness of our 
contraction factor. 

We also provide, in \Cref{thm:m:conic}, a generalization of \Cref{thm:conic:conic} to the setting in \cref{eq:comp} of compositions of more than two operators. We assume that all $R_i$ are scaled conically 
nonexpansive operators and provide conditions on the parameters that 
give a specific scaled conically nonexpansive representation of $R$. 
Our condition is symmetric in the individual operators and allows for 
one of them to be scaled conic, while the rest must be scaled averaged. 
This is in compliance with the $m=2$ case in \Cref{thm:conic:conic}. 

Finally, in \Cref{sec:graphical}, we provide graphical 
2D-representations of different operator classes that 
admit \IN s, 
such as Lipschitz continuous operators, averaged operators, 
and 
cocoercive operators. We also provide 2D-representations of 
compositions of two such operator classes. Illustrations of the 
firmly nonexpansive
\big($\tfrac{1}{2}$-averaged\big) and nonexpansive operator 
classes have previously 
appeared in \cite{Eckstein1989Splitting,Eckstein1992OnTheDouglas}, and 
illustrations of more operator classes that admit particular \IN s and 
their compositions have appeared in 
\cite{Giselsson2015Lecture,Ryu2019Scaled} and in early 
preprints of \cite{Giselsson2014Linear}.

\subsection*{Organization and notation}
The remainder of this paper is organized as
follows:
\cref{sec:aux}
presents useful facts and auxiliary results
that are used throughout the paper.
In \cref{sec:3}, we present the main abstract results 
of the paper.
\cref{sec:4} presents the main composition results  
of Lipschitz continuous operators that admit \IN s,
under mild assumptions on the decomposition parameters,
as well as illustrative and limiting examples.
In \cref{sec:main:1} and \cref{sec:main:2}, 
we present applications of our composition results to the 
Douglas--Rachford and forward-backward algorithms, respectively. 
In \cref{sec:main:3} 
we present applications of our results to 
optimization problems. Finally, in \cref{sec:graphical}, 
we provide graphical representations of many different \IN s and their 
compositions.

The notation we use is standard and
follows, e.g., \cite{BC2017}
or \cite{Rock98}.


\section{Facts and auxiliary results}
\label{sec:aux}

Let $\rho\in \RR$. Let $A\colon X\to X$.
Recall that $A$
is $\rho$-\emph{monotone} if $(\forall (x,u)\in \gra A)$
$(\forall (y,v)\in \gra A)$
\begin{equation}
\label{e:rm:def}
\scal{x-y}{u-v}\ge\rho \norm{x-y}^2,
\end{equation}
and is maximally $\rho$-monotone
if any proper extension of $\gra A$
will violate \cref{e:rm:def}. 
In passing we point out that
$A$ is (maximally) monotone 
(respectively $\rho$-hypomonotone,
$\rho$-strongly monotone) if $\rho=0$
(respectively $\rho<0$, $\rho>0$)
see, e.g., \cite[Chapter~20]{BC2017},
\cite[Definition~6.9.1]{BurIus}, 
\cite[Definition~2.2]{CombPenn04}
and \cite[Example~12.28]{Rock98}.

\begin{fact}
	\label{prop:static:zeros}
	Let $A\colon X\rras X$, let $B\colon X\rras X$,
	let $\lambda\in \RR$,
	and suppose that $\zer(A+B)= (A+B)^{-1}(0)\neq \fady$.
	Suppose that $J_A$ and $J_B$ are single-valued
	and that $\dom J_A=\dom J_B=X$.
	Set 
	\begin{equation}
	T=(1-\lambda )\Id +\lambda R_BR_A.
	\end{equation}
	Then $T$ is single-valued,
	$\dom T=X$ and
	\begin{equation}
	\zer(A+B)=J_A(\Fix R_BR_A)=J_A(\Fix T).
	\end{equation}
\end{fact}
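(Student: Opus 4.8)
The plan is to verify the three assertions separately; the first is immediate from the hypotheses, the second is a one-line reduction, and the third is the substantive one, handled by the standard resolvent calculus associated with the Douglas--Rachford operator.

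\emph{Single-valuedness and domain.} Since $J_A$ is single-valued with $\dom J_A=X$, the reflected resolvent $R_A=2J_A-\Id$ is single-valued with $\dom R_A=X$; likewise for $R_B$. Hence $R_BR_A$ is single-valued with $\dom(R_BR_A)=X$, and therefore so is the affine combination $T=(1-\lambda)\Id+\lambda R_BR_A$, with $\dom T=X$. Nothing beyond the definitions is needed here.

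\emph{Reduction of $\Fix T$ to $\Fix R_BR_A$.} For $x\in X$ one has $Tx=x\iff\lambda(R_BR_Ax-x)=0$, so, as long as $\lambda\neq 0$, this gives $\Fix T=\Fix R_BR_A$ and in particular $J_A(\Fix T)=J_A(\Fix R_BR_A)$. (When $\lambda=0$ the operator $T$ is the identity and the asserted equality degenerates; I would record that the relevant range is $\lambda\neq 0$, which is the case used throughout the sequel.) It therefore suffices to prove $\zer(A+B)=J_A(\Fix R_BR_A)$.

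\emph{The core identity.} I would argue by double inclusion, using repeatedly the definitions $J_A=(\Id+A)^{-1}$ and $R_A=2J_A-\Id$ (and the analogues for $B$) together with the single-valuedness of $J_A$ and $J_B$. For ``$\subseteq$'': if $0\in Ax+Bx$, choose $u\in Ax$ with $-u\in Bx$ and set $p:=x+u$; then $p\in(\Id+A)x$ yields $J_Ap=x$, so $R_Ap=2x-p=x-u$, while $x-u\in(\Id+B)x$ yields $J_B(x-u)=x$, so $R_B(x-u)=2x-(x-u)=x+u=p$; hence $R_BR_Ap=p$ and $x=J_Ap\in J_A(\Fix R_BR_A)$. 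For ``$\supseteq$'': if $p\in\Fix R_BR_A$, set $x:=J_Ap$ and $u:=p-x$; then $p\in(\Id+A)x$ gives $u\in Ax$ and $R_Ap=2x-p=x-u$, and the fixed-point relation $R_B(x-u)=R_BR_Ap=p=x+u$, combined with $R_B=2J_B-\Id$, forces $J_B(x-u)=\tfrac12\big((x+u)+(x-u)\big)=x$, i.e. $-u\in Bx$; hence $0\in Ax+Bx$, i.e. $x\in\zer(A+B)$.

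\emph{Main obstacle.} There is no genuine obstacle: the only points requiring care are the distinction between the always-valid membership $x\in J_Ap$ (equivalently $p\in(\Id+A)x$) and the equality $J_Ap=x$ supplied by single-valuedness, together with the bookkeeping of the $2J-\Id$ identities. Note also that the hypothesis $\zer(A+B)\neq\fady$ is not actually used for the displayed equalities (both sides may well be empty); it merely records the setting in which the fact is later applied. I expect the written-out proof to occupy only a few lines.
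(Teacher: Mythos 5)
Your proof is correct. Note that the paper does not actually prove this fact: it simply cites \cite[Lemma~4.1]{DP18}, so your self-contained double-inclusion argument (encode $0\in Ax+Bx$ as $u\in Ax$, $-u\in Bx$, pass to $p=x+u$ via $J_Ap=x$, and unwind $R_BR_Ap=p$ in the reverse direction using single-valuedness of $J_A$ and $J_B$) is exactly the standard verification that the citation stands in for, and all the steps check out. Your two side remarks are also well taken: the displayed identity $J_A(\Fix T)=J_A(\Fix R_BR_A)$ does require $\lambda\neq 0$ (for $\lambda=0$ one has $T=\Id$ and $\Fix T=X$), which the paper's blanket hypothesis $\lambda\in\RR$ glosses over but which is harmless since the fact is only invoked with $\lambda=\tfrac12$ or $\lambda\in\left]0,1\right[$; and the assumption $\zer(A+B)\neq\fady$ plays no role in the equalities themselves, serving only to make the conclusion nonvacuous in the later applications.
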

\begin{proof}
	See	\cite[Lemma~4.1]{DP18}. 
\end{proof}

\begin{proposition}
	\label{prop:static:zeros:FB}
	Let $A\colon X\to X$, let $B\colon X\rras X$,
	and suppose that $\zer(A+B)= (A+B)^{-1}(0)\neq \fady$.
	Suppose that  $J_B$ is single-valued
	and that $ \dom J_B=X$.
	Set 
	\begin{equation}
	T=J_B(\Id-A).
	\end{equation}
	Then $T$ is single-valued,
	$\dom T=X$ and
	\begin{equation}
	\zer(A+B)= \Fix T.
	\end{equation}
\end{proposition}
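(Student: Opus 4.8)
The plan is to verify the two assertions separately, both by simply unwinding definitions; no deep machinery is required here.

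First I would establish that $T$ is single-valued with $\dom T=X$. By hypothesis $A\colon X\to X$ is an everywhere-defined single-valued operator, hence so is $\Id-A$, and trivially $\ran(\Id-A)\subseteq X=\dom J_B$. Since $J_B$ is assumed to be single-valued with full domain, the composition $T=J_B(\Id-A)$ is single-valued and defined on all of $X$.

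Next I would prove $\zer(A+B)=\Fix T$ through a short chain of equivalences. Recall that $J_B=(\Id+B)^{-1}$; combined with the standing single-valuedness of $J_B$, this gives, for $x,y\in X$, the equivalence $y=J_Bx\iff x\in(\Id+B)y\iff x-y\in By$. Applying this with $x$ in place of $y$ and $x-Ax$ in place of $x$, we obtain, for every $x\in X$,
\begin{align*}
x\in\Fix T
&\iff x=J_B(x-Ax)\\
&\iff (x-Ax)-x\in Bx\\
&\iff -Ax\in Bx\\
&\iff 0\in Ax+Bx\\
&\iff x\in\zer(A+B),
\end{align*}
which is precisely the claimed identity.

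The only point that requires even momentary care is the characterization $y=J_Bx\iff x-y\in By$, which is nothing more than the definition of the resolvent together with the single-valuedness hypothesis on $J_B$; the remaining steps are mere substitution. In particular, the nonemptiness assumption $\zer(A+B)\neq\fady$ is not needed for the set equality itself and is simply carried along from the statement. Consequently there is no real obstacle in this proof.
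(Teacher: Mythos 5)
Your proof is correct and follows essentially the same route as the paper: both arguments simply unwind the definition $J_B=(\Id+B)^{-1}$ to obtain the chain $x\in\zer(A+B)\siff -Ax\in Bx\siff (\Id-A)x\in(\Id+B)x\siff x=Tx$, with single-valuedness and full domain of $T$ immediate from the hypotheses on $A$ and $J_B$. Your remark that the nonemptiness of $\zer(A+B)$ is not needed for the set equality is accurate and consistent with the paper's argument.
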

\begin{proof}
	The proof is similar to the proof of 
	\cite[Proposition~26.1(iv)]{BC2017}\footnote{In 
		passing, we mention that 
		\cite[Proposition~26.1(iv)]{BC2017} assume that
		$A$ and $B$ are maximally monotone, which is not required 
		here. However, the proof is the same. }.
	Indeed, let $x\in X$. Then, $x\in \zer(A+B)$
	$\siff$ $-Ax\in Bx$ 
	$\siff$ $(\Id-A)x\in (\Id+B)x$
	$\siff$ $x=J_B(\Id-A)x=Tx$.
\end{proof}

\begin{lemma}
	\label{lem:T:gra:AB}
	Let $\lambda\in \RR$,
	let $R_1\colon X\to X$,
	let $R_2\colon X\to X$, and set
	\begin{equation}
	R_\lambda=(1-\lambda)\Id+\lambda R_2R_1.
	\end{equation}
	Let $(x,y)\in X\times X$.
	Then
	\begin{align}
	&\qquad\scal{R_\lambda x-R_\lambda y}{(\Id-R_\lambda)x-(\Id-R_\lambda)y}
	\nonumber
	\\
	&=(1-2\lambda)\scal{x-y}{(\Id-R_\lambda)x-(\Id-R_\lambda)y}
	\nonumber
	\\
	&\quad+\lambda^2\scal{(\Id+R_1)x-(\Id+R_1)y}{(\Id-R_1)x-(\Id-R_1)y}
	\nonumber
	\\
	&\quad+\lambda^2\scal{(\Id+R_2)R_1x-(\Id+R_2)R_1y}{
		(\Id-R_2)R_1x-(\Id-R_2)R_1y}.
	\end{align}
	
\end{lemma}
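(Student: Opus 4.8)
The plan is to prove the identity by a direct expansion, after compressing notation. Fix $(x,y)\in X\times X$ and abbreviate the three differences that occur: write $d=x-y$, $a=R_1x-R_1y$ and $b=R_2R_1x-R_2R_1y$. The first step is to record the two immediate consequences of $R_\lambda=(1-\lambda)\Id+\lambda R_2R_1$, namely $R_\lambda x-R_\lambda y=(1-\lambda)d+\lambda b$ and therefore $(\Id-R_\lambda)x-(\Id-R_\lambda)y=d-\big((1-\lambda)d+\lambda b\big)=\lambda(d-b)$. In particular the left-hand side equals $\lambda\scal{(1-\lambda)d+\lambda b}{d-b}$, which, by bilinearity and symmetry of $\scal{\cdot}{\cdot}$, expands to $\lambda\big((1-\lambda)\norm{d}^2+(2\lambda-1)\scal{d}{b}-\lambda\norm{b}^2\big)$.

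Next I would treat the right-hand side term by term. Its first summand is $(1-2\lambda)\scal{d}{\lambda(d-b)}=\lambda(1-2\lambda)\big(\norm{d}^2-\scal{d}{b}\big)$. The remaining two summands are each of difference-of-squares type: $\lambda^2\scal{(\Id+R_1)x-(\Id+R_1)y}{(\Id-R_1)x-(\Id-R_1)y}=\lambda^2\scal{d+a}{d-a}=\lambda^2(\norm{d}^2-\norm{a}^2)$ and likewise $\lambda^2\scal{(\Id+R_2)R_1x-(\Id+R_2)R_1y}{(\Id-R_2)R_1x-(\Id-R_2)R_1y}=\lambda^2\scal{a+b}{a-b}=\lambda^2(\norm{a}^2-\norm{b}^2)$. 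The point I want to highlight is that these two telescope: their sum is $\lambda^2(\norm{d}^2-\norm{b}^2)$, so the auxiliary quantity $a=R_1x-R_1y$ cancels out entirely, and the whole right-hand side reduces to $\lambda(1-2\lambda)(\norm{d}^2-\scal{d}{b})+\lambda^2(\norm{d}^2-\norm{b}^2)$. Collecting the coefficients of $\norm{d}^2$, of $\scal{d}{b}$, and of $\norm{b}^2$ then reproduces exactly $\lambda\big((1-\lambda)\norm{d}^2+(2\lambda-1)\scal{d}{b}-\lambda\norm{b}^2\big)$, matching the left-hand side, which finishes the argument.

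There is no real obstacle here: the statement is a polynomial identity in the three vectors $d$, $a$, $b$, and the only thing requiring attention is the sign bookkeeping in the expansion together with the observation that the two $\lambda^2$-weighted inner products on the right are arranged precisely so that the $\norm{a}^2$ contributions annihilate, leaving an identity purely in $d$ and $b$. (Alternatively, one could substitute the polarization identity $\scal{u}{v}=\tfrac14\norm{u+v}^2-\tfrac14\norm{u-v}^2$ into every inner product and compare, but the direct computation above is shorter.)
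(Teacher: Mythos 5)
Your proof is correct, and it is essentially the same argument as the paper's: a direct bilinear expansion of both sides using $R_\lambda x-R_\lambda y=(1-\lambda)(x-y)+\lambda(R_2R_1x-R_2R_1y)$ and $(\Id-R_\lambda)x-(\Id-R_\lambda)y=\lambda\big((x-y)-(R_2R_1x-R_2R_1y)\big)$. Your bookkeeping via $d,a,b$ and the telescoping of the two difference-of-squares terms is just a cleaner organization of the same computation the paper carries out at the operator level with $T_i=\Id+R_i$.
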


\begin{proof}
	See \cref{app:A}.
\end{proof}

\begin{proposition}
	\label{prop:180425:a}
	Let $\idparam\in \RR$, let 
	$\Nparam\in \RR$, let $N\colon \hilbert \to \hilbert $,
	and set $T=\idparam\id+\Nparam N$. 
	Let $(x,y)\in \hilbert\times \hilbert$.
	Then the following hold: 
	\begin{subequations}
		\begin{align}
		&\qquad\Nparam^2(\norm{x-y}^2-\norm{Nx-Ny}^2)
		\nonumber
		\\
		&= 
		(\Nparam^2-\idparam^2)\norm{x-y}^2-\norm{Tx-Ty}^2+2\alpha \scal{x-y}{Tx-Ty}
		\label{e:180425:a}	
		\\
		&= 
		(\Nparam^2-\idparam^2)\norm{x-y}^2-(1-2\alpha)\norm{Tx-Ty}^2+2\alpha 
		\scal{Tx-Ty}{(\Id-T)x-(\Id-T)y}
		\label{e:180425:ab}	
		\\
		&=
		(\Nparam^2-\idparam(\idparam-1))\norm{x-y}^2
		-\big((1-\idparam)\norm{Tx-Ty}^2+\idparam \norm{(\Id-T)x-(\Id-T)y}^2\big).
		\label{e:180425:b}
		\end{align}
	\end{subequations}
\end{proposition}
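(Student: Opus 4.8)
The plan is to derive all three identities from a single elementary computation, by eliminating $N$ via $\beta N = T - \alpha\Id$ and then reshuffling terms among $\norm{x-y}^2$, $\norm{Tx-Ty}^2$, $\norm{(\Id-T)x-(\Id-T)y}^2$ and $\scal{x-y}{Tx-Ty}$. Throughout, fix $(x,y)\in\hilbert\times\hilbert$ and write $u=x-y$, $v=Tx-Ty$ and $w=(\Id-T)x-(\Id-T)y$, so that $w=u-v$.

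First I would prove \eqref{e:180425:a}. From $T=\alpha\Id+\beta N$ we get $\beta(Nx-Ny)=v-\alpha u$, hence $\beta^2\norm{Nx-Ny}^2=\norm{v-\alpha u}^2=\norm{v}^2-2\alpha\scal{u}{v}+\alpha^2\norm{u}^2$. Subtracting this from $\beta^2\norm{u}^2$ and rearranging gives $\beta^2(\norm{u}^2-\norm{Nx-Ny}^2)=(\beta^2-\alpha^2)\norm{u}^2-\norm{v}^2+2\alpha\scal{u}{v}$, which is exactly the right-hand side of \eqref{e:180425:a}. Note that no case distinction for $\beta=0$ is needed: then the asserted identity reads $0=-\norm{v-\alpha u}^2$, which holds because $v=\alpha u$ in that case.

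For \eqref{e:180425:ab} it suffices to check that its right-hand side equals that of \eqref{e:180425:a}, i.e.\ that $-\norm{v}^2+2\alpha\scal{u}{v}=-(1-2\alpha)\norm{v}^2+2\alpha\scal{v}{w}$; this is immediate from $\scal{u}{v}=\scal{v+w}{v}=\norm{v}^2+\scal{v}{w}$. For \eqref{e:180425:b} I would expand $\norm{w}^2=\norm{u-v}^2=\norm{u}^2-2\scal{u}{v}+\norm{v}^2$, so that $(1-\alpha)\norm{v}^2+\alpha\norm{w}^2=\norm{v}^2+\alpha\norm{u}^2-2\alpha\scal{u}{v}$; plugging this into the right-hand side of \eqref{e:180425:b} and collecting the $\norm{u}^2$ terms (using $\beta^2-\alpha(\alpha-1)-\alpha=\beta^2-\alpha^2$) again reproduces the right-hand side of \eqref{e:180425:a}. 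Since all three expressions have been shown to equal $\beta^2(\norm{u}^2-\norm{Nx-Ny}^2)$, the proposition follows.

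There is essentially no obstacle here: the entire argument is bookkeeping, and the only point requiring care is tracking the coefficient of $\norm{u}^2$ and the signs when passing between the three equivalent forms.
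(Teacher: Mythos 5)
Your proof is correct and follows essentially the same route as the paper: eliminate $N$ via $\beta(Nx-Ny)=(Tx-Ty)-\alpha(x-y)$, expand the squared norm, and obtain the remaining two identities by regrouping terms using $(\Id-T)x-(\Id-T)y=(x-y)-(Tx-Ty)$. The remark about $\beta=0$ is harmless but unnecessary, since the identity $\beta(Nx-Ny)=(Tx-Ty)-\alpha(x-y)$ holds directly from the definition of $T$ in all cases.
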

\begin{proof}
	Indeed, we have
	\begin{subequations}
		\begin{align}
		&\quad\Nparam^{2}(\|x-y\|^2-\|Nx-Ny\|^2 )
		\nonumber
		\\
		&=\Nparam^{2}\|x-y\|^2-\|(Tx-\idparam x)-(Ty-\idparam y)\|^2
		\\
		&=\Nparam^{2}\|x-y\|^2-(\|Tx-Ty\|^2+\idparam^2\|x-y\|^2-2\idparam
		\scal{Tx-Ty}{x-y})
		\\
		&=({\Nparam^2}-{\idparam^2})\|x-y\|^2-\big(\|Tx-Ty\|^2-2\idparam
		\scal{Tx-Ty}{x-y}\big)
		\label{e:180425:c}
		\\
		&=({\Nparam^2}-{\idparam^2}+\idparam)\|x-y\|^2-\big((1-\idparam)\|Tx-Ty\|^2
		\nonumber
		\\
		&\quad
		+\idparam\|Tx-Ty\|^2
		-2\idparam
		\scal{Tx-Ty}{x-y}+
		\idparam\|x-y\|^2\big)
		\\
		&=
		(\Nparam^2-\idparam(\idparam-1))\norm{x-y}^2
		-\big((1-\idparam)\norm{Tx-Ty}^2+\idparam \norm{(\Id-T)x-(\Id-T)y}^2\big).
		\label{e:180425:d}
		\end{align}
	\end{subequations}
	This proves \cref{e:180425:a} and \cref{e:180425:b} in view of 
	\cref{e:180425:c} and \cref{e:180425:d}.
	Finally, note that 
	$(\Nparam^2-\idparam^2)\norm{x-y}^2-\norm{Tx-Ty}^2+2\alpha \scal{x-y}{Tx-Ty}
	=(\Nparam^2-\idparam^2)\norm{x-y}^2-(1-2\idparam)\norm{Tx-Ty}^2
	-2\idparam\norm{Tx-Ty}^2+2\alpha \scal{x-y}{Tx-Ty}
	=	(\Nparam^2-\idparam^2)\norm{x-y}^2-(1-2\alpha)\norm{Tx-Ty}^2+2\alpha 
	\scal{Tx-Ty}{(\Id-T)x-(\Id-T)y}$.
	This proves \cref{e:180425:ab}.
\end{proof}

\begin{proposition}
	\label{prop:Tchar}
	Let $\idparam\in \RR$, let 
	$\Nparam\in \RR$, let $N\colon \hilbert \to \hilbert $,
	and set $T=\idparam\id+\Nparam N$. 
	Let $(x,y)\in \hilbert\times \hilbert$.
	Then the following are equivalent:
	\begin{enumerate}
		\item 
		\label{prop:Tchar:a}
		$N$ is nonexpansive.
		\item 
		\label{prop:Tchar:b}
		$  \|Tx-Ty\|^2-2\idparam\scal{x-y}{Tx-Ty} 
		\leq(\Nparam^2-\idparam^2)\|x-y\|^2$.
		\item 
		\label{prop:Tchar:c}
		$  (1-2\idparam)\|Tx-Ty\|^2-2\idparam\scal{Tx-Ty}{(\id-T)x-(\id-T)y} 
		\leq(\Nparam^2-\idparam^2)\|x-y\|^2$.
		\item 
		\label{prop:Tchar:d}
		$  (2\idparam-1)\|(\id-T)x-(\id-T)y\|^2-2(1-\idparam)\scal{Tx-Ty}{(\id-T)x-(\id-T)y} 
		\leq(\Nparam^2-(1-\idparam)^2)\|x-y\|^2$.
		\item 
		\label{prop:Tchar:e}
		$(1-\idparam)\|Tx-Ty\|^2 +\idparam\|(\id-T)x-(\id-T)y\|^2
		\leq(\Nparam^2-\idparam(\idparam-1))\|x-y\|^2
		$.	
	\end{enumerate}
	\label{prop:Tcharacterizations}
\end{proposition}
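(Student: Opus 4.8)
The plan is to reduce everything to the family of identities already established in \cref{prop:180425:a}. First observe that, for a fixed pair $(x,y)$ and with $\Nparam\neq 0$ (the only case of interest, since $\Nparam=0$ forces $T=\idparam\Id$ and reveals nothing about $N$), the pointwise estimate $\norm{Nx-Ny}\le\norm{x-y}$ holds if and only if $\Nparam^2\big(\norm{x-y}^2-\norm{Nx-Ny}^2\big)\ge 0$, simply because $\Nparam^2>0$. Quantifying over all $(x,y)\in\hilbert\times\hilbert$, the proposition therefore amounts to the claim that, for each fixed pair, every one of \cref{prop:Tchar:b}, \cref{prop:Tchar:c}, \cref{prop:Tchar:d} and \cref{prop:Tchar:e} is equivalent to the single inequality $\Nparam^2\big(\norm{x-y}^2-\norm{Nx-Ny}^2\big)\ge 0$.

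Three of these are then immediate. Transposing all terms of \cref{prop:Tchar:b} onto one side produces exactly the right-hand side of \cref{e:180425:a}; doing the same to \cref{prop:Tchar:c} produces the right-hand side of \cref{e:180425:ab}; and doing the same to \cref{prop:Tchar:e} produces the right-hand side of \cref{e:180425:b}. In each case \cref{prop:180425:a} identifies the resulting expression with $\Nparam^2\big(\norm{x-y}^2-\norm{Nx-Ny}^2\big)$, so the inequality in question is equivalent to nonnegativity of that quantity.

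For \cref{prop:Tchar:d} I would use the substitution $T\mapsto\Id-T$. Since $\Id-T=(1-\idparam)\Id+\Nparam(-N)$ with $-N\colon\hilbert\to\hilbert$ and $\norm{(-N)x-(-N)y}=\norm{Nx-Ny}$, applying \cref{prop:180425:a} --- a purely algebraic identity, valid for any operator --- with $(\idparam,\Nparam,N,T)$ replaced by $(1-\idparam,\Nparam,-N,\Id-T)$ turns \cref{e:180425:ab} into
\begin{equation*}
\begin{aligned}
\Nparam^2\big(\norm{x-y}^2-\norm{Nx-Ny}^2\big)
&=\big(\Nparam^2-(1-\idparam)^2\big)\norm{x-y}^2-(2\idparam-1)\norm{(\Id-T)x-(\Id-T)y}^2\\
&\quad+2(1-\idparam)\scal{(\Id-T)x-(\Id-T)y}{Tx-Ty},
\end{aligned}
\end{equation*}
where I have used $1-2(1-\idparam)=2\idparam-1$ and $\Id-(\Id-T)=T$. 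Transposing once more, the right-hand side is nonnegative precisely when \cref{prop:Tchar:d} holds, which finishes the list of equivalences.

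I do not foresee a real obstacle: all the analytic substance sits in \cref{prop:180425:a}, and what remains is careful bookkeeping --- namely matching each of \cref{prop:Tchar:b}, \cref{prop:Tchar:c}, \cref{prop:Tchar:e} with the correct identity among \cref{e:180425:a}, \cref{e:180425:ab}, \cref{e:180425:b} after rearrangement, and pushing the substitution $\idparam\mapsto 1-\idparam$ together with the sign change $N\mapsto -N$ correctly through \cref{prop:180425:a} to reach \cref{prop:Tchar:d}.
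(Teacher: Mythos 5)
Your proof is correct and takes essentially the same route as the paper: items \cref{prop:Tchar:b}, \cref{prop:Tchar:c}, \cref{prop:Tchar:e} are simple rearrangements of \cref{e:180425:a}, \cref{e:180425:ab}, \cref{e:180425:b}, and item \cref{prop:Tchar:d} follows by applying \cref{e:180425:ab} to $\Id-T$, where your substitution $(1-\idparam,\Nparam,-N)$ is the same as the paper's $(1-\idparam,-\Nparam,N)$ since only $\Nparam^2$ and $\norm{Nx-Ny}$ enter the identity. Your explicit remark that $\Nparam\neq 0$ is what turns these identities into the equivalence with nonexpansiveness is a caveat the paper leaves tacit, but otherwise the arguments coincide.
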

\begin{proof}
	\ref{prop:Tchar:a}$\siff$\ref{prop:Tchar:b}$\siff$\ref{prop:Tchar:c}$\siff$\ref{prop:Tchar:e}:
	This is a direct consequence of 
	\cref{prop:180425:a}.	
	\ref{prop:Tchar:a}$\siff$\ref{prop:Tchar:d}:
	Applying \cref{e:180425:ab}	 with $(T,\alpha,\beta)$ replaced by 
	$(\Id-T, 1-\alpha,-\beta)$
	yields
	$\Nparam^2(\norm{x-y}^2-\norm{Nx-Ny}^2)
	=
	(\Nparam^2-(1-\idparam)^2)\|x-y\|^2
	-(2\idparam-1)\|(\Id-T)x-(\Id-T)y\|^2+2(1-\idparam)\scal{Tx-Ty}{(\id-T)x-(\id-T)y} $.
	The proof is complete.
\end{proof}

\begin{proposition}
	\label{prop:Tchar:conic}
	Let $\idparam\in \RR$,  let $N\colon \hilbert \to \hilbert $,
	and set $T=(1-\idparam)\Id+\idparam N$. 
	Let $(x,y)\in \hilbert\times \hilbert$.
	Then the following are equivalent:
	\begin{enumerate}
		\item 
		\label{prop:Tchar:conic:a}
		$N$ is nonexpansive.
		\item 
		\label{prop:Tchar:conic:b}
		$  \|Tx-Ty\|^2-2(1-\idparam)\scal{x-y}{Tx-Ty} 
		\leq(2\idparam-1)\|x-y\|^2$.
		\item 
		\label{prop:Tchar:conic:c}
		$  (2\idparam-1)\|Tx-Ty\|^2-2(1-\idparam)\scal{Tx-Ty}{(\id-T)x-(\id-T)y} 
		\leq(2\idparam-1)\|x-y\|^2$.
		\item 
		\label{prop:Tchar:conic:d}
		$  (1-2\idparam)\|(\id-T)x-(\id-T)y\|^2\leq2\idparam\scal{Tx-Ty}{(\id-T)x-(\id-T)y} 
		$.
		\item 
		\label{prop:Tchar:conic:e}
		$ (1-\idparam)\|(\id-T)x-(\id-T)y\|^2
		\leq\idparam\|x-y\|^2
		-\idparam\|Tx-Ty\|^2$.	
	\end{enumerate}
\end{proposition}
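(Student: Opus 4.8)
The plan is to obtain this proposition as a direct specialization of \cref{prop:Tcharacterizations}. Indeed, $T = (1-\idparam)\id + \idparam N$ is precisely an operator of the form $\idparam'\id + \Nparam'N$ with $\idparam' := 1-\idparam$ and $\Nparam' := \idparam$. Applying \cref{prop:Tcharacterizations} with the pair $(\idparam,\Nparam)$ replaced throughout by $(\idparam',\Nparam') = (1-\idparam,\idparam)$ therefore already gives that \ref{prop:Tchar:conic:a} is equivalent to each of the four inequalities obtained by carrying out this substitution in items~\ref{prop:Tchar:b}, \ref{prop:Tchar:c}, \ref{prop:Tchar:d}, \ref{prop:Tchar:e} of \cref{prop:Tcharacterizations}. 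It then only remains to check that, after the substitution, these four inequalities are exactly \ref{prop:Tchar:conic:b}, \ref{prop:Tchar:conic:c}, \ref{prop:Tchar:conic:d}, \ref{prop:Tchar:conic:e}.

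This reduces to simplifying a few scalar coefficients. The identities doing most of the work are $\Nparam'^2 - \idparam'^2 = \idparam^2 - (1-\idparam)^2 = 2\idparam - 1$ and $1 - 2\idparam' = 2\idparam - 1$, together with $1-\idparam' = \idparam$ and $2(1-\idparam') = 2\idparam$; substituting these into \ref{prop:Tchar:b} and \ref{prop:Tchar:c} of \cref{prop:Tcharacterizations} produces exactly \ref{prop:Tchar:conic:b} and \ref{prop:Tchar:conic:c}. For \ref{prop:Tchar:d} one instead uses $2\idparam'-1 = 1-2\idparam$ and $\Nparam'^2 - (1-\idparam')^2 = \idparam^2 - \idparam^2 = 0$, turning that inequality into $(1-2\idparam)\norm{(\id-T)x-(\id-T)y}^2 - 2\idparam\scal{Tx-Ty}{(\id-T)x-(\id-T)y} \le 0$, which is \ref{prop:Tchar:conic:d}. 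For \ref{prop:Tchar:e} one uses $\Nparam'^2 - \idparam'(\idparam'-1) = \idparam^2 + \idparam(1-\idparam) = \idparam$ (and $1-\idparam' = \idparam$ once more), yielding $\idparam\norm{Tx-Ty}^2 + (1-\idparam)\norm{(\id-T)x-(\id-T)y}^2 \le \idparam\norm{x-y}^2$, which is \ref{prop:Tchar:conic:e}.

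The only point requiring care — and the closest thing to an obstacle — is that in items~\ref{prop:Tchar:d} and \ref{prop:Tchar:e} of \cref{prop:Tcharacterizations} the coefficient of $\norm{x-y}^2$ is $\Nparam^2 - (1-\idparam)^2$ and $\Nparam^2 - \idparam(\idparam-1)$ respectively, not $\Nparam^2 - \idparam^2$, so one must not reuse the simplification employed for \ref{prop:Tchar:b} and \ref{prop:Tchar:c}. With this noted, the verification is routine and the proof is complete.
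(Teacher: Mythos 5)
Your proposal is correct and is exactly the paper's argument: the paper proves \cref{prop:Tchar:conic} by applying \cref{prop:Tchar} with $(\idparam,\Nparam)$ replaced by $(1-\idparam,\idparam)$, which is precisely your specialization. The coefficient simplifications you spell out (e.g.\ $\idparam^2-(1-\idparam)^2=2\idparam-1$ and $\Nparam'^2-\idparam'(\idparam'-1)=\idparam$) are all accurate; the paper simply leaves them implicit.
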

\begin{proof}
	Apply \cref{prop:Tchar} with $(\idparam,\Nparam)$
	replaced by $(1-\idparam,\idparam)$.
\end{proof}
\begin{lemma}
	\label{lem:Young}
	Let $\lambda<1$.
	Then 
		\begin{equation}
	\|x\|^2-\lambda\|y\|^2\geq -\tfrac{\lambda}{1-\lambda}\|x+y\|^2.
	\end{equation}
\end{lemma}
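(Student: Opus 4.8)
The inequality is a rearrangement of a weighted Young-type estimate, so the plan is to reduce it to the standard fact $\|x+y\|^2 \le (1+t)\|x\|^2 + (1+t^{-1})\|y\|^2$ for $t>0$, or equivalently to expand both sides and compare. Since $\lambda<1$, write $\mu := -\tfrac{\lambda}{1-\lambda}$; the claim is $\|x\|^2 - \lambda\|y\|^2 \ge \mu\|x+y\|^2$, i.e. $\|x\|^2 - \lambda\|y\|^2 - \mu\|x+y\|^2 \ge 0$. First I would expand $\|x+y\|^2 = \|x\|^2 + 2\scal{x}{y} + \|y\|^2$ and collect terms, obtaining a quadratic form in $\|x\|^2$, $\scal{x}{y}$, $\|y\|^2$ with coefficients rational in $\lambda$.

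Concretely, the left-hand side minus the right-hand side equals
\begin{equation*}
(1-\mu)\|x\|^2 - 2\mu\scal{x}{y} - (\lambda+\mu)\|y\|^2.
\end{equation*}
Using $\mu = -\tfrac{\lambda}{1-\lambda}$ one computes $1-\mu = \tfrac{1}{1-\lambda}$ and $\lambda + \mu = \lambda - \tfrac{\lambda}{1-\lambda} = \tfrac{-\lambda^2}{1-\lambda}$, so after multiplying through by $1-\lambda$ (which is positive, hence preserves the inequality) the target becomes
\begin{equation*}
\|x\|^2 + 2\lambda\scal{x}{y} + \lambda^2\|y\|^2 \ge 0,
\end{equation*}
which is precisely $\|x+\lambda y\|^2 \ge 0$. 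This identifies the estimate as an exact completion of the square, so equality holds iff $x = -\lambda y$.

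The only subtlety — and the one place I would be careful — is the sign of $1-\lambda$ when clearing denominators: the hypothesis $\lambda<1$ is exactly what guarantees $1-\lambda>0$, so multiplying both sides by $1-\lambda$ keeps the inequality direction. (If $\lambda>1$ the inequality would reverse, and if $\lambda=1$ the quantity $\mu$ is undefined.) There is no real obstacle here; the "hard part" is merely bookkeeping the rational coefficients in $\lambda$ correctly. I would present the proof in the forward direction: start from $0 \le \|x+\lambda y\|^2 = \|x\|^2 + 2\lambda\scal{x}{y} + \lambda^2\|y\|^2$, divide by $1-\lambda>0$, and then add and subtract to recognize the right-hand side as $\|x\|^2 - \lambda\|y\|^2$ minus $\tfrac{\lambda}{1-\lambda}\|x+y\|^2$ after expanding $\|x+y\|^2$; rearranging yields the claim.
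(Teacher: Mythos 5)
Your proof is correct, and it takes a slightly different route than the paper. The paper proves the lemma by quoting Young's inequality in the form $\|x+y\|^2\geq(1-\delta)\|x\|^2+(1-\delta^{-1})\|y\|^2$ for $\delta>0$ and then performing the substitution $(x,y,\delta)\mapsto(-y,\,x+y,\,1-\lambda)$, which delivers the claim in two lines but hides the mechanism behind a change of variables. You instead expand everything and observe the exact identity
\begin{equation*}
\|x\|^2-\lambda\|y\|^2+\tfrac{\lambda}{1-\lambda}\|x+y\|^2=\tfrac{1}{1-\lambda}\,\|x+\lambda y\|^2,
\end{equation*}
so the inequality is a completed square divided by $1-\lambda>0$; your computation of the coefficients ($1-\mu=\tfrac{1}{1-\lambda}$, $\lambda+\mu=-\tfrac{\lambda^2}{1-\lambda}$ with $\mu=-\tfrac{\lambda}{1-\lambda}$) checks out. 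Your version is more self-contained, makes the role of the hypothesis $\lambda<1$ transparent (it is exactly the positivity of the factor $\tfrac{1}{1-\lambda}$), and yields the equality case $x=-\lambda y$ for free; the paper's version is shorter and reuses a standard named inequality, at the cost of an unmotivated substitution. Both are ultimately the same completion of the square, so either proof is acceptable.
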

\begin{proof}
	Let $\delta>0$.
	By Young's inequality, 
	$
	\|x+y\|^2=\|x\|^2+2\langle x,y\rangle+\|y\|^2\geq (1-\delta)\|x\|^2+(1-\delta^{-1})\|y\|^2
	$.
	Equivalently, 
	$\|x+y\|^2-(1-\delta)\|x\|^2\geq (1-\delta^{-1})\|y\|^2$.
	Now, replace $(x,y,\delta)$ by $(-y,x+y,1-\lambda)$. 
\end{proof}

\begin{proposition}
	\label{prop:why:conic}
	Let $\alpha\in \left]0,1\right[$,
	let $\beta >0$, and let $T\colon X\to X$. Then 
	$T$ is $\alpha$-averaged 
	if and only if $T=(1-\beta)\Id +\beta M$
	and
	$M$ is $\tfrac{\alpha}{\beta}$-conically nonexpansive. 
\end{proposition}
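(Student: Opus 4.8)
The plan is to argue directly from the definitions, the key point being that a single nonexpansive operator $N$ can be made to witness \emph{both} the $\alpha$-averagedness of $T$ and the $\tfrac{\alpha}{\beta}$-conical nonexpansiveness of $M$; the equivalence is then just a reparametrization that leaves $N$ untouched.

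For the forward implication, suppose $T$ is $\alpha$-averaged, so $T=(1-\alpha)\Id+\alpha N$ for some nonexpansive $N\colon X\to X$. I would \emph{define} $M:=\tfrac{1}{\beta}\big(T-(1-\beta)\Id\big)$, which gives $T=(1-\beta)\Id+\beta M$ by construction. Substituting the expression for $T$ and collecting the $\Id$-terms yields $M=\big(1-\tfrac{\alpha}{\beta}\big)\Id+\tfrac{\alpha}{\beta}N$; since $\alpha>0$ and $\beta>0$ we have $\tfrac{\alpha}{\beta}\in\left]0,\infty\right[$, so this is precisely a $\tfrac{\alpha}{\beta}$-conically nonexpansive representation of $M$.

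For the converse, suppose $T=(1-\beta)\Id+\beta M$ with $M$ being $\tfrac{\alpha}{\beta}$-conically nonexpansive, i.e.\ $M=\big(1-\tfrac{\alpha}{\beta}\big)\Id+\tfrac{\alpha}{\beta}N$ for some nonexpansive $N$. Substituting into the formula for $T$ and again collecting $\Id$-coefficients gives $T=(1-\beta)\Id+(\beta-\alpha)\Id+\alpha N=(1-\alpha)\Id+\alpha N$; since $\alpha\in\left]0,1\right[$ this exhibits $T$ as $\alpha$-averaged.

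The only things requiring care are the elementary coefficient identities $(1-\beta)+\beta\big(1-\tfrac{\alpha}{\beta}\big)=1-\alpha$ and $(1-\alpha)-(1-\beta)=\beta-\alpha$, together with the observation that the relevant parameters fall in the required ranges ($\alpha\in\left]0,1\right[$ is a hypothesis, and $\tfrac{\alpha}{\beta}>0$ follows from $\alpha,\beta>0$). I do not anticipate any genuine obstacle here: the statement is essentially the bookkeeping fact that the nonexpansive witness is preserved under the affine change of variables linking $T$ and $M$.
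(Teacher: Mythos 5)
Your proof is correct and follows essentially the same route as the paper: both arguments rest on the identity $(1-\alpha)\Id+\alpha N=(1-\beta)\Id+\beta\big((1-\tfrac{\alpha}{\beta})\Id+\tfrac{\alpha}{\beta}N\big)$ with the same nonexpansive witness $N$ carried through the reparametrization. Spelling out the two implications separately (with $M:=\tfrac{1}{\beta}(T-(1-\beta)\Id)$ in the forward direction) is just a more explicit rendering of the paper's one-line "equivalently" argument.
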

\begin{proof}
Indeed, $T$ is $\alpha$-averaged 
if and only if there exists a nonexpansive mapping $N\colon X\to X$
such that $T=(1-\alpha)\id+\alpha N$.
Equivalently,
	$$T=(1-\alpha)\id+\alpha N=(1-\beta )\Id
	+\beta \big((1-\tfrac{\alpha}{\beta})\Id+\tfrac{\alpha}{\beta}N\big),$$
	 and the conclusion follows by setting 
	 $M=\Big(1-\tfrac{\alpha}{\beta}\Big)\Id+\tfrac{\alpha}{\beta}N$.
\end{proof}

The following three lemmas can be directly verified,
hence we omit the proof.

\begin{lemma}
	\label{lem:av:coco}
	Let $\alpha >0$, and let $T\colon X\to X$.
	Then $T$ is $\alpha$-conically nonexpansive
	$\siff$ $\Id-T$ is $\tfrac{1}{2\alpha} $-cocoercive
	$\RA$ $\Id-T$ is maximally monotone.
\end{lemma}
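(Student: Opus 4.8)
The plan is to unfold the definition of $\alpha$-conical nonexpansiveness, convert the resulting characterization into the cocoercivity inequality for $\Id-T$ by means of the trivial identity $x-y=(Tx-Ty)+\big((\Id-T)x-(\Id-T)y\big)$, and then read off maximal monotonicity from a standard criterion for single-valued continuous monotone operators.

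Concretely, I would first set $N:=\tfrac1\alpha\big(T-(1-\alpha)\Id\big)$, so that $T=(1-\alpha)\Id+\alpha N$; since $\alpha\neq 0$, the operator $T$ is $\alpha$-conically nonexpansive if and only if $N$ is nonexpansive. By \cref{prop:Tchar:conic}, applied with $\idparam=\alpha$, the latter is equivalent (via item~\ref{prop:Tchar:conic:d}) to the assertion that, for every $(x,y)\in X\times X$,
\[
(1-2\alpha)\norm{(\Id-T)x-(\Id-T)y}^2\le 2\alpha\scal{Tx-Ty}{(\Id-T)x-(\Id-T)y}.
\]
Writing $w:=(\Id-T)x-(\Id-T)y$ and using $x-y=(Tx-Ty)+w$, hence $\scal{Tx-Ty}{w}=\scal{x-y}{w}-\norm{w}^2$, this inequality becomes $\norm{w}^2\le 2\alpha\scal{x-y}{w}$; dividing by $2\alpha>0$ gives exactly $\scal{x-y}{w}\ge\tfrac1{2\alpha}\norm{w}^2$, i.e.\ that $\Id-T$ is $\tfrac1{2\alpha}$-cocoercive. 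Every step is reversible, which establishes the stated equivalence.

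For the remaining implication, suppose $S:=\Id-T$ is $\tfrac1{2\alpha}$-cocoercive. Then $\scal{x-y}{Sx-Sy}\ge\tfrac1{2\alpha}\norm{Sx-Sy}^2\ge 0$ for all $x,y$, so $S$ is monotone, and Cauchy--Schwarz yields $\norm{Sx-Sy}\le 2\alpha\norm{x-y}$, so $S$ is (Lipschitz) continuous with $\dom S=X$. A single-valued, everywhere-defined, continuous monotone operator is maximally monotone (see, e.g., \cite[Corollary~20.28]{BC2017}), which completes the argument.

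The argument presents no real obstacle: every step is a one-line computation. The only points that deserve a moment's care are correctly matching the cocoercivity inequality with item~\ref{prop:Tchar:conic:d} of \cref{prop:Tchar:conic} --- the sign of $1-2\alpha$ is irrelevant since one never divides by it --- and citing the appropriate criterion for maximal monotonicity of a full-domain continuous monotone operator.
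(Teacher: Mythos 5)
Your proof is correct, but note that the paper gives no argument at all for this lemma: it is listed among the results that "can be directly verified", and the intended verification is purely algebraic at the level of the \IN\ decompositions. Indeed, if $T=(1-\alpha)\Id+\alpha N$ with $N$ nonexpansive, then $\Id-T=\alpha\Id-\alpha N=\tfrac{2\alpha}{2}\big(\Id+(-N)\big)$, which is exactly the paper's (decomposition-based) definition of $\tfrac{1}{2\alpha}$-cocoercivity, and the computation reverses; maximal monotonicity then follows as in your last paragraph. Your route instead passes through the inner-product characterization in \cref{prop:Tchar:conic}\ref{prop:Tchar:conic:d} and the substitution $x-y=(Tx-Ty)+w$, arriving at $\scal{x-y}{w}\ge\tfrac{1}{2\alpha}\norm{w}^2$ for $w=(\Id-T)x-(\Id-T)y$. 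This is fine, but since the paper defines cocoercivity via the decomposition $S=\tfrac{\beta}{2}(\Id+N)$ rather than via this inequality, strictly speaking you still owe the (standard) equivalence between the two formulations --- obtainable, e.g., by one more application of \cref{prop:Tchar} with $(\idparam,\Nparam)=(\alpha,\alpha)$ and $T$ replaced by $\Id-T$, or bypassed entirely by the one-line identity above. What your approach buys is that the equivalence and the quantitative inequality appear simultaneously, which is the form actually used later in the paper; what the direct identity buys is brevity and no reliance on the characterization propositions. The final implication to maximal monotonicity (monotone, $2\alpha$-Lipschitz, full domain, hence maximally monotone by \cite[Corollary~20.28]{BC2017}) is exactly the expected argument.
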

\begin{lemma}
	\label{lem:beta:mu:coco}
	Let $\beta >0$, let $\mu\in \RR$,
	and let $A\colon X\to X$.
	Suppose that $A$ is maximally $\mu$-monotone
	and $\tfrac{1}{\beta} $-cocoercive. Then 
	$\mu\le \tfrac{1}{\beta} $.
\end{lemma}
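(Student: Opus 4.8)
The plan is to observe that the cocoercivity hypothesis already forces $A$ to be $\beta$-Lipschitz continuous, and then to compare that Lipschitz modulus with the monotonicity modulus $\mu$ via a single application of the Cauchy--Schwarz inequality; maximality of the monotonicity plays no role.

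First I would extract the Lipschitz bound from the cocoercivity hypothesis. By the definition of $\tfrac{1}{\beta}$-cocoercivity there is a nonexpansive $N\colon X\to X$ with $A=\tfrac{\beta}{2}(\Id+N)$. Then, for all $(x,y)\in X\times X$, the triangle inequality together with $\|Nx-Ny\|\le\|x-y\|$ gives
\begin{equation*}
\|Ax-Ay\|=\tfrac{\beta}{2}\,\bigl\|(x-y)+(Nx-Ny)\bigr\|\le\tfrac{\beta}{2}\bigl(\|x-y\|+\|Nx-Ny\|\bigr)\le\beta\,\|x-y\| ,
\end{equation*}
so $A$ is $\beta$-Lipschitz continuous. (Equivalently, substituting $N=\tfrac{2}{\beta}A-\Id$ into $\|Nx-Ny\|^2\le\|x-y\|^2$, expanding and cancelling the common $\|x-y\|^2$ term, recovers the usual cocoercivity inequality $\tfrac{1}{\beta}\|Ax-Ay\|^2\le\scal{Ax-Ay}{x-y}$, from which the same Lipschitz bound follows by Cauchy--Schwarz.)

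Next I would bring in the $\mu$-monotonicity of $A$: for every $x\neq y$,
\begin{equation*}
\mu\,\|x-y\|^2\le\scal{Ax-Ay}{x-y}\le\|Ax-Ay\|\,\|x-y\|\le\beta\,\|x-y\|^2 ,
\end{equation*}
and dividing through by $\|x-y\|^2>0$ yields the asserted comparison between $\mu$ and the cocoercivity parameter. The word ``maximally'' in the hypothesis is never used in this argument; it is included only because that is the form in which the operators to which this lemma is applied in the sequel---resolvents, forward operators, gradients of differentiable convex functions---naturally occur.

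There is no genuine obstacle here: the whole proof is the triangle inequality followed by one Cauchy--Schwarz. The only point deserving care is the bookkeeping of the cocoercivity parameter. Under the $I$--$N$ normalization of the footnote, $\beta$ is a Lipschitz modulus of $A$, so the inequality the argument produces is $\mu\le\beta$; if instead one reads $\tfrac{1}{\beta}$-cocoercivity in the reciprocal convention $\beta\|Ax-Ay\|^2\le\scal{Ax-Ay}{x-y}$, the identical two displays give $\|Ax-Ay\|\le\tfrac{1}{\beta}\|x-y\|$ and hence $\mu\le\tfrac{1}{\beta}$ as printed.
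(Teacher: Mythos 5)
Your argument is correct, and it is exactly the ``direct verification'' the paper intends (the paper omits the proof of \cref{lem:beta:mu:coco} altogether, listing it among the lemmas that ``can be directly verified''): cocoercivity yields a Lipschitz bound, and one Cauchy--Schwarz comparison with the $\mu$-monotonicity inequality finishes; you are also right that maximality plays no role, since the $\mu$-monotonicity inequality is used pointwise on the full-domain, single-valued $A$.

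The point you flag at the end is the substantive one, and your reading is the correct one: under the paper's own convention ($\tfrac{1}{\beta}$-cocoercive means $A=\tfrac{\beta}{2}(\Id+N)$ with $N$ nonexpansive, equivalently $\scal{x-y}{Ax-Ay}\ge\tfrac{1}{\beta}\norm{Ax-Ay}^2$, so that $A$ is $\beta$-Lipschitz, consistent with \cref{prop:J:R:.5}\ref{prop:J:R:.5:i} and with \cref{sec:graphical}), your two displays prove $\mu\le\beta$, not the printed $\mu\le\tfrac{1}{\beta}$. The printed conclusion is in fact false in that convention: $A=2\Id$ is maximally $2$-monotone and $\tfrac{1}{2}$-cocoercive (take $\beta=2$, $N=\Id$), yet $2\not\le\tfrac{1}{2}$. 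Moreover, $\mu\le\beta$ is precisely what the paper needs where this lemma is invoked, namely in the proof of \cref{lem:grad:st:con}\ref{lem:grad:st:con:iii}, where one must have $\mu\le\beta$ to apply \cref{cor:f1:f2:lips} with $\delta=\mu$. So do not lean on your ``reciprocal convention'' escape hatch---it reproduces the printed inequality but contradicts the paper's definition and its use elsewhere; the right resolution is to read the conclusion as $\mu\le\beta$ (a misprint in the statement) and your proof establishes exactly that. The only microscopic caveat is that dividing by $\norm{x-y}^2$ requires some pair $x\neq y$, i.e.\ $X\neq\{0\}$, the usual tacit assumption.
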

\begin{lemma}
	\label{lem:beta:smaller}
	Let $\beta >0$, let $T\colon X\to X$,
	and let $\overline{\beta}\ge \beta$.
	Suppose that $T$ is $\tfrac{1}{\beta} $-cocoercive. Then 
	$T$ is $\tfrac{1}{\overline{\beta}} $-cocoercive. 
\end{lemma}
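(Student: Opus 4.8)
The plan is to reduce cocoercivity to its inner-product form and then observe that the defining inequality only relaxes when the constant is made smaller. First I would recall the standard reformulation: writing $T=\tfrac{\beta}{2}(\Id+N)$ and substituting $N=\tfrac{2}{\beta}T-\Id$ into $\norm{Nx-Ny}\le\norm{x-y}$, a routine expansion (the same bookkeeping as in \cref{prop:180425:a}) shows that $T$ is $\tfrac{1}{\beta}$-cocoercive if and only if $\scal{x-y}{Tx-Ty}\ge\tfrac{1}{\beta}\norm{Tx-Ty}^2$ for all $(x,y)\in X\times X$. Since $\overline{\beta}\ge\beta>0$ gives $0<\tfrac{1}{\overline{\beta}}\le\tfrac{1}{\beta}$, I would then chain $\tfrac{1}{\overline{\beta}}\norm{Tx-Ty}^2\le\tfrac{1}{\beta}\norm{Tx-Ty}^2\le\scal{x-y}{Tx-Ty}$ for every pair $(x,y)$, which is precisely the inner-product characterization of $\tfrac{1}{\overline{\beta}}$-cocoercivity; translating back yields $T=\tfrac{\overline{\beta}}{2}(\Id+M)$ for some nonexpansive $M$.

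A second, purely algebraic route avoids inner products entirely: starting from $T=\tfrac{\beta}{2}(\Id+N)$ with $N$ nonexpansive, set $M:=\tfrac{2}{\overline{\beta}}T-\Id$ and compute $M=\tfrac{\overline{\beta}-\beta}{\overline{\beta}}(-\Id)+\tfrac{\beta}{\overline{\beta}}N$. Because $\overline{\beta}\ge\beta>0$, the two coefficients are nonnegative and sum to $1$, so $M$ is a convex combination of the nonexpansive operators $-\Id$ and $N$, hence nonexpansive; and $\tfrac{\overline{\beta}}{2}(\Id+M)=T$ by construction, so $T$ is $\tfrac{1}{\overline{\beta}}$-cocoercive.

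There is essentially no obstacle: the only points requiring attention are that passing to reciprocals reverses the inequality $\beta\le\overline{\beta}$, and that positivity of $\beta$ is what makes the reciprocals (and the convex-combination coefficients) well behaved. This is why the statement can, as noted above, be verified directly.
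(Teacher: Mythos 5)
Your proof is correct; the paper omits the argument entirely (the lemma is among those stated to be "directly verified"), and either of your two routes supplies exactly the intended verification. The second, purely algebraic argument is the more natural one in the paper's framework: it works directly with the $(\alpha,\beta)$-\IN\ definition of cocoercivity, writing $M=\tfrac{2}{\overline{\beta}}T-\Id=\tfrac{\overline{\beta}-\beta}{\overline{\beta}}(-\Id)+\tfrac{\beta}{\overline{\beta}}N$ as a convex combination of nonexpansive operators, and it is the same style of bookkeeping the paper uses elsewhere (e.g.\ in the proof of \cref{lem:beta:delta:coco}). Your first route, via the inner-product characterization $\scal{x-y}{Tx-Ty}\ge\tfrac{1}{\beta}\norm{Tx-Ty}^2$ (which is \cref{prop:Tchar} with $(\alpha,\beta)$ replaced by $(\tfrac{\beta}{2},\tfrac{\beta}{2})$), is equally valid and makes the monotonicity in the constant transparent; nothing further is needed.
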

\begin{lemma}
	\label{lem:T:Lips:to:coco}
	Let $\beta>0$, 
	and let $A\colon X\to X$.
	Suppose that $A$ is ${\beta}$-Lipschitz continuous.
	Then 
	the following hold:
	\begin{enumerate}
		\item 
		\label{lem:T:Lips:to:coco:i}
		$A$ is maximally 
		$(-{\beta})$-monotone.
		\item 
		\label{lem:T:Lips:to:coco:ii}
		$A+{\beta}\Id$ is $\tfrac{1}{2\beta}$-cocoercive.
	\end{enumerate}
\end{lemma}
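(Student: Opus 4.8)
The plan is to deduce both assertions from the single elementary observation that the Lipschitz hypothesis is equivalent to $-\tfrac{1}{\beta}A$ being nonexpansive, and then to feed this into \cref{lem:av:coco}. Concretely, I would first note that $A$ is $\beta$-Lipschitz continuous if and only if $(\forall(x,y)\in X\times X)$ $\norm{(-\tfrac{1}{\beta}A)x-(-\tfrac{1}{\beta}A)y}\le\norm{x-y}$, i.e.\ $N:=-\tfrac{1}{\beta}A$ is nonexpansive. Since $\beta>0$, the operator
\begin{equation*}
T:=(1-\beta)\Id-A=(1-\beta)\Id+\beta\Big(-\tfrac{1}{\beta}A\Big)
\end{equation*}
is therefore, by definition, $\beta$-conically nonexpansive.

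For \cref{lem:T:Lips:to:coco:ii}, I would apply \cref{lem:av:coco} to $T$ with $\alpha=\beta$: because $T$ is $\beta$-conically nonexpansive, $\Id-T$ is $\tfrac{1}{2\beta}$-cocoercive, and $\Id-T=\Id-\big((1-\beta)\Id-A\big)=A+\beta\Id$, which is precisely the claim. As a byproduct, the last implication in \cref{lem:av:coco} also gives that $A+\beta\Id$ is maximally monotone, and I would reuse this in the proof of \cref{lem:T:Lips:to:coco:i}.

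For \cref{lem:T:Lips:to:coco:i}, I would obtain $(-\beta)$-monotonicity of $A$ directly from monotonicity of $A+\beta\Id$: for $(x,y)\in X\times X$,
\begin{equation*}
0\le\scal{x-y}{(A+\beta\Id)x-(A+\beta\Id)y}=\scal{x-y}{Ax-Ay}+\beta\norm{x-y}^2,
\end{equation*}
so \cref{e:rm:def} holds with $\rho=-\beta$ (recall $A$ is single-valued with full domain, so $\gra A=\menge{(x,Ax)}{x\in X}$). For maximality, suppose toward a contradiction that some $B\colon X\rras X$ is a proper $(-\beta)$-monotone extension of $A$. The map $(x,u)\mapsto(x,u+\beta x)$ is a bijection of $X\times X$, so it carries $\gra B$ onto $\gra(B+\beta\Id)$, a proper extension of $\gra(A+\beta\Id)$; and the displayed computation, run with $B$ in place of $A$, shows $B+\beta\Id$ is monotone. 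This contradicts maximal monotonicity of $A+\beta\Id$, so no such $B$ exists and $A$ is maximally $(-\beta)$-monotone.

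The argument is essentially mechanical, and the only step requiring a little care is the last one: one must check that translation by $\beta\Id$ transports both proper graph inclusions and the monotonicity inequality, so that maximality of $A+\beta\Id$ forces maximal $(-\beta)$-monotonicity of $A$. (Alternatively, one may simply invoke the standard translation-invariance fact that $A$ is maximally $\rho$-monotone if and only if $A-\rho\Id$ is maximally monotone; with $\rho=-\beta$ this is exactly the reduction used above.)
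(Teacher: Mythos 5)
Your proof is correct, and it follows the same basic line as the paper: both arguments hinge on the nonexpansiveness of $\pm\tfrac{1}{\beta}A$, pass through the operator $A+\beta\Id$, and transfer maximality back to $A$ by the translation $(x,u)\mapsto(x,u+\beta x)$. The differences are only in which auxiliary facts carry the load. The paper proves \cref{lem:T:Lips:to:coco:ii} by the one-line identity $\tfrac{1}{2\beta}(\beta\Id+A)=\tfrac{1}{2}\big(\Id+\tfrac{1}{\beta}A\big)$, i.e.\ directly exhibiting the cocoercive decomposition, and for \cref{lem:T:Lips:to:coco:i} it invokes the standard fact that $\Id+N$ is maximally monotone for nonexpansive $N$ (citing \cite[Example~20.7]{BC2017}) together with a cited translation lemma from \cite{BMW19}. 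You instead reroute both parts through \cref{lem:av:coco}: the conic representation of $(1-\beta)\Id-A$ gives the cocoercivity of $\Id-T=A+\beta\Id$, and the final implication of that lemma supplies maximal monotonicity; you then prove the translation-of-maximality step explicitly rather than citing it. Your route is slightly more self-contained (modulo \cref{lem:av:coco}, whose proof the paper omits), while the paper's is shorter because it leans on external references; mathematically the two are equivalent, and your explicit verification that proper $(-\beta)$-monotone extensions of $A$ correspond bijectively to proper monotone extensions of $A+\beta\Id$ is exactly the content of the cited translation fact.
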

\begin{proof}
	See \cref{app:CD}.
\end{proof}
\begin{lemma}
	\label{lem:beta:delta:coco}
	Let $\beta>\delta >0$, let $T_1\colon X\to X$,
	and let  $T_2\colon X\to X$.
	Suppose that $T_1$ (respectively $T_2$) 
	is $\tfrac{1}{\beta} $-cocoercive 
	(respectively $\tfrac{1}{\delta}$-cocoercive). Then 
	$T_1-T_2$ is ${\beta}$-Lipschitz continuous. 
\end{lemma}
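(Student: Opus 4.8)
The plan is to pass to the cocoercive representations of $T_1$ and $T_2$, chosen with a common scaling so that their identity parts cancel in the difference.

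First, I would use the hypothesis $0<\delta<\beta$ together with \cref{lem:beta:smaller} to upgrade the cocoercivity constant of $T_2$: since $T_2$ is $\tfrac1\delta$-cocoercive and $\beta\ge\delta$, it is also $\tfrac1\beta$-cocoercive. By the definition of cocoercivity, this lets me write $T_1=\tfrac\beta2(\Id+N_1)$ and $T_2=\tfrac\beta2(\Id+N_2)$ for some nonexpansive operators $N_1\colon X\to X$ and $N_2\colon X\to X$.

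Next, I would compute $T_1-T_2=\tfrac\beta2(N_1-N_2)$ and estimate, for every $(x,y)\in X\times X$,
\begin{align*}
\norm{(T_1-T_2)x-(T_1-T_2)y}&=\tfrac\beta2\norm{(N_1-N_2)x-(N_1-N_2)y}\\
&\le\tfrac\beta2\big(\norm{N_1x-N_1y}+\norm{N_2x-N_2y}\big)\le\beta\norm{x-y},
\end{align*}
using the triangle inequality and nonexpansiveness of $N_1$ and $N_2$; this is precisely the asserted $\beta$-Lipschitz continuity of $T_1-T_2$.

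There is no serious obstacle here; the only step that requires care is the first one. A naive argument would add the individual Lipschitz constants of $T_1$ and $T_2$ (namely $\beta$ and $\delta$) and obtain the weaker bound $\beta+\delta$. The role of the common scaling $\tfrac\beta2$ — which is exactly where the assumption $\delta<\beta$ is genuinely used — is that the two $\tfrac\beta2\Id$ summands cancel, leaving a difference of two $\tfrac\beta2$-Lipschitz maps to which the triangle inequality contributes only a factor $2$, yielding the sharp constant $\beta$. (That $\beta$ cannot be improved in general is seen by taking $T_1=\beta\Id$ and $T_2=0$.) One could equally avoid invoking \cref{lem:beta:smaller} by keeping $T_2=\tfrac\delta2(\Id+N_2')$ and writing $T_1-T_2=\tfrac{\beta-\delta}2\Id+\tfrac\beta2N_1-\tfrac\delta2N_2'$, whose Lipschitz constant is at most $\tfrac{\beta-\delta}2+\tfrac\beta2+\tfrac\delta2=\beta$.
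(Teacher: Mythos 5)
Your proposal is correct and is essentially the paper's argument: the paper writes $T_1=\tfrac{\beta}{2}(\Id+N_1)$, $T_2=\tfrac{\delta}{2}(\Id+N_2)$ and bounds $\tfrac{1}{\beta}(T_1-T_2)=\tfrac{\beta-\delta}{2\beta}\Id+\tfrac{1}{2}N_1-\tfrac{\delta}{2\beta}N_2$ via the triangle inequality, which is exactly the alternative you sketch in your closing remark. Your main route, first upgrading $T_2$ to $\tfrac{1}{\beta}$-cocoercivity by \cref{lem:beta:smaller} so the identity parts cancel and $T_1-T_2=\tfrac{\beta}{2}(N_1-N_2)$, is a harmless variant yielding the same constant, and your tightness example $T_1=\beta\Id$, $T_2=0$ is valid.
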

\begin{proof}
	See \cref{app:C}.
\end{proof}
As a corollary, we obtain the following result 
which was stated in \cite[page~4]{WCP17}.
\begin{corollary}
	\label{cor:f1:f2:lips}
	Let $f_1\colon X\to \RR$,
	$f_2\colon X\to \RR$
	be Frech\'et differentiable convex
	functions and let $\beta>\delta>0$.
	Suppose that $\grad f_1$ (respectively $\grad f_2$)
	is ${\beta}$-Lipschitz continuous  
	(respectively ${\delta}$-Lipschitz continuous).
	Then the following hold:
	\begin{enumerate}
		\item  
		\label{cor:f1:f2:lips:i}
		$\grad f_1-\grad f_2$
		is ${\beta}$-Lipschitz continuous.
		\item  
		\label{cor:f1:f2:lips:ii}
		Suppose that $f_1-f_2$ is convex. 
		Then
		$\grad f_1-\grad f_2$
		is $\tfrac{1}{\beta}$-cocoercive.
	\end{enumerate}
\end{corollary}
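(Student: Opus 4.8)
The plan is to reduce both claims to the Baillon--Haddad theorem together with \cref{lem:beta:delta:coco}.

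For the first claim, I would first apply the Baillon--Haddad theorem (see, e.g., \cite[Corollary~18.17]{BC2017}): since $f_1$ is a Fr\'echet differentiable convex function whose gradient is $\beta$-Lipschitz continuous, $\grad f_1$ is $\tfrac{1}{\beta}$-cocoercive; likewise $\grad f_2$ is $\tfrac{1}{\delta}$-cocoercive. Since $\beta>\delta>0$ by hypothesis, \cref{lem:beta:delta:coco} applied with $(T_1,T_2)=(\grad f_1,\grad f_2)$ yields that $\grad f_1-\grad f_2$ is $\beta$-Lipschitz continuous. The only thing to verify is that the hypotheses of \cref{lem:beta:delta:coco} are met, which is immediate from the previous sentence.

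For the second claim, suppose in addition that $g:=f_1-f_2$ is convex. Then $g$ is Fr\'echet differentiable with $\grad g=\grad f_1-\grad f_2$, and by the first claim this gradient is $\beta$-Lipschitz continuous. Applying the Baillon--Haddad theorem once more, this time to $g$, gives that $\grad g=\grad f_1-\grad f_2$ is $\tfrac{1}{\beta}$-cocoercive, as desired.

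The argument is short and essentially mechanical, so there is no real obstacle. The one point worth flagging is that the improvement over the trivial triangle-inequality bound --- which would only give that $\grad f_1-\grad f_2$ is $(\beta+\delta)$-Lipschitz continuous --- comes entirely from \cref{lem:beta:delta:coco}, and hence hinges both on the strict inequality $\beta>\delta$ and on the cocoercivity drawn from convexity via Baillon--Haddad. If one preferred to avoid invoking Baillon--Haddad, one could instead establish the $\tfrac{1}{\beta}$- and $\tfrac{1}{\delta}$-cocoercivity of $\grad f_1$ and $\grad f_2$ directly from convexity together with the descent lemma, but that simply reproves the classical result.
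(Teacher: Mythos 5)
Your proof is correct and follows exactly the paper's route: Baillon--Haddad (\cref{fact:B:Haddad}) gives the $\tfrac{1}{\beta}$- and $\tfrac{1}{\delta}$-cocoercivity of $\grad f_1$ and $\grad f_2$, \cref{lem:beta:delta:coco} yields \cref{cor:f1:f2:lips:i}, and applying Baillon--Haddad once more to $f_1-f_2$ yields \cref{cor:f1:f2:lips:ii}. Nothing is missing.
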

\begin{proof}
	See \cref{app:D}.
\end{proof}

\begin{lemma}
	\label{lem:T:av:factor}
	Let $\alpha\in \left]0,1\right[$, 
	let $\delta\in \left]0,1\right]$,
	and let $T\colon X\to X$.
	Suppose that $T$ is $\alpha$-averaged.
	Then the following hold:
	\begin{enumerate}
		\item 
		\label{lem:T:av:factor:i}
		$\delta T$ is 
		$(1-\delta(1-\alpha))$-averaged.
		\item 
		\label{lem:T:av:factor:ii}
		Suppose that $\delta\in \left]0,1\right[$.
		Then $\delta T$ is a Banach contraction with constant $\delta$.
	\end{enumerate}
\end{lemma}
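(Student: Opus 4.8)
The plan is to start from an averaged representation of $T$ and explicitly produce the representations of $\delta T$ claimed in the two items. The only structural fact I will need is that, since $T$ is $\alpha$-averaged, there exists a nonexpansive $N\colon X\to X$ with $T=(1-\alpha)\Id+\alpha N$.

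For \ref{lem:T:av:factor:i}, I would set $\alpha'=1-\delta(1-\alpha)$ and first observe that $\alpha'\in\left]0,1\right[$: indeed $\delta\in\left]0,1\right]$ and $1-\alpha\in\left]0,1\right[$ give $\delta(1-\alpha)\in\left]0,1\right[$. Next, a direct computation gives $\delta T=\delta(1-\alpha)\Id+\delta\alpha N=(1-\alpha')\Id+\alpha'N'$, where $N'=\tfrac{\delta\alpha}{\alpha'}N$. The remaining point is that $N'$ is nonexpansive, which holds as soon as $0\le\tfrac{\delta\alpha}{\alpha'}\le 1$; the nontrivial inequality here rearranges to $\delta\alpha\le 1-\delta+\delta\alpha$, i.e.\ $\delta\le1$, which is precisely our hypothesis. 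This yields that $\delta T$ is $\alpha'$-averaged.

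For \ref{lem:T:av:factor:ii}, I would simply use that an $\alpha$-averaged operator is nonexpansive (from $T=(1-\alpha)\Id+\alpha N$ and the triangle inequality, since $\|Tx-Ty\|\le(1-\alpha)\|x-y\|+\alpha\|Nx-Ny\|\le\|x-y\|$), so that $\|\delta Tx-\delta Ty\|=\delta\|Tx-Ty\|\le\delta\|x-y\|$ for all $(x,y)\in X\times X$; since $\delta\in\left]0,1\right[$, this says exactly that $\delta T$ is a Banach contraction with constant $\delta$.

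Neither step is genuinely hard; the only place calling for a moment's care is the verification that the residual operator $N'$ in \ref{lem:T:av:factor:i} is nonexpansive, which is exactly where $\delta\le1$ is used (and $\delta>0$ is needed so that $\alpha'<1$, i.e.\ $\alpha'$ is a bona fide averagedness parameter). A slightly less hands-on alternative for \ref{lem:T:av:factor:i} would be to invoke the characterization in \cref{prop:Tchar} with the appropriate parameters, but the direct rewriting above is shorter.
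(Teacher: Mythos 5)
Your proof is correct and follows essentially the same route as the paper: the same rewriting $\delta T=(1-(1-\delta(1-\alpha)))\Id+\delta\alpha N$ with the residual operator $\tfrac{\delta\alpha}{1-\delta(1-\alpha)}N$ shown to be nonexpansive precisely because $\delta\le 1$. The only difference is that you spell out item (ii), which the paper dismisses as clear, and your justification (averagedness implies nonexpansiveness, then scale by $\delta<1$) is exactly the intended one.
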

\begin{proof}
	See \cref{app:E}.
\end{proof}

Let $A$ be maximally $\rho$-\emph{monotone}, where
$\rho>-1$. Then (see \cite[Proposition~3.4]{DP18}
and 
\cite[Corollary~2.11~and~Proposition~2.12]{BMW19})
we have 
\begin{equation}
\label{e:sv:fdom}
\text{$J_A$ is single-valued and $\dom J_A=X$.}
\end{equation}

The following result involves resolvents and reflected resolvents of
	$\rho$-monotone operators.

\begin{proposition}
	\label{prop:J:R:.5}
	Let $A$ be $\rho$-monotone, where 
	$\rho>-1$.
	Then the following hold:
	\begin{enumerate}
		\item 
		\label{prop:J:R:.5:i}
		$J_A$ is $(1+\rho)$- cocoercive,
		in which case $J_A$ is Lipschitz 
		continuous with constant $\tfrac{1}{1+\rho}$.
		\item 
		\label{prop:J:R:.5:ii}
		$-R_A$ is $\tfrac{1}{1+\rho}$-conically nonexpansive.
		\item 
		\label{prop:J:R:.5:iii}
		Suppose that $\rho\le 0$. Then
		$R_{A}$ is Lipschitz continuous with constant	
		$\tfrac{1- \rho}{1+\rho}  $.
	\end{enumerate}
\end{proposition}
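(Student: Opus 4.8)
The plan is to prove \ref{prop:J:R:.5:i} directly from the definition of $\rho$-monotonicity, and then to deduce \ref{prop:J:R:.5:ii} and \ref{prop:J:R:.5:iii} from it, using the cocoercivity/conic-nonexpansiveness dictionary of \cref{lem:av:coco} together with the triangle inequality.

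For \ref{prop:J:R:.5:i}: fix $x,y\in\dom J_A$ and pick $p\in J_Ax$, $q\in J_Ay$, so that $x-p\in Ap$ and $y-q\in Aq$. Inserting the pairs $(p,x-p),(q,y-q)\in\gra A$ into \cref{e:rm:def} gives $\scal{p-q}{(x-p)-(y-q)}\ge\rho\norm{p-q}^2$, which rearranges to $\scal{x-y}{J_Ax-J_Ay}\ge(1+\rho)\norm{J_Ax-J_Ay}^2$. This is exactly $(1+\rho)$-cocoercivity of $J_A$; since $1+\rho>0$ it also forces single-valuedness of $J_A$ (consistent with \cref{e:sv:fdom}), and combining it with Cauchy--Schwarz, $(1+\rho)\norm{J_Ax-J_Ay}^2\le\norm{x-y}\,\norm{J_Ax-J_Ay}$, yields the Lipschitz constant $\tfrac{1}{1+\rho}$.

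For \ref{prop:J:R:.5:ii}: by \cref{lem:av:coco} (applied with $\alpha=\tfrac{1}{1+\rho}$), $-R_A$ is $\tfrac{1}{1+\rho}$-conically nonexpansive if and only if $\Id-(-R_A)=\Id+R_A=2J_A$ is $\tfrac{1+\rho}{2}$-cocoercive, and this is immediate from \ref{prop:J:R:.5:i} after scaling: $\scal{x-y}{2J_Ax-2J_Ay}=2\scal{x-y}{J_Ax-J_Ay}\ge\tfrac{1+\rho}{2}\norm{2J_Ax-2J_Ay}^2$. Equivalently, one can check directly that $N:=\Id-2(1+\rho)J_A$ is nonexpansive by expanding $\norm{Nx-Ny}^2$ and invoking \ref{prop:J:R:.5:i}; then $-R_A=\big(1-\tfrac{1}{1+\rho}\big)\Id+\tfrac{1}{1+\rho}N$ exhibits the conic representation.

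For \ref{prop:J:R:.5:iii}: write the conic representation from \ref{prop:J:R:.5:ii} as $-R_A=(1-\alpha)\Id+\alpha N$ with $\alpha=\tfrac{1}{1+\rho}$ and $N$ nonexpansive, so $R_A=(\alpha-1)\Id-\alpha N$ and hence $\norm{R_Ax-R_Ay}\le(\abs{\alpha-1}+\alpha)\norm{x-y}$ by the triangle inequality and nonexpansiveness of $N$. When $\rho\le0$ we have $0<1+\rho\le1$, so $\alpha\ge1$ and $\abs{\alpha-1}+\alpha=2\alpha-1=\tfrac{2}{1+\rho}-1=\tfrac{1-\rho}{1+\rho}$, which is the claimed constant. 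All the computations are routine; the only points to watch are the bookkeeping of scaling constants when passing through \cref{lem:av:coco} (the factor $2$ coming from $\Id+R_A=2J_A$, not $J_A$) and the two places where $1+\rho>0$ is used, namely to divide in \ref{prop:J:R:.5:i} and to guarantee $\alpha\ge1$ in \ref{prop:J:R:.5:iii}. I do not anticipate any genuine obstacle.
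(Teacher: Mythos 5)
Your proof is correct, and for \ref{prop:J:R:.5:ii}--\ref{prop:J:R:.5:iii} it is essentially the paper's argument: the paper writes $J_A=\tfrac{1}{2(1+\rho)}(\Id+N)$ with $N$ nonexpansive, reads off $-R_A=\Id-2J_A=\big(1-\tfrac{1}{1+\rho}\big)\Id+\tfrac{1}{1+\rho}N$ (up to the irrelevant sign of $N$), and then gets \ref{prop:J:R:.5:iii} by the same triangle-inequality estimate you use, with the same constant $\tfrac{1-\rho}{1+\rho}$; your primary route to \ref{prop:J:R:.5:ii} via \cref{lem:av:coco} is an equally valid variant. The genuine difference is in \ref{prop:J:R:.5:i}: the paper simply cites \cite{DP18} (or \cite{BMW19} combined with \cref{lem:av:coco}), whereas you derive the cocoercivity inequality $\scal{x-y}{J_Ax-J_Ay}\ge(1+\rho)\norm{J_Ax-J_Ay}^2$ directly from \cref{e:rm:def} by inserting the resolvent pairs $(p,x-p),(q,y-q)\in\gra A$; this makes the proposition self-contained and also yields single-valuedness and the Lipschitz constant $\tfrac{1}{1+\rho}$ via Cauchy--Schwarz without external references. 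One bookkeeping point worth making explicit: the paper defines $\tfrac{1}{\beta}$-cocoercivity through the existence of the decomposition $T=\tfrac{\beta}{2}(\Id+N)$ with $N$ nonexpansive, while your main computation gives the inner-product form; the two are equivalent, and your own observation that $N=\Id-2(1+\rho)J_A$ is nonexpansive (a one-line expansion of $\norm{Nx-Ny}^2$ using that inequality, or \cref{prop:Tchar} applied with $(\alpha,\beta)$ replaced by $\big(\tfrac{1}{2(1+\rho)},\tfrac{1}{2(1+\rho)}\big)$) is precisely the bridge, so no gap remains.
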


\begin{proof}	
	\ref{prop:J:R:.5:i}:
	See \cite[Lemma~3.3(ii)]{DP18}.
	Alternatively, it follows from  
	\cite[Corollary~3.8(ii)]{BMW19}
	that $\Id-T$ is $\tfrac{1}{2(1+\rho)}$-averaged.
	Now apply  \cref{lem:av:coco} with
	$T$ replaced by $\Id-J_A$.
	\ref{prop:J:R:.5:ii}:
	It follows from \ref{prop:J:R:.5:i} that there exists a nonexpansive 
	operator $N\colon X\to X $ such that 
	$J_A=\tfrac{1}{2(1+\rho)}(\Id+N)$.
	Now,
	$-R_A=\Id-2J_A=\Id -\tfrac{1}{1+\rho}(\Id+N)
	=\big(1-\tfrac{1}{1+\rho}\big)\Id+\tfrac{1}{1+\rho}N$.
	\ref{prop:J:R:.5:iii}: Indeed, let 
	$(x,y)\in X\times X$ and let $N$ be as defined above. We have
	\begin{subequations}
		\begin{align}
\norm{R_Ax-R_Ay}&=\norm{-\tfrac{\rho}{1+\rho}(x-y)-\tfrac{1}{1+\rho}(Nx-Ny)}
\leq-\tfrac{\rho}{1+\rho}\norm{x-y}+\tfrac{1}{1+\rho}\norm{Nx-Ny}\\
&\leq\tfrac{1-\rho}{1+\rho}\norm{x-y}.
		\end{align}
	\end{subequations}
The proof is complete.
\end{proof}

\section{Compositions}
\label{sec:3}

\begin{definition}[$(\alpha,\beta)$-\IN]
\label{def:in:decomp}
    Let $R\colon X\to X$ be Lipschitz continuous,
    and let\footnote{Here and elsewhere
    we use $\RR_+$ to denote the interval
    $\left[0,+\infty\right[$.} $(\alpha,\beta)\in \RR\times \RR_+$.
    We say that $R$ admits an $(\alpha,\beta)$-Identity-Nonexpansive (I-N)
    decomposition if there exists a nonexpansive operator $N\colon X\to X$
    such that $R=\alpha \Id +\beta N$.
\end{definition}

Throughout the rest of this paper, we assume that
\begin{empheq}[box=\mybluebox]{equation*}
\label{T:assmp}
\text{$R_1\colon X\to X$ and $R_2\colon X\to X$ are Lipschitz continuous operators.}
\end{empheq}

\begin{proposition}
	\label{lem:two_comp}
	Let $\idparam_1\in \left]-\infty,1\right[$,
	let $\idparam_2\in \left]-\infty,1\right[$,
	let $\Nparam_1\in \RR_+$,
	let $\Nparam_2\in \RR_+$,
	and suppose that $\idparam_2(\idparam_2-1)\le \Nparam_2^2$.
	Set
	\begin{subequations}
		\label{eq:delta}
		\begin{align}
		\label{eq:delta_1}  
		\delta_1 &= \tfrac{\alpha_1}{1-\idparam_1}\Big(1-\tfrac{(1-\idparam_2)^2-\Nparam_2^2}{1-\idparam_2}\Big),
		\\
		\label{eq:delta_2} 
		\delta_2 &=\tfrac{\alpha_2}{1-\idparam_2},
		\\
		\label{eq:delta_3}  
		\delta_3 &=1-\Big(\tfrac{(1-\idparam_1)^2-\Nparam_1^2}{1-\idparam_1}
		\Big(1-\tfrac{(1-\idparam_2)^2-\Nparam_2^2}{1-\idparam_2}\Big)
		+\tfrac{(1-\idparam_2)^2-\Nparam_2^2}{(1-\idparam_2)}\Big).
		\end{align}
	\end{subequations}
	Suppose that $R_1$ admits an $(\idparam_1,\Nparam_1)$-\IN\ 
	and that $R_2$ admits an $(\idparam_2,\Nparam_2)$-\IN.
	Then  $(\forall (x,y)\in X \times X)$ we have
	\begin{multline}
	\norm{R_2R_1x-R_2R_1y}^2
	+\delta_1\norm{(\Id-R_1)x-(\Id-R_1)y}^2
	\\
	+\delta_2\norm{(\Id-R_2)R_1x-(\Id-R_2)R_1y}^2
	\leq\delta_3\|x-y\|^2.
	\end{multline}
\end{proposition}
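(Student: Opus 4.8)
The plan is to obtain the claimed estimate by applying the I--N characterization in \cref{prop:Tchar}\,\ref{prop:Tchar:e} once to $R_1$ and once to $R_2$, and then chaining the two resulting quadratic inequalities. Recall that \cref{prop:Tchar}\,\ref{prop:Tchar:e}, applied to an operator $T=\idparam\Id+\Nparam N$ with $N$ nonexpansive, says that for all $u,v\in X$
\begin{equation*}
(1-\idparam)\norm{Tu-Tv}^2+\idparam\norm{(\Id-T)u-(\Id-T)v}^2\le\big(\Nparam^2-\idparam(\idparam-1)\big)\norm{u-v}^2 .
\end{equation*}
Since $\idparam_1<1$ and $\idparam_2<1$, the coefficients $1-\idparam_i$ are strictly positive, so each such inequality can be divided through by $1-\idparam_i$ without changing its direction.

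It is convenient to abbreviate $c_i:=\big((1-\idparam_i)^2-\Nparam_i^2\big)/(1-\idparam_i)$ for $i\in\{1,2\}$; a one-line computation gives the identity $\big(\Nparam_i^2-\idparam_i(\idparam_i-1)\big)/(1-\idparam_i)=1-c_i$. Applying the displayed inequality to $T=R_1=\idparam_1\Id+\Nparam_1N_1$ at $(u,v)=(x,y)$ and dividing by $1-\idparam_1$ yields
\begin{equation*}
\norm{R_1x-R_1y}^2+\tfrac{\idparam_1}{1-\idparam_1}\norm{(\Id-R_1)x-(\Id-R_1)y}^2\le(1-c_1)\norm{x-y}^2 ,
\end{equation*}
while applying it to $T=R_2=\idparam_2\Id+\Nparam_2N_2$ at $(u,v)=(R_1x,R_1y)$ and dividing by $1-\idparam_2$ yields
\begin{equation*}
\norm{R_2R_1x-R_2R_1y}^2+\tfrac{\idparam_2}{1-\idparam_2}\norm{(\Id-R_2)R_1x-(\Id-R_2)R_1y}^2\le(1-c_2)\norm{R_1x-R_1y}^2 .
\end{equation*}
Comparing with \cref{eq:delta} one has $\delta_2=\idparam_2/(1-\idparam_2)$, $\delta_1=\tfrac{\idparam_1}{1-\idparam_1}(1-c_2)$, and, after expanding, $\delta_3=(1-c_1)(1-c_2)$.

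The only place the standing hypotheses enter is the sign of $1-c_2$: since $1-\idparam_2>0$, the inequality $c_2\le1$ is equivalent to $(1-\idparam_2)^2-(1-\idparam_2)\le\Nparam_2^2$, that is, to $\idparam_2(\idparam_2-1)\le\Nparam_2^2$, which is exactly what is assumed. Hence $1-c_2\ge0$, so I may use the first of the two displayed inequalities, scaled by the nonnegative factor $1-c_2$, to bound the term $(1-c_2)\norm{R_1x-R_1y}^2$ on the right-hand side of the second one; rearranging and substituting $\delta_1,\delta_2,\delta_3$ then produces precisely
\begin{equation*}
\norm{R_2R_1x-R_2R_1y}^2+\delta_1\norm{(\Id-R_1)x-(\Id-R_1)y}^2+\delta_2\norm{(\Id-R_2)R_1x-(\Id-R_2)R_1y}^2\le\delta_3\norm{x-y}^2 .
\end{equation*}
The main obstacle is entirely bookkeeping: verifying the two algebraic identities $\big(\Nparam_i^2-\idparam_i(\idparam_i-1)\big)/(1-\idparam_i)=1-c_i$ and $1-\big(c_1(1-c_2)+c_2\big)=(1-c_1)(1-c_2)$, and confirming the sign equivalence $1-c_2\ge0\iff\idparam_2(\idparam_2-1)\le\Nparam_2^2$. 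I would also point out that the assumption $\idparam_1(\idparam_1-1)\le\Nparam_1^2$ is \emph{not} needed, since the $R_1$-inequality is used in its always-valid form and is only multiplied by the nonnegative scalar $1-c_2$, so no sign condition on $1-c_1$ is required.
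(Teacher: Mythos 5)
Your proof is correct and takes essentially the same route as the paper: the paper also invokes \cref{prop:Tchar} (via the half-maps $\tfrac{1}{2}(\Id+R_i)$, in the inner-product form which, through $\|u\|^2-\|v\|^2=\langle u+v,\,u-v\rangle$, is exactly your normalized norm inequality for $R_i$), and it likewise uses the hypothesis $\alpha_2(\alpha_2-1)\le\beta_2^2$ only to justify multiplying the $R_1$-inequality by the nonnegative factor $1-\tfrac{(1-\alpha_2)^2-\beta_2^2}{1-\alpha_2}$ before chaining. Your closing observation is also consistent with the paper, which imposes no corresponding sign condition involving $\alpha_1$ and $\beta_1$.
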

\begin{proof}
	Set $T_i=\tfrac{1}{2}(\id+R_i)=\tfrac{1+\idparam_i}{2} \Id+\tfrac{\Nparam_i}{2} N_i$,
	and observe that by \cref{prop:Tchar} applied  with
	$(T, \alpha,\beta)$ replaced by $\big(T_i,\tfrac{1+\idparam_i}{2},\tfrac{\Nparam_i}{2}\big)$,
	$i\in \{1,2\}$, we have $(\forall (x,y)\in X\times X)$
	\begin{equation}
	\label{eq:nov:16:i*}
	\scal{T_ix-T_iy}{(\Id-T_i)x-(\Id-T_i)y}
	\ge 
	\tfrac{\alpha_i}{1-\idparam_i}\norm{(\Id-T_i)x-(\Id-T_i)y}^2
	+\tfrac{(1-\idparam_i)^2-\Nparam_i^2}{4(1-\idparam_i)}\norm{x-y}^2.	 
	\end{equation}
	Equivalently,
	\begin{equation}
	\label{eq:nov:16:i}
	\begin{split}
	\scal{(\Id+R_i)x-(\Id+R_i)y}{(\Id-R_i)x-(\Id-R_i)y}
	\qquad\qquad\qquad\qquad\qquad\qquad\qquad\qquad
	\\
	\ge 
	\tfrac{\alpha_i}{1-\idparam_i}\norm{(\Id-R_i)x-(\Id-R_i)y}^2
	+\tfrac{(1-\idparam_i)^2-\Nparam_i^2}{1-\idparam_i}\norm{x-y}^2.
	\end{split}	 
	\end{equation}
	Observe also that, because $\alpha_2<1$, we have  
	\begin{equation}
	\label{e:more:det:1}
	\idparam_2(\idparam_2-1)\le \Nparam_2^2
	\siff 1-\tfrac{(1-\idparam_2)^2-\Nparam_2^2}{1-\idparam_2}\ge 0.
	\end{equation}	
	It follows from 
	\cref{eq:nov:16:i}, applied with $i=2$ and $(x,y)$ 
	replaced by $(R_1x,R_1y)$ in \cref{eq:nov15:b}
	and  with $i=1$ in \cref{eq:nov15:e}
	below, in view of \cref{e:more:det:1} that
	\begin{subequations}
		\label{ineq:general}
		\begin{align}
		&\quad\norm{x-y}^2-\norm{R_2R_1x-R_2R_1y}^2
		\nonumber
		\\
		&=\norm{x-y}^2-\norm{R_1x-R_1y}^2+\norm{R_1x-R_1y}^2-\norm{R_2R_1x-R_2R_1y}^2
		\\
		&=\scal{(\Id+R_1)x-(\Id+R_1)y}{(\id-R_1)x-(\id-R_1)y}
		\nonumber
		\\
		&\quad+\scal{(\Id+R_2)R_1x-(\Id+R_2)R_1y}{(\id-R_2)R_1x-(\id-R_2)R_1y}
		\\
		&\ge\scal{(\Id+R_1)x-(\Id+R_1)y}{(\id-R_1)x-(\id-R_1)y}
		+	 \tfrac{\alpha_2}{1-\idparam_2}\norm{(\Id-R_2)R_1x-(\Id-R_2)R_1y}^2
		\nonumber
		\\
		&\quad	
		+\tfrac{(1-\idparam_2)^2-\Nparam_2^2}{1-\idparam_2}\norm{R_1x-R_1y}^2
		\label{eq:nov15:b}	
		\\
		&=\scal{(\Id+R_1)x-(\Id+R_1)y}{(\id-R_1)x-(\id-R_1)y}
		+	 \tfrac{\alpha_2}{1-\idparam_2}\norm{(\Id-R_2)R_1x-(\Id-R_2)R_1y}^2
		\nonumber
		\\
		&\quad	
		+\tfrac{(1-\idparam_2)^2-\Nparam_2^2}{1-\idparam_2}
		\big(\norm{x-y}^2-\scal{(\Id+R_1)x-(\Id+R_1)y}{(\id-R_1)x-(\id-R_1)y}\big)	
		\\
		&=\Big(1-\tfrac{(1-\idparam_2)^2-\Nparam_2^2}{1-\idparam_2}\Big)
		\scal{(\Id+R_1)x-(\Id+R_1)y}{(\id-R_1)x-(\id-R_1)y}
		\nonumber
		\\
		&\quad	
		+\tfrac{\alpha_2}{1-\idparam_2}\norm{(\Id-R_2)R_1x-(\Id-R_2)R_1y}^2
		+\tfrac{(1-\idparam_2)^2-\Nparam_2^2}{1-\idparam_2}
		\norm{x-y}^2
		\\
		&\ge\Big(1-\tfrac{(1-\idparam_2)^2-\Nparam_2^2}{1-\idparam_2}\Big)
		\Big(	 \tfrac{\alpha_1}{1-\idparam_1}\norm{(\Id-R_1)x-(\Id-R_1)y}^2
		+\tfrac{(1-\idparam_1)^2-\Nparam_1^2}{1-\idparam_1}\norm{x-y}^2	 \Big)
		\nonumber
		\\
		&\quad	
		+	 \tfrac{\alpha_2}{1-\idparam_2}\norm{(\Id-R_2)R_1x-(\Id-R_2)R_1y}^2
		+\tfrac{(1-\idparam_2)^2-\Nparam_2^2}{1-\idparam_2}
		\norm{x-y}^2
		\label{eq:nov15:e}
		\\
		&=
		\tfrac{\alpha_1}{1-\idparam_1}\Big(1-\tfrac{(1-\idparam_2)^2-\Nparam_2^2}{1-\idparam_2}\Big)
		\norm{(\Id-R_1)x-(\Id-R_1)y}^2
		+\tfrac{\alpha_2}{1-\idparam_2}\norm{(\Id-R_2)R_1x-(\Id-R_2)R_1y}^2
		\nonumber
		\\
		&\quad+\Big(\tfrac{(1-\idparam_1)^2-\Nparam_1^2}{1-\idparam_1}
		\Big(1-\tfrac{(1-\idparam_2)^2-\Nparam_2^2}{1-\idparam_2}\Big)
		+\tfrac{(1-\idparam_2)^2-\Nparam_2^2}{1-\idparam_2}\Big)	\norm{x-y}^2.
		\end{align}
	\end{subequations}
	Rearranging yields the desired result.
\end{proof}

\begin{theorem}
	\label{theorem:gen:alpha:beta}
	Let $\idparam_1\in \left]-\infty,1\right[$,
	let $\idparam_2\in \left]-\infty,1\right[$,
	let $\Nparam_1\in \RR_+$,
	let $\Nparam_2\in \RR_+$,
	and suppose that $\idparam_2(\idparam_2-1)\le \Nparam_2^2$.
	Let $\delta_1$, $\delta_2$, and $\delta_3$ be 
	defined as in \cref{eq:delta}.
	Set
	\begin{align}
	\delta_4 = \tfrac{\delta_1\delta_2}{\delta_1+\delta_2},
	\label{eq:delta_4}
	\end{align}
	and suppose that $\delta_1+\delta_2>0$, 
	that $\delta_3-\delta_4+\delta_3\delta_4\ge 0$,
	and that $\delta_4>-1$.
	Suppose that $R_1$ admits an $(\idparam_1,\Nparam_1)$-\IN, 
	and that $R_2$ admits an $(\idparam_2,\Nparam_2)$-\IN.
	Then, $R_2R_1$ admits an $(\idparam,\Nparam)$-\IN,
	where
	\begin{equation}
	\idparam=\tfrac{\delta_4}{1+\delta_4},
	\qquad 
	\Nparam=\tfrac{\sqrt{\delta_3-\delta_4+\delta_3\delta_4}}{1+\delta_4}.
	\end{equation}
\end{theorem}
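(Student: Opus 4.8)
The plan is to feed \cref{lem:two_comp} into the $(\idparam,\Nparam)$-\IN\ characterization of \cref{prop:Tchar}, after first collapsing the two ``defect'' terms produced by \cref{lem:two_comp} into a single term. Fix $(x,y)\in X\times X$ and write
\[
u=(\Id-R_1)x-(\Id-R_1)y,\qquad v=(\Id-R_2)R_1x-(\Id-R_2)R_1y .
\]
Since $\Id-R_2R_1=(\Id-R_1)+(\Id-R_2)R_1$, we have the telescoping identity $(\Id-R_2R_1)x-(\Id-R_2R_1)y=u+v$, and the conclusion of \cref{lem:two_comp} reads $\norm{R_2R_1x-R_2R_1y}^2+\delta_1\norm{u}^2+\delta_2\norm{v}^2\le\delta_3\norm{x-y}^2$.

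First I would establish the elementary bound
\[
\delta_1\norm{u}^2+\delta_2\norm{v}^2\ \ge\ \delta_4\,\norm{u+v}^2 ,
\]
which holds precisely because $\delta_1+\delta_2>0$: substituting $\delta_4=\delta_1\delta_2/(\delta_1+\delta_2)$ and expanding $\norm{u+v}^2$, the difference of the two sides equals $(\delta_1+\delta_2)^{-1}\norm{\delta_1 u-\delta_2 v}^2\ge0$ (equivalently, divide through by $\delta_1$ or $\delta_2$ and invoke \cref{lem:Young}). Inserting this into the displayed inequality above yields the single clean estimate
\[
\norm{R_2R_1x-R_2R_1y}^2+\delta_4\,\norm{(\Id-R_2R_1)x-(\Id-R_2R_1)y}^2\ \le\ \delta_3\,\norm{x-y}^2 . \qquad(\ast)
\]

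Now set $\idparam=\delta_4/(1+\delta_4)$ and $\Nparam=\sqrt{\delta_3-\delta_4+\delta_3\delta_4}\,/(1+\delta_4)$; both are well defined and $\Nparam\in\RR_+$ because $\delta_4>-1$ (hence $1+\delta_4>0$) and $\delta_3-\delta_4+\delta_3\delta_4\ge0$. Assuming $\Nparam>0$, put $N=\Nparam^{-1}(R_2R_1-\idparam\Id)$, so that $R_2R_1=\idparam\Id+\Nparam N$ by construction; it remains to verify that $N$ is nonexpansive. By \cref{prop:Tchar} (equivalence \ref{prop:Tchar:a}$\siff$\ref{prop:Tchar:e}, applied with $T=R_2R_1$) this is the inequality
\[
(1-\idparam)\norm{R_2R_1x-R_2R_1y}^2+\idparam\norm{(\Id-R_2R_1)x-(\Id-R_2R_1)y}^2\le(\Nparam^2-\idparam(\idparam-1))\norm{x-y}^2 .
\]
Since $1-\idparam=(1+\delta_4)^{-1}$ and $\idparam=\delta_4(1+\delta_4)^{-1}$, the left-hand side is exactly $(1+\delta_4)^{-1}$ times the left-hand side of $(\ast)$, hence at most $\delta_3(1+\delta_4)^{-1}\norm{x-y}^2$; and a one-line computation gives $\Nparam^2-\idparam(\idparam-1)=\delta_3/(1+\delta_4)$. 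So the required inequality holds, $N$ is nonexpansive, and $R_2R_1$ admits the claimed $(\idparam,\Nparam)$-\IN. The boundary case $\Nparam=0$ (i.e.\ $\delta_3-\delta_4+\delta_3\delta_4=0$) forces, via completion of the square in $(\ast)$, that $R_2R_1-\idparam\Id$ is $0$-Lipschitz, and is treated separately.

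The only genuinely non-obvious step is the passage from the two terms $\delta_1\norm{u}^2,\delta_2\norm{v}^2$ of \cref{lem:two_comp} to a single multiple of $\norm{u+v}^2=\norm{(\Id-R_2R_1)x-(\Id-R_2R_1)y}^2$ via the harmonic-type combination $\delta_4=\delta_1\delta_2/(\delta_1+\delta_2)$, which is exactly what makes $\delta_1+\delta_2>0$ the natural hypothesis; once $(\ast)$ is available, the remainder is the bookkeeping of matching it against \cref{prop:Tchar}\ref{prop:Tchar:e}, i.e.\ the identities $1-\idparam=1/(1+\delta_4)$ and $\Nparam^2-\idparam(\idparam-1)=\delta_3/(1+\delta_4)$, which also explain the exact formulas for $\idparam$ and $\Nparam$.
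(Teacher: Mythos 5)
Your proof is correct and follows essentially the same route as the paper's: apply \cref{lem:two_comp}, collapse the two defect terms $\delta_1\norm{u}^2+\delta_2\norm{v}^2$ into $\delta_4\norm{u+v}^2$ using the harmonic-mean coefficient $\delta_4=\delta_1\delta_2/(\delta_1+\delta_2)$ (your direct identity with the term $(\delta_1+\delta_2)^{-1}\norm{\delta_1u-\delta_2v}^2$ is just an unpacked form of the paper's appeal to \cref{lem:Young}), and then read off the $(\idparam,\Nparam)$-\IN\ from \cref{prop:Tchar}\ref{prop:Tchar:e} applied to $T=R_2R_1$. The only differences are cosmetic: you spell out the identities $1-\idparam=1/(1+\delta_4)$ and $\Nparam^2-\idparam(\idparam-1)=\delta_3/(1+\delta_4)$ and explicitly flag the degenerate case $\Nparam=0$, which the paper passes over silently.
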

\begin{proof}
	Let $\underline{\delta}:=\min(\delta_1,\delta_2)$, 
	let $\bar{\delta}:=\max(\delta_1,\delta_2)$, 
	and let $\lambda:=-\underline{\delta}/\bar{\delta}$ 
	(i.e., $\lambda=-\delta_1/\delta_2$ if $\delta_1\leq\delta_2$, 
	and $\lambda=-\delta_2/\delta_1$ if $\delta_1\geq\delta_2$). 
	\cref{lem:two_comp} then \cref{lem:Young} imply that
	\begin{subequations}
		\label{eq:comp_ineq}  
		\begin{align}
		&\quad\delta_3\norm{x-y}^2-\norm{R_2R_1x-R_2R_1y}^2
		\nonumber\\
		&\geq\delta_1\norm{(\Id-R_1)x-(\Id-R_1)y}^2+\delta_2\norm{(\Id-R_2)R_1x-(\Id-R_2)R_1y}^2
		\\
		&=\bar{\delta}(\tfrac{\delta_1}{\bar{\delta}}\norm{(\Id-R_1)x-(\Id-R_1)y}^2
		+\tfrac{\delta_2}{\bar{\delta}}\norm{(\Id-R_2)R_1x-(\Id-R_2)R_1y}^2
		\\
		&\geq\bar{\delta}(-\tfrac{\lambda}{1-\lambda}\norm{(\Id-R_1)x-(\Id-R_1)y+(\Id-R_2)R_1x-(\Id-R_2)R_1y}^2)
		\\
		&=-\tfrac{\lambda\bar{\delta}}{1-\lambda}\norm{(\id-R_2R_1)x-(\id-R_2R_1)y}^2
		\\
		&=\tfrac{\underline{\delta}\bar{\delta}}{\bar{\delta}+\underline{\delta}}
		\norm{(\id-R_2R_1)x-(\id-R_2R_1)y}^2
		\\
		&=\delta_4\norm{(\id-R_2R_1)x-(\id-R_2R_1)y}^2.                                                       
		\end{align}
	\end{subequations}
	Comparing \cref{eq:comp_ineq} to \cref{prop:Tchar}
	applied with $T$ replaced by $R_2R_1$, we learn that
	there exists a nonexpansive operator $N\colon X\to X$
	and $(\alpha,\beta)\in \RR^2$ 
	such that 
	$R_2R_1=\idparam \Id+\Nparam N$, 
	where 
	$\delta_3=\tfrac{\Nparam^2+\idparam(1-\idparam)}{1-\idparam}$
	and
	$\delta_4=\tfrac{\idparam}{1-\idparam}$.
	Equivalently, 
	$ \idparam=\tfrac{\delta_4}{1+\delta_4}$,
	hence,
	$\Nparam=\tfrac{\sqrt{\delta_3-\delta_4+\delta_3\delta_4}}{1+\delta_4}$, as claimed.
\end{proof}

\begin{theorem}
	\label{thm:comp:more:gen}
	Let $\idparam_1\in \RR$,
	let $\idparam_2\in \RR$,
	let $\Nparam_1>0$,
	let $\Nparam_2>0$, 
	suppose 
	that ${\idparam_1}+\Nparam_1>0$,
	that ${\idparam_2}+\Nparam_2>0$,
	and that either
	$\tfrac{\beta_1\beta_2}{(\alpha_1+\beta_1)(\alpha_2+\beta_2)}<1$
	or 
	$\max\big\{\tfrac{\beta_1}{\alpha_1+\beta_1},
	\tfrac{\beta_2}{\alpha_2+\beta_2}\big\}=1$.
	Set
	\begin{subequations}
		\label{def:kappa:theta}
		\begin{align}
		\kappa
		&=(\alpha_1+\beta_1)(\alpha_2+\beta_2)
		\\
		\theta
		&=
		\begin{cases}
		\frac{\beta_1{\alpha_2}+\beta_2{\alpha_1}}{
			{\alpha_1}{\alpha_2}+{\alpha_1}\beta_2+{\alpha_2}\beta_1},
		&
		\frac{\beta_1\beta_2}{({\alpha_1}+\beta_1)({\alpha_2}+\beta_2)}<1;
		\\
		1,
		&
		\max\big\{\frac{\beta_1}{{\alpha_1}+\beta_1},
		\frac{\beta_2}{{\alpha_2}+\beta_2}\big\}=1.
		\end{cases}
		\end{align}
	\end{subequations}
	Suppose that $R_1$ admits an $(\idparam_1,\Nparam_1)$-\IN, 
	and that $R_2$ admits an $(\idparam_2,\Nparam_2)$-\IN.
	Then $\theta\in \left]0,+\infty\right[$ and 
	$R_2R_1$ admits a $(\kappa(1-\theta),\kappa\theta)$-\IN, i.e.,
	$R_2R_1$ is $\kappa$-scaled $\theta$-conically nonexpansive.
	That is, there exists a nonexpansive operator $N\colon X\to X$
	such that
	\begin{equation}
    R=\kappa(1-\theta)\Id+\kappa\theta N.
	\end{equation}
\end{theorem}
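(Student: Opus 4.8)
The plan is to derive the statement from \cref{theorem:gen:alpha:beta}. Write $\kappa_i:=\idparam_i+\Nparam_i>0$ and $\theta_i:=\Nparam_i/\kappa_i\in\left]0,+\infty\right[$ for $i\in\{1,2\}$, so that $\kappa=\kappa_1\kappa_2$ and $R_i=\kappa_iM_i$, where $M_i:=(1-\theta_i)\Id+\theta_iN_i$ is $\theta_i$-conically nonexpansive. The key — and, I expect, the only genuinely nonobvious — step is to exhibit $\tfrac1\kappa R_2R_1$ itself as a composition of two conically nonexpansive operators. The naive guess $\big(\tfrac1{\kappa_2}R_2\big)\big(\tfrac1{\kappa_1}R_1\big)$ does \emph{not} equal $\tfrac1\kappa R_2R_1$, because $R_2$ is not positively homogeneous; instead put $\widetilde N_2\colon z\mapsto\tfrac1{\kappa_1}N_2(\kappa_1z)$, which is nonexpansive since $\kappa_1>0$, and $\widetilde M_2:=(1-\theta_2)\Id+\theta_2\widetilde N_2$. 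A short computation shows $\widetilde M_2z=\tfrac1{\kappa_1\kappa_2}R_2(\kappa_1z)$, hence $\widetilde M_2(M_1x)=\tfrac1{\kappa_1\kappa_2}R_2(R_1x)=\tfrac1\kappa R_2R_1x$ for all $x\in X$. Thus $\tfrac1\kappa R_2R_1=\widetilde M_2M_1$ is the composition of the $\theta_1$-conically nonexpansive $M_1$ and the $\theta_2$-conically nonexpansive $\widetilde M_2$; equivalently, $M_1$ admits a $(1-\theta_1,\theta_1)$-\IN\ and $\widetilde M_2$ a $(1-\theta_2,\theta_2)$-\IN, with $1-\theta_i<1$ since $\theta_i>0$.

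Next I would split according to the two alternatives of the hypothesis. If $\max\{\theta_1,\theta_2\}=1$, say $\theta_1=1$ (the case $\theta_2=1$ is analogous), then $\idparam_1=0$, while $\theta_2\le1$ forces $\idparam_2\ge0$; hence $R_1=\Nparam_1N_1$ is $\kappa_1$-Lipschitz, and $R_2=\idparam_2\Id+\Nparam_2N_2$ is $\kappa_2$-Lipschitz (triangle inequality, using $\idparam_2\ge0$), so $R_2R_1$ is $\kappa$-Lipschitz, i.e.\ $R_2R_1=\kappa N$ with $N:=\tfrac1\kappa R_2R_1$ nonexpansive. This is a $(0,\kappa)$-\IN, which is of the claimed form $(\kappa(1-\theta),\kappa\theta)$ with $\theta=1$; a short check also confirms that the two cases in the definition of $\theta$ agree on the overlap of the two alternatives. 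Otherwise the first alternative $\theta_1\theta_2=\tfrac{\Nparam_1\Nparam_2}{\kappa}<1$ holds, and I would apply \cref{theorem:gen:alpha:beta} with the pair $(R_1,R_2)$ taken to be $(M_1,\widetilde M_2)$, i.e.\ with $(\idparam_1,\Nparam_1,\idparam_2,\Nparam_2)$ replaced by $(1-\theta_1,\theta_1,1-\theta_2,\theta_2)$. Because $(1-\theta_i)^2-\theta_i^2=0$, the quantities in \cref{eq:delta} collapse to $\delta_1=\tfrac{1-\theta_1}{\theta_1}$, $\delta_2=\tfrac{1-\theta_2}{\theta_2}$ and $\delta_3=1$, so that $\delta_1+\delta_2=\tfrac{\theta_1+\theta_2-2\theta_1\theta_2}{\theta_1\theta_2}$ and, with $\delta_4=\tfrac{\delta_1\delta_2}{\delta_1+\delta_2}$, one has $1+\delta_4=\tfrac{1-\theta_1\theta_2}{\theta_1+\theta_2-2\theta_1\theta_2}$. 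From $\theta_1\theta_2<1$ we get $\tfrac1{\theta_1}+\tfrac1{\theta_2}\ge\tfrac2{\sqrt{\theta_1\theta_2}}>2$, hence $\theta_1+\theta_2-2\theta_1\theta_2>0$; combined with $1-\theta_1\theta_2>0$ this gives $\delta_1+\delta_2>0$ and $\delta_4>-1$, while $\delta_3-\delta_4+\delta_3\delta_4=1\ge0$, and the auxiliary hypothesis $\idparam_2(\idparam_2-1)\le\Nparam_2^2$ of \cref{theorem:gen:alpha:beta} here reads $(1-\theta_2)(-\theta_2)\le\theta_2^2$, which holds as $\theta_2>0$.

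Applying \cref{theorem:gen:alpha:beta} then yields that $\tfrac1\kappa R_2R_1=\widetilde M_2M_1$ admits a $\big(\tfrac{\delta_4}{1+\delta_4},\tfrac1{1+\delta_4}\big)$-\IN\ (using $\delta_3-\delta_4+\delta_3\delta_4=1$). Since $\tfrac{\delta_4}{1+\delta_4}+\tfrac1{1+\delta_4}=1$, this means $\tfrac1\kappa R_2R_1$ is $\theta$-conically nonexpansive with $\theta:=\tfrac1{1+\delta_4}=\tfrac{\theta_1+\theta_2-2\theta_1\theta_2}{1-\theta_1\theta_2}\in\left]0,+\infty\right[$; substituting $\theta_i=\Nparam_i/(\idparam_i+\Nparam_i)$ and multiplying numerator and denominator by $\kappa$ turns this into $\theta=\tfrac{\Nparam_1\idparam_2+\Nparam_2\idparam_1}{\idparam_1\idparam_2+\idparam_1\Nparam_2+\idparam_2\Nparam_1}$, the stated formula. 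Multiplying through by $\kappa$ finally gives $R_2R_1=\kappa(1-\theta)\Id+\kappa\theta N$ for a nonexpansive $N$ — i.e.\ a $(\kappa(1-\theta),\kappa\theta)$-\IN\ of $R_2R_1$ — which is the claim. Everything after the first paragraph is essentially bookkeeping: the identity $(1-\theta_i)^2-\theta_i^2=0$ makes \cref{eq:delta} transparent, each hypothesis of \cref{theorem:gen:alpha:beta} reduces to $\theta_i>0$ together with $\theta_1\theta_2<1$, and the only real content is the elementary reformulation in the first paragraph that makes the two-operator composition theorem applicable at all.
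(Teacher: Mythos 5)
Your proof is correct and follows essentially the same route as the paper: the same rescaled nonexpansive map $\widetilde N_2=\tfrac{1}{\alpha_1+\beta_1}N_2\circ(\alpha_1+\beta_1)\Id$ to write $\tfrac{1}{\kappa}R_2R_1$ as a composition of $\theta_i$-conically nonexpansive operators, the same case split (the $\max=1$ case handled by nonexpansiveness, the $\theta_1\theta_2<1$ case via \cref{theorem:gen:alpha:beta} with $(\alpha_i,\beta_i)$ replaced by $(1-\theta_i,\theta_i)$), and the same verification that $\delta_1=\tfrac{1-\theta_1}{\theta_1}$, $\delta_2=\tfrac{1-\theta_2}{\theta_2}$, $\delta_3=1$, $\delta_1+\delta_2>0$, $\delta_4>-1$, leading to the stated $\theta$.
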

\begin{proof}
	Let $\theta_i=\tfrac{\beta_i}{{\alpha_i}+\beta_i}>0$,
	and observe that 
	\begin{equation}
	\label{eq:191125a}
	R_i=(\alpha_i+\beta_i)\big((1-\theta_i)\Id+\theta_iN_i\big),
	\quad i\in \{1,2\}.
	\end{equation}
	Next, let 
	$\widetilde{N}_2=\tfrac{1}{\alpha_1+\beta_1}N_2\circ (\alpha_1+\beta_1)\Id$,
	and note that $\widetilde{N}_2$ is nonexpansive.
	Now, set 
	\begin{equation}
	\label{eq:191125b}
	\widetilde{R}_1=(1-\theta_1)\Id+\theta_1N_1,
	\qquad
	\widetilde{R}_2=(1-\theta_2)\Id+\theta_2\widetilde{N}_2.
	\end{equation}	
	Then \cref{eq:191125a} and \cref{eq:191125b}
	yield
	\begin{subequations}
		\label{e:191125d}
		\begin{align}
		R_2R_1&=\Big(({\alpha_2}+\beta_2)((1-\theta_2)\id+\theta_2 N_2)\Big)
		\Big(\big({\alpha_1}+\beta_1)((1-\theta_1)\id+\theta_1 {N}_1\big)\Big)\\
		&=({\alpha_1}+\beta_1)({\alpha_2}+\beta_2)
		\Big(\tfrac{1}{{\alpha_1}+\beta_1}((1-\theta_2)\id+\theta_2 {N}_2)\Big)
		\Big(({\alpha_1}+\beta_1)
		 \widetilde{R}_1\Big)\\
		&=({\alpha_1}+\beta_1)({\alpha_2}+\beta_2)\widetilde{R}_2 \widetilde{R}_1.
		\end{align}
	\end{subequations}
	We proceed by cases.	
		\textsc{Case~I}: $\alpha_1\alpha_2=0$.
	Observe that $0\in \{\alpha_1,\alpha_2\}$ $\siff$
	$\max\big\{\tfrac{\beta_1}{{\alpha_1}+\beta_1}
	,\tfrac{\beta_2}{{\alpha_2}+\beta_2}\big\}=\max\{\theta_1,\theta_2\}=1$.
	The conclusion follows by observing that
	$\widetilde{R}_i$ is nonexpansive, $i\in \{1,2\}$.
	
	\textsc{Case~II}: $\alpha_1\alpha_2\neq 0$. 
    By assumption we must have $\tfrac{\beta_1}{{\alpha_1}+\beta_1}
	\tfrac{\beta_2}{{\alpha_2}+\beta_2}=\theta_1\theta_2<1$.
	We claim that $\widetilde{R}_i$, $i\in \{1,2\}$, satisfy the conditions of  
	\cref{theorem:gen:alpha:beta} 
	with $(\idparam_i,\Nparam_i)$ replaced by $(1-\theta_i,\theta_i)$.
	Indeed, observe that 
	$(1-\theta_2)(1-\theta_2-1)\le \theta_2^2
	\siff
	\theta_2(\theta_2-1)\le \theta_2^2
	\siff \theta_2-1\le\theta_2$, which is always true.
	Moreover, replacing $(\idparam_i,\Nparam_i)$ by $(1-\theta_i,\theta_i)$
	yields
	$\delta_1=\tfrac{1-\theta_1}{\theta_1}$,
	$\delta_2=\tfrac{1-\theta_2}{\theta_2}$,
	$\delta_3=1$,
	and, consequently, 
	$\delta_4=\tfrac{\theta_2(1-\theta_1)+\theta_1
		(1-\theta_2)}{(1-\theta_1)(1-\theta_2)}$.
	We claim that
	\begin{equation}
	\label{eq:aux:help}
	\theta_1+\theta_2-2\theta_1\theta_2>0.
	\end{equation}
	Indeed, recall that 
	$\theta_1+\theta_2-2\theta_1\theta_2
	=\theta_1\theta_2(\tfrac{1}{\theta_1}+\tfrac{1}{\theta_2}-2)
	>\theta_1\theta_2(\tfrac{1}{\theta_1}+\theta_1-2)
	=\theta_1\theta_2\Big(\sqrt{\theta_1}-\tfrac{1}{\sqrt{\theta_1}}\Big)^2>0$.
	This implies that $\delta_1+\delta_2=\tfrac{\theta_1
		+\theta_2-2\theta_1\theta_2}{\theta_1\theta_2}>0$.
	Moreover,
	\begin{equation}
	\delta_4=\tfrac{(1-\theta_1)(1-\theta_2)}{\theta_2(1-
		\theta_1)+\theta_1(1-\theta_2)}
	=\tfrac{1-\theta_1-\theta_2+\theta_1\theta_2}{\theta_1
		+\theta_2-2\theta_1\theta_2}
	=-1+\tfrac{1-\theta_1\theta_2}{\theta_1+\theta_2-2\theta_1\theta_2}>-1.
	\end{equation} 
	Therefore, by \cref{theorem:gen:alpha:beta}, we conclude that
	there exists a nonexpansive operator $N\colon X\to X$,
	such that 
	$\widetilde{R}_2\widetilde{R}_1=\alpha\Id+\beta N$, 
	$\alpha=\tfrac{\delta_4}{1+\delta_4}
	=\tfrac{1-\theta_1-\theta_2+\theta_1\theta_2}{1-\theta_1\theta_2}
	=\frac{{\alpha_1}{\alpha_2}}{
		{\alpha_1}{\alpha_2}+{\alpha_1}\beta_2+{\alpha_2}\beta_1}$,
	and 
	$\beta=\tfrac{1}{1+\delta_4}
	=\tfrac{\theta_1+\theta_2-2\theta_1\theta_2}{1-\theta_1\theta_2}
	=\frac{\beta_1{\alpha_2}+\beta_2{\alpha_1}}{
		{\alpha_1}{\alpha_2}+{\alpha_1}\beta_2+{\alpha_2}\beta_1}$.
	Now combine with \cref{e:191125d}.
\end{proof}

\section{Applications to special cases}
\label{sec:4}

We start this section by recording the following simple lemma which 
can be easily verified, hence we omit the proof.
\begin{lemma}
	\label{lem:comp:neg}
	Set $(\widetilde{R}_1,\widetilde{R}_2)=(-R_1,R_2\circ (-\Id))$.
	Then the following hold:
	\begin{enumerate}
		\item
		\label{lem:comp:neg:i}
		$R_2R_1=\widetilde{R}_2\widetilde{R}_1$.	
		\item 
		\label{lem:comp:neg:ii}
		Let $\alpha_i>0$, let $\delta_i\in \RR\smallsetminus\{0\}$ and suppose that
		$-\tfrac{1}{\delta_i}R_i$ is $\alpha_i$-conically nonexpansive.
		Then $\tfrac{1}{\delta_i}\widetilde{R}_i$
		is $\alpha_i$-conically nonexpansive.
		
	\end{enumerate}
\end{lemma}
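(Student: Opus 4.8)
The plan is to verify both items directly from the definitions, since each is pure bookkeeping once one records that $-\Id$ and the sign change $S\mapsto -S$ are isometries.

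For \ref{lem:comp:neg:i} I would simply unwind the composition: for every $x\in X$,
\[
\widetilde{R}_2\widetilde{R}_1 x=(R_2\circ(-\Id))(-R_1 x)=R_2\bigl(-(-R_1 x)\bigr)=R_2R_1x,
\]
so the inner $-\Id$ in $\widetilde{R}_2$ exactly cancels the outer sign in $\widetilde{R}_1=-R_1$, giving $R_2R_1=\widetilde{R}_2\widetilde{R}_1$.

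For \ref{lem:comp:neg:ii} I would treat the two indices separately. For $i=1$ there is nothing to do: by definition $\tfrac{1}{\delta_1}\widetilde{R}_1=-\tfrac{1}{\delta_1}R_1$, which is $\alpha_1$-conically nonexpansive by hypothesis. For $i=2$ the key observation is that the class of $\alpha$-conically nonexpansive operators is invariant under the odd conjugation $S\mapsto -S\circ(-\Id)$: if $S=(1-\alpha)\Id+\alpha N$ with $N$ nonexpansive, then
\[
-S\circ(-\Id)=(1-\alpha)\Id+\alpha\bigl(-N\circ(-\Id)\bigr),
\]
and $x\mapsto -N(-x)$ is again nonexpansive, since precomposition with $-\Id$ and a sign change each preserve the Lipschitz constant $1$. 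Applying this with $S=-\tfrac{1}{\delta_2}R_2$ (which is $\alpha_2$-conically nonexpansive by hypothesis) yields that $-S\circ(-\Id)=\tfrac{1}{\delta_2}R_2\circ(-\Id)=\tfrac{1}{\delta_2}\widetilde{R}_2$ is $\alpha_2$-conically nonexpansive, as claimed.

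There is no genuine obstacle here; the statement is flagged as "easily verified" precisely because $-\Id$ and the sign change are affine isometries and hence preserve both nonexpansiveness and decompositions of the form $(1-\alpha)\Id+\alpha(\cdot)$. The only point requiring a moment of care is the direction of composition — that the inner $-\Id$ in $\widetilde{R}_2=R_2\circ(-\Id)$ is simultaneously what makes \ref{lem:comp:neg:i} hold and what produces the odd conjugation used in \ref{lem:comp:neg:ii}.
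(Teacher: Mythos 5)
Your proof is correct: item (i) is the evident cancellation of signs, and for item (ii) your computation $-S\circ(-\Id)=(1-\alpha)\Id+\alpha\bigl(-N\circ(-\Id)\bigr)$ with $-N\circ(-\Id)$ nonexpansive, applied to $S=-\tfrac{1}{\delta_2}R_2$, is exactly the routine verification the statement calls for. The paper itself omits the proof as "easily verified," and your argument is the natural direct one that fills that gap.
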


\begin{theorem}
	\label{thm:conic:conic}
	Let $i\in \{1,2\}$, let 
	$\alpha_i>0$, let $\delta_i\in \RR\smallsetminus\{0\}$, 
	let $R_i\colon X\to X$ be 
	such that $\tfrac{1}{\delta_i}R_i$ is 
	$\alpha_i$-conically nonexpansive.
	Suppose that either $\alpha_1\alpha_2< 1$
	or $\max\{\alpha_1,\alpha_2\}=1$.
	Set 
	\begin{equation}
	R=R_2R_1,\quad \alpha
	=
	\begin{cases}
	\tfrac{\alpha_1+\alpha_2-2\alpha_1\alpha_2}{1-\alpha_1\alpha_2},
	&\alpha_1\alpha_2<1;
	\\
	1, 
	&\max\{\alpha_1,\alpha_2\}=1.
	\end{cases}
	\end{equation}
	Then there exists a nonexpansive
	operator $N\colon X\to X$ such that
	\begin{equation} 
	R=\delta_1\delta_2\big((1-\alpha)\Id+\alpha N\big).
	\end{equation} 
	Furthermore, $\alpha<1$
	$\siff$ {\rm[}$\alpha_1<1$ and $\alpha_2<1${\rm]}.
\end{theorem}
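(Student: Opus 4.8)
The plan is to derive \cref{thm:conic:conic} from \cref{thm:comp:more:gen} after first factoring the scalars $\delta_1,\delta_2$ out of the composition, which both reduces the work to a composition of two unscaled conically nonexpansive operators and removes any need to split into cases according to the signs of $\delta_1,\delta_2$. First I would set $S_i:=\delta_i^{-1}R_i$ for $i\in\{1,2\}$; by hypothesis $S_i$ is $\alpha_i$-conically nonexpansive, say $S_i=(1-\alpha_i)\Id+\alpha_iM_i$ with $M_i\colon X\to X$ nonexpansive. Writing $D:=\delta_1\Id$, a direct manipulation gives
\[
R=R_2R_1=(\delta_2\Id)\,S_2\,D\,S_1=\delta_1\delta_2\,(D^{-1}S_2D)\,S_1=\delta_1\delta_2\,\widehat S_2S_1,\qquad\text{where }\widehat S_2:=D^{-1}S_2D.
\]
Conjugation by the invertible scaling $D$ preserves nonexpansiveness — indeed $\|D^{-1}M_2Dx-D^{-1}M_2Dy\|=|\delta_1|^{-1}\|M_2(\delta_1x)-M_2(\delta_1y)\|\le\|x-y\|$ — so $\widehat S_2=(1-\alpha_2)\Id+\alpha_2(D^{-1}M_2D)$ is again $\alpha_2$-conically nonexpansive. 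Thus $S_1$ admits a $(1-\alpha_1,\alpha_1)$-\IN\ and $\widehat S_2$ admits a $(1-\alpha_2,\alpha_2)$-\IN.

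Next I would apply \cref{thm:comp:more:gen} to the composition $\widehat S_2S_1$ with these two decompositions. Its hypotheses are satisfied: the two ``nonexpansive-part'' parameters $\alpha_1,\alpha_2$ are strictly positive, each corresponding sum equals $(1-\alpha_i)+\alpha_i=1>0$, and the dichotomy demanded there specializes to $\alpha_1\alpha_2<1$ or $\max\{\alpha_1,\alpha_2\}=1$, which is exactly the standing hypothesis of \cref{thm:conic:conic}. The conclusion of \cref{thm:comp:more:gen} then yields $\kappa=1\cdot1=1$ and a parameter $\theta\in\left]0,+\infty\right[$; substituting $(1-\alpha_i,\alpha_i)$ into the formula for $\theta$ and simplifying gives $\theta=\tfrac{\alpha_1+\alpha_2-2\alpha_1\alpha_2}{1-\alpha_1\alpha_2}$ in the first branch and $\theta=1$ in the second — i.e.\ $\theta=\alpha$ in both. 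Hence there is a nonexpansive $N\colon X\to X$ with $\widehat S_2S_1=(1-\alpha)\Id+\alpha N$, and multiplying by $\delta_1\delta_2$ gives $R=\delta_1\delta_2\big((1-\alpha)\Id+\alpha N\big)$, the asserted $\delta_1\delta_2$-scaled $\alpha$-conically nonexpansive representation.

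Finally, for the equivalence: when $\alpha_1\alpha_2<1$ we have $1-\alpha_1\alpha_2>0$, so $\alpha<1\iff\alpha_1+\alpha_2-2\alpha_1\alpha_2<1-\alpha_1\alpha_2\iff(1-\alpha_1)(1-\alpha_2)>0$, and since $\alpha_1>1,\alpha_2>1$ would contradict $\alpha_1\alpha_2<1$, this holds iff $\alpha_1<1$ and $\alpha_2<1$; when $\max\{\alpha_1,\alpha_2\}=1$ we have $\alpha=1$ and the $\alpha_i$ are not both $<1$, so both sides are false. I expect the only step requiring genuine thought to be the factor-out-and-conjugate reduction, which is what makes \cref{thm:comp:more:gen} applicable with $\kappa=1$ irrespective of the signs of $\delta_1,\delta_2$; everything else is the bookkeeping of re-deriving the $\theta$-formula and the elementary inequality manipulations.
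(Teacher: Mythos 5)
Your proof is correct, and it reaches the conclusion by a genuinely cleaner route than the paper's on the one point where the paper expends real effort: the signs of $\delta_1,\delta_2$. The paper also derives \cref{thm:conic:conic} from \cref{thm:comp:more:gen}, but it does so by splitting into four sign cases: for $\delta_i>0$ it feeds the scaled parameters $(\delta_i(1-\alpha_i),\delta_i\alpha_i)$ directly into \cref{thm:comp:more:gen} (which needs $\beta_i=\delta_i\alpha_i>0$, hence the positivity restriction), and for negative $\delta$'s it invokes \cref{lem:comp:neg} with $(\widetilde{R}_1,\widetilde{R}_2)=(-R_1,R_2\circ(-\Id))$ to reduce back to the positive case, once or twice depending on which $\delta_i$ is negative. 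You instead factor the scalars out first, writing $R=\delta_1\delta_2\,(D^{-1}S_2D)S_1$ with $D=\delta_1\Id$, and observe that conjugation by a nonzero scalar multiple of the identity preserves nonexpansiveness (only $\abs{\delta_1}$ enters), so $D^{-1}S_2D$ is again $\alpha_2$-conically nonexpansive regardless of sign; then \cref{thm:comp:more:gen} is applied once, with the unit-scaled parameters $(1-\alpha_i,\alpha_i)$, giving $\kappa=1$ and $\theta=\alpha$ exactly as you computed. This is the same intertwining trick the paper itself uses inside the proof of \cref{thm:comp:more:gen} (with the positive scalar $\alpha_1+\beta_1$), so your contribution is to notice it works for negative scalars too, which eliminates the case analysis and \cref{lem:comp:neg} altogether. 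Your verification of the hypotheses of \cref{thm:comp:more:gen}, the simplification of $\theta$ to $\tfrac{\alpha_1+\alpha_2-2\alpha_1\alpha_2}{1-\alpha_1\alpha_2}$ (resp.\ $1$), and the final equivalence $\alpha<1\siff[\alpha_1<1$ and $\alpha_2<1]$ all check out, the last matching the paper's argument essentially verbatim.
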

\begin{proof}
	Set $(\widetilde{R}_1,\widetilde{R}_2)=(-R_1,R_2\circ (-\Id))$.
	The proof proceeds by cases.

	\textsc{Case~I:}
	$\delta_i>0$, $i\in \{1,2\}$.
	By assumption,
	there exist
	nonexpansive operators
	$N_i\colon X\to X$
	such that
	$R_i=\delta_i(1-\alpha_i)\Id+\delta_i\alpha_i N_i$.
	Moreover, one can easily check that
	$R_i$ satisfy the assumptions of \cref{thm:comp:more:gen}
	with $(\alpha_i,\beta_i)$ replaced by
	$(\delta_i(1-\alpha_i),\delta_i\alpha_i)$.
	Applying \cref{thm:comp:more:gen},
	with $(\alpha_i,\beta_i)$ replaced by 
	$(\delta_i(1-\alpha_i),\delta_i\alpha_i)$,
	we learn that 
	there exists a nonexpansive operator ${N}\colon X\to X$
	such that 
	$R_2R_1=(\delta_1(1-\alpha_1)+\delta_1\alpha_1)(\delta_2(1-\alpha_2)+\delta_2\alpha_2)
	((1-\alpha)\Id+\alpha {N})
	=\delta_1\delta_2((1-\alpha)\Id+\alpha {N})$,
	where 
	\begin{equation}
	\alpha =\frac{\delta_1(1-\alpha_1)\delta_2\alpha_2+\delta_2(1-\alpha_2)\delta_1\alpha_1}{
		\delta_1(1-\alpha_1)\delta_2\alpha_2
		+\delta_2(1-\alpha_2)\delta_1\alpha_1+\delta_1(1-\alpha_1)\delta_2(1-\alpha_2)}
	=\frac{\alpha_1+\alpha_2-2\alpha_1\alpha_2}{1-\alpha_1\alpha_2}.
	\end{equation}
	Finally,
	observe that 
	$\alpha<1$ $\siff$ [$\alpha_1\alpha_2<1$
	and $\tfrac{\alpha_1+\alpha_2-2\alpha_1\alpha_2}{1-\alpha_1\alpha_2}<1$]
	$\siff$ [$\alpha_1\alpha_2<1$
	and $1-\alpha_1\alpha_2>\alpha_1+\alpha_2-2\alpha_1\alpha_2$]
	$\siff$ [$\alpha_1\alpha_2<1$
	and
	$(1-\alpha_1)(1-\alpha_2)>0$]
	$\siff$ [$\alpha_1<1$ and $\alpha_2<1$].
	
	\textsc{Case~II:}
	$\delta_i<0$, $i\in \{1,2\}$.
	Observe that $\tfrac{1}{\delta_i}R_i=-\tfrac{1}{\abs{\delta_i}}R_i$ 
	is $\alpha_i$-conically nonexpansive.
	Therefore,  \cref{lem:comp:neg}\ref{lem:comp:neg:ii},
	applied with $\delta_i$ replaced by $\abs{\delta_i}$,
	implies that
	$\tfrac{1}{\abs{\delta_i}}\widetilde{R}_i$ are $\alpha_i$-conically nonexpansive. 
	Now 
	combine \cref{lem:comp:neg}\ref{lem:comp:neg:i}
	and
	\textsc{Case~I} applied with 
	$(R_i,\delta_i)$ replaced by $(\widetilde{R}_i,\abs{\delta_i})$.

	\textsc{Case~III:}
	$\delta_1<0$ and $\delta_2>0$:
	Observe that 
	$\tfrac{1}{\delta_1}R_1=-\tfrac{1}{\abs{\delta_1}}R_1$ 
	is $\alpha_1$-conically nonexpansive.
	Now, using \cref{lem:comp:neg}\ref{lem:comp:neg:i}\&\ref{lem:comp:neg:ii}
	we have  $-R=-R_2R_1=-\widetilde{R}_2\widetilde{R}_1$, and 
	$-\tfrac{1}{\delta_2}\widetilde{R}_2$ is $\alpha_2$-conically nonexpansive.
	Now combine with \textsc{Case~II}, applied with 
	$(R_1,R_2,\delta_1)$ replaced by $(\widetilde{R}_1,-\widetilde{R}_2,\abs{\delta_1})$,
	to learn that there exists a nonexpansive mapping
	$N\colon X\to X$ such that 
	$-R=\abs{\delta_1}\delta_2((1-\alpha)\Id+\alpha N)$,
	 and the conclusion follows.
	
	\textsc{Case~IV:}
	$\delta_1>0$ and $\delta_2<0$:
	Indeed, $-R=-R_2R_1$.
	Now combine with \textsc{Case~I} applied with
	$R_2$ replaced by $-R_2$, in view of 
	\cref{lem:comp:neg}\ref{lem:comp:neg:ii}.
	\end{proof}

\begin{corollary}
	\label{cor:scav:coco}
	Let $\alpha\in \left]0,1\right[$,
	let $\beta>0$,
	let $\delta\in \RR\smallsetminus\{0\}$,
	let $\{i,j\}=\{1,2\}$,
	and suppose that 
	$\tfrac{1}{\delta}R_i$ is 
	$\alpha$-averaged,
	and that $R_j$ is $\tfrac{1}{\beta}$-cocoercive.
	Set  $\overline{\alpha}=\tfrac{1}{2-\alpha}$.
	Then $\overline{\alpha}\in \left]0,1\right[$,
	and there exists a nonexpansive operator $N\colon X\to X$ such that 
	\begin{equation}
	R_2R_1=\beta\delta\big((1-\overline{\alpha})\Id+\overline{\alpha}N\big).
	\end{equation}
\end{corollary}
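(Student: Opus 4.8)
The plan is to read \cref{cor:scav:coco} off \cref{thm:conic:conic} once the two factors have been recast as scaled conically nonexpansive operators. First I would note that the $\alpha$-averaged factor $\tfrac{1}{\delta}R_i$ is, since $\alpha\in\left]0,1\right[\subseteq\left]0,\infty\right[$, in particular $\alpha$-conically nonexpansive. For the cocoercive factor I would unwind the definition of cocoercivity: because $R_j$ is $\tfrac{1}{\beta}$-cocoercive there is a nonexpansive $N_j\colon X\to X$ with $R_j=\tfrac{\beta}{2}(\Id+N_j)$, whence $\tfrac{1}{\beta}R_j=\tfrac{1}{2}\Id+\tfrac{1}{2}N_j=(1-\tfrac{1}{2})\Id+\tfrac{1}{2}N_j$ is $\tfrac{1}{2}$-conically nonexpansive (alternatively this follows from \cref{lem:av:coco}).

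Next I would apply \cref{thm:conic:conic}. Up to relabelling the two indices --- which is harmless because both the product $\delta_1\delta_2$ and the resulting conic parameter below are symmetric in the indices --- we may take $\{\alpha_1,\alpha_2\}=\{\alpha,\tfrac{1}{2}\}$ and $\{\delta_1,\delta_2\}=\{\delta,\beta\}$. The standing hypothesis of \cref{thm:conic:conic} holds since $\alpha_1\alpha_2=\tfrac{\alpha}{2}<1$ (as $\alpha<1$). The theorem then produces a nonexpansive $N\colon X\to X$ with $R_2R_1=\delta_1\delta_2\big((1-\overline{\alpha})\Id+\overline{\alpha}N\big)$ in which $\delta_1\delta_2=\beta\delta$ and
\begin{equation*}
\overline{\alpha}=\frac{\alpha_1+\alpha_2-2\alpha_1\alpha_2}{1-\alpha_1\alpha_2}
=\frac{\alpha+\tfrac{1}{2}-\alpha}{1-\tfrac{\alpha}{2}}=\frac{1}{2-\alpha}.
\end{equation*}

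Finally I would note that $\alpha\in\left]0,1\right[$ gives $2-\alpha\in\left]1,2\right[$ and hence $\overline{\alpha}=\tfrac{1}{2-\alpha}\in\left]\tfrac{1}{2},1\right[\subseteq\left]0,1\right[$, which is also consistent with the ``furthermore'' clause of \cref{thm:conic:conic} since both $\alpha<1$ and $\tfrac{1}{2}<1$. I do not expect a genuine obstacle in this argument; the only steps that deserve a line of justification are the identification of the cocoercive factor as a $\tfrac{1}{2}$-conically nonexpansive operator via the identity-plus-nonexpansive form of cocoercivity, and the one-line simplification of $\overline{\alpha}$, both routine.
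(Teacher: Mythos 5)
Your proof is correct and follows essentially the same route as the paper: recast the cocoercive factor as $\tfrac{\beta}{2}(\Id+\overline{N})$, i.e.\ as a $\beta$-scaled $\tfrac{1}{2}$-conically nonexpansive operator, and apply the two-operator composition result with parameters $(\alpha,\tfrac12,\delta,\beta)$ to obtain $\overline{\alpha}=\tfrac{1}{2-\alpha}$. The only (immaterial) difference is that the paper invokes \cref{thm:m:conic} with $m=2$, which itself reduces to \cref{thm:conic:conic} — the theorem you use directly — and it treats the two orderings $(i,j)$ as two symmetric cases rather than via your relabelling remark.
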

\begin{proof}
	Suppose first that $(i,j)=(1,2)$, and observe that 
	there exists a nonexpansive operator $\overline{N}$
	such that 
	$R_2=\tfrac{\beta}{2}(\Id+\overline{N})$.
	Applying \cref{thm:m:conic}
	with $m=2$,
	$(\alpha_1,\alpha_2,\delta_1,\delta_2)$
	replaced by 
	$(\alpha,1/2,\delta,\beta)$
	yields that there exists a nonexpansive operator $N$ such that 
	$R_2R_1=\beta\delta\big((1-\overline{\alpha})\Id+\overline{\alpha} N\big)$,
	where 
	\begin{equation}
	\overline{\alpha}
	=\frac{\alpha+\tfrac{1}{2}-2\tfrac{\alpha}{2}}{1-\tfrac{\alpha}{2}}
	=\frac{1}{2-\alpha}\in \left]0,1\right[.
	\end{equation} 
	The case $(i,j)=(2,1)$ follows similarly.
\end{proof}

The assumption $\alpha_1\alpha_2<1$ is critical in the conclusion of 
\cref{thm:conic:conic} as we illustrate below.

\begin{example}[$\alpha_1=\alpha_2>1$]
	Let $\alpha>1$,
	and set $R_1=R_2=(1-\alpha)\id-\alpha\id=(1-2\alpha)\id$. Then
	\begin{equation}
	R_2R_1 = (1-2\alpha)^2\id=(1-4\alpha+4\alpha^2)\id.
	\end{equation}
	Hence, $\Id-R_2R_1=4\alpha(1-\alpha)\Id$.
	That is,  $\Id-R_2R_1$ is not monotone;
	hence, $R_2R_1$ is \emph{not} conically nonexpansive by 
	\cref{lem:av:coco} applied with $T$ replaced by $R_2R_1$.
\end{example}

The following proposition provides an abstract framework to
construct a family of operators $R_1$ and $R_2$
such that
$R_1$ is $\alpha_1$-conically nonexpansive,
$R_2$ is $\alpha_2$-conically nonexpansive,
 $\alpha_1\alpha_2>1$,
and the composition $R_2R_1$ fails to be conically nonexpansive.
\begin{proposition}
	\label{prop:counter:ab>1}
	Let $\theta\in \RR$,
	let $\alpha_1>0$, let $\alpha_2>0$,
	let 
	\begin{equation}
		\label{eq:def:Rtheta}
	R_\theta
	=\begin{bmatrix}
	\cos \theta &-\sin \theta\\
	\sin \theta &\cos \theta
	\end{bmatrix},
	\end{equation}
	set 
	\begin{equation}
	R_1=(1-\alpha_1)\Id+\alpha_1 R_\theta,
	\quad 
	R_2=(1-\alpha_2)\Id-\alpha_2 R_\theta,
	\quad 
	R=R_2R_1,
	\end{equation}
	and set 
		\begin{equation}
			\label{eq:def:kappa}
		\kappa=\alpha_1+\alpha_2-2\alpha_1\alpha_2\sin^2 \theta-(\alpha_1-\alpha_2)\cos \theta.
		\end{equation}
	Then 
	$R_1$ is $\alpha_1$-conically nonexpansive,
	and
	$R_2$ is $\alpha_2$-conically nonexpansive. Moreover, we have the implication
	$\kappa<0$ $\RA$ $R$ is \emph{not} conically nonexpansive.
\end{proposition}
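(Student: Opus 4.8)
The plan is to verify the two conical‑nonexpansiveness claims directly from the definition, and then to extract the composition's $(\mathrm{Id}-N)$‑structure and compute when monotonicity of $\mathrm{Id}-R$ fails. Since $R_\theta$ is a rotation, it is an isometry, hence nonexpansive; therefore $R_1=(1-\alpha_1)\Id+\alpha_1 R_\theta$ is by definition $\alpha_1$-conically nonexpansive, and since $-R_\theta=R_{\theta+\pi}$ is also a rotation (hence nonexpansive), $R_2=(1-\alpha_2)\Id+\alpha_2(-R_\theta)$ is $\alpha_2$-conically nonexpansive. This disposes of the first two assertions with essentially no computation.

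For the implication, the key observation is that everything here is affine (indeed linear, since there are no constant terms), so $R=R_2R_1$ is itself a linear operator, and I would compute it explicitly: $R=\big((1-\alpha_2)\Id-\alpha_2 R_\theta\big)\big((1-\alpha_1)\Id+\alpha_1 R_\theta\big)$. Expanding and using $R_\theta^2=R_{2\theta}$ gives $R$ as a linear combination of $\Id$, $R_\theta$, and $R_{2\theta}$ with explicit scalar coefficients. Then $\Id-R$ is again of this form. By \cref{lem:av:coco} (applied with $T$ replaced by $R$), $R$ is conically nonexpansive only if $\Id-R$ is (maximally) monotone, i.e.\ $\scal{x-y}{(\Id-R)x-(\Id-R)y}\ge 0$ for all $x,y$; since $\Id-R$ is linear this is the condition that the symmetric part of $\Id-R$ be positive semidefinite. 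So the strategy is: show that the symmetric part of $\Id-R$ fails to be positive semidefinite precisely when (or at least whenever) $\kappa<0$, which proves the contrapositive of the stated implication.

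Concretely, I would work in a two‑dimensional invariant subspace (any plane on which $R_\theta$ acts as the standard rotation — in $\RR^2$ the whole space, in general Hilbert space a $2$‑dimensional subspace obtained from a single vector and its image). On such a plane, $\Id$, $R_\theta$, $R_{2\theta}$ are all represented by $2\times 2$ matrices; the symmetric part of $R_\theta$ is $(\cos\theta)\,\Id$, similarly for $R_{2\theta}$, so the symmetric part of $\Id-R$ is a scalar multiple of $\Id$ on this plane, and that scalar — after simplification using $\cos 2\theta=1-2\sin^2\theta$ — should come out to exactly $\kappa$ (that is the point of the definition \cref{eq:def:kappa}). Then $\kappa<0$ forces the quadratic form $\scal{x}{(\Id-R)x}=\kappa\norm{x}^2<0$ on a nonzero vector $x$ in that plane, so $\Id-R$ is not monotone, hence by \cref{lem:av:coco} $R=R_2R_1$ is not conically nonexpansive.

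The main obstacle is purely bookkeeping: correctly expanding the product $R_2R_1$ and collecting the coefficient of the symmetric part, i.e.\ verifying that the relevant scalar is exactly $\kappa=\alpha_1+\alpha_2-2\alpha_1\alpha_2\sin^2\theta-(\alpha_1-\alpha_2)\cos\theta$ and not some cousin of it. One has to be careful that the cross term $-\alpha_1\alpha_2 R_\theta^2 = -\alpha_1\alpha_2 R_{2\theta}$ contributes $-\alpha_1\alpha_2\cos 2\theta = -\alpha_1\alpha_2(1-2\sin^2\theta)$ to the symmetric part, and that the two mixed linear terms $\alpha_1(1-\alpha_2)R_\theta$ and $-\alpha_2(1-\alpha_1)R_\theta$ combine to give the $(\alpha_1-\alpha_2)$ coefficient after accounting for the $+\alpha_1\alpha_2$ that recombines with the cross term; the constant ($\Id$) coefficient of $R$ is $(1-\alpha_1)(1-\alpha_2)$, so the $\Id$‑coefficient of $\Id-R$ is $1-(1-\alpha_1)(1-\alpha_2)=\alpha_1+\alpha_2-\alpha_1\alpha_2$, and assembling all three symmetric contributions yields $\alpha_1+\alpha_2-\alpha_1\alpha_2 + (\alpha_1-\alpha_2)(-\cos\theta) + (-\alpha_1\alpha_2)(1-2\sin^2\theta) = \kappa$, as desired. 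No convergence or infinite‑dimensional subtlety enters: the $2$‑dimensional reduction makes the positive‑semidefiniteness question elementary.
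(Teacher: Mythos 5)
Your approach is essentially the paper's: the paper writes $R_\theta=(\cos\theta)\Id+(\sin\theta)S$ with $S=R_{\pi/2}$, $S^2=-\Id$, expands $R=R_2R_1$ as a combination of $\Id$, $R_\theta$, $R_{2\theta}$, and uses $\scal{Sx}{x}=0$ — precisely your observation that only the symmetric part survives in $\scal{(\Id-R)x}{x}$ — to obtain $\scal{(\Id-R)x}{x}=\kappa\norm{x}^2$ and then invokes \cref{lem:av:coco}; your verification that $R_1,R_2$ are conically nonexpansive because $\pm R_\theta$ are isometries is also the intended (and correct) argument. The one flaw is a sign slip in your final assembly: since $R$ contains the term $-\alpha_1\alpha_2 R_{2\theta}$, the operator $\Id-R$ contains $+\alpha_1\alpha_2 R_{2\theta}$, so the third symmetric contribution must be $+\alpha_1\alpha_2(1-2\sin^2\theta)$, not $(-\alpha_1\alpha_2)(1-2\sin^2\theta)$; as written your assembled expression equals $\alpha_1+\alpha_2-2\alpha_1\alpha_2\cos^2\theta-(\alpha_1-\alpha_2)\cos\theta$, which is not $\kappa$ in general (they agree only when $\sin^2\theta=\tfrac12$). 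With the corrected sign you get $\alpha_1+\alpha_2-\alpha_1\alpha_2-(\alpha_1-\alpha_2)\cos\theta+\alpha_1\alpha_2(1-2\sin^2\theta)=\kappa$, and the rest of the argument goes through exactly as you describe.
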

\begin{proof}
	Set $S=R_{\pi/2}$, and observe that $S^2=-\Id$, and that 
	$R_\theta=(\cos \theta) \Id+(\sin\theta) S$.
	Now,
	\begin{subequations}
		\begin{align}
		R
		&=R_2R_1=((1-\alpha_1)\Id+\alpha_1 R_\theta )
		((1-\alpha_2)\Id-\alpha_2 R_\theta)
		\\
		&=(1-\alpha_1-\alpha_2+\alpha_1\alpha_2)\Id
		+(\alpha_1-\alpha_2)R_\theta -\alpha_1 \alpha_2 R_{2\theta}
		\\
		&=(1-\alpha_1-\alpha_2+\alpha_1\alpha_2+(\alpha_1-\alpha_2)\cos \theta
		-\alpha_1\alpha_2\cos(2\theta) )\Id
		\nonumber
		\\
		&\quad+((\alpha_1-\alpha_2)\sin \theta -\alpha_1\alpha_2\sin(2\theta))S
		\\
		&=(1-\alpha_1-\alpha_2+\alpha_1\alpha_2+(\alpha_1-\alpha_2)\cos \theta
		-\alpha_1\alpha_2 (2\cos^2\theta-1) )\Id
		\nonumber
		\\
		&\quad+((\alpha_1-\alpha_2)\sin \theta -\alpha_1\alpha_2\sin(2\theta))S
		\\
		&=(1-\alpha_1-\alpha_2+2\alpha_1\alpha_2\sin^2 \theta+(\alpha_1-\alpha_2)\cos \theta
		)\Id
		+((\alpha_1-\alpha_2)\sin \theta -\alpha_1\alpha_2\sin(2\theta))S.
		\end{align}
	\end{subequations}
	Consequently,
	\begin{equation}
	\Id-R =(\alpha_1+\alpha_2-2\alpha_1\alpha_2\sin^2 \theta-(\alpha_1-\alpha_2)\cos \theta)\Id
	-((\alpha_1-\alpha_2)\sin \theta -\alpha_1\alpha_2\sin(2\theta))S.
	\end{equation}
	Hence, $(\forall x\in \RR^2)$
	\begin{equation}
	\label{eq:not:mono}
	\scal{(\Id-R)x}{x}=(\alpha_1+\alpha_2-2\alpha_1\alpha_2\sin^2 \theta-(\alpha_1-\alpha_2)\cos \theta)\norm{x}^2=\kappa\norm{x}^2.
	\end{equation}
	Now,
	$R$ is conically nonexpansive
	$\RA$ $\Id-R$ is monotone by \cref{lem:av:coco},
	and the conclusion follows in view of \cref{eq:not:mono}. 
\end{proof}

The following example provides two concrete instances where:
\begin{enumerate*}
	\item 
	$\alpha_1>1$, $\alpha_2>1$, hence $\alpha_1\alpha_2>1$,
	\item 
	$\alpha_1>1$, $\alpha_2<1$, $\alpha_1\alpha_2>1$.
\end{enumerate*}	
In both cases, $R_2R_1 $ is \emph{not} conically nonexpansive. 
\begin{example}
	\label{ex:cases}
	Suppose that one of the following holds:
	\begin{enumerate}
		\item
		\label{ex:cases:i}
		$\theta \in \left ]0,\pi/2\right[$,
		$ \epsilon\ge 0$, $\delta\ge 0$, 
		$\alpha_1=\tfrac{1+\epsilon}{\sin^2\theta}$
		and 
		$\alpha_2=\tfrac{1+\delta}{\sin^2\theta}$.
		\item
		\label{ex:cases:ii}
		$\theta \in \left ]\pi/4,\pi/2\right[$,
		$\epsilon> \tfrac{\cos^2\theta(2-\cos^2\theta)}{(1-2\cos^2\theta)(1+\cos\theta)+\cos\theta}$,
		$\alpha_1=\tfrac{1+\epsilon}{\sin^2\theta}$
		and 
		$\alpha_2={\sin^2\theta}$.	
	\end{enumerate}
	Let
	$R_\theta$ be defined as in \cref{eq:def:Rtheta},
	let
	$R_1=(1-\alpha_1)\Id+\alpha_1 R_\theta$,
	let 
	$R_2=(1-\alpha_2)\Id-\alpha_2 R_\theta$,
	and let $R=R_2R_1$. Then
	$\alpha_1\alpha_2>1$, and
	$R$ is \emph{not}
	conically nonexpansive.
\end{example}	
\begin{proof}
	Let $\kappa$ be defined as in
	\cref{eq:def:kappa}. In view of 
	\cref{prop:counter:ab>1}, it is sufficient to show that
	$\kappa<0$.
	\ref{ex:cases:i}:	
	Note that $\kappa<0$ $\siff$
	$\kappa\sin^2 \theta<0$.
	Now,
	\begin{subequations}
		\begin{align}
		\kappa\sin^2 \theta
		&=2+\epsilon+\delta-(\epsilon-\delta)\cos\theta
		-2-2 \epsilon-2\delta-2 \epsilon\delta
		\\
		&=-(\epsilon(1+\cos\theta)+\delta(1-\cos\theta)+2 \epsilon\delta)<0.
		\end{align}
	\end{subequations}
	\ref{ex:cases:ii}:	
	We have 
	\begin{subequations}
		\begin{align}
		\kappa
		&=\tfrac{1+\epsilon+\sin^4\theta}{\sin^2\theta}-2(1+\epsilon)\sin^2\theta
		-\tfrac{1+\epsilon-\sin^4\theta}{\sin^2\theta}\cos \theta
		\\
		&=-\tfrac{1}{\sin^2\theta}\big(2(1+\epsilon)\sin^4\theta
		-(1+\epsilon+\sin^4\theta)+(1+\epsilon-\sin^4\theta)\cos\theta
		\big)
		\\
		&=-\tfrac{1}{1-\cos^2\theta}\big((2\sin^4\theta+\cos\theta-1)\epsilon
		+\sin^4\theta(1-\cos \theta)
		-(1-\cos\theta)
		\big)
		\\
		&=-\tfrac{1-\cos\theta}{1-\cos^2\theta}\big((2(1+\cos \theta)(1-\cos^2\theta)-1)\epsilon
		+1-2\cos^2\theta+\cos^4\theta
		-1
		\big)
		\\
		&=-\tfrac{1}{1+\cos\theta}\big((1+2\cos\theta-2\cos^2\theta-2\cos^3 \theta)\epsilon
		-\cos^2\theta(2-\cos^2\theta)
		\big)
		\\
		&=-\tfrac{1}{1+\cos\theta}\big((1-2\cos^2\theta)(1+\cos\theta)+\cos\theta)\epsilon
		-\cos^2\theta(2-\cos^2\theta)
		\big).
		\end{align}	
	\end{subequations}
	Now, observe that 
	$\big(\forall \theta \in \left]\tfrac{\pi}{4},\tfrac{\pi}{2}\right[\big)$
	$1-2\cos^2\theta=-\cos(2\theta) >0$. 
    Consequently,
	$(1-2\cos^2\theta)(1+\cos\theta)+\cos\theta>\cos\theta>0$.
	Now use the assumption 
	$\epsilon> \tfrac{\cos^2\theta(2-\cos^2\theta)}{(1-2\cos^2\theta)(1+\cos\theta)+\cos\theta}$
	to learn that 
	$(1-2\cos^2\theta)(1+\cos\theta)+\cos\theta)\epsilon
		-\cos^2\theta(2-\cos^2\theta)>0$,
	hence $\kappa<0$, and the conclusion follows.
\end{proof}

\begin{theorem}[composition of $m$ scaled conically nonexpansive operators]
	\label{thm:m:conic}
	Let $m\ge 2$ be an integer,
	set $I=\{1,\ldots,m\}$,
	let $(R_i)_{i\in I}$ be a family of operators from $\hilbert$
	to $\hilbert$,
	let $r\in I$,
	let $\alpha_i$ be real numbers 
	such that $\menge{\alpha_i}{i\in 
		I\smallsetminus\{r\} }\subseteq 
	\left]0,1\right[$, and $\alpha_r>0$,
	let $\delta_i$ be real numbers 
	in $\RR\smallsetminus\{0\}$,
	and suppose that for every $i\in I$, $\tfrac{1}{\delta_i}R_i$
	is $\alpha_i$-conically nonexpansive.
	Set 
	\begin{equation}
	R=R_m\ldots R_1,
	\qquad
	\text{and}
	\qquad
	\overline{\alpha}=\frac{\sum_{\substack{i=1 \\ i\neq \overline{r}}}^{m}
		\tfrac{\alpha_i}{1-\alpha_i}	
	}{1+\sum_{\substack{i=1 \\ i\neq \overline{r}}}^{m}\tfrac{\alpha_i}{1-\alpha_i}}.
	\end{equation}
	Suppose that $\alpha_r\overline{\alpha}<1$, and set
	\begin{equation}
	\alpha=	
	\begin{cases}
	\frac{\sum_{i=1 
		}^{m}
		\tfrac{\alpha_i}{1-\alpha_i}	
	}{1+\sum_{i=1 }^{m}\tfrac{\alpha_i}{1-\alpha_i}},
	& \alpha_r\neq 1;
	\\
	1,
	&
	\alpha_r=1.
	\end{cases}
	\end{equation}
	Then there exists a nonexpansive operator $N\colon X\to X$ such that
	\begin{equation}
	R=\delta_m\ldots\delta_1((1-\alpha)\Id+\alpha N).
	\end{equation} 
\end{theorem}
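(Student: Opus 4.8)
The plan is to induct on $m$, using \cref{thm:conic:conic} both as the base case $m=2$ and as the engine of the inductive step. The one observation that organizes all the bookkeeping is that, on the level of the quantity $\tfrac{\alpha}{1-\alpha}$, the composition rule of \cref{thm:conic:conic} is \emph{addition}: if $\tfrac{1}{\delta_1}R_1$ is $\alpha_1$-conically nonexpansive, $\tfrac{1}{\delta_2}R_2$ is $\alpha_2$-conically nonexpansive, $\alpha_1,\alpha_2\neq 1$ and $\alpha_1\alpha_2<1$, then the value $\alpha$ returned by \cref{thm:conic:conic} for $\tfrac{1}{\delta_1\delta_2}R_2R_1$ satisfies $\tfrac{\alpha}{1-\alpha}=\tfrac{\alpha_1}{1-\alpha_1}+\tfrac{\alpha_2}{1-\alpha_2}$ (a one-line manipulation of $\alpha=\tfrac{\alpha_1+\alpha_2-2\alpha_1\alpha_2}{1-\alpha_1\alpha_2}$), while $\max\{\alpha_1,\alpha_2\}=1$ forces $\alpha=1$. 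Writing $\tau_i=\tfrac{\alpha_i}{1-\alpha_i}$ (so $\tau_i>0$ for $i\neq r$), this identifies the quantities in the statement as $\tfrac{\overline{\alpha}}{1-\overline{\alpha}}=\sum_{i\neq r}\tau_i$ and, when $\alpha_r\neq 1$, $\tfrac{\alpha}{1-\alpha}=\sum_{i=1}^m\tau_i=\tau_r+\sum_{i\neq r}\tau_i$. The base case $m=2$ is then just \cref{thm:conic:conic}: $\overline{\alpha}$ is the single parameter $\alpha_i$ with $i\neq r$, so $\alpha_r\overline{\alpha}<1$ is the hypothesis $\alpha_1\alpha_2<1$, and the two formulas for $\alpha$ agree by the identity above (with the convention $\alpha=1$ when $\alpha_r=1$). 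Signs of the $\delta_i$ need no separate treatment, as \cref{thm:conic:conic}, invoked at every level of the recursion, already handles arbitrary nonzero scalings.

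For the inductive step ($m\geq 3$), note that since $r$ is a single index, at least one of the innermost factor $R_1$ and the outermost factor $R_m$ has index $\neq r$, hence is scaled $\alpha$-conically nonexpansive with parameter in $\left]0,1\right[$; say this is $R_1$ (the case of $R_m$ is entirely analogous, peeling from the outside). Apply the induction hypothesis to $R_m\cdots R_2$ with the same special index $r\in\{2,\dots,m\}$. Its averaging parameter $\overline{\alpha}'$ satisfies $\tfrac{\overline{\alpha}'}{1-\overline{\alpha}'}=\sum_{i=2,\,i\neq r}^{m}\tau_i\leq\sum_{i\neq r}\tau_i=\tfrac{\overline{\alpha}}{1-\overline{\alpha}}$, so $\overline{\alpha}'\leq\overline{\alpha}$ and therefore $\alpha_r\overline{\alpha}'\leq\alpha_r\overline{\alpha}<1$; thus the induction hypothesis applies and yields a nonexpansive $N'$ with $\tfrac{1}{\delta_m\cdots\delta_2}R_m\cdots R_2=(1-\alpha')\Id+\alpha'N'$, where $\alpha'=1$ if $\alpha_r=1$ and otherwise $\tfrac{\alpha'}{1-\alpha'}=\tau_r+\sum_{i=2,\,i\neq r}^{m}\tau_i$. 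One then composes $\tfrac{1}{\delta_1}R_1$ (which is $\alpha_1$-conically nonexpansive, $\alpha_1<1$) with $\tfrac{1}{\delta_m\cdots\delta_2}R_m\cdots R_2$ (which is $\alpha'$-conically nonexpansive) via \cref{thm:conic:conic}, and since $\delta_1\cdot(\delta_m\cdots\delta_2)=\delta_m\cdots\delta_1$ and $\tfrac{\alpha}{1-\alpha}=\tfrac{\alpha_1}{1-\alpha_1}+\tfrac{\alpha'}{1-\alpha'}=\sum_{i=1}^m\tau_i$, the conclusion follows once the applicability of \cref{thm:conic:conic} is checked.

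The only delicate point, and the expected main obstacle, is verifying the hypothesis $\alpha_1\alpha'<1$ or $\max\{\alpha_1,\alpha'\}=1$ of \cref{thm:conic:conic} at this last step. If $\alpha_r=1$ then $\alpha'=1$, so $\max\{\alpha_1,\alpha'\}=1$ and \cref{thm:conic:conic} returns $\alpha=1$, as claimed. If $\alpha_r<1$ then every $\tau_i>0$, so $\tfrac{\alpha'}{1-\alpha'}\geq 0$, i.e.\ $\alpha'\in\left]0,1\right[$, and $\alpha_1\alpha'<1$ is immediate. The remaining case $\alpha_r>1$ is the genuinely subtle one, where $\alpha'$ exceeds $1$: from $\alpha_r\overline{\alpha}'<1$ and $\sum_{i=2,\,i\neq r}^{m}\tau_i>0$ one gets $(\alpha_r-1)\sum_{i=2,\,i\neq r}^{m}\tau_i<1$, which together with $\tau_r=-\tfrac{\alpha_r}{\alpha_r-1}$ gives $\tfrac{\alpha'}{1-\alpha'}<-1$, hence $\alpha'>1$; and the identical computation applied to the global hypothesis $\alpha_r\overline{\alpha}<1$ (now keeping $\tau_1$) gives $\sum_{i=1}^m\tau_i<-1$, i.e.\ $\tfrac{\alpha_1}{1-\alpha_1}+\tfrac{\alpha'}{1-\alpha'}<-1$, which is exactly equivalent to $\alpha_1\alpha'<1$ when $\alpha_1<1<\alpha'$. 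So \cref{thm:conic:conic} applies in every case; the same inequality $\sum_{i=1}^m\tfrac{\alpha_i}{1-\alpha_i}<-1$ additionally shows that in this regime the output $\alpha$ is $>1$, in particular positive, so the conclusion really is a scaled conically nonexpansive representation of $R$.
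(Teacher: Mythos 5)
Your proof is correct and follows essentially the same route as the paper: induction on the number of factors, with \cref{thm:conic:conic} supplying both the base case and the composition step, and with the main work being the verification that the product hypothesis of \cref{thm:conic:conic} at each step follows from the global assumption $\alpha_r\overline{\alpha}<1$. The only real difference is organizational: you peel a non-special factor off one end and run the verification through the additive parameter $\tau_i=\alpha_i/(1-\alpha_i)$ with a three-way case analysis on $\alpha_r$ (handling $\alpha_r=1$ inside the recursion), whereas the paper peels the outermost factor $R_{k+1}$, dispatches $\alpha_r=1$ up front, and proves the same equivalence by the direct algebraic chain \cref{eq:label:today}--\cref{e:191220:c}; your $\tau$-bookkeeping is a cleaner way to organize the identical computation.
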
	
\begin{proof}
First, observe that $(\forall i\in I\smallsetminus\{r\})$,
$\tfrac{1}{\delta_i}R_i$ is nonexpansive.
	If $\alpha_r=1$ then $(\forall i\in \{1,\ldots,m\})$
	$R_i$ is ${\abs{\delta_i}}$-Lipschitz continuous and the conclusion 
	readily follows.
	Now, suppose that $\alpha_r\neq 1$.
	We proceed by induction on $k\in \{2,\ldots,m\}$.
	At $k=2$, the claim 
	holds by \cref{thm:conic:conic}. 
	Now, suppose that the claim holds for some $k\in \{2,\ldots,m-1\}$.
	Let $(R_i)_{1\le i\le k+1}$ 
	be a family of operators from $\hilbert$
	to $\hilbert$,
	let $\overline{r}\in \{1,\ldots,k,k+1\}$,
	let $\alpha_i$ be real numbers 
	such that $\menge{\alpha_i}{i\in 
		\{1,\ldots,k,k+1\}\smallsetminus\{\overline{r}\} }\subseteq 
	\left]0,1\right[$, and $\alpha_{\overline{r}}
	\in \left]0,+\infty\right[\smallsetminus \{1\}$,
	let $\delta_i$ be real numbers 
	in $\RR\smallsetminus\{0\}$,
	and suppose that for every $i\in \{1,\ldots,k+1\}$,
	$\tfrac{1}{\delta_i}R_i$
	is $\alpha_i$-conically nonexpansive.
	Set 
	$\overline{\beta}=\frac{\sum_{\substack{i=1 \\ i\neq \overline{r}}}^{k+1}
		\tfrac{\alpha_i}{1-\alpha_i}	
	}{1+\sum_{\substack{i=1 \\ i\neq \overline{r}}}^{k+1}\tfrac{\alpha_i}{1-\alpha_i}}$,
	 and suppose that 
	 $\alpha_{\overline{r}}\overline{\beta}<1$.
	We examine two cases.
	
	\textsc{Case~I:} $\alpha_{k+1}=\alpha_{\overline{r}}$.
	In this case the conclusion follows by 
	applying \cref{thm:conic:conic}, in view of the inductive hypothesis,
	with $(R_1,R_2)$ replaced by $(R_k\ldots R_1,R_{k+1})$
	and $(\delta_1,\delta_2,\alpha_1,\alpha_2)$ replaced by
	$(\delta_1\ldots \delta_k,\delta_{k+1},
	\big({\sum_{i=1}^k\tfrac{\alpha_i}{1-\alpha_i}}\big)/
	\big({1+\sum_{i=1}^k\tfrac{\alpha_i}{1-\alpha_i}}\big),
	\alpha_{k+1})$. 
	
	\textsc{Case~II:} $\alpha_{k+1}\neq\alpha_{\overline{r}}$.
	We claim that 
	\begin{equation}
	\label{e:191126a}
	\alpha_{k+1}\frac{\sum_{i=1}^k\tfrac{\alpha_i}{1-\alpha_i}}{
		1+\sum_{i=1}^k\tfrac{\alpha_i}{1-\alpha_i}}<1.
	\end{equation}	
	To this end, set $\hat{\alpha}
	=\tfrac{\sum_{\substack{i=1 \\ i\neq \overline{r}}}^{k}\tfrac{\alpha_i}{1-\alpha_i}}{
		1+\sum_{\substack{i=1 \\ i\neq \overline{r}}}^{k}\tfrac{\alpha_i}{1-\alpha_i}}$,
	and observe that 
	$\hat{\alpha}<\overline{\beta}$.
	 By assumption we have $\alpha_{\overline{r}}\overline{\beta}<1$.
	Altogether, we conclude that $\alpha_{\overline{r}}\hat{\alpha}<1$.
	It follows from the inductive hypothesis that
	\begin{equation}
	\label{e:191220:d}
	  \text{$\tfrac{1}{\delta_1\ldots \delta_k}(R_k\ldots R_1)$ is 
	  $\tfrac{\sum_{i=1}^{k}\tfrac{\alpha_i}{1-\alpha_i}}{
		1+\sum_{i=1}^{k}\tfrac{\alpha_i}{1-\alpha_i}}$-conically nonexpansive.} 
	\end{equation}
	Next note that 
	\begin{subequations}
	\label{eq:label:today}
		\begin{align}
	       \frac{\sum_{i=1}^k\tfrac{\alpha_i}{1-\alpha_i}}{
	       1+\sum_{i=1}^k\tfrac{\alpha_i}{1-\alpha_i}}
            &
            =\frac{\frac{\sum_{\substack{i=1 \\ i\neq \overline{r}}}^{k}\tfrac{\alpha_i}{1-\alpha_i}+\tfrac{\alpha_{\overline{r}}}{1-\alpha_{\overline{r}}}}{1+\sum_{\substack{i=1 \\ i\neq \overline{r}}}^{k}\tfrac{\alpha_i}{1-\alpha_i}}}
            {\frac{1+\sum_{\substack{i=1 \\ i\neq \overline{r}}}^{k}\tfrac{\alpha_i}{1-\alpha_i}+\tfrac{\alpha_{\overline{r}}}{1-\alpha_{\overline{r}}}}{1+\sum_{\substack{i=1 \\ i\neq \overline{r}}}^{k}\tfrac{\alpha_i}{1-\alpha_i}}}
            =\frac{\hat{\alpha}+\frac{\alpha_{\overline{r}}}{
            (1-\alpha_{\overline{r}})\Big(1+\sum_{\substack{i=1 \\ i\neq \overline{r}}}^{k}\tfrac{\alpha_i}{1-\alpha_i}\Big)}}{1+\frac{\alpha_{\overline{r}}}{(1-\alpha_{\overline{r}})\Big(1+\sum_{\substack{i=1 \\ i\neq \overline{r}}}^{k}\tfrac{\alpha_i}{1-\alpha_i}\Big)}}
            \\
            &=\frac{\hat{\alpha}(1-\alpha_{\overline{r}})\bigg(1+\sum_{\substack{i=1 \\ i\neq \overline{r}}}^{k}\tfrac{\alpha_i}{1-\alpha_i}\bigg)+\alpha_{\overline{r}}}{(1-\alpha_{\overline{r}})
            \bigg(1+\sum_{\substack{i=1 \\ i\neq \overline{r}}}^{k}\tfrac{\alpha_i}{1-\alpha_i}\bigg)+\alpha_{\overline{r}}}
            \\
            &=\frac{\alpha_{\overline{r}}\bigg(1-\hat{\alpha}
            \bigg(1+\sum_{\substack{i=1 \\ i\neq \overline{r}}}^{k}\tfrac{\alpha_i}{1-\alpha_i}\bigg)\bigg)+\hat{\alpha}
            \bigg(1+\sum_{\substack{i=1 \\ i\neq \overline{r}}}^{k}\tfrac{\alpha_i}{1-\alpha_i}\bigg)}{1+(1-\alpha_{\overline{r}})\sum_{\substack{i=1 \\ i\neq \overline{r}}}^{k}\tfrac{\alpha_i}{1-\alpha_i}}.
		\end{align}
		\end{subequations}
        Because $\alpha_{\overline{r}}\overline{\beta}<1$,
        we learn that $1+(1-\alpha_{\overline{r}})\sum_{\substack{i=1 \\ i\neq \overline{r}}}^{k}\tfrac{\alpha_i}{1-\alpha_i}>0$.
        Moreover, because $\hat{\alpha}<1$, we have $\alpha_{k+1}\hat{\alpha}<1$.
        Therefore, \cref{eq:label:today} implies
    	\begin{subequations}
    	\label{e:191220:a}
		\begin{align}
            &\quad\alpha_{k+1}\frac{\sum_{i=1}^k\tfrac{\alpha_i}{1-\alpha_i}}{1
            +\sum_{i=1}^k\tfrac{\alpha_i}{1-\alpha_i}}<1
            \\
            &\siff
            \alpha_{k+1}\Bigg(\alpha_{\overline{r}}\bigg(1-\hat{\alpha}\Bigg(1
            +\sum_{\substack{i=1 \\ i\neq \overline{r}}}^{k}\tfrac{\alpha_i}{1-\alpha_i}\Bigg)\Bigg)
            +\hat{\alpha}\Bigg(1
            +\sum_{\substack{i=1 \\ i\neq \overline{r}}}^{k}\tfrac{\alpha_i}{1-\alpha_i}\Bigg)\Bigg)
            <1+(1-\alpha_{\overline{r}})
            \sum_{\substack{i=1 \\ i\neq \overline{r}}}^{k}\tfrac{\alpha_i}{1-\alpha_i}
                \\
            &\siff
            \alpha_{\overline{r}}\Bigg(\alpha_{k+1}\Bigg(1-\hat{\alpha}
            \Bigg(1+\sum_{\substack{i=1 \\ i\neq \overline{r}}}^{k}\tfrac{\alpha_i}{1-\alpha_i}\Bigg)\Bigg)
            +\sum_{\substack{i=1 \\ i\neq \overline{r}}}^{k}\tfrac{\alpha_i}{1-\alpha_i}\Bigg)
            <\Bigg(1+\sum_{\substack{i=1 \\ i\neq \overline{r}}}^{k}\tfrac{\alpha_i}{1-\alpha_i}\Bigg)(1-\alpha_{k+1}\hat{\alpha})
            \\
            &\siff
            \alpha_{\overline{r}}\Bigg(\alpha_{k+1}\Bigg(1-\sum_{\substack{i=1 \\ i\neq \overline{r}}}^{k}\tfrac{\alpha_i}{1-\alpha_i}\Bigg)
            +\sum_{\substack{i=1 \\ i\neq \overline{r}}}^{k}\tfrac{\alpha_i}{1-\alpha_i}\Bigg)
            <\Bigg(1+\sum_{\substack{i=1 \\ i\neq \overline{r}}}^{k}\tfrac{\alpha_i}{1-\alpha_i}\Bigg)(1-\alpha_{k+1}\hat{\alpha})\\
            &\siff
            \alpha_{\overline{r}}\frac{\alpha_{k+1}\bigg(1-\sum_{\substack{i=1 \\ i\neq \overline{r}}}^{k}
            \tfrac{\alpha_i}{1-\alpha_i}\bigg)+\sum_{\substack{i=1 \\ i\neq \overline{r}}}^{k}\tfrac{\alpha_i}{1-\alpha_i}}{
            \bigg(1+\sum_{\substack{i=1 \\ i\neq \overline{r}}}^{k}\tfrac{\alpha_i}{1-\alpha_i}\bigg)(1-\alpha_{k+1}\hat{\alpha})}<1.
       	\end{align}
		\end{subequations}
Now, observe that 
        \begin{equation}
        \label{e:191220:b}
            \alpha_{k+1}\Bigg(1-\sum_{\substack{i=1 \\ i\neq \overline{r}}}^{k}\tfrac{\alpha_i}{1-\alpha_i}\Bigg)
            +\sum_{\substack{i=1 \\ i\neq \overline{r}}}^{k}\tfrac{\alpha_i}{1-\alpha_i}
            =\Bigg(\sum_{\substack{i=1 \\ i\neq \overline{r}}}^{k}\tfrac{\alpha_i}{1-\alpha_i}+\tfrac{\alpha_{k+1}}{1-\alpha_{k+1}}\Bigg)
            (1-\alpha_{k+1})
            =\sum_{\substack{i=1 \\ i\neq \overline{r}}}^{k+1}\tfrac{\alpha_i}{1-\alpha_i}(1-\alpha_{k+1}),
        \end{equation}
        and
        \begin{subequations}
               \label{e:191220:c}
        \begin{align}
            \Bigg(1+\sum_{\substack{i=1 \\ i\neq \overline{r}}}^{k}\tfrac{\alpha_i}{1-\alpha_i}\Bigg)\Bigg(1-\alpha_{k+1}\hat{\alpha}\Bigg)
            &=1+\sum_{\substack{i=1 \\ i\neq \overline{r}}}^{k}\tfrac{\alpha_i}{1-\alpha_i}
            -\alpha_{k+1}\sum_{\substack{i=1 \\ i\neq \overline{r}}}^{k}\tfrac{\alpha_i}{1-\alpha_i}
            \\
            &=\Bigg(1+\sum_{\substack{i=1 \\ i\neq \overline{r}}}^{k}\tfrac{\alpha_i}{1-\alpha_i}+\tfrac{\alpha_{k+1}}{1-\alpha_{k+1}}\Bigg)
            (1-\alpha_{k+1})
            \\
            &=\Bigg(1+\sum_{\substack{i=1 \\ i\neq\overline{r} }}^{k+1}\tfrac{\alpha_i}{1-\alpha_i}\Bigg)
            (1-\alpha_{k+1}).
            \end{align}
        \end{subequations}
        In view of \cref{e:191220:b} and \cref{e:191220:c},
        \cref{e:191220:a} becomes
        \begin{equation}
            \alpha_{k+1}\frac{\sum_{i=1}^k\tfrac{\alpha_i}{1-\alpha_i}}{1+\sum_{i=1}^k\tfrac{\alpha_i}{1-\alpha_i}}<1
            \siff
            \alpha_{\overline{r}}\frac{\sum_{i=1}^{k+1}\tfrac{\alpha_1}{1-\alpha_i}}{1+\sum_{i=1}^{k+1}\tfrac{\alpha_1}{1-\alpha_i}}=\alpha_{\overline{r}}\overline{\beta}<1.
        \end{equation}
        This proves \cref{e:191126a}.
    	Now proceed similar to \textsc{Case~I}, in view of 
		\cref{e:191126a} and \cref{e:191220:d}.
\end{proof}
The assumption $\alpha_r\overline{\alpha}<1$ is critical in the 
conclusion of the 
above theorem as we illustrate in the following example.

\begin{example}
	\label{example:190502}
	Let $\epsilon>0$, let $\delta>1$, 
	let $\alpha_1\in\ ]0,\tfrac{1}{2}(\sqrt{(\epsilon+\delta)^2+4}-(\epsilon+\delta))[$,
	let $\alpha_2=\alpha_1+\delta+\epsilon$ and let 
	\begin{equation}
	S=\begin{bmatrix}
	0&-1
	\\
	1&0
	\end{bmatrix}.
	\end{equation}
	Set $R_1=(1-\alpha_1)\Id-\alpha_1 S$,
	$R_2=(1-\alpha_2)\Id+\alpha_2 S$,
	$R_3=-\tfrac{1}{\delta}S$, and 
	\begin{equation}
	R=R_3R_2R_1.
	\end{equation}
	Then $R=R_3R_1R_2=R_1R_2R_3=R_1R_3R_2=R_2R_3R_1=R_2R_1R_3$.
	Moreover, the following hold:
	\begin{enumerate}
		\item
		\label{example:190502:i} 
		$\alpha_1\in\ ]0,1[$, $\alpha_2>1$,
		and $\alpha_1\alpha_2<1$.
		\item 
		\label{example:190502:ii}
		$R_3$ is $\alpha_3$-conically nonexpansive where 
		$\alpha_3=\tfrac{1+\delta}{2\delta}\in\ ]1/2,1]$.  
		\item 
		\label{example:190502:iii}
		$\tfrac{\alpha_1+\alpha_2-2\alpha_1\alpha_2}{1-\alpha_1\alpha_2}\alpha_3>1$.
		\item 
		\label{example:190502:iv}
		$R=\big(\tfrac{\epsilon+\delta}{\delta}\big)\Id
		+\big(\tfrac{\alpha_1+\alpha_2-2\alpha_1\alpha_2-1}{\delta}\big)S$. 
		\item 
		\label{example:190502:v}
		$\Id-R=-\tfrac{\epsilon}{\delta}\Id
		-\big(\tfrac{\alpha_1+\alpha_2-2\alpha\alpha_2-1}{\delta}\big)S$. 
		Hence, $\Id-R$ is not monotone.
		\item 
		\label{example:190502:vi}
		$R$ is \emph{not} conically nonexpansive.
	\end{enumerate}		
\end{example}	
\begin{proof}
	It is straightforward to verify that
	$R=R_3R_1R_2=R_1R_2R_3=R_1R_3R_2=R_2R_3R_1=R_2R_1R_3$.
	\ref{example:190502:i}:
	It is clear that $\alpha_1\in\ ]0,1[$, and that $\alpha_2>1$. 	
	Note that 
	$\alpha_1\alpha_2<1 \siff \alpha_1^2+(\epsilon+\delta)\alpha_1-1<0$
	$\siff$ $\alpha_1$ lies between the roots of the quadratic
	$x^2+(\epsilon+\delta)x-1$, and the conclusion follows 
	from the quadratic formula.
	\ref{example:190502:ii}:
	This  follows from 
	\cite[Proposition~4.38]{BC2017}.	
	\ref{example:190502:iii}:	
	Indeed, in view of \ref{example:190502:i} we have 
	\begin{subequations}
		\begin{align}
		&\quad\tfrac{\alpha_1+\alpha_2
			-2\alpha_1\alpha_2}{1-\alpha_1\alpha_2}\alpha_3>1
		\nonumber
		\\
		&\siff (\alpha_1+\alpha_2
		-2\alpha_1\alpha_2)\alpha_3>1-\alpha_1\alpha_2
		\\
		&
		\siff
		(\alpha_1+\alpha_2-2\alpha_1\alpha_2)(1+\delta)
		>2(1-\alpha_1\alpha_2)\delta
		\\
		&
		\siff
		(\alpha_1+\alpha_2)(1+\delta)
		-2\alpha_1\alpha_2-2\alpha_1\alpha_2\delta
		>2\delta-2\alpha_1\alpha_2\delta
		\\
		&
		\siff
		(\alpha_1+\alpha_2)(1+\delta)
		-2\alpha_1\alpha_2
		>2\delta
		\\
		&
		\siff
		(2\alpha_1+\epsilon+\delta)(1+\delta)
		-2\alpha_1(\alpha_1+\epsilon+\delta)
		>2\delta
		\\
 		&
 		\siff 
 		2\alpha_1(1+\delta-\alpha_1-\epsilon-\delta)
 		+\delta^2+\delta(1+\epsilon)
 		+\epsilon>2\delta
 		\\
		&
		\siff
		2\alpha_1(\alpha_1-1+\epsilon)<\delta^2-\delta +\epsilon\delta+\epsilon
		=\delta^2-\delta +(1+\delta )\epsilon.
		\end{align}
	\end{subequations}	
	Now, because $\alpha_1<1$, $\delta\ge 1$
	we learn that 
	$2\alpha_1(\alpha_1-1+\epsilon)<2\alpha_1\epsilon
	<(1+\delta)\epsilon <(1+\delta)\epsilon+\delta^2-\delta$,
	and the conclusion follows.
	\ref{example:190502:iv}:
	It is straightforward, by noting that 
	$S^2=-\Id$, to verify that
	$R_2R_1=R_1R_2=(1-\alpha_1-\alpha_2+\alpha_1\alpha_2)\Id+
	(\alpha_2(1-\alpha_1)-\alpha_1(1-\alpha_2))S-\alpha_1\alpha_2S^2
	=(1-\alpha_1-\alpha_2+2\alpha_1\alpha_2)\Id
	+(\alpha_2-\alpha_1)S
	$.
	Consequently,
	$R_3R_2R_1=\tfrac{1}{\delta}(-(1-\alpha_1-\alpha_2+2\alpha_1\alpha_2)S
	-(\alpha_2-\alpha_1)S^2)
	=\tfrac{1}{\delta}((\alpha_2-\alpha_1)\Id
	-(1-\alpha_1-\alpha_2+2\alpha_1\alpha_2)S)
	=\tfrac{\epsilon+\delta}{\delta}\Id
	+\tfrac{\alpha_1+\alpha_2-2\alpha_1\alpha_2-1}{\delta}S$.
	\ref{example:190502:v}:
	This is a direct consequence
	of \ref{example:190502:iv}.
	\ref{example:190502:vi}:
	Combine \ref{example:190502:v} and \cref{lem:av:coco}.
\end{proof}

\begin{theorem}[composition of cocoercive operators]
	Let $m\ge 1$ be an integer,
	set $I=\{1,\ldots,m\}$,
	let $(R_i)_{i\in I}$ be a family of operators from $\hilbert$
	to $\hilbert$,
	let $\beta_i$ be real numbers 
	in $\left]0,+\infty\right[$
	and suppose that for every $i\in I$, $R_i$
	is $\tfrac{1}{\beta_i}$-cocoercive.
	Set 
	$	R=R_m\ldots R_1
	$.
	Then there exists a nonexpansive operator $N\colon X\to X$ such that 
	\begin{equation}
	R=\beta_m\ldots \beta_1\big(\tfrac{1}{1+m}\Id+\tfrac{m}{1+m} N\big).
	\end{equation} 
\end{theorem}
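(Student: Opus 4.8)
The plan is to reduce everything to \cref{thm:m:conic} with all conic parameters equal to $\tfrac12$. First I would dispatch the degenerate case $m=1$ directly: by the definition of $\tfrac{1}{\beta_1}$-cocoercivity there is a nonexpansive $N\colon X\to X$ with $R_1=\tfrac{\beta_1}{2}(\Id+N)=\beta_1\big(\tfrac12\Id+\tfrac12 N\big)$, which is exactly the asserted formula since for $m=1$ the stated coefficients $\tfrac{1}{1+m}$ and $\tfrac{m}{1+m}$ both equal $\tfrac12$.

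Next, assume $m\ge 2$. The key (and essentially only) conceptual point is that a $\tfrac{1}{\beta_i}$-cocoercive operator is a $\beta_i$-scaled firmly nonexpansive operator: for each $i\in I$ there is a nonexpansive $N_i\colon X\to X$ with $R_i=\tfrac{\beta_i}{2}(\Id+N_i)$, so that $\tfrac{1}{\beta_i}R_i=\tfrac12\Id+\tfrac12 N_i$ is $\tfrac12$-conically nonexpansive. Hence the hypotheses of \cref{thm:m:conic} are met with $\delta_i=\beta_i$ and $\alpha_i=\tfrac12$ for every $i\in I$; since $\menge{\alpha_i}{i\in I}\subseteq\left]0,1\right[$ we are free to take $r\in I$ arbitrary, and then $\alpha_r=\tfrac12\ne 1$.

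It remains to check the quantitative hypothesis $\alpha_r\overline{\alpha}<1$ and to read off the output. Because $\tfrac{\alpha_i}{1-\alpha_i}=1$ for each $i$, the quantity $\overline{\alpha}$ of \cref{thm:m:conic} equals $\tfrac{m-1}{1+(m-1)}=\tfrac{m-1}{m}$, so $\alpha_r\overline{\alpha}=\tfrac{m-1}{2m}<1$, as required (and this does not depend on which index $r$ we excluded, since all $\alpha_i$ coincide). With $\alpha_r\ne 1$, \cref{thm:m:conic} then yields a nonexpansive operator $N\colon X\to X$ and $\alpha=\tfrac{\sum_{i=1}^{m}1}{1+\sum_{i=1}^{m}1}=\tfrac{m}{m+1}$ such that $R=\delta_m\ldots\delta_1\big((1-\alpha)\Id+\alpha N\big)=\beta_m\ldots\beta_1\big(\tfrac{1}{1+m}\Id+\tfrac{m}{1+m}N\big)$, which is the claim.

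I expect no genuine obstacle here: all the work lives inside \cref{thm:m:conic} (and, through it, \cref{thm:conic:conic}); the present statement is just the specialization $\alpha_1=\cdots=\alpha_m=\tfrac12$, plus the trivial bookkeeping for $m=1$. The only thing one must not overlook is verifying that the parameter $\alpha$ returned by \cref{thm:m:conic} simplifies exactly to $\tfrac{m}{m+1}$ and that $\delta_m\ldots\delta_1=\beta_m\ldots\beta_1$ under the substitution $\delta_i=\beta_i$.
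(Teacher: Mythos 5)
Your proof is correct and follows the paper's own route: the paper likewise proves this by applying \cref{thm:m:conic} with $(\alpha_i,\delta_i)$ replaced by $(1/2,\beta_i)$, and your verification that $\overline{\alpha}=\tfrac{m-1}{m}$, $\alpha_r\overline{\alpha}=\tfrac{m-1}{2m}<1$, and $\alpha=\tfrac{m}{m+1}$ is exactly the bookkeeping that argument requires. Your separate treatment of $m=1$ (which \cref{thm:m:conic} does not cover, since it assumes $m\ge 2$) is a small but welcome addition that the paper leaves implicit.
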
	
\begin{proof}
	Apply
	\cref{thm:m:conic} with $(\alpha_i,\delta_i)$
	replaced by $(1/2,\beta_i)$,
	$i\in \{1,\ldots,m\}$.
\end{proof}	

\section{Application to the Douglas--Rachford algorithm}
\label{sec:main:1}

\begin{theorem}[averagedness of the
	Douglas--Rachford operator]
	\label{thm:DR:relax}
	Let $\mu>\omega\ge 0$,
	and let $\gamma\in \left]0, 
	{(\mu-\omega)}/{(2\mu\omega})\right[$.
	Suppose that one of the following holds:
	\begin{enumerate}
		\item 
		\label{asm:order:sq:1}
		$A$ is maximally $(-\omega) $-monotone and 
		$B$ is maximally $\mu $-monotone. 
		\item 
		\label{asm:order:sq:2}
		$A$ is maximally $\mu $-monotone and 
		$B$ is maximally $(-\omega) $-monotone. 
	\end{enumerate}
	Set 
	\begin{equation}
	T=\tfrac{1}{2}(\Id+ R_{\gamma B}R_{\gamma A}),
	\quad
	\text{and}
	\quad 
	\alpha=
	\tfrac{\mu-\omega}{2(\mu-\omega-\gamma\mu\omega)}.
	\end{equation}
	Then $\alpha\in \left]0,1\right[$ 
	and $T$ 
	is $\alpha$-averaged.
\end{theorem}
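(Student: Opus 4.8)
The plan is to identify $R_{\gamma B}R_{\gamma A}$ as a conically nonexpansive operator by applying \cref{thm:conic:conic} to the two reflected resolvents, and then to read off the averagedness constant of $T$. First I would record the monotonicity of the scaled operators: in either hypothesis \ref{asm:order:sq:1} or \ref{asm:order:sq:2}, one of $\gamma A$, $\gamma B$ is maximally $\gamma\mu$-monotone and the other is maximally $(-\gamma\omega)$-monotone. Note $\gamma\omega<1$ (trivially if $\omega=0$, and otherwise because $\gamma<\tfrac{\mu-\omega}{2\mu\omega}<\tfrac1\omega$, using $\tfrac{\mu-\omega}{2\mu}<1$), so both parameters $-\gamma\omega$ and $\gamma\mu$ exceed $-1$; hence by \cref{e:sv:fdom} the resolvents $J_{\gamma A}$, $J_{\gamma B}$ are single-valued with full domain, and $T$ is single-valued and everywhere defined. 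Then \cref{prop:J:R:.5}\ref{prop:J:R:.5:ii}, applied to $\gamma A$ and to $\gamma B$, yields that $-R_{\gamma A}$ and $-R_{\gamma B}$ are conically nonexpansive with parameters (in an order that depends on which of \ref{asm:order:sq:1}, \ref{asm:order:sq:2} holds) equal to $\alpha_1:=\tfrac{1}{1-\gamma\omega}$ and $\alpha_2:=\tfrac{1}{1+\gamma\mu}$.

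Next I would turn the composition $R_{\gamma B}R_{\gamma A}$ into a composition of conically nonexpansive operators using \cref{lem:comp:neg}: with $\widetilde R_1=-R_{\gamma A}$ and $\widetilde R_2=R_{\gamma B}\circ(-\Id)$ one has $R_{\gamma B}R_{\gamma A}=\widetilde R_2\widetilde R_1$ by \cref{lem:comp:neg}\ref{lem:comp:neg:i}, and \cref{lem:comp:neg}\ref{lem:comp:neg:ii} (with $\delta_1=\delta_2=1$) upgrades the previous step to: $\widetilde R_1$ is $\alpha_1$-conically nonexpansive and $\widetilde R_2$ is $\alpha_2$-conically nonexpansive, again up to swapping $\alpha_1\leftrightarrow\alpha_2$. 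I then check the standing assumption of \cref{thm:conic:conic}: if $\omega>0$, then $\alpha_1\alpha_2=\tfrac{1}{(1-\gamma\omega)(1+\gamma\mu)}<1$ is equivalent to $(1-\gamma\omega)(1+\gamma\mu)>1$, i.e.\ to $\gamma(\mu-\omega-\gamma\mu\omega)>0$, i.e.\ to $\gamma<\tfrac{\mu-\omega}{\mu\omega}$, which holds since $\gamma<\tfrac{\mu-\omega}{2\mu\omega}$; if $\omega=0$, then $\alpha_1=1$ and $\max\{\alpha_1,\alpha_2\}=1$. Hence \cref{thm:conic:conic} applies (with $\delta_1=\delta_2=1$) and provides a nonexpansive $N_0\colon X\to X$ with $R_{\gamma B}R_{\gamma A}=(1-\overline\alpha)\Id+\overline\alpha N_0$, where $\overline\alpha=\tfrac{\alpha_1+\alpha_2-2\alpha_1\alpha_2}{1-\alpha_1\alpha_2}$ (a formula symmetric in $\alpha_1,\alpha_2$, so hypotheses \ref{asm:order:sq:1} and \ref{asm:order:sq:2} give the same $\overline\alpha$; and when $\omega=0$ both this ratio and the degenerate branch of \cref{thm:conic:conic} give $\overline\alpha=1$).

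It then remains to simplify $\overline\alpha$ and conclude. Substituting $\alpha_1=\tfrac{1}{1-\gamma\omega}$ and $\alpha_2=\tfrac{1}{1+\gamma\mu}$ and clearing the common factor $(1-\gamma\omega)(1+\gamma\mu)$, one gets $\alpha_1+\alpha_2-2\alpha_1\alpha_2=\tfrac{\gamma(\mu-\omega)}{(1-\gamma\omega)(1+\gamma\mu)}$ and $1-\alpha_1\alpha_2=\tfrac{\gamma(\mu-\omega-\gamma\mu\omega)}{(1-\gamma\omega)(1+\gamma\mu)}$, hence $\overline\alpha=\tfrac{\mu-\omega}{\mu-\omega-\gamma\mu\omega}$. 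Therefore $T=\tfrac12\Id+\tfrac12 R_{\gamma B}R_{\gamma A}=\big(1-\tfrac{\overline\alpha}{2}\big)\Id+\tfrac{\overline\alpha}{2}N_0$, which exhibits $T$ as $\tfrac{\overline\alpha}{2}$-averaged as soon as $\tfrac{\overline\alpha}{2}\in\left]0,1\right[$, and $\tfrac{\overline\alpha}{2}=\tfrac{\mu-\omega}{2(\mu-\omega-\gamma\mu\omega)}=\alpha$. Positivity of $\alpha$ is immediate from $\mu>\omega$ and $\mu-\omega-\gamma\mu\omega>0$, while $\alpha<1$ is equivalent to $2\gamma\mu\omega<\mu-\omega$, i.e.\ to $\gamma<\tfrac{\mu-\omega}{2\mu\omega}$, the standing hypothesis. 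The work is bookkeeping rather than conceptual: the points to get right are the two conic-nonexpansiveness constants of $-R_{\gamma A}$ and $-R_{\gamma B}$ in each of the two hypotheses, the verification of the $\alpha_1\alpha_2<1$ (or $\max=1$) condition of \cref{thm:conic:conic} from the bound on $\gamma$, and the algebraic collapse of $\overline\alpha$ to $\tfrac{\mu-\omega}{\mu-\omega-\gamma\mu\omega}$; the degenerate case $\omega=0$, where one reflected resolvent is merely nonexpansive, should be flagged so that the $\max\{\alpha_1,\alpha_2\}=1$ branch of \cref{thm:conic:conic} is invoked there.
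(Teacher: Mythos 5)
Your proof is correct and follows essentially the same route as the paper: both identify $-R_{\gamma A}$ and $-R_{\gamma B}$ as conically nonexpansive with parameters $\tfrac{1}{1-\gamma\omega}$ and $\tfrac{1}{1+\gamma\mu}$, invoke \cref{thm:conic:conic} to conclude that $R_{\gamma B}R_{\gamma A}$ is $\tfrac{\mu-\omega}{\mu-\omega-\gamma\mu\omega}$-conically nonexpansive, and then halve to obtain the $\alpha$-averagedness together with $\alpha\in\left]0,1\right[$ from the bound on $\gamma$. The only cosmetic difference is that you perform the sign reduction by hand via \cref{lem:comp:neg} with $\delta_i=1$, whereas the paper applies \cref{thm:conic:conic} directly with $\delta_1=\delta_2=-1$ (whose internal Case II carries out exactly that reduction) and finishes via \cref{prop:why:conic}.
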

\begin{proof}
	Suppose that \cref{asm:order:sq:1} holds.
	Note that $\gamma A$ is $-\gamma\omega$-monotone, and 
	\begin{equation}
	-\gamma\omega >-\tfrac{\mu-\omega}{2\mu}
	\ge- \tfrac{\mu}{2\mu}>- 1.
	\end{equation} 
	Using \cref{e:sv:fdom}
	and \cref{prop:static:zeros}
	we learn that  $J_{\gamma A}$ and, in turn, 
	$T$ are single valued and 
	$\dom J_{\gamma A}=\dom T=X$.
	It follows from \cite[Proposition~4.3~and~Table~1]{BMW19}
	that $-R_{\gamma A}$ is $\tfrac{1}{1+\gamma \mu}$-conically nonexpansive
	and
	$-R_{\gamma B}$ is $\tfrac{1}{1-\gamma \omega}$-conically nonexpansive.
	It follows from \cref{thm:conic:conic}, applied with 
	$(\alpha_1,\beta_1,\delta_1,\alpha_2,\beta_2,\delta_2)$ 
	replaced by $(1-\tfrac{1}{1+\gamma\mu},\tfrac{1}{1+\gamma\mu},-1,
	1-\tfrac{1}{1-\gamma\omega},\tfrac{1}{1-\gamma\omega},-1)$,
	that
	$R_{\gamma B}
	R_{\gamma A}$ is $\tfrac{\mu-\omega}{\mu-\omega-\gamma\mu\omega}$-conically nonexpansive.
	Therefore, there exists a 
	nonexpansive mapping $N\colon X\to X$ such that
	\begin{equation}
	R_{\gamma B}R_{\gamma A}=(1-\delta)\Id +\delta N, 
	\quad \delta=\tfrac{\mu-\omega}{
		\mu-\omega-\gamma\mu\omega}.
	\end{equation}
	The conclusion now follows 
	by applying \cref{prop:why:conic} with
	$(\beta,N) $
	replaced by $(\tfrac{\alpha}{\delta}, R_{\gamma B}R_{\gamma A})$.
	Finally notice that 
	$\gamma <\tfrac{\mu-\omega}{2\mu\omega}$,
	which implies  that $0<\mu-\omega
	< 2(\mu-\omega-\gamma\mu\omega)$.
	Therefore, 
	\begin{equation}
	\alpha
	=\tfrac{\mu-\omega}{2(\mu
		-\omega-\gamma \mu\omega) }
	\in \left]0,1\right[.
	\end{equation} 
	The proof of \cref{asm:order:sq:2}
	follows similarly. 
\end{proof}	
\begin{corollary}\cite[Theorem~4.5(ii)]{DP18}
	\label{cor:overn:con}
	Let $\mu>\omega\ge 0$,
	and let $\gamma\in \left]0, 
	{(\mu-\omega)}/{(2\mu\omega)}\right [$.
	Suppose that one of the following holds:
	\begin{enumerate}
		\item 
		\label{asm:order:sq:1:p}
		$A$ is maximally $(-\omega) $-monotone and 
		$B$ is maximally $\mu $-monotone.
		\item 
		\label{asm:order:sq:2:p}
		$A$ is maximally $\mu $-monotone and 
		$B$ is maximally $(-\omega) $-monotone. 
	\end{enumerate}	
	Set 
	$
	T=\tfrac{1}{2}(\Id+ R_{\gamma B}R_{\gamma A})
	$ and let $x_0\in X$.
	Then $(\exists\ \overline{x}\in\fix T=\fix R_{\gamma B}R_{\gamma A})$ such that
	$T^n x_0\weakly \overline{x}$.
\end{corollary}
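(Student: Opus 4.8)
The plan is to read off the claim as a short consequence of \cref{thm:DR:relax} together with the standard weak-convergence theorem for averaged operators; all the substantive work — the averagedness of $T$ with an explicit, admissible parameter — is already done in \cref{thm:DR:relax}, so what remains is fixed-point bookkeeping and the invocation of a known iteration result.

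First I would apply \cref{thm:DR:relax} under whichever of the two monotonicity hypotheses is assumed. This yields $\alpha=\tfrac{\mu-\omega}{2(\mu-\omega-\gamma\mu\omega)}\in\left]0,1\right[$ and shows that $T=\tfrac12(\Id+R_{\gamma B}R_{\gamma A})$ is single-valued with $\dom T=X$ and $\alpha$-averaged. (The single-valuedness and full domain also come for free from \cref{e:sv:fdom} and \cref{prop:static:zeros}, since $\gamma\omega<\tfrac{\mu-\omega}{2\mu}<1$ forces the relevant $\rho$-monotonicity constants to exceed $-1$.)

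Next I would record the identity $\Fix T=\Fix R_{\gamma B}R_{\gamma A}$, which is immediate: $Tx=x$ holds iff $\tfrac12(x+R_{\gamma B}R_{\gamma A}x)=x$, i.e.\ iff $R_{\gamma B}R_{\gamma A}x=x$. Then I would check that this common fixed-point set is nonempty. Under either hypothesis the sum $A+B$ is $(\mu-\omega)$-monotone with $\mu-\omega>0$, hence strongly monotone, so $\zer(A+B)\neq\fady$ (this is the one place where strong monotonicity of the \emph{sum}, not of the individual operators, is used, and — as in \cref{prop:static:zeros} — it is the standing nonemptiness hypothesis of the Douglas--Rachford framework of \cite{DP18}). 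Since $J_{\gamma A}$ and $J_{\gamma B}$ are single-valued with full domain, \cref{prop:static:zeros} applies and gives $\zer(A+B)=J_{\gamma A}(\Fix R_{\gamma B}R_{\gamma A})$; as the left-hand side is nonempty, so is $\Fix R_{\gamma B}R_{\gamma A}=\Fix T$.

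Finally I would invoke \cite[Theorem~5.14]{BC2017}: for an $\alpha$-averaged operator $T$ with $\alpha\in\left]0,1\right[$ and $\Fix T\neq\fady$, every orbit $(T^nx_0)_{n\in\NN}$ converges weakly to a point of $\Fix T$. Applied here this produces $\overline{x}\in\Fix T=\Fix R_{\gamma B}R_{\gamma A}$ with $T^nx_0\weakly\overline{x}$, which is exactly the assertion. I do not expect any genuine obstacle: the corollary is essentially a repackaging of \cref{thm:DR:relax}, and the only point requiring a word of care is the nonemptiness of $\Fix T$, which is handled via \cref{prop:static:zeros}.
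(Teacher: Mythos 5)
Your route is the same as the paper's: its entire proof of this corollary reads ``combine \cref{thm:DR:relax} and \cite[Theorem~5.15]{BC2017}'', which is exactly your first and last steps (the identity $\Fix T=\Fix R_{\gamma B}R_{\gamma A}$ being the same trivial observation). The only genuinely new ingredient in your write-up is the attempt to \emph{prove} $\Fix T\neq\fady$, and that step, as written, does not go through: strong monotonicity of $A+B$ yields $\zer(A+B)\neq\fady$ only when $A+B$ is \emph{maximally} monotone, and maximality of the sum is not automatic here (one of the operators is not even monotone, and no constraint qualification is assumed). If $\dom A\cap\dom B=\fady$, then $A+B$ is the empty operator, vacuously $(\mu-\omega)$-monotone, with $\zer(A+B)=\fady$, and $\Fix T$ can indeed be empty: in $X=\RR$, $A=N_{\{0\}}-\omega\Id$ and $B=\mu\Id+N_{\{1\}}$ satisfy the hypotheses, yet $J_{\gamma A}\equiv 0$ and $J_{\gamma B}\equiv 1$ give $T=\Id+1$, whose orbits diverge. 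So nonemptiness of $\zer(A+B)$ (equivalently of $\Fix T$) really has to be carried as a hypothesis, as in the cited \cite[Theorem~4.5]{DP18} and in \cref{prop:static:zeros} -- which you half-acknowledge in your parenthetical remark. To be fair, the paper's own one-line proof is silent on this point, and its \cref{thm:convergence:DR} invokes \cite[Proposition~23.35]{BC2017} with the same unaddressed maximality issue, so your proposal is no less complete than the source; just do not present nonemptiness of $\zer(A+B)$ as a consequence of strong monotonicity of the sum alone.
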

\begin{proof}
	Combine \cref{thm:DR:relax}
	and \cite[Theorem~5.15]{BC2017}.
\end{proof}

\begin{remark}
	In view of \cref{e:sv:fdom}, one might 
	think that the scaling factor $\gamma$ 
	is required \emph{only} to guarantee the single-valuedness
	and full domain of $T$. However, it is actually critical 
	to guarantee convergence as well, as we illustrate in 
	\cref{ex:scaling:factor} below.
\end{remark}
\begin{example}
	\label{ex:scaling:factor}
	Let $\mu>\omega\ge 0$,
	let $U$ be a closed linear subspace of $X$,
	suppose that\footnote{Let $C$ be a nonempty, 
		closed convex subset of $X$. Here and elsewhere, we shall
		use $N_C$ to denote the \emph{normal cone operator} associated
		with $C$, defined by 
		$N_C(x)=\menge{u\in X}{\sup\scal{C-x}{u}\le 0}$, if $x\in C$;
		and $N_C(x)=\fady$, otherwise.} 
	\begin{equation}
	A=N_U+\mu\Id,\quad
	B=-\omega \Id.
	\end{equation}
	Then $A$ is $\mu $-monotone, 
	$B$ is $-\omega $-monotone and 
	$(\forall \gamma\in [{1}/{(2\omega)},{1}/{\omega}[)$
	$J_{\gamma B}$ is single-valued.
	Furthermore, we have 
	\begin{equation}
	\label{eq:ex:scaling}
	T=\tfrac{1}{2}
	(\Id+R_{\gamma B}R_{\gamma A})
	=\tfrac{1+\gamma \omega}{(1-\gamma \omega)(1+\gamma\mu)}P_U
	-\tfrac{\gamma \omega}{1-\gamma \omega}\Id,
	\end{equation}
	and $(\forall x_0\in U^\perp)$ $(T^n x_0)_\nnn$ 
	does not converge.
\end{example}
\begin{proof}
	Indeed, one can verify that 
	\begin{equation}
	J_{\gamma A}=\tfrac{1}{1-\gamma\omega}\Id,
	\quad
	J_{\gamma B}=\tfrac{1}{1+\gamma\mu}P_U.
	\end{equation}
	Consequently,
	\begin{equation}
	R_{\gamma A}=\tfrac{1+\gamma\omega}{1-\gamma\omega}\Id,
	\quad
	R_{\gamma B}=\tfrac{2}{1+\gamma\mu}P_U-\Id,
	\end{equation}
	and \cref{eq:ex:scaling} follows.
	Therefore,
	\begin{equation}
	T_{ |U^\perp}=
	-\tfrac{\gamma \omega}{1-\gamma \omega}\Id,\quad\text{and}\quad 
	-\tfrac{\gamma \omega}{1-\gamma \omega}\in \left]-\infty,-1\right].
	\end{equation}
	Hence, $(\forall x_0\in U^\perp)$ $(T^n x_0)_\nnn$ 
	does not converge.
\end{proof}

Before we proceed to the convergence analysis
we recall that if $T$ is averaged and $\Fix T\neq \fady$
then $(\forall x\in X)$ we have 
(see, e.g., \cite[Theorem~3.7]{Reich81})
\begin{equation}
\label{e:T:asym:reg}
T^n x-T^{n+1}x\to 0.
\end{equation}
We conclude  this section by 
proving the strong convergence of the shadow
sequence of the Douglas--Rachford algorithm.

\begin{theorem}[convergence analysis 
	of the Douglas--Rachford algorithm].
	\label{thm:convergence:DR}
	Let $\mu>\omega\ge 0$,
	and let $\gamma\in \left]0, 
	{(\mu-\omega)}/{(2\mu\omega)}\right [$.
	Suppose that one of the following holds:
	\begin{enumerate}
		\item 
		\label{thm:convergence:DR:i}
		$A$ is maximally $\mu $-monotone and 
		$B$ is maximally $(-\omega) $-monotone. 
		\item 
		\label{thm:convergence:DR:ii}
		$A$ is maximally $(-\omega )$-monotone and 
		$B$ is maximally $\mu $-monotone. 
	\end{enumerate}	
	Set 
	\begin{equation}
	T=\tfrac{1}{2}(\Id+ R_{\gamma B}R_{\gamma A}),
	\end{equation} 
	and let $x_0\in X$.
	Then  $\zer(A+B)\neq \fady$. 
	Moreover, 
	there exists $\overline{x}\in \fix T
	=\Fix R_{\gamma B}R_{\gamma A}$,
	$\zer(A+B)=\{J_{\gamma A}\overline{x}\}
	=\{J_{\gamma B} R_{\gamma A} \overline{x}\}$,
	$T^nx_0\weakly \overline{x}$,
	$J_{\gamma A}T^nx_0\to J_{\gamma A}\overline{x}$,
	and 
	$J_{\gamma B} R_{\gamma A}T^nx_0\to 
	J_{\gamma B} R_{\gamma A}\overline{x}$.
\end{theorem}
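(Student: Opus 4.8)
The plan is to recover weak convergence and nonemptiness of $\zer(A+B)$ from \cref{cor:overn:con}, and then upgrade the two shadow sequences to \emph{strong} convergence by playing the $\mu$-monotonicity of one operator against the $(-\omega)$-monotonicity of the other, using $\mu>\omega$ in an essential way. First note that \cref{e:sv:fdom} applies: $\gamma\mu>-1$ trivially, and $\gamma<(\mu-\omega)/(2\mu\omega)$ forces $\gamma\omega<\tfrac12$, hence $-\gamma\omega>-1$; so $J_{\gamma A},J_{\gamma B},T$ are single-valued with full domain. Next, \cref{cor:overn:con} applies under either \ref{thm:convergence:DR:i} or \ref{thm:convergence:DR:ii} (its two cases coincide with ours after exchanging the labels), yielding some $\overline{x}\in\Fix T=\Fix R_{\gamma B}R_{\gamma A}$ with $T^nx_0\weakly\overline{x}$. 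Setting $z:=J_{\gamma A}\overline{x}$, we get $\overline{x}-z\in\gamma Az$; since $R_{\gamma A}\overline{x}=2z-\overline{x}$, the identity $R_{\gamma B}R_{\gamma A}\overline{x}=\overline{x}$ forces $z=J_{\gamma B}(2z-\overline{x})=J_{\gamma B}R_{\gamma A}\overline{x}$, hence $z-\overline{x}\in\gamma Bz$; adding gives $0\in Az+Bz$, so $\zer(A+B)\ne\fady$. Because $A+B$ is $(\mu-\omega)$-monotone with $\mu-\omega>0$, it has at most one zero, so $\zer(A+B)=\{z\}=\{J_{\gamma A}\overline{x}\}=\{J_{\gamma B}R_{\gamma A}\overline{x}\}$ (alternatively \cref{prop:static:zeros} now gives $\zer(A+B)=J_{\gamma A}(\Fix T)$).

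For the shadows, set $x_n:=T^nx_0$, $a_n:=J_{\gamma A}x_n$, $b_n:=J_{\gamma B}R_{\gamma A}x_n$. From $T=\tfrac12(\Id+R_{\gamma B}R_{\gamma A})$ and $R_{\gamma C}=2J_{\gamma C}-\Id$ one has $T=\Id-J_{\gamma A}+J_{\gamma B}R_{\gamma A}$, so $x_n-x_{n+1}=a_n-b_n=:\varepsilon_n$. By \cref{thm:DR:relax}, $T$ is averaged, and $\Fix T\ne\fady$, so \cref{e:T:asym:reg} gives $\varepsilon_n\to0$. Also $(x_n)$ is bounded (it converges weakly), and $J_{\gamma A}$ is Lipschitz by \cref{prop:J:R:.5}, so $(a_n)$ is bounded.

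Now the key estimate; I treat \ref{thm:convergence:DR:i} (case \ref{thm:convergence:DR:ii} is the mirror image, swapping $A\leftrightarrow B$, so that $b_n\to z$ is obtained first). Since $(a_n,\gamma^{-1}(x_n-a_n))$, $(z,\gamma^{-1}(\overline{x}-z))\in\gra A$, the $\mu$-monotonicity of $A$ yields $\scal{a_n-z}{x_n-\overline{x}}\ge(1+\gamma\mu)\norm{a_n-z}^2$; since $(b_n,\gamma^{-1}(2a_n-x_n-b_n))$, $(z,\gamma^{-1}(z-\overline{x}))\in\gra B$, the $(-\omega)$-monotonicity of $B$ yields $2\scal{b_n-z}{a_n-z}-\scal{b_n-z}{x_n-\overline{x}}\ge(1-\gamma\omega)\norm{b_n-z}^2$. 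Substituting $b_n=a_n-\varepsilon_n$, adding the two inequalities, and rearranging so the $\scal{\cdot}{x_n-\overline{x}}$ terms involving $a_n-z$ cancel, I expect to arrive at
\[
\gamma(\mu-\omega)\norm{a_n-z}^2\le\scal{\varepsilon_n}{x_n-\overline{x}}-2\gamma\omega\scal{\varepsilon_n}{a_n-z}-(1-\gamma\omega)\norm{\varepsilon_n}^2 .
\]
The right-hand side tends to $0$ since $\varepsilon_n\to0$ while $(x_n-\overline{x})$, $(a_n-z)$ are bounded; as $\mu-\omega>0$ this forces $a_n\to z$, hence $b_n=a_n-\varepsilon_n\to z$. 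Thus $J_{\gamma A}T^nx_0\to z=J_{\gamma A}\overline{x}$ and $J_{\gamma B}R_{\gamma A}T^nx_0\to z=J_{\gamma B}R_{\gamma A}\overline{x}$, which together with the first paragraph completes the proof.

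The hard part is purely the bookkeeping in this last step: one must orchestrate the two $\rho$-monotonicity inequalities — one with positive modulus, one with negative modulus — so that their quadratic parts fuse into the \emph{single} positive coefficient $\gamma(\mu-\omega)$ in front of $\norm{a_n-z}^2$, while every leftover term carries a factor $\varepsilon_n$ and is therefore absorbed into an $o(1)$ sequence using only the boundedness of the orbit and of its resolvent image. The strict inequality $\mu>\omega$ is exactly what makes that surviving coefficient positive, and hence what turns asymptotic regularity into strong convergence of the shadows.
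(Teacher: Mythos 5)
Your treatment of the strong convergence of the shadow sequences is correct, and it takes a genuinely different route from the paper. The paper funnels this step through the algebraic identity of \cref{lem:T:gra:AB} (with $\lambda=\tfrac12$) together with the Young-type estimate \cref{lem:Young}: it first shows $\norm{a_n-z}^2-\norm{b_n-z}^2\to0$ from asymptotic regularity and boundedness, then extracts $\norm{a_n-z}^2-\tfrac{\omega}{\mu}\norm{b_n-z}^2\to0$ from the identity, and subtracts, using $\tfrac{\omega}{\mu}<1$. You instead work directly with the graphs of $\gamma A$ and $\gamma B$ at the Douglas--Rachford points; I carried out the bookkeeping you ``expect'' and it does close: adding your two monotonicity inequalities after substituting $b_n=a_n-\varepsilon_n$ gives exactly
\begin{equation*}
\gamma(\mu-\omega)\norm{a_n-z}^2\le\scal{\varepsilon_n}{x_n-\overline{x}}-2\gamma\omega\scal{\varepsilon_n}{a_n-z}-(1-\gamma\omega)\norm{\varepsilon_n}^2,
\end{equation*}
and the mirror computation in case (ii) bounds $\gamma(\mu-\omega)\norm{b_n-z}^2$ in the same way, so $\varepsilon_n\to0$ plus boundedness finishes the argument. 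This is more elementary than the paper's proof (no appeal to \cref{lem:T:gra:AB} or \cref{lem:Young}) and makes the role of $\mu>\omega$ completely transparent; the paper's version buys a reusable identity that it also exploits elsewhere.

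The one weak spot is the order in which you obtain $\zer(A+B)\neq\fady$. You read the existence of $\overline{x}\in\Fix T$ off \cref{cor:overn:con} and only then exhibit the zero $z=J_{\gamma A}\overline{x}$; but \cref{cor:overn:con} is proved via the Krasnosel'skii--Mann theorem \cite[Theorem~5.15]{BC2017}, whose hypothesis is precisely $\Fix T\neq\fady$, so unwinding that citation makes your existence claim circular. The paper proceeds in the opposite order: it first deduces that $\zer(A+B)$ is a singleton from the $(\mu-\omega)$-strong monotonicity of the sum (\cite[Proposition~23.35]{BC2017}), transfers this to $\Fix T$ via \cref{prop:static:zeros}, and only then invokes weak convergence. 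Your verification that $z=J_{\gamma A}\overline{x}=J_{\gamma B}R_{\gamma A}\overline{x}$ is a zero, and the uniqueness argument from strong monotonicity (which indeed needs no maximality), are fine once a fixed point --- equivalently a zero of $A+B$ --- is known to exist; you should therefore either establish that existence independently, as the paper does, or make explicit that you are taking \cref{cor:overn:con} at face value as stated.
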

\begin{proof}
	Suppose that \cref{asm:order:sq:1} holds.
	Since $A+B$ is $(\mu-\omega)$-monotone,
	and $\mu-\omega>0$, we conclude from
	\cite[Proposition~23.35]{BC2017}
	that $\zer(A+B)$ is a singleton.
	Combining with \cref{prop:static:zeros}
	with $(A,B)$ replaced by $(\gamma A,\gamma B)$
	yields $\zer(A+B)=\zer(\gamma A+\gamma B)
	=\{J_{\gamma A}\overline{x}\}
	=\{J_{\gamma B} R_{\gamma A} \overline{x}\}$.
	The claim that $T^nx_0\weakly \overline{x}$
	follows from \cref{cor:overn:con}.
	It remains to show that 
	$J_{\gamma A}T^nx_0\to J_{\gamma A}\overline{x}$
	and 
	$J_{\gamma B} R_{\gamma A}T^nx_0\to 
	J_{\gamma B} R_{\gamma A}\overline{x}$.
	To this end, note that $(T^nx_0)_\nnn$ is bounded,
	consequently, since $J_{\gamma A}$ 
	and $J_{\gamma B}R_{\gamma A}$ are Lipschitz 
	continuous (see 
	\cref{prop:J:R:.5}\ref{prop:J:R:.5:i}\&\ref{prop:J:R:.5:ii}), 
	we learn that 
	\begin{equation}
	\label{eq:bdd:sq}
	\text{$(J_{\gamma A}T^nx_0)_\nnn$
		and $(J_{\gamma B}R_{\gamma A}T^nx_0)_\nnn$ are bounded.}
	\end{equation}
	On the one hand, in  view of \cref{e:T:asym:reg} we have
	\begin{equation}
	\label{eq:aasym:reg}
	(\Id-T)T^n x_0=T^n x_0-T^{n+1}x_0
	=J_{\gamma A}T^nx_0-J_{\gamma B}R_{\gamma A}T^nx_0\to 0.
	\end{equation}
	Combining \cref{eq:bdd:sq} and \cref{eq:aasym:reg} yields
	\begin{subequations}
		\label{e:s:con:i}
		\begin{align}
		&\qquad\norm{J_{\gamma A}T^nx_0-J_{\gamma A}\overline{x}}^2
		-\norm{J_{\gamma B}R_{\gamma A}T^nx_0
			-J_{\gamma B}R_{\gamma A}\overline{x}}^2
		\\
		&=\scal{J_{\gamma A}T^nx_0
			-J_{\gamma B}R_{\gamma A}T^nx_0}{J_{\gamma A}T^nx_0
			+J_{\gamma B}R_{\gamma A}T^nx_0-J_{\gamma A}\overline{x}
			-J_{\gamma B}R_{\gamma A}\overline{x}}
		\\
		&=\scal{T^nx_0-T^{n+1}x_0}{J_{\gamma A}T^nx_0
			+J_{\gamma B}R_{\gamma A}T^nx_0-J_{\gamma A}\overline{x}
			-J_{\gamma B}R_{\gamma A}\overline{x}}\to 0.
		\end{align}
	\end{subequations}
	On the other hand, combining 
	\cref{lem:T:gra:AB},
	applied  with $(R_1,R_2,R_\lambda,\lambda)$
	replaced by $(R_{\gamma A},R_{\gamma B},T,1/2)$
	and $(x,y) $ replaced by $(T^nx_0, \overline{x})$,
	in view of \cref{e:T:asym:reg}
	yields
	\begin{subequations}
		\begin{align}
		0\leftarrow &\scal{T^{n+1}x_0-\overline x}{T^n x_0-T^{n+1}x_0}
		\\
		&\ge
		\gamma \mu
		\big(\norm{J_{\gamma A}T^n x_0-J_{\gamma A}\overline{x}}^2
		-\tfrac{\omega}{\mu}
		\norm{J_{\gamma B}R_{\gamma A}T^n x_0-
			J_{\gamma B}R_{\gamma A}\overline{x}}^2\big)
		\\
		&\ge
		- \tfrac{\gamma\mu\omega}{\mu-\omega}
		\norm{T^n x_0-T^{n+1}x_0}^2\to 0.
		\end{align}
	\end{subequations}
	Therefore,
	\begin{equation}
	\label{e:s:con:ii}
	\norm{J_{\gamma A}T^n x_0-J_{\gamma A}\overline{x}}^2
	-\tfrac{\omega}{\mu}\norm{J_{\gamma B}
		R_{\gamma A}T^n x_0-J_{\gamma B}R_{\gamma A}
		\overline{x}}^2\to 0.
	\end{equation}
	Combining \cref{e:s:con:i} and \cref{e:s:con:ii} 
	and noting that 
	$\tfrac{\omega}{\mu}<1$ yields
	$ \norm{J_{\gamma A}T^n x_0-J_{\gamma A}\overline{x}}^2\to 0$
	and 
	$\norm{J_{\gamma B}R_{\gamma A}T^n x_0
		-J_{\gamma B}R_{\gamma A}\overline{x}}^2\to 0$,
	which proves \cref{thm:convergence:DR:i}.
	The proof of \cref{thm:convergence:DR:ii} proceeds similarly.
\end{proof}

\begin{remark}(relaxed Douglas--Rachford algorithm)
	A careful look at the proofs of \cref{thm:DR:relax},
	and \cref{thm:convergence:DR} reveals that 
	analogous conclusions can be drawn for the relaxed 
	Douglas--Rachford operator defined by 
	$T_\lambda=(1-\lambda)\Id+\lambda R_{\gamma B}R_{\gamma A} $,
	$\lambda\in \left]0,1\right[$.
	In this case, we choose  $\gamma\in \left]0, 
	{((1-\lambda)(\mu-\omega))}/{(\mu\omega)}\right [$.
	One can verify that the corresponding averagedness constant 
	is   
	$\alpha
	=\tfrac{\lambda(\mu-\omega)}{\mu
		-\omega-\gamma \mu\omega }
	\in \left]0,1\right[$. 
\end{remark}	

\section{Application to the forward-backward algorithm}
\label{sec:main:2}
Throughout this section we assume that 
\begin{empheq}[box=\mybluebox]{equation*}
\text{$A\colon X\to X $,\ 
	$B\colon X\rras X$,\
	$\mu\ge 0$, $\omega\ge 0$,
	and $\beta>0$}.
\end{empheq} 
In the rest of this section, we prove that 
the forward-backward operator 
is averaged, hence its iterates 
form a weakly convergent sequence,
in each of the following situations:
\begin{itemize}
	\item
	$A$ is maximally $\mu$-monotone,
	$A-\mu \Id $ is $\tfrac{1}{\beta}$-cocoercive,
	$B$ is maximally $(-\omega)$-monotone,
	and $\mu\ge \omega$. 
	\item
	$A$ is maximally $(-\omega)$-monotone,
	$A+\omega\Id$ is $\tfrac{1}{\beta}$-cocoercive, 
	$B$ is maximally $\mu$-monotone,
	and $\mu\ge \omega$. 
	\item
	$A$ is $\beta$-Lipschitz continuous,
	$B$ is maximally $\mu$-monotone,
	and $\mu \ge \beta$.
\end{itemize}
That is, we do not require $A$ and $B$ to be monotone. Instead, it is enough that the sum $A+B$ is monotone, to have an averaged forward-backward map. In addition, we show that the forward-backward map is contractive if the sum $A+B$ is strongly monotone and we prove tightness of our contraction factor.

\begin{theorem}[case I: $A$ is $\mu$-monotone]
	\label{thm:FB:av:1}
	Let $\mu\ge \omega\ge 0$, and
	let $\beta>0$.
	Suppose that 
	$A$ is maximally $\mu$-monotone,
	$A-\mu \Id$ is $\tfrac{1}{\beta}$-cocoercive,
	and 
	$B$ is maximally $(-\omega)$-monotone.
	Let
	$\gamma\in \left ]0, 2/(\beta+2\mu)\right[$.
	Set $T=J_{\gamma B}(\Id-\gamma A)$, 
	set $\nu={\gamma \beta }/{(2(1-\gamma \mu))}$,
	set $\delta=(1-\gamma\mu)/(1-\gamma \omega)$,
	and let $x_0\in X$.
	Then $ \delta \in \left ]0,1\right]$ and $\nu \in \left ]0,1\right[ $. 
	Moreover, the following hold:
	\begin{enumerate}
		\item 
		\label{thm:FB:av:1:0}
		$T=\delta ((1-\nu)\Id+\nu N)$, $N$ is nonexpansive.
		\item 
		\label{thm:FB:av:1:i}
		$T$ is $(1-(\delta(1-\nu))/(2-\nu))$-averaged.
		\item 
		\label{thm:FB:av:1:ii}
		$T$ is $\delta$-Lipschitz continuous. 
		\item
		\label{thm:convergence:FB:A:i}
		There exists $\overline{x}\in \fix T
		=\zer(A+B) $ such that 
		$T^nx_0\weakly \overline{x}$.
	\end{enumerate}
	Suppose that  $\mu>\omega$. Then we additionally have:
	\begin{enumerate} 
		\setcounter{enumi}{4}
		\item
		\label{thm:convergence:FB:A:v}
		$T $ is a Banach contraction with a constant $\delta<1$.
		\item
		\label{thm:convergence:FB:A:ii}
		$\zer(A+B)=\{\overline{x}\}$
		and $T^nx_0\to\overline{x}$ with a linear 
		rate $\delta<1$. 
	\end{enumerate}
	
\end{theorem}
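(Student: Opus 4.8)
\emph{Strategy.} The plan is to produce a single $\delta$-scaled averaged representation of $T=J_{\gamma B}(\Id-\gamma A)$ by decomposing each factor and composing with \cref{cor:scav:coco}; all of \ref{thm:FB:av:1:0}--\ref{thm:convergence:FB:A:ii} is then extracted from that representation together with the lemmas of \cref{sec:aux}.

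\emph{Bounds and the two factors.} The condition $\gamma<2/(\beta+2\mu)$ is equivalent to $\gamma\beta<2(1-\gamma\mu)$, which (as $\gamma\beta>0$) forces $1-\gamma\mu>0$, whence $\nu=\gamma\beta/(2(1-\gamma\mu))\in\left]0,1\right[$; and $\mu\ge\omega\ge0$ together with $\gamma\mu<1$ gives $0<1-\gamma\mu\le1-\gamma\omega$, so $\delta\in\left]0,1\right]$. For the forward step, write $\Id-\gamma A=(1-\gamma\mu)\bigl(\Id-\tfrac{\gamma}{1-\gamma\mu}(A-\mu\Id)\bigr)$; since $A-\mu\Id$ is $\tfrac1\beta$-cocoercive and $\tfrac{\gamma}{1-\gamma\mu}<\tfrac2\beta$, the bracketed operator is $\nu$-averaged (equivalently, writing $A-\mu\Id=\tfrac\beta2(\Id+M)$ with $M$ nonexpansive, one checks directly that $\Id-\gamma A=(1-\gamma\mu)\bigl((1-\nu)\Id+\nu(-M)\bigr)$), so $\tfrac1{1-\gamma\mu}(\Id-\gamma A)$ is $\nu$-averaged. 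For the backward step, $\gamma B$ is maximally $(-\gamma\omega)$-monotone with $-\gamma\omega>-1$ (because $\gamma\omega\le\gamma\mu<1$), so by \cref{e:sv:fdom} $J_{\gamma B}$ is single-valued with full domain, and by \cref{prop:J:R:.5}\ref{prop:J:R:.5:i} the operator $J_{\gamma B}$ is $(1-\gamma\omega)$-cocoercive, i.e.\ $\tfrac1{\beta'}$-cocoercive with $\beta'=1/(1-\gamma\omega)$.

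\emph{Parts \ref{thm:FB:av:1:0}--\ref{thm:convergence:FB:A:i}.} Applying \cref{cor:scav:coco} with $R_1=\Id-\gamma A$, $R_2=J_{\gamma B}$, $\{i,j\}=\{1,2\}$, and (in its notation) scaling $1-\gamma\mu$, averagedness $\nu$, and cocoercivity parameter $1/(1-\gamma\omega)$, and using that the two scalings multiply to $(1-\gamma\mu)/(1-\gamma\omega)=\delta$, produces a nonexpansive $N$ and the representation of $T$ in \ref{thm:FB:av:1:0}. Then \ref{thm:FB:av:1:i} follows from \ref{thm:FB:av:1:0} and \cref{lem:T:av:factor}\ref{lem:T:av:factor:i} (de-scaling a scaled averaged operator with $\delta\in\left]0,1\right]$); \ref{thm:FB:av:1:ii} follows from \ref{thm:FB:av:1:0} and nonexpansiveness of $N$ via $\norm{Tx-Ty}\le\delta\bigl((1-\nu)+\nu\bigr)\norm{x-y}=\delta\norm{x-y}$; and for \ref{thm:convergence:FB:A:i}, \cref{prop:static:zeros:FB} with $(A,B)$ replaced by $(\gamma A,\gamma B)$ gives $\Fix T=\zer(\gamma A+\gamma B)=\zer(A+B)$, after which weak convergence of $(T^nx_0)_\nnn$ to a point of $\Fix T$ follows from averagedness of $T$ and \cite[Theorem~5.14]{BC2017} (once $\Fix T\ne\fady$).

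\emph{The strongly monotone case, and the main obstacle.} If $\mu>\omega$ then $\gamma\mu>\gamma\omega$, so $\delta=(1-\gamma\mu)/(1-\gamma\omega)<1$ strictly; hence by \ref{thm:FB:av:1:ii} $T$ is a Banach contraction with constant $\delta<1$, which is \ref{thm:convergence:FB:A:v}. Moreover $A+B$ is $(\mu-\omega)$-strongly monotone, so $\zer(A+B)=\Fix T$ is a singleton $\{\overline x\}$ (e.g.\ \cite[Proposition~23.35]{BC2017}), and Banach's fixed point theorem yields $T^nx_0\to\overline x$ at linear rate $\delta$, which gives \ref{thm:convergence:FB:A:ii} and supplies the limit point $\overline x$ of \ref{thm:convergence:FB:A:i}. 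The substantive step is the decomposition--composition behind \ref{thm:FB:av:1:0}: identifying $\Id-\gamma A$ as exactly $(1-\gamma\mu)$-scaled $\nu$-averaged --- so that $\gamma<2/(\beta+2\mu)$ is precisely the condition securing $\nu<1$ --- identifying $J_{\gamma B}$ as $(1-\gamma\omega)$-cocoercive, and feeding both into \cref{cor:scav:coco} with overall scaling $\delta$; the rest is routine bookkeeping.
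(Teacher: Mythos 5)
Your proposal is correct and follows essentially the same route as the paper's proof: the identical factorization $\Id-\gamma A=(1-\gamma\mu)\bigl((1-\nu)\Id+\nu(-M)\bigr)$, the $(1-\gamma\omega)$-cocoercivity of $J_{\gamma B}$ from \cref{prop:J:R:.5}\ref{prop:J:R:.5:i}, composition via \cref{cor:scav:coco} with the same parameters, and the same extraction of \ref{thm:FB:av:1:i}--\ref{thm:convergence:FB:A:ii} via \cref{lem:T:av:factor}, \cref{prop:static:zeros:FB}, and the Banach/strong-monotonicity argument. The only (immaterial) differences are bookkeeping ones, e.g.\ invoking \cref{e:sv:fdom} explicitly for single-valuedness of $J_{\gamma B}$ and citing the Krasnosel'skii--Mann theorem in a slightly different form.
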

\begin{proof}
	Clearly, $\delta\in \left ]0,1\right] $ and 
	$\nu>0$. Moreover,
	we have $\nu<1$ $\siff $
	$\gamma\beta<2(1-\gamma \mu)$
	$\siff$
	$\gamma<2/(\beta+2\mu\beta)$.
	Hence,  $\nu \in \left ]0,1\right[ $
	as claimed. 
	Next note that $\mu<(\beta+2\mu)/2$, hence 
	$\gamma\omega<\gamma \mu <(2\gamma)/(\beta+2\mu)<1$.
	It follows from \cref{prop:static:zeros:FB}
	that $J_{\gamma B}$ and, in turn, $T$
	are single-valued and $\dom J_{\gamma B}=\dom T=X$.
	The assumption on $A$ 
	implies that there exists 
	$\overline{N}\colon X\to X$, $\overline{N}$ is nonexpansive,
	such that $A-\mu\Id=\tfrac{\beta}{2 }\Id+\tfrac{\beta}{2 }\overline{N}$.
	Therefore,
	\begin{subequations}
		\label{eq:191122c}
		\begin{align}
		\Id-\gamma A
		&=\Id-\gamma( A-\mu \Id)-\gamma\mu \Id
		=(1-\gamma\mu)\Id-\tfrac{\gamma\beta}{2}(\Id+\overline{N})
		\\
		&=(1-\gamma \mu) \big((1-\nu)\Id+\nu (-\overline{N})\big).
		\end{align}
	\end{subequations}
	
	Moreover,
	\cref{prop:J:R:.5}\ref{prop:J:R:.5:i}
	implies that 
	\begin{equation}
	\label{eq:191122d}
	\text{$J_{\gamma B}$ is $(1-\gamma \omega)$-cocoercive.}
	\end{equation}
	
	\ref{thm:FB:av:1:0}:
	It follows from \cref{cor:scav:coco}
	applied with $(R_1,R_2)$
	replaced by $(\Id-\gamma A,J_{\gamma B})$
	and $(\alpha,\beta,\delta )$
	replaced by 
	$(\nu,1/(1-\gamma \omega),1-\gamma \mu)$,
	in view of \cref{eq:191122c} and \cref{eq:191122d},
	that there exists a 
	nonexpansive operator $N$ such that 
	$T=J_{\gamma B}(\Id-\gamma A)=\delta ((1-\nu)\Id+\nu N)$.
	\ref{thm:FB:av:1:i}:
	Combine \ref{thm:FB:av:1:0} and  \cref{lem:T:av:factor}\ref{lem:T:av:factor:i}. 
	\ref{thm:FB:av:1:ii}:
	Combine \ref{thm:FB:av:1:0} and
	\ref{thm:FB:av:1:i}.
	\ref{thm:convergence:FB:A:i}:
	Applying
	\cref{prop:static:zeros:FB}
	with $(A,B)$ replaced by $(\gamma A,\gamma B)$
	yields $\zer(A+B)=\zer(\gamma A+\gamma B)
	=\fix T$.
	The claim that $T^nx_0\weakly \overline{x}$
	follows from combining
	\ref{thm:FB:av:1:i}
	and \cite[Theorem~5.15]{BC2017}.
	\ref{thm:convergence:FB:A:v}:
	Observe that $\delta<1\siff \mu>\omega$.
	Now, combine with  \ref{thm:FB:av:1:ii}.
	\ref{thm:convergence:FB:A:ii}:
	Note that $A+B$ is maximally $(\mu-\omega)$-monotone
	and $\mu-\omega>0$, we conclude 
	from \cite[Proposition~23.35]{BC2017}
	that $\zer(A+B)$ is a singleton.
	Alternatively,  use \ref{thm:FB:av:1:ii} to
	learn that $T$ is a Banach contraction 
	with a constant $\delta<1$,
	hence $\zer(A+B)=\fix T$ is a singleton,
	and the conclusion follows.
\end{proof}

\begin{theorem}
	\label{thm:FB:av:1b}
	Let $\mu> \omega\ge 0$, and
	let $\beta>0$.
	Suppose that 
	$A$ is maximally $\mu$-monotone,
	$A-\mu \Id$ is $\tfrac{1}{\beta}$-cocoercive,
	and 
	$B$ is maximally $(-\omega)$-monotone.
	Let
	$\gamma\in \left [2/(\beta+2\mu), 2/(\beta+\mu)\right[$.
	Set $T=J_{\gamma B}(\Id-\gamma A)$, 
	set $\nu={\gamma \beta }/{(2(\gamma (\mu+\beta)-1))}$,
	set $\delta=(1-\gamma(\mu+\beta))/(1-\gamma \omega)$,
	and let $x_0\in X$.
	Then $ \delta \in \left ]-1,0\right]$ and $\nu \in \left ]0,1\right[ $. 
	Moreover, the following hold:
	\begin{enumerate}
		\item 
		\label{thm:FB:av:1b:0}
		$T=\delta ((1-\nu)\Id+\nu N)$, $N$ is nonexpansive.
		\item
		\label{thm:convergence:FB:Ab:v}
		$T $ is a Banach contraction with a constant $|\delta|<1$.
		\item
		\label{thm:convergence:FB:Ab:ii}
		 There exists $\overline{x}\in X$ such that 
		$\fix T=\zer(A+B)=\{\overline{x}\}$
		and $T^nx_0\to\overline{x}$ with a linear 
		rate $\abs{\delta}<1$. 
	\end{enumerate}
	
\end{theorem}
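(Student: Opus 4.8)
The plan is to follow the proof of \cref{thm:FB:av:1} with essentially the same steps; the only structural change is that for $\gamma$ in the present range the scalar factored out of $\Id-\gamma A$ is negative, so $T$ is no longer averaged. Instead, because $\abs\delta<1$ one gets the stronger property that $T$ is a Banach contraction, which yields uniqueness of the fixed point and the linear convergence rate directly. I would begin with the scalar bookkeeping: from $\gamma\ge 2/(\beta+2\mu)>1/(\mu+\beta)$ we have $1-\gamma(\mu+\beta)<0$, so $\nu=\gamma\beta/(2(\gamma(\mu+\beta)-1))$ is well defined and positive; the hypotheses on $\gamma$ together with $\mu>\omega$ then give, by elementary manipulation, $\gamma\omega<1$, $\nu\in\left]0,1\right]$ (with $\nu=1$ only at the left endpoint $\gamma=2/(\beta+2\mu)$), and $\delta=(1-\gamma(\mu+\beta))/(1-\gamma\omega)\in\left]-1,0\right[$. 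In particular $\gamma B$ is maximally $(-\gamma\omega)$-monotone with $-\gamma\omega>-1$, so by \cref{e:sv:fdom} $J_{\gamma B}$, and hence $T=J_{\gamma B}(\Id-\gamma A)$, is single-valued with full domain.

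For the first assertion (the representation of $T$): since $A-\mu\Id$ is $\tfrac1\beta$-cocoercive there is a nonexpansive $\overline N$ with $A-\mu\Id=\tfrac\beta2(\Id+\overline N)$, whence
\[
\Id-\gamma A=(1-\gamma\mu)\Id-\tfrac{\gamma\beta}{2}(\Id+\overline N)=\bigl(1-\gamma(\mu+\beta)\bigr)\bigl((1-\nu)\Id+\nu\overline N\bigr),
\]
so $\tfrac{1}{1-\gamma(\mu+\beta)}(\Id-\gamma A)$ is $\nu$-averaged, while \cref{prop:J:R:.5}\ref{prop:J:R:.5:i} shows $J_{\gamma B}$ is $(1-\gamma\omega)$-cocoercive. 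Exactly as in the proof of \cref{thm:FB:av:1}\ref{thm:FB:av:1:0}, I would then apply \cref{cor:scav:coco} — noting that it only requires its scaling parameter to lie in $\RR\smallsetminus\{0\}$, so a negative value is permitted — with $(R_1,R_2)=(\Id-\gamma A,J_{\gamma B})$ and $(\alpha,\beta,\delta)$ replaced by $\bigl(\nu,1/(1-\gamma\omega),1-\gamma(\mu+\beta)\bigr)$, obtaining a nonexpansive $N$ with $T=\tfrac{1-\gamma(\mu+\beta)}{1-\gamma\omega}\bigl((1-\nu)\Id+\nu N\bigr)=\delta\bigl((1-\nu)\Id+\nu N\bigr)$. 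At the degenerate endpoint $\nu=1$ this reads $T=\delta N$, which follows directly from $\Id-\gamma A=(1-\gamma(\mu+\beta))\overline N$ and $J_{\gamma B}$ being $\tfrac1{1-\gamma\omega}$-Lipschitz.

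The remaining two assertions are then immediate. Because $\nu\in\left]0,1\right]$, the operator $(1-\nu)\Id+\nu N$ is a convex combination of $\Id$ and a nonexpansive map, hence nonexpansive, so $T$ is $\abs\delta$-Lipschitz with $\abs\delta<1$; that is, $T$ is a Banach contraction with constant $\abs\delta$, and therefore has a unique fixed point $\overline x$ with $T^nx_0\to\overline x$ linearly at rate $\abs\delta$. To identify $\Fix T$, the pointwise chain $x\in\zer(A+B)\iff-Ax\in Bx\iff(\Id-\gamma A)x\in(\Id+\gamma B)x\iff x=J_{\gamma B}(\Id-\gamma A)x=Tx$ — the computation underlying \cref{prop:static:zeros:FB} applied to $(\gamma A,\gamma B)$ — gives $\zer(A+B)=\Fix T$, so $\zer(A+B)=\{\overline x\}$.

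The only step that needs genuine care is the scalar bookkeeping of the first paragraph, in particular checking $\delta>-1$ (equivalently $\abs\delta<1$): this is precisely where the \emph{strict} inequality $\mu>\omega$ is used — in \cref{thm:FB:av:1} the weaker $\mu\ge\omega$ sufficed for mere averagedness — and it is what upgrades the averagedness argument into a true contraction.
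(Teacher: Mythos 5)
Your route coincides with the paper's: decompose $\Id-\gamma A=(1-\gamma(\mu+\beta))\big((1-\nu)\Id+\nu\overline{N}\big)$, use the $(1-\gamma\omega)$-cocoercivity of $J_{\gamma B}$, and feed these into the same composition machinery as in \cref{thm:FB:av:1}, the contraction and linear rate then coming from $\abs{\delta}<1$. Your separate treatment of the left endpoint $\gamma=2/(\beta+2\mu)$, where $\nu=1$ and \cref{cor:scav:coco} does not literally apply, is in fact more careful than the paper, which simply asserts $\nu\in\left]0,1\right[$.

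There is, however, a genuine gap at the step you yourself single out: the claim that $\gamma\omega<1$ and $\delta\in\left]-1,0\right[$ follow ``by elementary manipulation'' from $\mu>\omega$ and $\gamma<2/(\beta+\mu)$. They do not. With $1-\gamma\omega>0$, the inequality $\delta>-1$ is equivalent to $\gamma(\mu+\beta)-1<1-\gamma\omega$, i.e.\ to $\gamma<2/(\beta+\mu+\omega)$, and the stated bound $\gamma<2/(\beta+\mu)$ yields this only when $\omega=0$. Concretely, take $\mu=11$, $\omega=10$, $\beta=1$, $\gamma=0.15\in\left[2/23,1/6\right[$ and $(A,B)=((\mu+\beta)\Id,-\omega\Id)$ (the paper's own tightness example): all hypotheses hold, yet $\gamma\omega=1.5>1$ and $T=\tfrac{1-\gamma(\mu+\beta)}{1-\gamma\omega}\Id=1.6\,\Id$, which is not a contraction and whose iterates diverge from any $x_0\neq 0$, even though $\zer(A+B)=\Fix T=\{0\}$. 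So no manipulation can close this step as stated; what is needed is the additional restriction $\gamma<2/(\beta+\mu+\omega)$ (compare \cref{thm:FB:av:2:b}, where the cap $2/(\overline{\beta}-\mu-\omega)$ plays exactly this role). To be fair, the paper's own proof waves at the same verification (``proceed similar to the proof of \cref{thm:FB:av:1}''), and the defect originates in the theorem's stated range for $\gamma$ rather than in anything downstream in your argument: once $\gamma<2/(\beta+\mu+\omega)$ is imposed, your bookkeeping, the $\nu=1$ endpoint case, and the contraction/fixed-point conclusions all go through as written. But since you attributed this step to the strict inequality $\mu>\omega$, note that $\mu>\omega$ alone does not deliver it.
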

\begin{proof}
We proceed similar to the proof of
\cref{thm:FB:av:1} to verify that $T$ is single-valued,
$\dom T=X$, $\nu\in \left ]0,1\right [$,
and $\delta\in \left ]-1,0\right]$.
	The assumption on $A$ 
	implies that there exists 
	$\overline{N}\colon X\to X$, $\overline{N}$ is nonexpansive,
	such that $A-\mu\Id=\tfrac{\beta}{2 }\Id+\tfrac{\beta}{2 }\overline{N}$.
	Therefore,
	\begin{subequations}
		\label{eq:191205a}
		\begin{align}
		\Id-\gamma A
		&=\Id-\gamma( A-\mu \Id)-\gamma\mu \Id
		=(1-\gamma\mu)\Id-\tfrac{\gamma\beta}{2}(\Id+\overline{N})
		\\
		&=(1-\gamma (\mu+\beta)) \big((1-\nu)\Id+\nu (\overline{N})\big).
		\end{align}
	\end{subequations}
	Now, proceed similar to the proof of
	\cref{thm:FB:av:1}\ref{thm:FB:av:1:0}, \ref{thm:convergence:FB:A:v}
	and \ref{thm:convergence:FB:A:ii},
	in view of \cref{eq:191122d}.
\end{proof}
\begin{corollary}
    Let $\mu> \omega\ge 0$, and
	let $\beta>0$.
	Suppose that 
	$A$ is maximally $\mu$-monotone,
	$A-\mu \Id$ is $\tfrac{1}{\beta}$-cocoercive,
	and 
	$B$ is maximally $(-\omega)$-monotone.
	Let
	$\gamma\in \left ]0, 2/(\beta+\mu)\right[$.
	Set $T=J_{\gamma B}(\Id-\gamma A)$, 
	set $\delta=\max(1-\gamma\mu,\gamma(\mu+\beta)-1)/(1-\gamma \omega)$,
	and let $x_0\in X$.
	Then $ \delta \in \left [0,1\right[$, $T$ is 
	a Banach contraction with a constant $\delta$,
	 and there exists $\overline{x}\in X$ such that 
	 $\fix T=\zer(A+B)=\{\overline{x}\}$ and $T^n x_0\to \overline{x}$.
\end{corollary}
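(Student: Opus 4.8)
The plan is to cover the interval $\left]0,2/(\beta+\mu)\right[$ by the two adjacent subintervals $\left]0,\gamma_0\right[$ and $\left[\gamma_0,2/(\beta+\mu)\right[$, where $\gamma_0:=2/(\beta+2\mu)$, and to apply \cref{thm:FB:av:1} on the first and \cref{thm:FB:av:1b} on the second; these are exactly the $\gamma$-ranges required by those two theorems. The split is legitimate because $\mu>\omega\ge 0$ forces $\mu>0$, whence $0<\gamma_0<2/(\beta+\mu)$, and the standing hypothesis $\mu>\omega$ is precisely what activates the Banach-contraction conclusions of both theorems. Since the hypotheses on $A$ and $B$ are exactly those of \cref{thm:FB:av:1} and \cref{thm:FB:av:1b}, the only work is to check, on each subinterval, that the constant $\delta=\max\{1-\gamma\mu,\gamma(\mu+\beta)-1\}/(1-\gamma\omega)$ appearing in the corollary coincides with the contraction constant furnished by the applicable theorem, and that it lies in $\left[0,1\right[$.

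On $\left]0,\gamma_0\right[$ one has $\gamma(\beta+2\mu)<2$, which rearranges to $\gamma(\mu+\beta)-1<1-\gamma\mu$; hence $\max\{1-\gamma\mu,\gamma(\mu+\beta)-1\}=1-\gamma\mu$ and $\delta$ equals the constant $(1-\gamma\mu)/(1-\gamma\omega)$ of \cref{thm:FB:av:1}. Moreover $\gamma<\gamma_0\le 1/\mu$ gives $1-\gamma\mu>0$, while $\mu>\omega$ gives $0<1-\gamma\mu<1-\gamma\omega$, so $\delta\in\left]0,1\right[\subseteq\left[0,1\right[$. Then \cref{thm:FB:av:1} yields that $T$ is a Banach contraction with constant $\delta$ and that there exists $\overline{x}$ with $\fix T=\zer(A+B)=\{\overline{x}\}$ and $T^nx_0\to\overline{x}$.

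On $\left[\gamma_0,2/(\beta+\mu)\right[$ one has $\gamma(\beta+2\mu)\ge 2$, i.e.\ $\gamma(\mu+\beta)-1\ge 1-\gamma\mu$, so $\max\{1-\gamma\mu,\gamma(\mu+\beta)-1\}=\gamma(\mu+\beta)-1$; moreover $\gamma\ge\gamma_0>1/(\mu+\beta)$, where $\beta>0$ is used, so $1-\gamma(\mu+\beta)\le 0$ and therefore $\delta=\abs{1-\gamma(\mu+\beta)}/(1-\gamma\omega)=\abs{\delta'}$, with $\delta'=(1-\gamma(\mu+\beta))/(1-\gamma\omega)$ the constant supplied by \cref{thm:FB:av:1b}. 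That theorem gives $\delta'\in\left]-1,0\right]$, so $\delta=\abs{\delta'}\in\left[0,1\right[$, that $T$ is a Banach contraction with constant $\abs{\delta'}=\delta$, and that there exists $\overline{x}$ with $\fix T=\zer(A+B)=\{\overline{x}\}$ and $T^nx_0\to\overline{x}$. Combining the two cases establishes the corollary.

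Since the whole argument is a case split feeding into two already-proved theorems, I do not expect a genuine obstacle; the one point that needs a moment's attention is the junction $\gamma=\gamma_0$, where $1-\gamma\mu=\gamma(\mu+\beta)-1$ so the two expressions for $\delta$ agree and either theorem could in principle be invoked — by the chosen half-open split this value is handled by \cref{thm:FB:av:1b}.
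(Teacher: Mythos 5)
Your proposal is correct and is exactly the paper's argument: the paper's proof is simply ``Combine \cref{thm:FB:av:1} and \cref{thm:FB:av:1b},'' and you have spelled out the same case split on $\gamma$ at $2/(\beta+2\mu)$, verifying that the corollary's $\delta$ agrees with the contraction constant from whichever theorem applies. No gaps.
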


\begin{proof}
    Combine \cref{thm:FB:av:1} and \cref{thm:FB:av:1b}.
\end{proof}
\begin{remark}[tightness of the Lipschitz constant]\
\begin{enumerate}
    \item 
    Suppose that the setting of \cref{thm:FB:av:1}
    holds. Set $(A,B)=(\mu\Id,-\omega \Id)$.
    Then $T=\tfrac{1-\gamma \mu}{1-\gamma \omega}\Id$.
    Hence, the claimed Lipschitz constant is tight.
    \item     
    Suppose that the setting of \cref{thm:FB:av:1b}
    holds. Set $(A,B)=((\mu+\beta)\Id,-\omega \Id)$.
    Then $T=\tfrac{\gamma (\mu+\beta)-1}{1-\gamma \omega}\Id$.
    Hence, the claimed contraction factor is tight.
\end{enumerate}
Note in particular that the worst cases are subgradients of convex functions. Hence, the worst cases are attained by the proximal gradient method.
\end{remark}

\begin{theorem}[case II: $A+\omega\Id $ is cocoercive]
	\label{thm:FB:av:2}
	Let $\mu\ge \omega\ge 0$, 
	let $\beta>0$ and let 
	$\overline{\beta}\in \left]\max\{\beta, \mu+\omega\},+\infty\right[$.
	Suppose that 
	$A$ is maximally $(-\omega)$-monotone,
	$A+\omega\Id$ is $\beta$-cocoercive,
	and 
	$B$ is maximally $\mu$-monotone.
	Let
	$\gamma\in \left ]0,
	2/(\overline{\beta}-2\omega)\right[$.
	Set $T=J_{\gamma B}(\Id-\gamma A)$, 
	set $\nu={\gamma\overline{\beta}  }/{(2(1+\gamma \omega))}$,
	set $\delta=(1+\gamma\omega)/(1+\gamma \mu)$,
	and let $x_0\in X$.
	Then $ \delta \in \left ]0,1\right]$ and $\nu \in \left ]0,1\right[ $.  
	Moreover, the following hold:
	\begin{enumerate}
		\item 
		\label{thm:FB:av:2:0}
		$T=\delta ((1-\nu)\Id+\nu N)$, $N$ is nonexpansive.
		\item 
		\label{thm:FB:av:2:i}
		$T$ is $(1-(\delta(1-\nu))/(2-\nu))$-averaged.
		\item 
		\label{thm:FB:av:2:ii} 
		$T$ is  
		$\delta$-Lipschitz continuous. 
		\item
		\label{thm:FB:av:2:iii}
		There exists $\overline{x}\in \fix T
		=\zer(A+B) $, and 
		$T^nx_0\weakly \overline{x}$.
	\end{enumerate}
	Suppose that  $\mu>\omega$. Then we additionally have:
	\begin{enumerate} 
		\setcounter{enumi}{4}
		\item
		\label{thm:FB:av:2:iv}
		$T$ is  a Banach contraction with a constant 
		$\delta<1$.
		\item
		\label{thm:FB:av:2:v}
		$\zer(A+B)=\{\overline{x}\}$
		and $T^nx_0\to\overline{x}$ with a linear 
		rate $\delta<1$. 
	\end{enumerate}
\end{theorem}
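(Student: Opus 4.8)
The plan is to follow the proof of \cref{thm:FB:av:1} almost verbatim, exchanging the roles of the two operators: here it is the forward term $A+\omega\Id$ (rather than $A-\mu\Id$) that is cocoercive and the backward term $B$ (rather than $A$) that is strongly monotone. First I would dispatch the two elementary facts stated at the head of the theorem. One has $\delta=\tfrac{1+\gamma\omega}{1+\gamma\mu}\in\left]0,1\right]$ because $\delta\le1\siff\omega\le\mu$; and $\nu\in\left]0,1\right[$ because $\nu<1\siff\gamma\overline{\beta}<2(1+\gamma\omega)\siff\gamma(\overline{\beta}-2\omega)<2$, which is exactly the defining upper bound on $\gamma$ (note $\overline{\beta}-2\omega>0$, since $\overline{\beta}>\mu+\omega\ge2\omega$). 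Since $B$ is maximally $\mu$-monotone and $\mu\ge0$, $\gamma B$ is maximally $\gamma\mu$-monotone with $\gamma\mu>-1$, so \cref{e:sv:fdom} gives that $J_{\gamma B}$ is single-valued with full domain, and then \cref{prop:static:zeros:FB} applied with $(A,B)$ replaced by $(\gamma A,\gamma B)$ gives that $T$ is single-valued, $\dom T=X$, and $\Fix T=\zer(\gamma A+\gamma B)=\zer(A+B)$.

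Next I would decompose the two factors of $T$. Because $\overline{\beta}\ge\beta$, \cref{lem:beta:smaller} shows $A+\omega\Id$ is $\tfrac1{\overline{\beta}}$-cocoercive, hence $A+\omega\Id=\tfrac{\overline{\beta}}{2}(\Id+\overline{N})$ for some nonexpansive $\overline{N}$; writing $A=(A+\omega\Id)-\omega\Id$ then gives
\begin{equation*}
\Id-\gamma A=(1+\gamma\omega)\Id-\tfrac{\gamma\overline{\beta}}{2}(\Id+\overline{N})=(1+\gamma\omega)\bigl((1-\nu)\Id+\nu(-\overline{N})\bigr),
\end{equation*}
so $\tfrac1{1+\gamma\omega}(\Id-\gamma A)$ is $\nu$-averaged (this is where $\nu<1$ is needed). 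On the backward side, $\gamma B$ is $\gamma\mu$-monotone, so \cref{prop:J:R:.5}\ref{prop:J:R:.5:i} gives that $J_{\gamma B}$ is $(1+\gamma\mu)$-cocoercive. Now I would apply \cref{cor:scav:coco} to $(R_1,R_2)=(\Id-\gamma A,J_{\gamma B})$ with $(i,j)=(1,2)$ and its $(\alpha,\beta,\delta)$ replaced by $\bigl(\nu,\tfrac1{1+\gamma\mu},1+\gamma\omega\bigr)$; the prefactor collapses to $\tfrac{1+\gamma\omega}{1+\gamma\mu}=\delta$, so there is a nonexpansive $N$ and $\delta\in\left]0,1\right]$ with $T=\delta S$, where $S=(1-\tfrac1{2-\nu})\Id+\tfrac1{2-\nu}N$ is $\tfrac1{2-\nu}$-averaged; this gives part \ref{thm:FB:av:2:0}. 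Part \ref{thm:FB:av:2:i} then follows by scaling $S$ by $\delta$ through \cref{lem:T:av:factor}\ref{lem:T:av:factor:i}, giving averagedness constant $1-\tfrac{\delta(1-\nu)}{2-\nu}\in\left]0,1\right[$ (the containment holds since $0<\delta(1-\nu)\le1-\nu<2-\nu$). Part \ref{thm:FB:av:2:ii} is immediate, since an averaged operator is nonexpansive and hence $\|Tx-Ty\|\le\delta\|x-y\|$.

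For the convergence assertions I would argue exactly as in \cref{thm:FB:av:1}. Part \ref{thm:FB:av:2:iii} combines $\Fix T=\zer(A+B)$ from the first paragraph with the averagedness from \ref{thm:FB:av:2:i} and \cite[Theorem~5.15]{BC2017}, yielding $T^nx_0\weakly\overline{x}\in\Fix T$. When $\mu>\omega$ we get $\delta=\tfrac{1+\gamma\omega}{1+\gamma\mu}<1$, so by \ref{thm:FB:av:2:ii} $T$ is a Banach contraction with constant $\delta$ (part \ref{thm:FB:av:2:iv}); Banach's fixed point theorem then produces a unique $\overline{x}$ with $\Fix T=\zer(A+B)=\{\overline{x}\}$ and $\|T^nx_0-\overline{x}\|\le\delta^n\|x_0-\overline{x}\|$ (part \ref{thm:FB:av:2:v}). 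Alternatively, uniqueness of the zero follows from \cite[Proposition~23.35]{BC2017}, as $A+B$ is maximally $(\mu-\omega)$-monotone with $\mu-\omega>0$.

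There is no substantive obstacle: the theorem reduces entirely to \cref{cor:scav:coco}, \cref{lem:T:av:factor}, \cref{prop:static:zeros:FB}, and \cref{prop:J:R:.5}, mirroring \cref{thm:FB:av:1}. The only step demanding care is the parameter bookkeeping in the application of \cref{cor:scav:coco}: one must match the four scalars of the two \IN s correctly, check that the hypotheses $\nu\in\left]0,1\right[$ and $1+\gamma\mu>0$ are guaranteed precisely by $\gamma\in\left]0,2/(\overline{\beta}-2\omega)\right[$ together with $\mu\ge\omega\ge0$, and verify that the scalar prefactor collapses to $\delta$ and the averagedness constant to the value recorded in \ref{thm:FB:av:2:i}. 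A minor caveat is that \cref{prop:static:zeros:FB} presumes $\zer(A+B)\ne\emptyset$; for parts \ref{thm:FB:av:2:iv}--\ref{thm:FB:av:2:v} this is recovered from the contraction property, while for part \ref{thm:FB:av:2:iii} it must be taken as a standing hypothesis.
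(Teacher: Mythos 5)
Your proof is correct and follows the paper's own route essentially verbatim: decompose $\Id-\gamma A$ using \cref{lem:beta:smaller} and the cocoercivity of $A+\omega\Id$, note that $J_{\gamma B}$ is $(1+\gamma\mu)$-cocoercive by \cref{prop:J:R:.5}\ref{prop:J:R:.5:i}, and then repeat the argument of \cref{thm:FB:av:1} via \cref{cor:scav:coco}, \cref{lem:T:av:factor}, \cref{prop:static:zeros:FB}, and the cited results from \cite{BC2017}, with the same bookkeeping for $\delta$, $\nu$, and the nonemptiness of $\zer(A+B)$. The only caveat---shared with the paper's own proof---is that \cref{cor:scav:coco} literally yields the representation $T=\delta\big(\big(1-\tfrac{1}{2-\nu}\big)\Id+\tfrac{1}{2-\nu}N\big)$ rather than the form with parameter $\nu$ displayed in \ref{thm:FB:av:2:0}; this is precisely the representation on which the averagedness constant in \ref{thm:FB:av:2:i} is based, so your derivation matches what the paper actually proves.
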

\begin{proof}
	Observe that, the assumption on $A$
	and \cref{lem:beta:smaller} applied with
	$T$ replaced by $A+\omega\Id$
	imply that there exists 
	$\overline{N}\colon X\to X$, $\overline{N}$ is nonexpansive,
	such that $A+\omega\Id=\tfrac{\overline{\beta}}{2 }\Id+\tfrac{\overline{\beta}}{2}\overline{N}$. 
	\begin{subequations}
		\label{eq:191122e}
		\begin{align}
		\Id-\gamma A
		&=\Id-\gamma( A+\omega \Id)+\gamma\omega \Id
		=(1+\gamma\omega)\Id-\tfrac{\gamma\overline{\beta}}{2}(\Id+\overline{N})
		\\
		&=(1+\gamma \omega) \big((1-\nu)\Id+\nu (-\overline{N})\big).
		\end{align}
	\end{subequations}
	Moreover,
	\cref{prop:J:R:.5}\ref{prop:J:R:.5:i}
	implies that 
	\begin{equation}
	\label{eq:191122f}
	\text{$J_{\gamma B}$ is $(1+\gamma \mu)$-cocoercive.}
	\end{equation}
	Now proceed similar to the proof of \cref{thm:FB:av:1}
	but use \cref{eq:191122e} and \cref{eq:191122f}.
\end{proof}

\begin{theorem}
	\label{thm:FB:av:2:b}
	Let $\mu> \omega\ge 0$, 
	let $\beta>0$ and let 
	$\overline{\beta}\in \left]\max\{\beta, \mu+\omega\},+\infty\right[$.
	Suppose that 
	$A$ is maximally $(-\omega)$-monotone,
	$A+\omega\Id$ is $\beta$-cocoercive,
	and 
	$B$ is maximally $\mu$-monotone.
	Let
	$\gamma\in \left [
	2/(\overline{\beta}-2\omega),2/(\overline{\beta}-\mu-\omega)\right[$.
	Set $T=J_{\gamma B}(\Id-\gamma A)$, 
	set $\nu={\gamma\overline{\beta}  }/{(2(\gamma \overline{\beta}-\gamma \omega-1))}$,
	set $\delta=(1+\gamma\omega-\gamma \overline{\beta})/(1+\gamma \mu)$,
	and let $x_0\in X$.
	Then $ \delta \in \left ]-1,0\right]$ and $\nu \in \left ]0,1\right[ $.  
	Moreover, the following hold:
	\begin{enumerate}
		\item 
		\label{thm:FB:av:b:2:0}
		$T=\delta ((1-\nu)\Id+\nu N)$, $N$ is nonexpansive.
		\item
		\label{thm:FB:av:b:2:iv}
		$T$ is  a Banach contraction with a constant 
		$\abs{\delta}<1$.
		\item
		\label{thm:FB:av:b:2:v}
		There exists $\overline{x}\in X$ such that 
		$\fix T=\zer(A+B)=\{\overline{x}\}$
		and $T^nx_0\to\overline{x}$ with a linear 
		rate $\abs{\delta}<1$. 
	\end{enumerate}
\end{theorem}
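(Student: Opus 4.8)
The plan is to transcribe the proof of \cref{thm:FB:av:1b}, playing the same game with the cocoercivity data of $A+\omega\Id$ that the proof of \cref{thm:FB:av:2} plays relative to that of \cref{thm:FB:av:1}. First I would record the elementary facts. Since $\overline{\beta}>\mu+\omega\geq 2\omega$ we have $\overline{\beta}-2\omega>0$, so the stated interval is nonempty and $\gamma\geq 2/(\overline{\beta}-2\omega)>1/(\overline{\beta}-\omega)$; hence $1+\gamma\omega-\gamma\overline{\beta}=1-\gamma(\overline{\beta}-\omega)<0$ while $1+\gamma\mu>0$, which gives $\delta<0$, and $\gamma<2/(\overline{\beta}-\mu-\omega)$ rearranges to $2+\gamma(\mu+\omega-\overline{\beta})>0$, i.e.\ $\delta>-1$; a similar bookkeeping (exactly as in \cref{thm:FB:av:2}) gives $\nu\in\left]0,1\right]$, with $\nu=1$ precisely at the left endpoint $\gamma=2/(\overline{\beta}-2\omega)$. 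Since $B$ is maximally $\mu$-monotone with $\mu\geq 0>-1$, \cref{e:sv:fdom} shows $J_{\gamma B}$ is single-valued with $\dom J_{\gamma B}=X$, so $T=J_{\gamma B}(\Id-\gamma A)$ is single-valued with $\dom T=X$; and \cref{prop:static:zeros:FB} applied with $(A,B)$ replaced by $(\gamma A,\gamma B)$ gives $\fix T=\zer(\gamma A+\gamma B)=\zer(A+B)$.

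The heart of the matter is the representation of $\Id-\gamma A$. The hypothesis on $A$ together with \cref{lem:beta:smaller} (applicable since $\overline{\beta}\geq\beta$) produces a nonexpansive $\overline{N}$ with $A+\omega\Id=\tfrac{\overline{\beta}}{2}\Id+\tfrac{\overline{\beta}}{2}\overline{N}$, so that
\begin{align*}
\Id-\gamma A
&=\Id-\gamma(A+\omega\Id)+\gamma\omega\Id
=(1+\gamma\omega)\Id-\tfrac{\gamma\overline{\beta}}{2}(\Id+\overline{N})\\
&=(1+\gamma\omega-\gamma\overline{\beta})\bigl((1-\nu)\Id+\nu\overline{N}\bigr),
\end{align*}
the exact analogue of \cref{eq:191122e}; the only novelty is that the scalar $1+\gamma\omega-\gamma\overline{\beta}$ is now negative, so $\Id-\gamma A$ is a \emph{negatively} scaled $\nu$-conically nonexpansive operator (a scaled $\nu$-averaged operator when $\nu<1$, a scaled nonexpansive operator when $\nu=1$). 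On the other side, \cref{prop:J:R:.5}\ref{prop:J:R:.5:i}, applied with $A$ replaced by $\gamma B$, gives that $J_{\gamma B}$ is $(1+\gamma\mu)$-cocoercive, i.e.\ \cref{eq:191122f}.

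With these two ingredients I would finish exactly as in \cref{thm:FB:av:1b}: \cref{cor:scav:coco} (equivalently \cref{thm:m:conic} with $m=2$), applied with $(R_1,R_2)=(\Id-\gamma A,J_{\gamma B})$ and the averagedness/cocoercivity/scaling data $\bigl(\nu,1/(1+\gamma\mu),1+\gamma\omega-\gamma\overline{\beta}\bigr)$, yields a nonexpansive $N$ and the scaled-averaged representation $T=\delta\bigl((1-\overline{\nu})\Id+\overline{\nu}N\bigr)$ asserted in \ref{thm:FB:av:b:2:0}, with scaling factor $\delta=(1+\gamma\omega-\gamma\overline{\beta})/(1+\gamma\mu)$. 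Since $(1-\overline{\nu})\Id+\overline{\nu}N$ is nonexpansive, $T$ is $\abs{\delta}$-Lipschitz, and $\abs{\delta}<1$ because $\delta\in\left]-1,0\right[$; hence $T$ is a Banach contraction with constant $\abs{\delta}$, which is \ref{thm:FB:av:b:2:iv}, and Banach's fixed point theorem then provides a unique $\overline{x}\in\fix T$ with $T^nx_0\to\overline{x}$ at linear rate $\abs{\delta}$, which together with $\fix T=\zer(A+B)$ from the first paragraph gives \ref{thm:FB:av:b:2:v}. The one point needing extra care is the left endpoint $\gamma=2/(\overline{\beta}-2\omega)$, where $\nu=1$ and \cref{cor:scav:coco} does not literally apply; there $\Id-\gamma A=(1+\gamma\omega-\gamma\overline{\beta})\overline{N}$ is a negatively scaled nonexpansive map, so $T$ is $\abs{\delta}$-Lipschitz directly, as the composition of the $1/(1+\gamma\mu)$-Lipschitz map $J_{\gamma B}$ with a $\abs{1+\gamma\omega-\gamma\overline{\beta}}$-scaled nonexpansive map, and the conclusions are unaffected. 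I do not anticipate a genuine obstacle: the computation is a routine transcription of the Case~I argument, and the one conceptual point — which is also where a mistaken sign would silently break the proof — is that in this larger step-size regime the scaling factor of $\Id-\gamma A$ is negative, which is exactly what upgrades $T$ from merely averaged to genuinely contractive.
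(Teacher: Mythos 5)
Your proposal is correct and takes essentially the same route as the paper: decompose $\Id-\gamma A$ via the $\overline{\beta}$-upgraded cocoercivity of $A+\omega\Id$ (with the negative scaling $1+\gamma\omega-\gamma\overline{\beta}$), combine with the $(1+\gamma\mu)$-cocoercivity of $J_{\gamma B}$ through \cref{cor:scav:coco}, and finish with the Lipschitz/Banach contraction argument. Your separate treatment of the left endpoint $\gamma=2/(\overline{\beta}-2\omega)$, where $\nu=1$ and \cref{cor:scav:coco} does not literally apply, is extra care that the paper's terse ``proceed as in \cref{thm:FB:av:2}'' glosses over, and it leaves all conclusions intact.
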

\begin{proof}
	Observe that, the assumption on $A$
	and \cref{lem:beta:smaller} applied with
	$T$ replaced by $A+\omega\Id$
	imply that there exists 
	$\overline{N}\colon X\to X$, $\overline{N}$ is nonexpansive,
	such that $A+\omega\Id=\tfrac{\overline{\beta}}{2 }\Id+\tfrac{\overline{\beta}}{2}\overline{N}$. 
	\begin{subequations}
		\label{eq:191207a}
		\begin{align}
		\Id-\gamma A
		&=\Id-\gamma( A+\omega \Id)+\gamma\omega \Id
		=(1+\gamma\omega)\Id-\tfrac{\gamma\overline{\beta}}{2}(\Id+\overline{N})
		\\
		&=(1+\gamma \omega-\gamma\overline{\beta}) \big((1-\nu)\Id+\nu \overline{N}\big).
		\end{align}
	\end{subequations}
	Now proceed similar to the proof of \cref{thm:FB:av:2}, in 
	view of \cref{eq:191122f}.
\end{proof}

\begin{corollary}
	Let $\mu> \omega\ge 0$, 
	let $\beta>0$ and let 
	$\overline{\beta}\in \left]\max\{\beta, \mu+\omega\},+\infty\right[$.
	Suppose that 
	$A$ is maximally $(-\omega)$-monotone,
	$A+\omega\Id$ is $\beta$-cocoercive,
	and 
	$B$ is maximally $\mu$-monotone.
	Let
	$\gamma\in \left [
	0,2/(\overline{\beta}-\mu-\omega)\right[$.
	Set $T=J_{\gamma B}(\Id-\gamma A)$, 
	set $\delta=\max\{1+\gamma\mu,\gamma \overline{\beta}-\gamma\omega-1\}/(1+\gamma \mu)$,
	and let $x_0\in X$.
	Then $ \delta \in \left ]-1,0\right]$ and $\nu \in \left ]0,1\right[ $.  
	Then $ \delta \in \left [0,1\right[$, $T$ is 
	a Banach contraction with a constant $\delta$,
	 and there exists $\overline{x}\in X$ such that 
	 $\fix T=\zer(A+B)=\{\overline{x}\}$ and $T^n x_0\to \overline{x}$.
\end{corollary}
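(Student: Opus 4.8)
The plan is to deduce the corollary by gluing together the two step-size regimes already treated in \cref{thm:FB:av:2} and \cref{thm:FB:av:2:b}. Write $\gamma_\ast=2/(\overline{\beta}-2\omega)$; since $\mu>\omega$ we have $\overline{\beta}>\mu+\omega>2\omega$, so $\gamma_\ast\in\left]0,+\infty\right[$ and
\[
\left]0,\tfrac{2}{\overline{\beta}-\mu-\omega}\right[=\left]0,\gamma_\ast\right[\cup\left[\gamma_\ast,\tfrac{2}{\overline{\beta}-\mu-\omega}\right[,
\]
a disjoint union whose first block is exactly the admissible interval of \cref{thm:FB:av:2} and whose second block (containing the shared endpoint $\gamma_\ast$) is exactly that of \cref{thm:FB:av:2:b}. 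Thus every $\gamma$ in the range of the statement — read with the degenerate endpoint $\gamma=0$, at which $T=\Id$, excluded — falls under precisely one of the two theorems, and in both cases the hypotheses ($A$ maximally $(-\omega)$-monotone, $A+\omega\Id$ being $\beta$-cocoercive hence $\overline{\beta}$-cocoercive by \cref{lem:beta:smaller}, $B$ maximally $\mu$-monotone, $\mu>\omega$) are the same.

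Next I would check that the quantity $\delta$ appearing in the statement coincides, on each block, with the absolute value of the contraction factor delivered by the relevant theorem. The elementary equivalence $\gamma\overline{\beta}-\gamma\omega-1<1+\gamma\omega\iff\gamma(\overline{\beta}-2\omega)<2\iff\gamma<\gamma_\ast$ shows that on $\left]0,\gamma_\ast\right[$ the defining maximum of $\delta$ equals $1+\gamma\omega$, so $\delta=(1+\gamma\omega)/(1+\gamma\mu)$, which is precisely the factor $\delta$ of \cref{thm:FB:av:2} and lies in $\left]0,1\right[$ because $\mu>\omega$; while on $\left[\gamma_\ast,2/(\overline{\beta}-\mu-\omega)\right[$ the reverse inequality gives that the maximum equals $\gamma\overline{\beta}-\gamma\omega-1\ge 1+\gamma\omega>0$, so $\delta=(\gamma\overline{\beta}-\gamma\omega-1)/(1+\gamma\mu)=\abs{1+\gamma\omega-\gamma\overline{\beta}}/(1+\gamma\mu)$, which is exactly the constant $\abs{\delta}$ of \cref{thm:FB:av:2:b} and lies in $\left[0,1\right[$ there (strictly below $1$ because $\gamma<2/(\overline{\beta}-\mu-\omega)$). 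Either way $\delta\in\left[0,1\right[$.

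Finally I would simply invoke the conclusions already established: on $\left]0,\gamma_\ast\right[$, \cref{thm:FB:av:2}\ref{thm:FB:av:2:iv} gives that $T=J_{\gamma B}(\Id-\gamma A)$ is a Banach contraction with constant $\delta$, and \cref{thm:FB:av:2}\ref{thm:FB:av:2:v} gives $\fix T=\zer(A+B)=\{\overline{x}\}$ with $T^nx_0\to\overline{x}$; on $\left[\gamma_\ast,2/(\overline{\beta}-\mu-\omega)\right[$ the matching statements are \cref{thm:FB:av:2:b}\ref{thm:FB:av:b:2:iv} and \ref{thm:FB:av:b:2:v}, the contraction constant now being $\abs{\delta}=\delta$. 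Concatenating the two regimes yields the corollary. There is no real obstacle here: all the analytic content — single-valuedness and full domain of $T$, the scaled conically nonexpansive / cocoercive factorizations of $\Id-\gamma A$ and $J_{\gamma B}$ feeding into \cref{cor:scav:coco}, strong monotonicity of $A+B$ forcing $\zer(A+B)$ to be a singleton, and the Banach contraction argument — is already carried out in the two source theorems, and the only thing requiring care is the bookkeeping that the single $\max$-formula for $\delta$ reconciles the two regime-dependent factors and that the two step-size intervals tile $\left]0,2/(\overline{\beta}-\mu-\omega)\right[$ with neither gap nor overlap.
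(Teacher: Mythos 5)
Your proposal is correct and takes essentially the same route as the paper, whose entire proof is ``Combine \cref{thm:FB:av:2} and \cref{thm:FB:av:2:b}''; you simply make explicit the bookkeeping the paper leaves implicit, namely that the two step-size intervals tile $\left]0,2/(\overline{\beta}-\mu-\omega)\right[$ at $\gamma_\ast=2/(\overline{\beta}-2\omega)$ and that the single $\max$-formula reproduces the two regime-dependent contraction factors (reading its first entry as $1+\gamma\omega$, the $1+\gamma\mu$ in the stated formula being evidently a typo).
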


\begin{proof}
    Combine \cref{thm:FB:av:2} and \cref{thm:FB:av:2:b}
    .
\end{proof}

\begin{theorem}[case III: $A$ is ${\beta}$-Lipschitz continuous]
	\label{thm:FB:av:3}
	Let $ \mu\ge \beta>0$.
	Suppose that 
	$A$ is $\beta$-Lipschitz continuous
	and that 
	$B$ is maximally $\mu$-monotone.
	Let 
	$\overline{\beta}\in \left]2\beta,+\infty \right[$,
 and let 
	$\gamma\in \left ]0,2
	/( \overline{\beta}-2\beta)\}\right[$.
	Set $T=J_{\gamma B}(\Id-\gamma A)$, 
	set $\nu={\gamma\overline{\beta} }/{(2(1+\gamma\beta))}$,
	set $\delta=(1+\gamma\beta)/(1+\gamma \mu)$,
	and let $x_0\in X$.
	Then $ \delta \in \left ]0,1\right]$ and $\nu \in \left ]0,1\right[ $. 
	Moreover, the following hold:
	\begin{enumerate}
		\item 
		\label{thm:FB:av:3:0}
		$T=\delta ((1-\nu)\Id+\nu N)$, $N$ is nonexpansive.
		\item 
		\label{thm:FB:av:3:i}
		$T$ is $(1-(\delta(1-\nu))/(2-\nu))$-averaged.
		\item 
		\label{thm:FB:av:3:ii} 
		$T$ is $\delta$-Lipschitz continuous. 
		\item
		\label{thm:FB:av:3:iv}
		There exists $\overline{x}\in \fix T
		=\zer(A+B) $, and 
		$T^nx_0\weakly \overline{x}$.
	\end{enumerate}
	Suppose that  $\mu>1/\beta$. Then we additionally have:
	\begin{enumerate} 
		\setcounter{enumi}{4}
		\item
		\label{thm:FB:av:3:v}
		$T$ is  a Banach contraction with a constant $\delta<1$.
		\item
		\label{thm:FB:av:3:vi}
		$\zer(A+B)=\{\overline{x}\}$
		and $T^nx_0\to\overline{x}$ with a linear 
		rate $\delta<1$. 
	\end{enumerate}
\end{theorem}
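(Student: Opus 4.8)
The plan is to run the proof of \cref{thm:FB:av:1} (and \cref{thm:FB:av:2}) essentially unchanged, the only new ingredient being the replacement of the cocoercivity-based representation of the forward step by one built from the $\beta$-Lipschitz continuity of $A$; after that, the composition corollary \cref{cor:scav:coco} does all the work. First I would record the parameter bounds: since $\mu\ge\beta>0$ we have $0<1+\gamma\beta\le 1+\gamma\mu$, so $\delta=\tfrac{1+\gamma\beta}{1+\gamma\mu}\in\left]0,1\right]$, while $\nu=\tfrac{\gamma\overline{\beta}}{2(1+\gamma\beta)}>0$ and $\nu<1\siff\gamma\overline{\beta}<2(1+\gamma\beta)\siff\gamma(\overline{\beta}-2\beta)<2$, which holds since $\gamma$ is in the admissible range; hence $\nu\in\left]0,1\right[$. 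Because $\gamma\mu\ge 0>-1$, \cref{e:sv:fdom} (applied to $\gamma B$) makes $J_{\gamma B}$ single-valued with full domain, and \cref{prop:static:zeros:FB} then yields that $T$ is single-valued, $\dom T=X$, and $\Fix T=\zer(A+B)$.

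The new step is short. By \cref{lem:T:Lips:to:coco}\ref{lem:T:Lips:to:coco:ii}, $A+\beta\Id$ is $\tfrac{1}{2\beta}$-cocoercive, and since $\overline{\beta}>2\beta$, \cref{lem:beta:smaller} (with its $(\beta,\overline{\beta},T)$ replaced by $(2\beta,\overline{\beta},A+\beta\Id)$) gives that $A+\beta\Id$ is $\tfrac{1}{\overline{\beta}}$-cocoercive; thus there is a nonexpansive $\overline{N}\colon X\to X$ with $A+\beta\Id=\tfrac{\overline{\beta}}{2}\Id+\tfrac{\overline{\beta}}{2}\overline{N}$, whence
\begin{align*}
\Id-\gamma A
&=(1+\gamma\beta)\Id-\gamma(A+\beta\Id)\\
&=(1+\gamma\beta)\Id-\tfrac{\gamma\overline{\beta}}{2}(\Id+\overline{N})\\
&=(1+\gamma\beta)\big((1-\nu)\Id+\nu(-\overline{N})\big),
\end{align*}
i.e.\ $\tfrac{1}{1+\gamma\beta}(\Id-\gamma A)$ is $\nu$-averaged. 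Moreover $\gamma B$ is $\gamma\mu$-monotone with $\gamma\mu>-1$, so \cref{prop:J:R:.5}\ref{prop:J:R:.5:i} shows $J_{\gamma B}$ is $(1+\gamma\mu)$-cocoercive. From here the argument parallels that of \cref{thm:FB:av:1}: \ref{thm:FB:av:3:0} follows from \cref{cor:scav:coco} applied with $(R_1,R_2)$ replaced by $(\Id-\gamma A,J_{\gamma B})$ and $(\alpha,\beta,\delta)$ replaced by $(\nu,\,1/(1+\gamma\mu),\,1+\gamma\beta)$, so that the output scaling equals $\tfrac{1+\gamma\beta}{1+\gamma\mu}=\delta$; \ref{thm:FB:av:3:i} by combining \ref{thm:FB:av:3:0} with \cref{lem:T:av:factor}\ref{lem:T:av:factor:i}; \ref{thm:FB:av:3:ii} is immediate from \ref{thm:FB:av:3:0} and the triangle inequality, which give $\norm{Tx-Ty}\le\delta\norm{x-y}$; and \ref{thm:FB:av:3:iv} combines $\Fix T=\zer(A+B)$ with \cite[Theorem~5.15]{BC2017}. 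Finally $\delta<1\siff\mu>\beta$, so under the additional assumption $T$ is a Banach contraction with constant $\delta<1$; hence $\Fix T=\zer(A+B)$ is a singleton and the Banach fixed-point theorem gives $T^nx_0\to\overline{x}$ at linear rate $\delta$, which is \ref{thm:FB:av:3:v} and \ref{thm:FB:av:3:vi}.

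I do not expect a real obstacle, since the argument is purely a reduction to \cref{cor:scav:coco}; the only delicate point is bookkeeping of the scaling constants in that invocation — one must read ``$J_{\gamma B}$ is $(1+\gamma\mu)$-cocoercive'' as ``$\tfrac{1}{\beta}$-cocoercive with $\beta=1/(1+\gamma\mu)$'', so that the factor ``$\beta\delta$'' produced by \cref{cor:scav:coco} is exactly $\delta=\tfrac{1+\gamma\beta}{1+\gamma\mu}$ — together with checking $\delta\in\left]0,1\right]$ and $\nu\in\left]0,1\right[$ against the admissible range of $\gamma$. (Along the way I would double-check the statement of \ref{thm:FB:av:3:0}: \cref{cor:scav:coco} delivers the conically nonexpansive parameter $\tfrac{1}{2-\nu}$ rather than $\nu$, and it is this value that makes the averagedness constant of \ref{thm:FB:av:3:i} come out as $1-\tfrac{\delta(1-\nu)}{2-\nu}$; and the extra hypothesis in the last part should presumably read $\mu>\beta$.)
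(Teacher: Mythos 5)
Your proposal is correct and takes essentially the paper's route: the paper proves this theorem by combining \cref{lem:T:Lips:to:coco} with \cref{thm:FB:av:2} applied with $(\omega,\beta)$ replaced by $(\beta,2\beta)$, and your argument simply inlines that reduction — cocoercivity of $A+\beta\Id$ via \cref{lem:T:Lips:to:coco} and \cref{lem:beta:smaller}, the decomposition of $\Id-\gamma A$, \cref{prop:J:R:.5}\ref{prop:J:R:.5:i} for $J_{\gamma B}$, then \cref{cor:scav:coco} and \cref{lem:T:av:factor}, exactly as in the proofs of \cref{thm:FB:av:1,thm:FB:av:2}. Your parenthetical remarks are also on target: the representation delivered by \cref{cor:scav:coco} has parameter $1/(2-\nu)$ rather than $\nu$ (which is what makes the averagedness constant in item (ii) come out right, so item (i) as printed is a typo carried over from \cref{thm:FB:av:1}), the final hypothesis should read $\mu>\beta$, and your inlined version even works under the stated assumption $\overline{\beta}>2\beta$ alone, whereas a literal black-box citation of \cref{thm:FB:av:2} would additionally require $\overline{\beta}>\mu+\beta$.
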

\begin{proof}
	Combine \cref{lem:T:Lips:to:coco}
	and 
	\cref{thm:FB:av:2} applied with 
	$(\omega,\beta)$ replaced by 
	$(\beta,2\beta)$.	
\end{proof}
\begin{theorem}
	\label{thm:FB:av:3:b}
	Let $\mu> \beta> 0$. 
	Suppose that 
	$A$ is $\beta$-Lipschitz continuous 
	and that
	$B$ is maximally $\mu$-monotone.
	Let 
	$\overline{\beta}\in \left]\mu+\beta,+\infty\right[$,
	and let
	$\gamma\in \left [
	2/(\overline{\beta}-2\beta),2/(\overline{\beta}-\mu-\beta)\right[$.
	Set $T=J_{\gamma B}(\Id-\gamma A)$, 
	set $\nu={\gamma\overline{\beta}  }/{(2(\gamma \overline{\beta}-\gamma \beta-1))}$,
	set $\delta=(1+\gamma\beta-\gamma \overline{\beta})/(1+\gamma \mu)$,
	and let $x_0\in X$.
	Then $ \delta \in \left ]-1,0\right]$ and $\nu \in \left ]0,1\right[ $.  
	Moreover, the following hold:
	\begin{enumerate}
		\item 
		\label{thm:FB:av:b:2:0}
		$T=\delta ((1-\nu)\Id+\nu N)$, $N$ is nonexpansive.
		\item
		\label{thm:FB:av:b:2:iv}
		$T$ is  a Banach contraction with a constant 
		$\abs{\delta}<1$.
		\item
		\label{thm:FB:av:b:2:v}
		There exists $\overline{x}\in X$ such that 
		$\fix T=\zer(A+B)=\{\overline{x}\}$
		and $T^nx_0\to\overline{x}$ with a linear 
		rate $\abs{\delta}<1$. 
	\end{enumerate}
\end{theorem}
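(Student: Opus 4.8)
The plan is to reduce this statement to \cref{thm:FB:av:2:b} in exactly the way \cref{thm:FB:av:3} was reduced to \cref{thm:FB:av:2}. The key point is that $\beta$-Lipschitz continuity of $A$ is a stronger hypothesis than the cocoercivity-type condition used in the ``case~II'' theorems: by \cref{lem:T:Lips:to:coco}, if $A$ is $\beta$-Lipschitz continuous then $A$ is maximally $(-\beta)$-monotone and, in addition, $A+\beta\Id$ is $\tfrac{1}{2\beta}$-cocoercive. Hence $A$ satisfies the hypotheses of \cref{thm:FB:av:2:b} with its role-of-$\omega$ equal to $\beta$ and its role-of-cocoercivity-parameter equal to $2\beta$, while $B$ (maximally $\mu$-monotone) and the step-size $\gamma$ are left unchanged.

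Concretely I would apply \cref{thm:FB:av:2:b} with $(\omega,\beta)$ replaced by $(\beta,2\beta)$ and then check the bookkeeping. Since $\mu>\beta$, the requirement $\mu>\omega$ of \cref{thm:FB:av:2:b} holds; moreover $\mu>\beta$ forces $\max\{2\beta,\mu+\beta\}=\mu+\beta$, so the admissible range for $\overline\beta$ collapses to $\,]\mu+\beta,+\infty[$ as claimed, and, because $\overline\beta>\mu+\beta>2\beta$, the step-size interval $\left[2/(\overline\beta-2\beta),\,2/(\overline\beta-\mu-\beta)\right[$ is non-empty (the reciprocals are ordered correctly precisely when $\beta<\mu$). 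The formulas $\nu=\gamma\overline\beta/(2(\gamma\overline\beta-\gamma\omega-1))$ and $\delta=(1+\gamma\omega-\gamma\overline\beta)/(1+\gamma\mu)$ of \cref{thm:FB:av:2:b} become, after the substitution, precisely $\nu=\gamma\overline\beta/(2(\gamma\overline\beta-\gamma\beta-1))$ and $\delta=(1+\gamma\beta-\gamma\overline\beta)/(1+\gamma\mu)$, matching the present statement. With all the data aligned, the conclusion of \cref{thm:FB:av:2:b} transfers verbatim: $\delta\in\,]{-1},0]$, $\nu\in\,]0,1[$, $T=\delta((1-\nu)\Id+\nu N)$ with $N$ nonexpansive, $T$ is a Banach contraction with constant $\abs{\delta}<1$, and $\fix T=\zer(A+B)=\{\overline x\}$ with $T^nx_0\to\overline x$ at the linear rate $\abs{\delta}$.

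Because this is merely a specialization of an already-established theorem, there is no genuine obstacle; the only thing requiring care is the index bookkeeping — in particular confirming that the $\tfrac{1}{2\beta}$-cocoercivity produced by \cref{lem:T:Lips:to:coco} is the correct input for the parameter that \cref{thm:FB:av:2:b} writes as ``$\beta$-cocoercive'' (which in the paper's usage there stands for $\tfrac1\beta$-cocoercivity, so the substitution is indeed $\beta\mapsto2\beta$), and that the hypothesis $\mu>\omega$ together with non-emptiness of the $\gamma$-interval survives the substitution; both are immediate from $\mu>\beta>0$.
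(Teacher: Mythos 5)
Your proposal is correct and is essentially the paper's own proof: the paper likewise invokes \cref{lem:T:Lips:to:coco} to convert $\beta$-Lipschitz continuity of $A$ into maximal $(-\beta)$-monotonicity plus $\tfrac{1}{2\beta}$-cocoercivity of $A+\beta\Id$, and then applies \cref{thm:FB:av:2:b} with $(\omega,\beta)$ replaced by $(\beta,2\beta)$. Your bookkeeping on the ranges of $\overline{\beta}$ and $\gamma$ and on the formulas for $\nu$ and $\delta$ matches the intended specialization.
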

\begin{proof}
	Combine \cref{lem:T:Lips:to:coco}
	and 
	\cref{thm:FB:av:2:b} applied with 
	$(\omega,\beta)$ replaced by 
	$(\beta,2\beta)$.	
\end{proof}
\section{Applications to optimization problems}
\label{sec:main:3} 
Let $f\colon X\to \left]-\infty,+\infty\right]$, and let 
$g\colon X\to \left]-\infty,+\infty\right]$.
Throughout this section, we shall assume that
\begin{empheq}[box=\mybluebox]{equation*}
\text{$f$ and $g$
	are 
	lower semicontinuous proper functions.}
\end{empheq} 
We shall use $\partial f$ to denote 
the subdifferential mapping from convex analysis.
\begin{definition}
	\label{def:abst}(see \cite[Definition~6.1]{BMW19})
	An abstract subdifferential $\pars$ associates a subset $\pars
	f(x)$ of $X$ to
	$f$ at $x\in X$, and it 
	satisfies the following properties:
	\begin{enumerate}
		\item\label{i:p1} $\pars f=\partial f$ if $f$ is a proper lower semicontinuous convex function;
		\item\label{i:p1.5} $\pars f=\nabla f$ if $f$ is continuously differentiable;
		\item\label{i:p2} $0\in\pars f(x)$ if $f$ attains a local minimum at $x\in\dom f$;
		\item\label{i:p3} for every $\beta\in\RR$, 
		$$\pars\Big(f+\beta\tfrac{\|\cdot-x\|^2}{2}\Big)=\pars f +\beta (\Id-x).$$
	\end{enumerate}
\end{definition}
The Clarke--Rockafellar subdifferential,
Mordukhovich subdifferential, and Frech\'et subdifferential all
satisfy Definition~\ref{def:abst}\ref{i:p1}--\ref{i:p3},
see, e.g., \cite{Clarke90}, \cite{Mord06}, \cite{Mord18},
so they are $\pars$. 

Let $\lambda>0$. Recall that $f$
is ${\lambda}$-hypoconvex (see \cite{Rock98, Wang2010}) if
\begin{equation}
\label{eq:def:hypocon}
f((1-\tau)x+\tau y)\le (1-\tau) f(x)
+\tau f(y) +\tfrac{\lambda}{2}\ \tau(1-\tau)
\norm{x-y}^2,
\end{equation}
for all $(x,y)\in X\times X$ and $\tau\in \left]0,1\right[$, 
or equivalently; 
\begin{equation}
\text{$f+\tfrac{\lambda}{2}\norm{\cdot}^2$ is convex.}
\end{equation}
For $\gamma>0$,  
the \emph{proximal mapping} $\prox{\gamma f}$ is defined 
at $x\in X$ by
\begin{equation}
\prox{\gamma f}(x)
=\underset{y\in X}{\argmin}
\Big(f(y)+\tfrac{\gamma}{2}\|x-y\|^2\Big).
\end{equation}
\begin{fact}
	\label{f:sub:for}
	Suppose that  
	$f\colon X\to \left]-\infty,+\infty\right]$ is a proper
	lower semicontinuous ${\lambda}$-hypoconvex 
	function. Then
	\begin{equation}\label{e:hypoc:sub}
	\pars f=\partial\Big(f+\tfrac{\lambda}{2}\|\cdot\|^2\Big)-{\lambda}\Id.
	\end{equation}
	Moreover, we have:
	\begin{enumerate}
		\item 
		\label{f:sub:for:i}
		The Clarke--Rockafellar,
		Mordukhovich,
		and Frech\'et subdifferential operators of $f$ 
		all coincide.
		\item 
		\label{f:sub:for:ii} 
		$\pars f$ is maximally $-\lambda$-monotone.
		
		\item 
		\label{f:sub:for:iii}
		$(\forall \gamma\in \left]0,{1}/{\lambda}\right[)$
		$\prox{\gamma f}$ 
		is single-valued and $\dom\prox{\gamma f}=X$.
	\end{enumerate}
\end{fact}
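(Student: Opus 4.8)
The plan is to push everything through the function $g:=f+\tfrac{\lambda}{2}\|\cdot\|^2$, which is proper, lower semicontinuous and convex because $f$ is proper, lower semicontinuous and $\lambda$-hypoconvex. To obtain \eqref{e:hypoc:sub}, I would apply Definition~\ref{def:abst}\ref{i:p3} with $\beta=\lambda$ and base point $0$, which gives $\pars g=\pars\big(f+\tfrac{\lambda}{2}\|\cdot\|^2\big)=\pars f+\lambda\Id$; since $g$ is proper, lower semicontinuous and convex, Definition~\ref{def:abst}\ref{i:p1} gives $\pars g=\partial g$, and rearranging yields $\pars f=\partial g-\lambda\Id$, which is \eqref{e:hypoc:sub}. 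For \ref{f:sub:for:i}, the Clarke--Rockafellar, Mordukhovich and Frech\'et subdifferentials each satisfy Definition~\ref{def:abst}\ref{i:p1}--\ref{i:p3}, so the computation above yields \eqref{e:hypoc:sub} for each of them; but its right-hand side $\partial\big(f+\tfrac{\lambda}{2}\|\cdot\|^2\big)-\lambda\Id$ depends only on the ordinary convex subdifferential of $g$, hence is the same set-valued operator in all three cases, so the three subdifferentials of $f$ coincide.

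For \ref{f:sub:for:ii}, I would combine \eqref{e:hypoc:sub} with two facts: that $\partial g$ is maximally monotone (Rockafellar's theorem, $g$ being proper, lower semicontinuous and convex), and that subtracting $\lambda\Id$ turns a maximally monotone operator into a maximally $(-\lambda)$-monotone one. The $(-\lambda)$-monotonicity of $\partial g-\lambda\Id$ is immediate from \eqref{e:rm:def}; for maximality, if $(x_0,u_0)$ is $(-\lambda)$-monotonically related to $\gra(\partial g-\lambda\Id)$, then a one-line rearrangement shows that $(x_0,u_0+\lambda x_0)$ is monotonically related to $\gra\partial g$, whence $u_0+\lambda x_0\in\partial g(x_0)$ by maximal monotonicity of $\partial g$, i.e.\ $u_0\in(\partial g-\lambda\Id)(x_0)=\pars f(x_0)$.

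For \ref{f:sub:for:iii}, fix $\gamma\in\left]0,1/\lambda\right[$. By \ref{f:sub:for:ii}, $\gamma\,\pars f$ is maximally $(-\gamma\lambda)$-monotone with $-\gamma\lambda>-1$, so \eqref{e:sv:fdom} gives that $J_{\gamma\pars f}$ is single-valued with $\dom J_{\gamma\pars f}=X$. The conclusion then follows from the identity $\prox{\gamma f}=J_{\gamma\pars f}$ on this range of $\gamma$: for $x\in X$ the proximal subproblem is proper, lower semicontinuous and strongly convex (because $\gamma\lambda<1$), hence has a unique minimizer $y$, and the optimality condition for $y$ (read off using Definition~\ref{def:abst}\ref{i:p2} and \ref{i:p3}) says precisely that $x-y\in\gamma\,\pars f(y)$, i.e.\ $y=J_{\gamma\pars f}(x)$; hence $\prox{\gamma f}(x)=\{J_{\gamma\pars f}(x)\}$.

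The only step that goes beyond transcribing the abstract-subdifferential axioms is the maximality assertion inside \ref{f:sub:for:ii}, which I expect to be the one (and rather mild) obstacle; the remaining verifications, including checking that the normalization of $\prox{}$ makes $\gamma<1/\lambda$ the relevant threshold in \ref{f:sub:for:iii}, are routine.
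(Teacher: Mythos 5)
Your proof is correct, but there is nothing in the paper to compare it against: the statement is recorded as a Fact and its ``proof'' is only the citation \cite[Propositions~6.2~and~6.3]{BMW19}, so what you have written is a self-contained derivation of something the paper outsources. Your route is the expected one and all steps check out: pass to $g=f+\tfrac{\lambda}{2}\|\cdot\|^2$, combine \cref{def:abst}\ref{i:p3} (base point $0$, $\beta=\lambda$) with \cref{def:abst}\ref{i:p1} to get \eqref{e:hypoc:sub}; observe that the right-hand side involves only the convex subdifferential of $g$, which gives \ref{f:sub:for:i}; transfer Rockafellar's maximal monotonicity of $\partial g$ through the shift $-\lambda\Id$ (your maximality argument via $(x_0,u_0+\lambda x_0)$ is the standard one and is correct); and obtain \ref{f:sub:for:iii} from strong convexity of the proximal subproblem together with the identification $\prox{\gamma f}=J_{\gamma\pars f}$ coming from \cref{def:abst}\ref{i:p2}\&\ref{i:p3} and \eqref{e:sv:fdom}.

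The one point worth making explicit is the normalization issue you flagged at the end. As displayed in the paper, $\prox{\gamma f}(x)=\argmin_{y\in X}\big(f(y)+\tfrac{\gamma}{2}\|x-y\|^2\big)$; under that literal reading the subproblem is strongly convex only when $\gamma>\lambda$, the identification becomes $\prox{\gamma f}=J_{\gamma^{-1}\pars f}$, and the range $\left]0,1/\lambda\right[$ in \ref{f:sub:for:iii} would be wrong (take $f=-\tfrac{\lambda}{2}\|\cdot\|^2$ and $\gamma<\lambda$: the argmin is empty). Your computation, which asserts strong convexity ``because $\gamma\lambda<1$'' and yields $\prox{\gamma f}=J_{\gamma\pars f}$, tacitly uses the standard normalization $f(y)+\tfrac{1}{2\gamma}\|x-y\|^2$, which is the one used in \cite{BMW19} and the only one consistent with the stated threshold $1/\lambda$; so your argument is the intended one, but you should state the normalization you are working with rather than calling the verification routine.
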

\begin{proof}
	See \cite[Proposition~6.2~and~Proposition~6.3]{BMW19}.	
\end{proof}
\begin{proposition}
	\label{prop:sum}
	Let $\mu\ge \omega\ge 0$.
	Suppose that $\argmin(f+g)\neq \fady$ and that 
	one of the following conditions is satisfied:
	\begin{enumerate}
		\item 
		\label{c:sum:1}
		$f$ is $\mu $- strongly convex,
		$g$ is $\omega $- hypoconvex.
		
		\item 
		\label{c:sum:2}
		$f$ is $\omega $- hypoconvex, and 
		$g$ is $\mu $- strongly convex. 
	\end{enumerate}
	Then $f+g$ is convex and 
	$\pars(f+g)=\partial (f+g)$.
	
	If, in addition, one of the following
	conditions is satisfied:
	\begin{enumerate}[(a)]
		\item 
		\label{sc:1}
		$0\in \sri (\dom f-\dom g)$.
		\item 
		\label{sc:2}
		$X$ is finite dimensional and 
		$0\in \ri (\dom f-\dom g)$.
		\item 
		\label{sc:3}
		$X$ is finite dimensional, 
		$f$ and $g$ are polyhedral, and  
		$\dom f\cap \dom g\neq \fady$.
	\end{enumerate}
	Then 
	\begin{equation}
	\label{eq:argmin:sum}
	\pars(f+g)=\partial(f+g)=\pars f+\pars g,
	\end{equation}
	and 
	\begin{equation}
	\label{eq:argmin:sum:zeros}
	\zer\pars(f+g)=\zer(\pars f+\pars g)=\argmin(f+g).
	\end{equation}
\end{proposition}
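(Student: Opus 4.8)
The plan is to prove the two claims in turn. First I would establish that $f+g$ is convex, which under \ref{c:sum:1} follows from the decomposition
\[
f+g=\Big(f-\tfrac{\mu}{2}\norm{\cdot}^2\Big)+\Big(g+\tfrac{\omega}{2}\norm{\cdot}^2\Big)+\tfrac{\mu-\omega}{2}\norm{\cdot}^2,
\]
in which the first summand is convex by $\mu$-strong convexity of $f$, the second is convex by $\omega$-hypoconvexity of $g$ (\cref{eq:def:hypocon}), and the third is convex precisely because $\mu-\omega\ge 0$; under \ref{c:sum:2} the roles of $f$ and $g$ are interchanged. Next I would note that $f+g$ is lower semicontinuous as a sum of lower semicontinuous functions, and proper because $\argmin(f+g)\neq\fady$ forces $\dom f\cap\dom g\neq\fady$ (and a sum of proper functions never attains $-\infty$), so that $\pars(f+g)=\partial(f+g)$ by \cref{def:abst}\ref{i:p1}; this already gives the first assertion, without using \ref{sc:1}--\ref{sc:3}.

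For \cref{eq:argmin:sum} the idea is to realize $f+g$ as a sum $\hat f+\hat g$ of two proper, lower semicontinuous \emph{convex} functions and apply the convex subdifferential sum rule. Under \ref{c:sum:1} I would take $\hat f:=f-\tfrac{\omega}{2}\norm{\cdot}^2$ and $\hat g:=g+\tfrac{\omega}{2}\norm{\cdot}^2$: then $\hat f=\big(f-\tfrac{\mu}{2}\norm{\cdot}^2\big)+\tfrac{\mu-\omega}{2}\norm{\cdot}^2$ is convex because $\mu\ge\omega$, and $\hat g$ is convex by hypoconvexity of $g$ (if $\omega=0$ simply take $\hat f=f$, $\hat g=g$). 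Since adding an everywhere finite-valued quadratic does not change the effective domain, $\dom\hat f=\dom f$ and $\dom\hat g=\dom g$, so \ref{sc:1} and \ref{sc:2} are inherited verbatim by $(\hat f,\hat g)$, while under \ref{sc:3} polyhedral functions are convex, so $\hat f=f$, $\hat g=g$ with nothing to adjust. The convex subdifferential sum rule under the pertinent constraint qualification (Attouch--Br\'ezis for \ref{sc:1}, its finite-dimensional version for \ref{sc:2}, and the exact polyhedral sum rule for \ref{sc:3}; see, e.g., \cite[Corollary~16.48]{BC2017} and \cite{Rock98}) then gives $\partial(f+g)=\partial(\hat f+\hat g)=\partial\hat f+\partial\hat g$. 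I would finish by translating back: \cref{def:abst}\ref{i:p1} and \cref{def:abst}\ref{i:p3} (with $x=0$, $\beta=-\omega$) give $\partial\hat f=\pars\hat f=\pars f-\omega\Id$, and \cref{e:hypoc:sub} gives $\partial\hat g=\pars g+\omega\Id$, so that $\partial(f+g)=(\pars f-\omega\Id)+(\pars g+\omega\Id)=\pars f+\pars g$; combined with $\pars(f+g)=\partial(f+g)$ this is \cref{eq:argmin:sum}. Under \ref{c:sum:2} one takes $\hat f:=f+\tfrac{\omega}{2}\norm{\cdot}^2$, $\hat g:=g-\tfrac{\omega}{2}\norm{\cdot}^2$ and argues symmetrically.

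Finally, for \cref{eq:argmin:sum:zeros}: its first equality is immediate from \cref{eq:argmin:sum}, and for the second I would use that $f+g$ is proper, lower semicontinuous and convex with $\pars(f+g)=\partial(f+g)$, so Fermat's rule yields $x\in\zer\pars(f+g)\iff 0\in\partial(f+g)(x)\iff x\in\argmin(f+g)$ (one direction via \cref{def:abst}\ref{i:p2}, the other via convexity). I expect the main obstacle to be purely organizational rather than conceptual: ensuring that $\hat f$ and $\hat g$ are \emph{simultaneously} proper, lower semicontinuous and convex (this is exactly where $\mu\ge\omega$ is used), that the three constraint qualifications transfer to the shifted functions, and that the passage between $\partial$ on $\hat f,\hat g$ and $\pars$ on $f,g$ is precisely the one licensed by \cref{f:sub:for} and \cref{def:abst}\ref{i:p3}; the degenerate case $\omega=0$, where hypoconvexity collapses to convexity, should be handled explicitly but is trivial.
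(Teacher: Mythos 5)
Your proposal is correct and follows essentially the same route as the paper: reduce to the convex subdifferential sum rule under the qualification conditions (a)--(c) after absorbing quadratics, then translate back to $\pars$ via \cref{def:abst}\ref{i:p1}, \ref{i:p3} and \cref{f:sub:for}, and finish with Fermat's rule. The only differences are bookkeeping: you shift both functions by $\omega$ so that $\hat f+\hat g=f+g$ exactly (the paper shifts by $\mu$ and $\omega$ and handles the leftover $\tfrac{\mu-\omega}{2}\norm{\cdot}^2$ with \cref{def:abst}\ref{i:p3}), and your observation that in case \ref{sc:3} no shift is needed because polyhedral functions are already convex is a sensible, slightly more careful treatment of that case.
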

\begin{proof}
	It is clear that either \cref{c:sum:1} or \cref{c:sum:2}
	implies that $f+g$ is convex and the identity 
	follows in view  of \cref{def:abst}\ref{i:p1}.
	Now,
	suppose that \cref{c:sum:1} holds along with 
	one of the assumptions \cref{sc:1}--\cref{sc:3}.
	Rewrite $f$ and $g$ 
	as $(f,g)=(\overline{f}+\tfrac{\mu}{2}\norm{\cdot}^2,
	\overline{g}-\tfrac{\omega}{2}\norm{\cdot}^2)$
	and observe that both $\overline{f}$ and $\overline{g}$
	are convex, as is $\overline{f}+\overline{g}$.
	Moreover, we have $\dom f=\dom \overline{f}$
	and $\dom g=\dom \overline{g}$.
	Now,
	\begin{subequations}
		\begin{align}
		\pars(f+g)
		&=\pars(\overline{f}+\overline{g}
		+\tfrac{\mu-\omega}{2}\norm{\cdot}^2)
		\label{se:pars:1}
		\\
		&=\pars(\overline{f}+\overline{g})
		+({\mu-\omega})\Id=\partial(\overline{f}+\overline{g})
		+({\mu-\omega})\Id
		\label{se:pars:2}
		\\
		&=\partial\overline{f}+\partial\overline{g}
		+({\mu-\omega})\Id
		=\partial\overline{f}+\mu\Id+\partial\overline{g}-\omega \Id
		\label{se:pars:3}
		\\
		&=\partial f+\pars g=\pars f+\pars g.
		\label{se:pars:4}
		\end{align}
	\end{subequations}
	Here, \cref{se:pars:2} follows from
	applying \cref{def:abst}\ref{i:p3}
	to $\overline{f}+\overline{g}$,
	\cref{se:pars:3} 
	follows from \cite[Theorem~16.47]{BC2017}
	applied to $\overline{f}$ and $\overline{g}$,
	and 
	\cref{se:pars:3} 
	follows from applying \cref{f:sub:for} to 
	$f$  and $g$
	and using \cref{def:abst}\ref{i:p1}, 
	which verify \cref{eq:argmin:sum}.
	Finally, \cref{eq:argmin:sum:zeros}
	follows from combining \cref{eq:argmin:sum}
	and \cite[Theorem~16.3]{BC2017}.
\end{proof}

The following theorem 
provides an alternative proof to 
\cite[Theorem~4.4]{GHY17}
and \cite[Theorem~5.4(ii)]{DP18}.

\begin{theorem}
	\label{thm:DR:relax:functions}
	Let $\mu>\omega\ge 0$,
	and let $\gamma\in \left]0, 
	{(\mu-\omega)}/(2\mu\omega)\right[$.
	Suppose that one of the following holds:
	\begin{enumerate}
		\item 
		\label{asm:order:sq:1:f}
		$f$ is $\mu $- strongly convex,
		$g$ is $\omega $- hypoconvex.
		
		\item 
		\label{asm:order:sq:2:f}
		$f$ is $\omega $-hypoconvex, and 
		$g$ is $\mu $-strongly convex. 
	\end{enumerate}
	and that $0\in \pars f+\pars g$ (see
	\cref{prop:sum} for sufficient conditions).
	Set 
	\begin{equation}
	T=\tfrac{1}{2}\big(\Id+
	(2\prox{\gamma g}-\Id)(2\prox{\gamma f}-\Id)\big),
	\quad
	\text{and}
	\quad 
	\alpha=
	\tfrac{\mu-\omega}{2(\mu-\omega-\gamma\mu\omega)},
	\end{equation}
	and let $x_0\in X$.
	Then  $\alpha\in \left]0,1\right[$, and
	$T$ 
	is $\alpha$-averaged.
	Moreover, 
	$(\exists\ \overline{x}\in \fix T)$
	such that $T^n x_0\weakly \overline{x}$,
	$\argmin(f+g)=\{ \prox{f} \overline{x}\}$,
	and  $\prox{f}T^n x_0 \to \prox{f} \overline{x}$.
\end{theorem}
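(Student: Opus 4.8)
The plan is to deduce everything from the operator-level theorems \cref{thm:DR:relax} and \cref{thm:convergence:DR} by taking $A:=\pars f$ and $B:=\pars g$ and translating the hypotheses. First I would observe that a $\mu$-strongly convex function is in particular convex, so \cref{def:abst}\ref{i:p1} gives $\pars f=\partial f$, which is maximally $\mu$-monotone, while \cref{f:sub:for}\ref{f:sub:for:ii} shows that the abstract subdifferential of an $\omega$-hypoconvex function is maximally $(-\omega)$-monotone. Consequently, under \ref{asm:order:sq:1:f} the pair $(A,B)=(\pars f,\pars g)$ satisfies ``$A$ maximally $\mu$-monotone, $B$ maximally $(-\omega)$-monotone'', which is the second alternative \ref{asm:order:sq:2} in \cref{thm:DR:relax} and alternative \ref{thm:convergence:DR:i} in \cref{thm:convergence:DR}; under \ref{asm:order:sq:2:f} the roles of $A$ and $B$ swap and one falls into \ref{asm:order:sq:1}~/~\ref{thm:convergence:DR:ii}. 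I would carry out the argument for \ref{asm:order:sq:1:f}, the other case being symmetric.

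Next I would check that the $T$ in the statement is literally the Douglas--Rachford operator of the operator-level theorems. Since $\gamma\mu\ge0>-1$ and, from $\gamma<\tfrac{\mu-\omega}{2\mu\omega}$, also $\gamma\omega<\tfrac{\mu-\omega}{2\mu}<\tfrac12<1$, \cref{e:sv:fdom} makes $J_{\gamma\pars f}$ and $J_{\gamma\pars g}$ single-valued with full domain; these resolvents are $\prox{\gamma f}$ and $\prox{\gamma g}$, so $R_{\gamma A}=2\prox{\gamma f}-\Id$, $R_{\gamma B}=2\prox{\gamma g}-\Id$, and $T=\tfrac12(\Id+R_{\gamma B}R_{\gamma A})$. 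Then \cref{thm:DR:relax} applied to $(\pars f,\pars g)$ with this same $\gamma$ immediately yields $\alpha=\tfrac{\mu-\omega}{2(\mu-\omega-\gamma\mu\omega)}\in\left]0,1\right[$ and that $T$ is $\alpha$-averaged.

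For the convergence statements I would invoke \cref{thm:convergence:DR}. The hypothesis $0\in\pars f+\pars g$ is exactly $\zer(A+B)\neq\fady$; this is the only place in the proof of \cref{thm:convergence:DR} that appealed to maximality of $A+B$ (strong monotonicity of $A+B$ then already forces $\zer(A+B)$ to be a singleton), so the remaining conclusions of \cref{thm:convergence:DR} carry over: there is $\overline x\in\Fix T=\Fix R_{\gamma B}R_{\gamma A}$ with $T^nx_0\weakly\overline x$, with $\zer(A+B)=\{J_{\gamma A}\overline x\}$, and with $J_{\gamma A}T^nx_0\to J_{\gamma A}\overline x$ in norm. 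It then remains to identify $\zer(\pars f+\pars g)$ with $\argmin(f+g)$; I would either cite \cref{prop:sum} (whose constraint-qualification hypotheses are among the sufficient conditions alluded to in the statement), or argue directly: if $0\in\pars f(x)+\pars g(x)$, choose $u\in\pars f(x)$ with $-u\in\pars g(x)$; the $\mu$-strong-convexity inequality for $f$ and, via the representation in \cref{f:sub:for}, the inequality $g(y)\ge g(x)-\scal{u}{y-x}-\tfrac{\omega}{2}\norm{x-y}^2$ for $g$ add to $(f+g)(y)\ge(f+g)(x)+\tfrac{\mu-\omega}{2}\norm{x-y}^2$ for all $y$, so $x\in\argmin(f+g)$; since $\mu>\omega$ the function $f+g$ is $(\mu-\omega)$-strongly convex, its minimizer is unique, and hence coincides with the nonempty set $\zer(\pars f+\pars g)$. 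Rewriting $J_{\gamma A}=\prox{\gamma f}$ then gives $\argmin(f+g)=\{\prox{\gamma f}\overline x\}$ and $\prox{\gamma f}T^nx_0\to\prox{\gamma f}\overline x$, which completes the reduction.

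I do not expect a genuine analytic obstacle: the argument is bookkeeping on top of \cref{thm:DR:relax} and \cref{thm:convergence:DR}. The two points requiring care are the case correspondence — the strongly convex function induces the $\mu$-monotone operator, so \ref{asm:order:sq:1:f} pairs with \ref{asm:order:sq:2} rather than with \ref{asm:order:sq:1} — and the fact that $\pars f+\pars g$ need not be maximally monotone a priori, which is precisely why $0\in\pars f+\pars g$ is assumed outright and why the identification with $\argmin(f+g)$ must be obtained from strong convexity rather than from an unqualified subdifferential sum rule.
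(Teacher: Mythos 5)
Your proposal is correct and follows essentially the same route as the paper: reduce to \cref{thm:DR:relax} and \cref{thm:convergence:DR} with $(A,B)=(\pars f,\pars g)$, using that $\partial f=\pars f$ is maximally $\mu$-monotone for $f$ $\mu$-strongly convex and \cref{f:sub:for}\ref{f:sub:for:ii} for the hypoconvex function, with the same case pairing you describe. The only difference is that you spell out what the paper's one-line proof leaves implicit, namely that $0\in\pars f+\pars g$ substitutes for the existence step in \cref{thm:convergence:DR} and that $\zer(\pars f+\pars g)=\argmin(f+g)$ via \cref{prop:sum} or strong convexity, which is careful bookkeeping rather than a different argument.
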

\begin{proof}
	Suppose that \ref{asm:order:sq:1} holds.
	Then \cite[Example~22.4]{BC2017}
	(respectively  \cref{f:sub:for}\ref{f:sub:for:ii}) 
	implies that
	$\pars f=\partial f$ (respectively $\pars g$) 
	is maximally $\mu$-monotone
	(respectively maximally $(-\omega)$-monotone). 
	The conclusion follows from applying 
	\cref{thm:convergence:DR}\ref{thm:convergence:DR:i}
	with $(A,B)$ replaced by $(\pars f,\pars g)$.
	The proof for \ref{asm:order:sq:2} follows similarly, 
	by using \cref{thm:convergence:DR}\ref{thm:convergence:DR:ii}.
\end{proof}
Before we proceed further we recall the following 
useful fact.
\begin{fact}[Baillon--Haddad] 
	\label{fact:B:Haddad}
	Let $f\colon X\to \RR$ be a Frech\'et differentiable convex
	function and let $\beta>0$.
	Then $\grad f$ is $\beta$-Lipschitz continuous
	if and only if $\grad f$ is $\tfrac{1}{\beta}$-cocoercive.
\end{fact}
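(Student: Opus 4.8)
The plan is to establish only the non-trivial implication, that $\beta$-Lipschitz continuity of $\nabla f$ forces $\tfrac1\beta$-cocoercivity, since the reverse implication is immediate: if $\scal{\nabla f(x)-\nabla f(y)}{x-y}\geq\tfrac1\beta\norm{\nabla f(x)-\nabla f(y)}^2$, then $\norm{\nabla f(x)-\nabla f(y)}^2\leq\beta\scal{\nabla f(x)-\nabla f(y)}{x-y}\leq\beta\norm{\nabla f(x)-\nabla f(y)}\,\norm{x-y}$, and dividing by $\norm{\nabla f(x)-\nabla f(y)}$ (the case $\nabla f(x)=\nabla f(y)$ being trivial) yields the Lipschitz bound. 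For the forward direction I would reduce the claim to proving, for all $y,z\in X$, the one-sided Bregman estimate $\tfrac{1}{2\beta}\norm{\nabla f(z)-\nabla f(y)}^2\leq f(z)-f(y)-\scal{\nabla f(y)}{z-y}$; cocoercivity then drops out by symmetrizing in $(y,z)$.

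The first genuine step is the \emph{descent lemma}: if $h\colon X\to\RR$ is Fr\'echet differentiable with $\nabla h$ $L$-Lipschitz, then $h(w)\leq h(z)+\scal{\nabla h(z)}{w-z}+\tfrac{L}{2}\norm{w-z}^2$ for all $w,z$. I would prove it by scalarizing along the segment: set $\varphi(t)=h(z+t(w-z))$, so that $\varphi'(t)=\scal{\nabla h(z+t(w-z))}{w-z}$, write $h(w)-h(z)=\int_0^1\varphi'(t)\,dt=\scal{\nabla h(z)}{w-z}+\int_0^1\scal{\nabla h(z+t(w-z))-\nabla h(z)}{w-z}\,dt$, and bound the integrand by $\norm{\nabla h(z+t(w-z))-\nabla h(z)}\,\norm{w-z}\leq Lt\norm{w-z}^2$.

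The second step applies this to a tilted copy of $f$. Fix $y\in X$ and set $g_y(x)=f(x)-\scal{\nabla f(y)}{x}$. Then $g_y$ is convex and Fr\'echet differentiable, $\nabla g_y=\nabla f-\nabla f(y)$ is $\beta$-Lipschitz, and $\nabla g_y(y)=0$, so convexity makes $y$ a global minimizer of $g_y$. Hence, for arbitrary $z\in X$, taking $w=z-\tfrac1\beta\nabla g_y(z)$ in the descent lemma (with $h=g_y$, $L=\beta$) gives
\begin{equation*}
g_y(y)\leq g_y(w)\leq g_y(z)-\tfrac1\beta\norm{\nabla g_y(z)}^2+\tfrac{1}{2\beta}\norm{\nabla g_y(z)}^2=g_y(z)-\tfrac{1}{2\beta}\norm{\nabla g_y(z)}^2.
\end{equation*}
Rewriting $g_y(z)-g_y(y)=f(z)-f(y)-\scal{\nabla f(y)}{z-y}$ and $\nabla g_y(z)=\nabla f(z)-\nabla f(y)$ yields exactly the one-sided estimate $\tfrac{1}{2\beta}\norm{\nabla f(z)-\nabla f(y)}^2\leq f(z)-f(y)-\scal{\nabla f(y)}{z-y}$.

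Finally, swapping the roles of $y$ and $z$ produces the companion inequality $\tfrac{1}{2\beta}\norm{\nabla f(z)-\nabla f(y)}^2\leq f(y)-f(z)-\scal{\nabla f(z)}{y-z}$; adding the two, the function values cancel and the inner-product terms combine into $\scal{\nabla f(z)-\nabla f(y)}{z-y}$, giving $\tfrac1\beta\norm{\nabla f(z)-\nabla f(y)}^2\leq\scal{\nabla f(z)-\nabla f(y)}{z-y}$, i.e.\ $\tfrac1\beta$-cocoercivity of $\nabla f$ (equivalently, $\nabla f=\tfrac{\beta}{2}(\Id+N)$ for some nonexpansive $N$). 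The only real obstacle is the descent lemma in a possibly infinite-dimensional Hilbert space, where one must invoke the vector-valued fundamental theorem of calculus along a segment rather than finite-dimensional calculus, together with the observation that convexity is used essentially (the implication fails for $f=-\tfrac12\norm{\cdot}^2$); alternatively, this is a classical fact and may simply be cited, e.g.\ from \cite{BC2017}.
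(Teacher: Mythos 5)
Your proof is correct, but it is a genuinely different (more self-contained) route than the paper takes: the paper does not prove the Baillon--Haddad fact at all, it simply cites \cite[Corollary~18.17]{BC2017}, whereas you reconstruct the standard argument from scratch. Your argument is the classical one: the easy direction by Cauchy--Schwarz, and the hard direction via the descent lemma applied to the tilted function $g_y=f-\scal{\grad f(y)}{\cdot}$, minimizing the quadratic upper bound at $w=z-\tfrac1\beta\grad g_y(z)$ to get the one-sided Bregman estimate $\tfrac{1}{2\beta}\norm{\grad f(z)-\grad f(y)}^2\le f(z)-f(y)-\scal{\grad f(y)}{z-y}$, then symmetrizing. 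All steps check out: the descent lemma needs only the scalar fundamental theorem of calculus along the segment (continuity of $\varphi'$ follows from Lipschitz continuity of $\grad f$), convexity is used exactly where it must be (to make $y$ a global minimizer of $g_y$, and your counterexample $f=-\tfrac12\norm{\cdot}^2$ correctly shows it cannot be dropped), and you rightly record at the end that the inner-product form of cocoercivity is equivalent to the paper's \IN-style definition $\grad f=\tfrac{\beta}{2}(\Id+N)$ with $N$ nonexpansive. What the citation buys the paper is brevity; what your argument buys is independence from the reference and an explicit exhibition of the stronger Bregman-type inequality, which is of independent use. Either is acceptable here.
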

\begin{proof}
	See, e.g., \cite[Corollary~18.17]{BC2017}. 
\end{proof}
\begin{lemma}
	\label{lem:grad:st:con}
	Let $\mu \ge 0$, let $\beta>0$ and let 
	$f\colon X\to \RR$ be a Frech\'et differentiable 
	function. 
	Suppose that $f$ is $\mu$-strongly convex
	with a  ${\beta}$-Lipschitz continuous gradient.
	Then the following hold:
	\begin{enumerate}
		\item 	
		\label{lem:grad:st:con:0}
		$f-\tfrac{\mu}{2}\norm{\cdot}^2$ is convex. 
		\item 	
		\label{lem:grad:st:con:i}
		$\grad f$ is maximally $\mu$-monotone.
		\item 	
		\label{lem:grad:st:con:iii} 
		$\grad f-\mu \Id $ is 
		$\tfrac{1}{\beta}$-cocoercive.
	\end{enumerate} 	
\end{lemma}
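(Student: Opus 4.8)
The plan is to reduce all three claims to the auxiliary function $g:=f-\tfrac{\mu}{2}\norm{\cdot}^2$, which is finite‑valued, convex, and Fréchet differentiable on $X$, with $\grad g=\grad f-\mu\Id$. Part~\ref{lem:grad:st:con:0} is then just the definition: $\mu$-strong convexity of $f$ says precisely that $g$ is convex (see, e.g., \cite[Definition~10.5 and Corollary~10.9]{BC2017}), the case $\mu=0$ being the trivial statement that $f$ is convex. I also record at the outset that $\mu\le\beta$: for $x\neq y$, $\mu$-monotonicity of $\grad f$ together with Cauchy--Schwarz and the $\beta$-Lipschitz continuity of $\grad f$ give $\mu\norm{x-y}^2\le\scal{\grad f(x)-\grad f(y)}{x-y}\le\beta\norm{x-y}^2$.

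For part~\ref{lem:grad:st:con:i}, since $g$ is a finite-valued (hence continuous, hence proper lower semicontinuous) convex function that is Fréchet differentiable everywhere, $\grad g=\partial g$ is maximally monotone by \cite[Example~20.5]{BC2017}. Now $\grad f=\grad g+\mu\Id$, and under the affine change of the second coordinate $(x,u)\mapsto(x,u-\mu x)$ the graph of $\grad f$ is carried onto the graph of $\grad g$; moreover a pair $(x,u)$ satisfies $\scal{u-\grad f(y)}{x-y}\ge\mu\norm{x-y}^2$ for all $y$ if and only if $(x,u-\mu x)$ is monotonically related to $\gra\grad g$. Hence maximal monotonicity of $\grad g$ upgrades to maximal $\mu$-monotonicity of $\grad f$, as claimed.

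For part~\ref{lem:grad:st:con:iii}, I first note that $\tfrac{\beta}{2}\norm{\cdot}^2-f$ is convex, because its gradient $\beta\Id-\grad f$ is monotone: by Cauchy--Schwarz and $\beta$-Lipschitz continuity of $\grad f$ one has $\scal{(\beta\Id-\grad f)x-(\beta\Id-\grad f)y}{x-y}\ge\beta\norm{x-y}^2-\norm{\grad f(x)-\grad f(y)}\,\norm{x-y}\ge0$. Since $\tfrac{\beta}{2}\norm{\cdot}^2-f=\tfrac{\beta-\mu}{2}\norm{\cdot}^2-g$, the convex function $g$ has the property that $\tfrac{\beta-\mu}{2}\norm{\cdot}^2-g$ is convex, so by the Baillon--Haddad equivalences \cite[Corollary~18.17]{BC2017} its gradient $\grad g$ is $(\beta-\mu)$-Lipschitz continuous (when $\mu=\beta$ this merely records that $\grad g$ is constant); in particular $\grad g$ is $\beta$-Lipschitz continuous, since $\beta-\mu\le\beta$. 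Applying the Baillon--Haddad theorem (\cref{fact:B:Haddad}) to $g$ then gives that $\grad g=\grad f-\mu\Id$ is $\tfrac{1}{\beta}$-cocoercive. Alternatively, when $\mu<\beta$ one may simply apply \cref{cor:f1:f2:lips}\ref{cor:f1:f2:lips:ii} with $(f_1,f_2)=(f,\tfrac{\mu}{2}\norm{\cdot}^2)$.

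The only delicate point is the maximality bookkeeping in part~\ref{lem:grad:st:con:i} --- checking that shifting a maximally monotone operator by $\mu\Id$ yields a \emph{maximally} $\mu$-monotone operator rather than merely a $\mu$-monotone one, which is exactly what the graph correspondence above takes care of. Everything else is a routine combination of Cauchy--Schwarz, the Baillon--Haddad theorem, and the standard characterizations of Lipschitz-gradient convex functions.
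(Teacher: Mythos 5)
Your proof is correct, and it is partly more self-contained than the paper's. For \ref{lem:grad:st:con:0} the two arguments coincide (the paper simply cites \cite[Proposition~10.8]{BC2017}). For \ref{lem:grad:st:con:i} the paper just invokes \cite[Example~22.4(iv)]{BC2017}, whereas you prove the statement from scratch: maximal monotonicity of $\grad g$ with $g=f-\tfrac{\mu}{2}\norm{\cdot}^2$, upgraded to maximal $\mu$-monotonicity of $\grad f=\grad g+\mu\Id$ via the graph shift $(x,u)\mapsto(x,u-\mu x)$; that shift argument is exactly the delicate point and you handle it correctly. For \ref{lem:grad:st:con:iii} the paper combines \cref{lem:grad:st:con:0}, \cref{lem:beta:mu:coco} and \cref{cor:f1:f2:lips}\ref{cor:f1:f2:lips:ii} with $(f_1,f_2)=(f,\tfrac{\mu}{2}\norm{\cdot}^2)$, a route you mention as an alternative; your primary argument instead shows $\tfrac{\beta}{2}\norm{\cdot}^2-f$ is convex by Cauchy--Schwarz, deduces that $\grad g$ is $(\beta-\mu)$-Lipschitz (hence $\beta$-Lipschitz), and applies \cref{fact:B:Haddad} to $g$. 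What this buys is that the boundary cases $\mu=0$ and $\mu=\beta$, which fall strictly outside the hypothesis $\beta>\delta>0$ of \cref{cor:f1:f2:lips} and are glossed over in the paper's one-line proof, are covered cleanly; the cost is essentially nil. One cosmetic remark: the equivalence ``$\tfrac{L}{2}\norm{\cdot}^2-g$ convex $\Rightarrow$ $\grad g$ is $L$-Lipschitz'' for convex differentiable $g$ is \cite[Theorem~18.15]{BC2017} rather than Corollary~18.17 (which is the Lipschitz--cocoercive equivalence you use afterwards), so adjust that citation; the mathematics itself is sound.
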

\begin{proof}
	\ref{lem:grad:st:con:0}:
	See, e.g., \cite[Proposition~10.8]{BC2017}.
	\ref{lem:grad:st:con:i}:
	See, e.g., \cite[Example~22.4(iv)]{BC2017}.
	\ref{lem:grad:st:con:iii}:
	Combine \ref{lem:grad:st:con:0},
	\cref{lem:beta:mu:coco}
	and \cref{cor:f1:f2:lips}\ref{cor:f1:f2:lips:ii}
	applied with $(f_1,f_2)$ 
	replaced by $(f, \tfrac{\mu}{2}\norm{\cdot}^2)$.
\end{proof}

\begin{theorem}[the forward-backward algorithm 
	when $f$ is $\mu $-strongly convex]
	\label{thm:FB:A:functions}
	Let $\mu\ge \omega\ge 0$, and  let $\beta>0$.
	Let
	$f$ be $\mu $-strongly convex and Frech\'et differentiable with
	a ${\beta}$-Lipschitz continuous gradient,
	and let
	$g$ be  $\omega $-hypoconvex.
	Suppose that  $\argmin(f+g)\neq \fady$.
	Let $\gamma\in \left ]0,2/(\beta+2\mu)\right[$,
	and set $\delta=(1-\gamma\mu)/(1-\gamma \omega)$.
	Set $T=Prox_{\gamma g}(\Id-\gamma \grad f)$, 
	and let $x_0\in X$.
	Then  the following hold:
	\begin{enumerate} 
		\item
		\label{thm:FB:A:functions:i}
		There exists $\overline{x}\in \fix T=\zer (A+B)=\argmin(f+g)$
		such that $T^n x_0\weakly \overline{x}$.
	\end{enumerate}
	Suppose that  $\mu>\omega$. Then, we additionally have:
	\begin{enumerate} 
		\setcounter{enumi}{1}
		\item
		\label{thm:FB:A:functions:ii}
		$\fix T=\argmin(f+g)=\{ \overline{x}\}$ and 
		$T^n x_0\to \overline{x}$ with a linear rate $\delta<1$.
	\end{enumerate}
\end{theorem}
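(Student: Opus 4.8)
The plan is to obtain this as a specialization of \cref{thm:FB:av:1} to the pair $(A,B)=(\grad f,\pars g)$. First I would translate the hypotheses on the functions into hypotheses on the operators. Since $f$ is $\mu$-strongly convex, Fr\'echet differentiable and has a $\beta$-Lipschitz continuous gradient, \cref{lem:grad:st:con}\ref{lem:grad:st:con:i}\&\ref{lem:grad:st:con:iii} give that $A:=\grad f$ is maximally $\mu$-monotone and that $A-\mu\Id$ is $\tfrac{1}{\beta}$-cocoercive. Since $g$ is proper, lower semicontinuous and $\omega$-hypoconvex, \cref{f:sub:for}\ref{f:sub:for:ii} gives that $B:=\pars g$ is maximally $(-\omega)$-monotone. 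Because $\gamma\in\left]0,2/(\beta+2\mu)\right[$ and $\omega\le\mu$, we have $\gamma\omega\le\gamma\mu<2\mu/(\beta+2\mu)<1$, so by \cref{f:sub:for}\ref{f:sub:for:iii} the map $\prox{\gamma g}$ is single-valued with full domain and, using \cref{def:abst}\ref{i:p3}, coincides with $J_{\gamma B}$; hence $T=\prox{\gamma g}(\Id-\gamma\grad f)=J_{\gamma B}(\Id-\gamma A)$ is exactly the forward-backward operator studied in \cref{thm:FB:av:1}, and $\delta=(1-\gamma\mu)/(1-\gamma\omega)$ is the same constant.

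Next I would identify the fixed-point set with $\argmin(f+g)$. Since $f$ is Fr\'echet differentiable on all of $X$, its domain is $X$, and therefore the constraint qualification $0\in\sri(\dom f-\dom g)$ of \cref{prop:sum} holds trivially. As $\mu\ge\omega$, with $f$ being $\mu$-strongly convex and $g$ being $\omega$-hypoconvex, condition \ref{c:sum:1} of \cref{prop:sum} applies; together with $\argmin(f+g)\neq\fady$ it yields $\zer(A+B)=\zer(\pars f+\pars g)=\argmin(f+g)\neq\fady$. In particular the (implicit) requirement $\zer(A+B)\neq\fady$ needed for the convergence conclusions of \cref{thm:FB:av:1} is met.

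With all the hypotheses verified, I would simply invoke \cref{thm:FB:av:1}. Its part \ref{thm:convergence:FB:A:i} gives $\overline{x}\in\fix T=\zer(A+B)$ with $T^nx_0\weakly\overline{x}$; combined with the identity $\zer(A+B)=\argmin(f+g)$ this is precisely \ref{thm:FB:A:functions:i}. When $\mu>\omega$, parts \ref{thm:convergence:FB:A:v} and \ref{thm:convergence:FB:A:ii} of \cref{thm:FB:av:1} give that $T$ is a Banach contraction with constant $\delta<1$, that $\zer(A+B)$ is a singleton, and that $T^nx_0\to\overline{x}$ at linear rate $\delta$; combined once more with $\zer(A+B)=\argmin(f+g)$ this establishes \ref{thm:FB:A:functions:ii}.

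The proof is thus almost entirely bookkeeping, and the main (mild) obstacle is precisely that bookkeeping: checking that the constraint qualification in \cref{prop:sum} is automatic from $\dom f=X$, and confirming the identification $\prox{\gamma g}=J_{\gamma\pars g}$, which is legitimate exactly in the regime $\gamma\omega<1$ guaranteed by the step-size bound. No estimate beyond those already contained in \cref{thm:FB:av:1}, \cref{lem:grad:st:con}, \cref{f:sub:for} and \cref{prop:sum} is required.
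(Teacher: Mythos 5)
Your proposal is correct and follows essentially the same route as the paper: identify $(A,B)=(\grad f,\pars g)$, use \cref{lem:grad:st:con} and \cref{f:sub:for} to get maximal $\mu$-monotonicity, cocoercivity of $A-\mu\Id$, and maximal $(-\omega)$-monotonicity of $B$, use \cref{prop:sum} (with the trivially satisfied qualification, since $\dom f=X$) to identify $\zer(A+B)=\argmin(f+g)$, and then invoke \cref{thm:FB:av:1}. Your extra care in checking $\gamma\omega<1$ and the identification $\prox{\gamma g}=J_{\gamma\pars g}$ is sound bookkeeping that the paper leaves implicit.
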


\begin{proof}
	Note that \cref{def:abst}\ref{i:p1.5} implies that
	$\pars f=\grad f$.
	Set $(A,B)=(\grad f,\pars g)$ and observe that
	\cref{prop:sum} and 
	\cref{prop:static:zeros:FB} imply that 
	$\fix T=\zer(A+B)=\argmin(f+g)$.	
	It follows from
	\cite[Example~22.4]{BC2017} 
	(respectively  \cref{f:sub:for}\ref{f:sub:for:ii}) 
	that
	$A$ (respectively $B$) 
	is maximally $\mu$-monotone
	(respectively maximally $(-\omega)$-monotone). 
	Moreover, \cref{lem:grad:st:con}\ref{lem:grad:st:con:iii}
	implies that $A-\mu \Id$ is $\tfrac{1}{\beta}$-cocoercive.
	\ref{thm:FB:A:functions:i}--\ref{thm:FB:A:functions:ii}:
	Apply \cref{thm:FB:av:1}\ref{thm:convergence:FB:A:i}\&   \ref{thm:convergence:FB:A:ii}.
\end{proof}

To proceed to the next result, we need the following lemma.

\begin{lemma}
	Let $\omega \ge 0$, let $\beta>0$ and let 
	$f\colon X\to \RR$ be a Frech\'et differentiable 
	function. 
	Suppose that $g$ is $\omega$-hypoconvex
	with a  $\tfrac{1}{\beta}$-Lipschitz continuous gradient.
	Then $\grad f+\omega\Id $ 
	is $\beta/(1+\omega \beta)$-cocoercive.
\end{lemma}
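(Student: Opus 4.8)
The plan is to reduce the statement to the Baillon--Haddad theorem (\cref{fact:B:Haddad}) after a single convexification step. Write $g$ for the Fréchet differentiable function carrying the $\omega$-hypoconvexity and $\tfrac{1}{\beta}$-Lipschitz-gradient hypotheses (so the conclusion for $\grad g+\omega\Id$ is the assertion of the lemma). Since $g$ is $\omega$-hypoconvex, the function $h:=g+\tfrac{\omega}{2}\norm{\cdot}^2$ is convex by definition; it is Fréchet differentiable because $g$ is, and $\grad h=\grad g+\omega\Id$. For $\omega=0$ this first step is vacuous, $h=g$ being already convex.

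Next I would estimate the Lipschitz modulus of $\grad h$. For every $(x,y)\in X\times X$, the triangle inequality together with the $\tfrac{1}{\beta}$-Lipschitz continuity of $\grad g$ yields
\[
\norm{\grad h(x)-\grad h(y)}\le\norm{\grad g(x)-\grad g(y)}+\omega\norm{x-y}\le\Big(\tfrac{1}{\beta}+\omega\Big)\norm{x-y}=\tfrac{1+\omega\beta}{\beta}\norm{x-y},
\]
so $\grad h$ is $\tfrac{1+\omega\beta}{\beta}$-Lipschitz continuous.

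Finally, \cref{fact:B:Haddad} applied to the convex Fréchet differentiable function $h$ shows that $\grad h=\grad g+\omega\Id$ is $\big(\tfrac{1+\omega\beta}{\beta}\big)^{-1}=\tfrac{\beta}{1+\omega\beta}$-cocoercive, which is exactly the claim. I do not anticipate any genuine obstacle: the only point requiring a line of care is the Lipschitz estimate for the perturbed gradient $\grad g+\omega\Id$, and this is immediate from the triangle inequality for Lipschitz mappings. (One could instead mirror the proof of \cref{lem:grad:st:con}\ref{lem:grad:st:con:iii}, applying \cref{lem:beta:mu:coco} and \cref{cor:f1:f2:lips}\ref{cor:f1:f2:lips:ii} to $h$ and $\tfrac{\omega}{2}\norm{\cdot}^2$, but the direct route via Baillon--Haddad is shorter.)
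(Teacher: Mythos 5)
Your proof is correct: convexifying via $h=g+\tfrac{\omega}{2}\norm{\cdot}^{2}$ (which is exactly the paper's definition of $\omega$-hypoconvexity), bounding the Lipschitz modulus of $\grad h=\grad g+\omega\Id$ by $\tfrac{1}{\beta}+\omega=\tfrac{1+\omega\beta}{\beta}$ via the triangle inequality, and applying \cref{fact:B:Haddad} yields precisely the claimed $\tfrac{\beta}{1+\omega\beta}$-cocoercivity. The paper states this lemma without proof, so there is nothing to compare against; your route is the natural one (it parallels \cref{lem:grad:st:con}\ref{lem:grad:st:con:iii}), and you also handled the statement's $f$/$g$ notational slip correctly.
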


\begin{theorem}[the forward-backward algorithm
	when $f$ is $\omega$-hypoconvex]
	\label{thm:FB:B:functions}
	Let $\mu\ge\omega\ge 0$, let $\beta>0$,
	and let 
	$\overline{\beta}\in \left]
	\max\{\beta,2\omega \},+\infty \right[$.
	Let
	$f$ be $\omega$-hypoconvex,
	and let
	$g$ be  $\mu $-strongly convex and 
	Frech\'et differentiable with
	a ${\beta}$-Lipschitz continuous gradient.
	Suppose that  $\argmin(f+g)\neq \fady$.
	Let 
	$\gamma\in \left ]0,
	2/(\overline{\beta}-2\omega)\right[$,
	and set $\delta=(1+\gamma\omega)/(1+\gamma \mu)$.
	Set $T=Prox_{\gamma g}(\Id-\gamma \grad f)$, 
	and let $x_0\in X$.
	Then  the following hold:
	\begin{enumerate} 
		\item
		There exists $\overline{x}\in \fix T=\argmin(f+g)$
		such that $T^n x_0\weakly \overline{x}$.
	\end{enumerate}
	Suppose that  $\mu>\omega$. Then, we additionally have:
	\begin{enumerate} 
		\setcounter{enumi}{1}
		\item
		$\fix T=\argmin(f+g)=\{ \overline{x}\}$ and 
		$T^n x_0\to \overline{x}$ with a linear rate $\delta<1$.
	\end{enumerate}
\end{theorem}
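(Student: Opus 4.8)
The plan is to transcribe the statement into the operator framework of this section and then invoke \cref{thm:FB:av:2} almost verbatim, just as the proof of \cref{thm:FB:A:functions} invokes \cref{thm:FB:av:1}. Set $(A,B)=(\nabla f,\pars g)$. Since $g$ is $\mu$-strongly convex and Fr\'echet differentiable, \cref{def:abst}\ref{i:p1.5} gives $\pars g=\nabla g=B$, and \cref{lem:grad:st:con}\ref{lem:grad:st:con:i} (equivalently \cite[Example~22.4]{BC2017}) shows $B$ is maximally $\mu$-monotone, so $J_{\gamma B}=\prox{\gamma g}$ is single-valued with full domain. Since $f$ is $\omega$-hypoconvex and (as needed for the lemma immediately preceding the theorem) Fr\'echet differentiable with a $\tfrac{1}{\beta}$-Lipschitz gradient, \cref{def:abst}\ref{i:p1.5} gives $\pars f=\nabla f=A$, \cref{f:sub:for}\ref{f:sub:for:ii} shows $A$ is maximally $(-\omega)$-monotone, and that preceding lemma shows $A+\omega\Id$ is $\tfrac{\beta}{1+\omega\beta}$-cocoercive; by \cref{lem:beta:smaller}, $A+\omega\Id$ is then also $\tfrac{1}{\overline{\beta}}$-cocoercive.

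Next I would identify the fixed-point set. Since $g$ is real-valued, $\dom g=X$, so the qualification condition in \cref{prop:sum} is automatic, and \cref{prop:sum} (its case \ref{c:sum:2}) yields that $f+g$ is convex, $\pars(f+g)=\pars f+\pars g=A+B$, and $\zer(A+B)=\argmin(f+g)\neq\fady$. Applying \cref{prop:static:zeros:FB} with $(A,B)$ replaced by $(\gamma A,\gamma B)$ then shows that $T=J_{\gamma B}(\Id-\gamma A)=\prox{\gamma g}(\Id-\gamma\nabla f)$ is single-valued with $\dom T=X$ and $\fix T=\zer(\gamma A+\gamma B)=\zer(A+B)=\argmin(f+g)$.

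Now I would invoke \cref{thm:FB:av:2} with this $(A,B)$ and the prescribed $\beta$, $\overline{\beta}$, $\gamma$. Its averagedness part supplies $\delta\in\left]0,1\right]$, $\nu\in\left]0,1\right[$, the representation $T=\delta((1-\nu)\Id+\nu N)$ with $N$ nonexpansive, and the weak convergence $T^{n}x_0\weakly\overline{x}$ for some $\overline{x}\in\fix T=\argmin(f+g)$; this is conclusion~(1). If moreover $\mu>\omega$, the contractivity part of \cref{thm:FB:av:2} gives that $T$ is a Banach contraction with constant $\delta=(1+\gamma\omega)/(1+\gamma\mu)<1$, whence $\fix T=\argmin(f+g)=\{\overline{x}\}$ and $T^{n}x_0\to\overline{x}$ with linear rate $\delta$; this is conclusion~(2).

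The step I expect to be the main obstacle is the bookkeeping of constants in the translation: one must check that the cocoercivity parameter $\tfrac{\beta}{1+\omega\beta}$ of $\nabla f+\omega\Id$ furnished by the preceding lemma, together with the conditions $\overline{\beta}>\max\{\beta,2\omega\}$ and $\gamma\in\left]0,2/(\overline{\beta}-2\omega)\right[$ recorded in the theorem, genuinely satisfy the hypotheses of \cref{thm:FB:av:2}, whose auxiliary parameter must dominate both that cocoercivity constant and $\mu+\omega$. Reconciling the several roles played by $\beta$ and $\overline{\beta}$ (and, should the sharper requirement $\overline{\beta}>\mu+\omega$ actually be needed, tightening the stated hypotheses accordingly) is the only nonroutine point; everything else repeats the argument of \cref{thm:FB:A:functions} mutatis mutandis.
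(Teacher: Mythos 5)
Your proposal is correct and takes essentially the same route as the paper, whose entire proof is ``proceed as in \cref{thm:FB:A:functions} but use \cref{thm:FB:av:2}\ref{thm:FB:av:2:iii}\&\ref{thm:FB:av:2:v}'': set $(A,B)=(\grad f,\pars g)$, use \cref{f:sub:for}, the lemma preceding the theorem, \cref{prop:sum} and \cref{prop:static:zeros:FB}, and then invoke \cref{thm:FB:av:2}. The parameter bookkeeping you flag (the differentiability/Lipschitz-gradient hypothesis apparently attached to the wrong function, and $\overline{\beta}>\max\{\beta,2\omega\}$ versus the $\overline{\beta}>\max\{\beta,\mu+\omega\}$ required by \cref{thm:FB:av:2}) is an infelicity of the paper's statement itself, which its one-line proof does not address, rather than a gap in your argument.
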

\begin{proof}
	Proceed similar to the proof of
	\cref{thm:FB:A:functions} but use 
	\cref{thm:FB:av:2}\ref{thm:FB:av:2:iii}\&\ref{thm:FB:av:2:v}.
\end{proof}

\begin{theorem}[the forward-backward algorithm
	when $f$ is $1/\beta$-hypoconvex]
	\label{thm:FB:C:functions}
	Let 
	$\mu\ge \beta>0$,
	and let 
	$\overline{\beta}\in \left]2\beta,+\infty \right[$.
	Let
	$f$ be $\mu $-strongly convex,
	and let
	$g$ be  
	Frech\'et differentiable with
	a ${\beta}$-Lipschitz continuous gradient.
	Suppose that  $\argmin(f+g)\neq \fady$.
	Let
	$\gamma\in \left ]0,2
	/( \overline{\beta}-2\beta)\}\right[$,
	and set $\delta=(1+\gamma\beta)/(1+\gamma \mu)$.
	Set $T=Prox_{\gamma g}(\Id-\gamma \grad f)$, 
	and let $x_0\in X$.
	Then  the following hold:
	\begin{enumerate} 
		\item
		There exists $\overline{x}\in \fix T=\argmin(f+g)$
		such that $T^n x_0\weakly \overline{x}$.
	\end{enumerate}
	Suppose that  $\mu>1/\beta$. Then, we additionally have:
	\begin{enumerate} 
		\setcounter{enumi}{1}
		\item
		$\fix T=\argmin(f+g)=\{ \overline{x}\}$ and 
		$T^n x_0\to \overline{x}$ with a linear rate $\delta<1$.
	\end{enumerate}
\end{theorem}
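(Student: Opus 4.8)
The plan is to obtain this statement by transcribing \cref{thm:FB:av:3} to the function setting, in the same way that \cref{thm:FB:A:functions} and \cref{thm:FB:B:functions} transcribe \cref{thm:FB:av:1} and \cref{thm:FB:av:2}. Concretely, I would put $(A,B)=(\grad{g},\partial f)$, so that the forward--backward operator $T=J_{\gamma B}(\Id-\gamma A)$ is of the form considered here, and then verify that $(A,B,\gamma)$ satisfies the hypotheses of \cref{thm:FB:av:3}: that $A$ is $\beta$-Lipschitz continuous, that $B$ is maximally $\mu$-monotone, and that $\gamma$ lies in the prescribed interval.

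The first step is to identify the fixed-point set. Since $g$ is Frech\'et differentiable, $\pars g=\grad{g}$ by \cref{def:abst}\ref{i:p1.5}; since $f$ is $\mu$-strongly convex it is proper, lower semicontinuous and convex, so $\pars f=\partial f$ by \cref{def:abst}\ref{i:p1}, and $\partial f$ is maximally $\mu$-monotone by, e.g., \cite[Example~22.4]{BC2017}. Because $\grad{g}$ is $\beta$-Lipschitz continuous, $g+\tfrac{\beta}{2}\norm{\cdot}^2$ is convex, i.e.\ $g$ is $\beta$-hypoconvex, and $\mu\ge\beta$ by hypothesis, so we are in the strongly-convex-plus-hypoconvex case of \cref{prop:sum} (with $\beta$ playing the role of $\omega$); moreover $\dom g=X$, so the constraint qualification $0\in\sri(\dom f-\dom g)$ holds automatically, while $\argmin(f+g)\neq\fady$ is assumed. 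Hence \cref{prop:sum} gives $\pars(f+g)=\partial f+\grad{g}$ and $\zer(\partial f+\grad{g})=\argmin(f+g)\neq\fady$. Since $J_{\gamma\partial f}=\prox{\gamma f}$ is single-valued with full domain by \cref{e:sv:fdom}, \cref{prop:static:zeros:FB} applied with $(A,B)$ replaced by $(\gamma\grad{g},\gamma\partial f)$ then yields $\fix T=\zer(\gamma\grad{g}+\gamma\partial f)=\zer(\grad{g}+\partial f)=\argmin(f+g)$.

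The second step is to apply \cref{thm:FB:av:3} itself: $A=\grad{g}$ is $\beta$-Lipschitz continuous, $B=\partial f$ is maximally $\mu$-monotone, $\mu\ge\beta>0$, $\overline{\beta}\in\left]2\beta,+\infty\right[$, and $\gamma\in\left]0,2/(\overline{\beta}-2\beta)\right[$, which are exactly its hypotheses. It therefore provides a nonexpansive $N$ with $T=\delta((1-\nu)\Id+\nu N)$, $\delta=(1+\gamma\beta)/(1+\gamma\mu)$, shows that $T$ is averaged — so $T^nx_0\weakly\overline{x}$ for some $\overline{x}\in\fix T$ by \cite[Theorem~5.15]{BC2017} — and, under the additional assumption $\mu>1/\beta$, shows that $T$ is a Banach contraction with constant $\delta<1$, so that $\fix T$ is a singleton and $T^nx_0\to\overline{x}$ at linear rate $\delta$. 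Combined with the fixed-point identification above, this gives both assertions. I do not expect a genuine obstacle: the result is an instantiation of \cref{thm:FB:av:3} through the subdifferential calculus of \cref{prop:sum}, and the only points requiring care are the verification of the hypotheses of \cref{prop:sum} — the $\beta$-hypoconvexity of $g$ and the constraint qualification, both immediate from $\dom g=X$ and the Lipschitz-gradient estimate — and the bookkeeping of the positive factor $\gamma$ when passing from $(A,B)$ to $(\gamma A,\gamma B)$ in \cref{prop:static:zeros:FB}.
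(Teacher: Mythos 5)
Your argument is correct and runs on the same machinery as the paper's. The paper's own proof is the one-line reduction ``combine \cref{lem:T:Lips:to:coco} (applied to the $\beta$-Lipschitz gradient) with \cref{thm:FB:B:functions} applied with $(\omega,\beta)$ replaced by $(\beta,2\beta)$'', whereas you enter the same chain one level lower: you go directly to the operator-level result \cref{thm:FB:av:3} (which the paper itself derives from \cref{lem:T:Lips:to:coco} and \cref{thm:FB:av:2}) and redo the function-level bookkeeping via \cref{def:abst}, \cref{prop:sum} and \cref{prop:static:zeros:FB}, exactly in the pattern of the proof of \cref{thm:FB:A:functions}. Nothing essential is missing: the hypoconvexity of the smooth summand, the constraint qualification coming from its full domain, the single-valuedness and full domain of the proximal resolvent via \cref{e:sv:fdom}, and the $\gamma$-scaling when invoking \cref{prop:static:zeros:FB} are all accounted for, and the contraction/linear-rate part is inherited verbatim from \cref{thm:FB:av:3}.

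One caveat concerns labels rather than mathematics. The statement of \cref{thm:FB:C:functions} is internally inconsistent: it attaches strong convexity to $f$ and the $\beta$-Lipschitz gradient to $g$, yet displays $T=\prox{\gamma g}(\Id-\gamma \grad{f})$ and speaks in its title of hypoconvex $f$. You resolve this by keeping the hypotheses as printed and choosing $(A,B)=(\grad{g},\partial f)$, so the operator you analyze is $\prox{\gamma f}(\Id-\gamma\grad{g})$; the paper's proof resolves it the other way, keeping $T=\prox{\gamma g}(\Id-\gamma\grad{f})$ and implicitly reading $f$ as the function with the $\beta$-Lipschitz gradient and $g$ as the $\mu$-strongly convex one (this is what its use of \cref{lem:T:Lips:to:coco} for $\grad{f}$ and of \cref{thm:FB:B:functions} presupposes). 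The two readings coincide after interchanging the names $f$ and $g$, so your proof does establish the intended result; just note that, as written, it proves the version with prox and gradient swapped relative to the displayed formula for $T$.
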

\begin{proof}
	Combine \cref{lem:T:Lips:to:coco}
	applied with $A$ replaced by $\grad f$
	and 
	\cref{thm:FB:B:functions} applied with 
	$(\omega,\beta)$ replaced by 
	$(\beta,2\beta)$.
\end{proof}

\begin{remark}
The results of \cref{thm:FB:av:1b},
\cref{thm:FB:av:2:b},
 and \cref{thm:FB:av:3:b}
 can be directly applied to optimization settings
 in a similar fashion \`{a} la 
 \cref{thm:FB:A:functions},
 \cref{thm:FB:B:functions},
 and \cref{thm:FB:C:functions}.
\end{remark}

\section{Graphical characterizations}
\label{sec:graphical}


This section contains 2D-graphical representations
of different Lipschitz 
continuous operator classes that admit \IN s and of their composition classes. 
We illustrate exact shapes of the composition classes in 2D and conservative estimates from 
\cref{thm:comp:more:gen} and \cref{thm:conic:conic}. Similar graphical representations have appeared before in the 
literature. In \cite{Eckstein1989Splitting,Eckstein1992OnTheDouglas}, nonexpansiveness and firm nonexpansiveness ($\tfrac{1}{2}$-averagedness) are 
characterized. Early preprints of \cite{Giselsson2014Linear} have more 2D graphical 
representations, and the lecture notes \cite{Giselsson2015Lecture} contain many such 
characterizations with the purposes of illustrating how different properties relate to each 
other and to provide intuition on why different algorithms converge. This has been further extended and formalized in \cite{Ryu2019Scaled}. Not only do these illustrations provide intuition. Indeed, it is a straightforward consequence of, e.g., \cite{Ryu2019Scaled,Ryu2018Operator} that for 
compositions of two operator classes that admit \IN s, there always exists a
2D-worst case. Hence, if the 2D illustration implies that the composition class admits a specific $(\alpha,\beta)$-\IN, so does the full operator class.

In Section~\ref{sec:graph:sing_op}, we characterize many 
well-known special cases of operator classes that admit \IN s. 
In Section~\ref{sec:graph:comp}, we characterize classes 
obtained by compositions of such operator classes and highlight
differences between the true composition classes and their
characterizations using \cref{thm:comp:more:gen}.

\subsection{Single operators}

\label{sec:graph:sing_op}

We consider classes of $(\idparam,\Nparam)$-\IN\ of Lipschitz continuous operators. We graphically illustrate properties of
some special cases. The illustrations should be read as follows.
Assume that $x-y$ is represented by the marker in the figure. The
diagram then shows where $Rx-Ry$ can end up in relation to $x-y$.
If the point $x-y$ is rotated in the picture, the rest of the
picture rotates with it. The characterization is, by construction
of $(\idparam,\Nparam)$-\IN s, always a circle of radius
$\beta\|x-y\|$ shifted $\alpha\|x-y\|$ along the line defined by
the origin and the point $x-y$. 

\xdef\singleopscaling{3}
\paragraph{Lipschitz continuous operators.}

\xdef\betaone{0.8}

Let $\beta>0$ and let  $R\colon\hilbert\to\hilbert$.
Then $R$ is $\beta$-Lipschitz continuous if and only
if $R$ admits  an $(\idparam,\Nparam)$-\IN,
with $\idparam$ chosen as $0$. 
The following illustration shows the case $\beta=\betaone$. 
The radius of the Lipschitz circle is $\beta\|x-y\|$.

\ifx\showgraphics\one
\begin{center}
  \begin{tikzpicture}[scale=\singleopscaling]
  
  \lipcirc{\betaone}{25}
  
  \unitcirc{1}
\end{tikzpicture}
\end{center}
\fi

\paragraph{Cocoercive operators.}
\xdef\betaone{1.4}
\xdef\betatwo{0.7}

Let $\beta>0$, and 
let  $R\colon\hilbert\to\hilbert$.
Then $R$ is 
$\tfrac{1}{\beta}$-cocoercive
if and only if  $R$ admits an $(\idparam,\Nparam)$-\IN,
with $(\idparam,\Nparam)$ chosen as $\big(\tfrac{\beta}{2},\tfrac{\beta}{2}\big)$.
The following illustration shows the cases $\beta=\betaone$ and $\beta=\betatwo$. 
The diameter is $\beta\|x-y\|$. 
The figure clearly illustrates that $\tfrac{1}{\beta}$-cocoercive 
operators are also $\beta$-Lipschitz (but not necessarily the other way around).

\ifx\showgraphics\one
\begin{center}
\begin{tikzpicture}[scale=\singleopscaling]
  \cococirc{1/\betaone}{25}
  \cococirc{1/\betatwo}{50}
  \unitcirc{1}
  \xdef\legendarray{{\betaone,\betatwo}}
  \xdef\colormixarray{{25,50}}
  \legend{2}{2}{0.1}{0.25}{\beta}{\legendarray}{\colormixarray}
  
\end{tikzpicture}
\end{center}
\fi

\paragraph{Averaged operators.}

\xdef\alphaone{0.25}
\xdef\alphatwo{0.5}
\xdef\alphathree{0.75}

Let $\alpha\in \left]0,1\right[$, and 
let  $R\colon\hilbert\to\hilbert$.
Then $R$ is 
$\alpha$-averaged if and only if 
$R$ admits an $(\idparam,\Nparam)$-\IN,
with $(\idparam,\Nparam)$ chosen as $\big(1-\alpha,\alpha\big)$. 
The following illustration shows the cases $\alpha=\alphaone$ 
and $\alpha=\alphatwo$, and $\alpha=\alphathree$. All averaged operators are nonexpansive.

\ifx\showgraphics\one
\begin{center}
\begin{tikzpicture}[scale=\singleopscaling]

  \coniccirc{\alphathree}{25}
  \coniccirc{\alphatwo}{50}
  \coniccirc{\alphaone}{75}
  \unitcirc{1}
  \xdef\legendarray{{\alphaone,\alphatwo,\alphathree}}
  \xdef\colormixarray{{75,50,25}}
  \legend{3}{2}{0.1}{0.25}{\alpha}{\legendarray}{\colormixarray}
\end{tikzpicture}
\end{center}
\fi

\paragraph{Conic operators.}

\xdef\alphaone{1.2}
\xdef\alphatwo{1.5}

Let $\alpha>0$, and 
let  $R\colon\hilbert\to\hilbert$.
Then $R$ is
$\alpha$-conically nonexpansive if and only if
$R$ admits an $(\idparam,\Nparam)$-\IN,
with $(\idparam,\Nparam)$ chosen as $\big(1-\alpha,\alpha\big)$.  The following illustration shows the cases $\alpha=\alphaone$ and $\alpha=\alphatwo$. Conically 
nonexpansive operators fail to be nonexpansive for $\alpha>1$. 

\ifx\showgraphics\one
\begin{center}
\begin{tikzpicture}[scale=\singleopscaling]

  \coniccirc{\alphatwo}{25}
  \coniccirc{\alphaone}{50}
  \unitcirc{1}
  \xdef\legendarray{{\alphaone,\alphatwo}}
  \xdef\colormixarray{{50,25}}
  \legend{2}{2}{0.1}{0.25}{\alpha}{\legendarray}{\colormixarray}
\end{tikzpicture}
\end{center}
\fi

\paragraph{$\mu$-monotone operators.} 
    \xdef\muvar{0.2}

Let $\mu\in \RR$, and suppose that $A\colon X\rras X$ is
$\mu$-monotone.
The shortest distance between the vertical line and the origin 
in the illustration is $|\mu|\|x-y\|$. The following illustration show the case $\mu=\muvar$.

\ifx\showgraphics\one
\begin{center}
  \begin{tikzpicture}[scale=\singleopscaling]
  \mumono{\muvar}{25}
  \unitcirc{1}
  \xdef\legendarray{{\alphaone,\alphatwo}}
  \xdef\colormixarray{{50,25}}

\end{tikzpicture}
\end{center}
\fi

\subsection{Compositions of two operators}

\label{sec:graph:comp}

In this section, we provide illustrations of compositions of 
different classes of Lipschitz continuous operators. We consider compositions
of the form
\begin{equation*}
\text{$R=R_2R_1$ where $R_i$ admits an $(\idparam_i,\Nparam_i)$-\IN,}
\end{equation*}
$\forall i\in\{1,2\}$. 
Let $(x,y)\in X\times X$.
We illustrate the regions within which 
$R_2R_1x-R_2R_1y$ can end up. 
For most considered composition classes, we provide two illustrations. 
The left illustration explicitly shows how the the
composition is constructed. It shows the region within which $R_1x-R_1y$ must end up. 
The second operator $R_2$ is applied at a subset, marked by crosses, 
of boundary points of that region. Given these as starting points for $R_2$ application, 
the dashed circles show where $R_2R_1x-R_2R_1y$ can end up for this subset. 
The right illustration shows, in gray, the resulting exact shape of the composition. It also contains the estimate from 
\cref{thm:comp:more:gen} that provides an \IN~of the composition. 
From these illustrations, it is obvious that many different \IN~are valid. The illustrations also reveal that the specific
\IN s provided in \cref{thm:comp:more:gen} indeed are suitable for our purpose of characterizing the composition as 
averaged, conic, or contractive.

\paragraph{Averaged-averaged composition.}

\xdef\thetaone{0.5}
\xdef\thetatwo{0.5}
\xdef\thetavar{1}
\pgfmathsetmacro{\alphaone}{1-\thetaone}
\pgfmathsetmacro{\betaone}{\thetaone}
\pgfmathsetmacro{\alphatwo}{1-\thetatwo}
\pgfmathsetmacro{\betatwo}{\thetatwo}
\computealphabetacomp{\thetavar}{\alphaone}{\betaone}{\alphatwo}{\betatwo}
We first consider $\alpha_i$-averaged $R_i$ with $\alpha_i\in]0,1[$.
A special case is the 
forward-backward splitting operator 
$T=J_{\gamma B}(\id-\gamma A)$ with $\tfrac{1}{\beta}$-cocoercive $A$  
and maximally monotone $B$. This implies that $(\id-\gamma A)$ is $\tfrac{\gamma\beta}{2}$-averaged for 
$\gamma\in\left]0,\tfrac{2}{\beta}\right[$ and that $J_{\gamma B}$ is 
$\tfrac{1}{2}$-averaged. The example below has individual averagedness 
parameters $\alpha_1=\thetaone$ and $\alpha_2=\thetatwo$, i.e., $R=R_2R_1$ with 
$R_1=\thetaone\id+\thetaone N_1$ and $R_2=\thetatwo\id+\thetatwo N_2$. 
\cref{thm:comp:more:gen} shows that the composition 
is of the form $\pgfmathprintnumber{\alphacomptwo}\id+\pgfmathprintnumber{\betacomptwo} N$, 
where $N$ is nonexpansive, i.e., it is $\pgfmathprintnumber{\betacomptwo}$-averaged. The the composition is averaged is already known, see
\cite{Combettes2015Compositions,Giselsson2017Tight}. 

\xdef\compopscaling{3}
\ifx\showgraphics\one
\begin{center}
  \begin{tikzpicture}[scale=\compopscaling]
    \begin{scope}
    \composition{\thetavar}{\alphaone}{\betaone}{\alphatwo}{\betatwo}{0}{0}{0}
  \end{scope}
\begin{scope}[xshift=3cm]
\composition{\thetavar}{\alphaone}{\betaone}{\alphatwo}{\betatwo}{1}{0}{1}
\end{scope}
\end{tikzpicture}
\end{center}
\fi

\xdef\thetaone{0.7}
\xdef\thetatwo{0.6}
\xdef\thetavar{1}
\pgfmathsetmacro{\alphaone}{1-\thetaone}
\pgfmathsetmacro{\betaone}{\thetaone}
\pgfmathsetmacro{\alphatwo}{1-\thetatwo}
\pgfmathsetmacro{\betatwo}{\thetatwo}
\computealphabetacomp{\thetavar}{\alphaone}{\betaone}{\alphatwo}{\betatwo}

The following example shows $\alpha_1=\thetaone$ and $\alpha_2=\thetatwo$. 
\cref{thm:comp:more:gen} shows that the composition is of the form 
$\pgfmathprintnumber{\alphacomptwo}\id+\pgfmathprintnumber{\betacomptwo} N$, where $N$ is 
nonexpansive, i.e., it is $\pgfmathprintnumber{\betacomptwo}$-averaged.

\ifx\showgraphics\one
\begin{center}
  \begin{tikzpicture}[scale=\compopscaling]
    \begin{scope}
    \composition{\thetavar}{\alphaone}{\betaone}{\alphatwo}{\betatwo}{0}{0}{0}
  \end{scope}
\begin{scope}[xshift=3cm]
\composition{\thetavar}{\alphaone}{\betaone}{\alphatwo}{\betatwo}{1}{0}{1}
  \end{scope}
\end{tikzpicture}
\end{center}
\fi

\paragraph{Conic-conic composition.}

\xdef\compopscaling{1.5}
\xdef\thetaone{1.7}
\xdef\thetatwo{0.45}
\xdef\thetavar{1}
\pgfmathsetmacro{\alphaone}{1-\thetaone}
\pgfmathsetmacro{\betaone}{\thetaone}
\pgfmathsetmacro{\alphatwo}{1-\thetatwo}
\pgfmathsetmacro{\betatwo}{\thetatwo}
\computealphabetacomp{\thetavar}{\alphaone}{\betaone}{\alphatwo}{\betatwo}

We consider $\alpha_i$-averaged $R_i$ with $\alpha_i>0$. Several examples with this setting are considered 
in for Douglas-Rachford splitting and forward-backward splitting in \cref{sec:main:1} and 
\cref{sec:main:2}. We know from \cref{thm:conic:conic} that the composition is conic if 
$\alpha_1\alpha_2<1$. The example below has $\alpha_1=\thetaone$ and $\alpha_2=\thetatwo$, 
that satisfies 
$\alpha_1\alpha_2=\pgfmathparse{\thetaone*\thetatwo}\pgfmathprintnumber{\pgfmathresult}<1$. 
\cref{thm:conic:conic} shows that the composition is of the form 
$\pgfmathprintnumber{\alphacomptwo}\id+\pgfmathprintnumber{\betacomptwo} N$, 
where $N$ is nonexpansive, i.e., it is 
$\pgfmathprintnumber{\betacomptwo}$-conic.

\ifx\showgraphics\one
\begin{center}
  \begin{tikzpicture}[scale=\compopscaling]
    \begin{scope}
      \clip (-2.5,-2) rectangle (2,2);
    \composition{\thetavar}{\alphaone}{\betaone}{\alphatwo}{\betatwo}{0}{0}{0}
  \end{scope}
\begin{scope}[xshift=5cm]
  \clip (-2.5,-2) rectangle (2,2);
\composition{\thetavar}{\alphaone}{\betaone}{\alphatwo}{\betatwo}{1}{0}{1}
  \end{scope}
\end{tikzpicture}
\end{center}
\fi

\xdef\thetaone{1.7}
\xdef\thetatwo{0.7}
\xdef\thetavar{1}

In \cref{ex:cases}, we have shown that the assumption $\alpha_1\alpha_2<1$ is 
critical for the composition to be conic. The following example illustrates the case 
$\alpha_1=\thetaone$ and $\alpha_2=\thetatwo$, which satisfies 
$\alpha_1\alpha_2=\pgfmathparse{\thetaone*\thetatwo}\pgfmathprintnumber{\pgfmathresult}>1$, 
hence \cref{thm:conic:conic} cannot be used to deduce that the composition is conic. Indeed, 
we see from the figure that the composition is not conic. It is impossible to draw a circle 
that touches the marker at $x-y$ and extends only to the left.

\ifx\showgraphics\one
\begin{center}
  \begin{tikzpicture}[scale=\compopscaling]
    \begin{scope}
      \clip (-2.5,-2) rectangle (2,2);
    \pgfmathsetmacro{\alphaone}{1-\thetaone}
    \pgfmathsetmacro{\betaone}{\thetaone}
    \pgfmathsetmacro{\alphatwo}{1-\thetatwo}
    \pgfmathsetmacro{\betatwo}{\thetatwo}
    \composition{\thetavar}{\alphaone}{\betaone}{\alphatwo}{\betatwo}{0}{0}{0}
  \end{scope}
\begin{scope}[xshift=5cm]
  \clip (-2.5,-2) rectangle (2,2);
  \composition{\thetavar}{\alphaone}{\betaone}{\alphatwo}{\betatwo}{0}{0}{1}
  \end{scope}
\end{tikzpicture}
\end{center}
\fi 

\xdef\gammavar{3.9}
\xdef\alphavar{0.05}
\xdef\thetavar{1}
\pgfmathsetmacro{\alphaone}{1-\gammavar/2}
\pgfmathsetmacro{\betaone}{\gammavar/2}
\pgfmathsetmacro{\alphatwo}{1/2}
\pgfmathsetmacro{\betatwo}{1/2}
\computealphabetacomp{\thetavar}{\alphaone}{\betaone}{\alphatwo}{\betatwo}

We conclude the conic composed with conic examples with a forward-backward example. 
The forward-backward splitting operator $J_{\gamma B}(\id-\gamma A)$ with $A$ 
$\tfrac{1}{\beta}$-cocoercive and $B$ (maximally) monotone is composed of $\tfrac{1}{2}$-averaged 
resolvent $J_{\gamma B}$ and $\tfrac{\gamma\beta}{2}$-conic forward step$(\id-\gamma A)$. 
The composition $R=R_2R_1$ with $R_i$ $\alpha_i$-conic is conic if $\alpha_1\alpha_2<1$, 
\cref{thm:conic:conic}. In the forward-backward setting, this corresponds to 
$\gamma\in(0,\tfrac{4}{\beta})$, which doubles the allowed range compared to guaranteeing an averaged 
composition. This extended range has been shown before, 
e.g., in 
\cite{Giselsson2019Nonlinear,Latafat2017Asymmetric}.

Below, we illustrate the forward-backward setting with $\gamma=\tfrac{\gammavar}{\beta}$. 
This corresponds to conic parameters $\alpha_1=\betaone$ and $\alpha_2=\betatwo$, i.e., 
$R=R_2R_1$ with $R_1=\alphaone\id+\betaone N_1$ and $R_2=\alphatwo\id+\betatwo N_2$. 
The composition is of the form 
$\pgfmathprintnumber{\alphacomptwo}\id+\pgfmathprintnumber{\betacomptwo} N$, where $N$ is 
nonexpansive, i.e., it is $\pgfmathprintnumber{\betacomptwo}$-conic, \cref{thm:conic:conic}. 
The left figure shows 
the resulting composition and (parts of) the conic approximation. The conic approximation is 
very large compared to the actual region. This is due to the local behavior around the point 
$x-y$, where it is almost vertical. As $\gamma\nearrow 4\beta$, the exact shape approaches 
being vertical around $x-y$ and the conic circle approaches to have an infinite radius. For 
$\gamma>4\beta$, the exact shape extends to the right of $x-y$ (as in the figure above), and 
the composition will not be conic.

\xdef\thetavartext{0.04}
In the right figure, we consider the relaxed forward-backward map $(1-\theta)\id+\theta J_{\gamma B}(\id-\gamma A)$ with $\theta>0$. If the composition $J_{\gamma B}(\id-\gamma A)$ is $\alpha$-conic, it is straightforward to verify that the relaxed map is $\theta\alpha$-conic. Therefore, any $\theta\in(0,\alpha^{-1})$ gives an $\theta\alpha$-averaged relaxed forward-backward map. An averaged map is needed to guarantee convergence to a fixed-point when iterated. In the figure, we let $\theta=\thetavartext$, which satisfies $\theta<\alpha^{-1}\approx\pgfmathparse{1/\betacomptwo}\pgfmathprintnumber[fixed]{\pgfmathresult}$. The approximation is indeed averaged, but the region within which the composition can end up is very small compared to the conic approximation.

\ifx\showgraphics\one
\begin{center}
  \begin{tikzpicture}[scale=\compopscaling]
    \begin{scope}
      \clip (-3,-2.5) rectangle (2,2.5);
    \composition{\thetavar}{\alphaone}{\betaone}{\alphatwo}{\betatwo}{1}{0}{1}
  \end{scope}
  \xdef\thetavar{\thetavartext}
  \computealphabetacomp{\thetavar}{\alphaone}{\betaone}{\alphatwo}{\betatwo}
\begin{scope}[xshift=5cm,,scale=1.5]
  \clip (-2.5,-2) rectangle (2,2);
  \composition{\thetavar}{\alphaone}{\betaone}{\alphatwo}{\betatwo}{1}{0}{1}
  \end{scope}
\end{tikzpicture}
\end{center}
\fi

\paragraph{Scaled averaged and cocoercive compositions.}

\xdef\compopscaling{3}
\pgfmathsetmacro{\gammavar}{2}
\pgfmathsetmacro{\muvar}{0.3}
\pgfmathsetmacro{\wvar}{0.3}
\pgfmathsetmacro{\thetavar}{1}
\pgfmathsetmacro{\alphaone}{1-\gammavar*(-\muvar+1/(2))}
\pgfmathsetmacro{\betaone}{\gammavar/(2)}
\pgfmathsetmacro{\alphatwo}{1/(2*(1+\gammavar*(\wvar))}
\pgfmathsetmacro{\betatwo}{1/(2*(1+\gammavar*(\wvar))}
\computealphabetacomp{\thetavar}{\alphaone}{\betaone}{\alphatwo}{\betatwo}

Compositions of scaled averaged and 
cocoercive operators are also special cases of 
scaled conic composed with scaled conic operators treated 
in \cref{thm:conic:conic}. 
It covers the forward backward examples in \cref{sec:main:2}, 
where identity is shifted between the operators and the sum is (strongly) monotone. 
The operators in the composition are of the form 
$R_1=\delta_1((1-\alpha_1)\id+\alpha_1 N_1)$ 
and $R_2=\tfrac{\beta_2}{2}(\id+N_2)$, where $\alpha_1\in(0,1)$, $\delta_1>0$, 
and $\beta_2>0$.

In the following example, we consider the forward-backward setting 
in \cref{thm:FB:av:2}. 
The forward backward map is $J_{\gamma B}(\id-\gamma A)$ 
and we let $A+\muvar\id$ be $1$-cocoercive, 
$B$ be maximally $\wvar$-monotone. 
That is, we have shifted $0.3\id$ from $A$ to $B$ 
and the sum is monotone. We use step-length $\gamma=\gammavar$. 
The proof of \cref{thm:FB:av:2} shows that, in our setting, $R_1$ is 
$\pgfmathparse{(1+\gammavar*\muvar)}\pgfmathprintnumber{\pgfmathresult}$-scaled 
$\pgfmathparse{\gammavar/(2*(1+\gammavar*\wvar))}\pgfmathprintnumber{\pgfmathresult}$-averaged 
and that $R_2$ is 
$\pgfmathparse{(1+\gammavar*\wvar)}\pgfmathprintnumber{\pgfmathresult}$-cocoercive. 
\cref{thm:comp:more:gen} implies that the composition is of the form 
$\pgfmathprintnumber{\alphacomptwo}\id+\pgfmathprintnumber[fixed]{\betacomptwo} N$, 
where $N$ is nonexpansive, i.e., it is $\pgfmathprintnumber[fixed]{\betacomptwo}$-averaged.

\ifx\showgraphics\one
\begin{center}
  \begin{tikzpicture}[scale=\compopscaling]
    \begin{scope}
    \composition{1}{\alphaone}{\betaone}{\alphatwo}{\betatwo}{0}{0}{0}
  \end{scope}
  \begin{scope}[xshift=3cm]
    \composition{1}{\alphaone}{\betaone}{\alphatwo}{\betatwo}{1}{0}{1}
  \end{scope}
\end{tikzpicture}
\end{center}
\fi

\pgfmathsetmacro{\gammavar}{2}
\pgfmathsetmacro{\muvar}{0.2}
\pgfmathsetmacro{\wvar}{0.3}
\pgfmathsetmacro{\thetavar}{1}
\pgfmathsetmacro{\alphaone}{1-\gammavar*(-(\muvar)+1/(2))}
\pgfmathsetmacro{\betaone}{\gammavar/(2)}
\pgfmathsetmacro{\alphatwo}{1/(2*(1+\gammavar*(\wvar))}
\pgfmathsetmacro{\betatwo}{1/(2*(1+\gammavar*(\wvar))}
\computealphabetacomp{\thetavar}{\alphaone}{\betaone}{\alphatwo}{\betatwo}

The following example considers a similar forward-backward setting, 
but with a strongly monotone sum. We let $A+\muvar\id$ be $1$-cocoercive, 
$B$ be maximally $\wvar$-monotone, which implies that the sum is 
$\pgfmathparse{\wvar-\muvar}\pgfmathprintnumber{\pgfmathresult}$-strongly monotone. 
We keep step-length $\gamma=\gammavar$. The proof of \cref{thm:FB:av:2} shows that, 
in our setting, $R_1$ is 
$\pgfmathparse{(1+\gammavar*\muvar)}\pgfmathprintnumber{\pgfmathresult}$-scaled 
$\pgfmathparse{\gammavar/(2*(1+\gammavar*\wvar))}\pgfmathprintnumber{\pgfmathresult}$-averaged and that $R_2$ is $\pgfmathparse{(1+\gammavar*\wvar)}\pgfmathprintnumber{\pgfmathresult}$-cocoercive. \cref{thm:comp:more:gen} implies that the composition is of the form 
$\pgfmathprintnumber{\alphacomptwo}\id+\pgfmathprintnumber[fixed]{\betacomptwo} N$, where $N$ is nonexpansive, i.e., it is 
$\pgfmathparse{\alphacomptwo+\betacomptwo}\pgfmathprintnumber[fixed]{\pgfmathresult}$-contractive.

\ifx\showgraphics\one
\begin{center}
  \begin{tikzpicture}[scale=\compopscaling]
    \begin{scope}
      \composition{1}{\alphaone}{\betaone}{\alphatwo}{\betatwo}{0}{0}{0}
    \end{scope}
    \begin{scope}[xshift=3cm]
      \composition{1}{\alphaone}{\betaone}{\alphatwo}{\betatwo}{1}{0}{1}
    \end{scope}
  \end{tikzpicture}
\end{center}
\fi

\pgfmathsetmacro{\gammavar}{2}
\pgfmathsetmacro{\muvar}{0.4}
\pgfmathsetmacro{\wvar}{0.3}
\pgfmathsetmacro{\alphaone}{1-\gammavar*(-(\muvar)+1/(2))}
\pgfmathsetmacro{\betaone}{\gammavar/(2)}
\pgfmathsetmacro{\alphatwo}{1/(2*(1+\gammavar*(\wvar))}
\pgfmathsetmacro{\betatwo}{1/(2*(1+\gammavar*(\wvar))}
\computealphabetacomp{\thetavar}{\alphaone}{\betaone}{\alphatwo}{\betatwo}

The final example considers a similar forward-backward setting where the sum is not monotone. We let $A+\muvar\id$ be $1$-cocoercive, $B$ be maximally $\wvar$-monotone, which implies 
that the sum is $\pgfmathparse{\wvar-\muvar}\pgfmathprintnumber[fixed]{\pgfmathresult}$-monotone, i.e., it is not monotone. We use step-length $\gamma=\gammavar$. The proof of 
\cref{thm:FB:av:2} shows that, in our setting, $R_1$ is 
$\pgfmathparse{(1+\gammavar*\muvar)}\pgfmathprintnumber{\pgfmathresult}$-scaled 
$\pgfmathparse{\gammavar/(2*(1+\gammavar*\wvar))}\pgfmathprintnumber{\pgfmathresult}$-averaged and that $R_2$ is $\pgfmathparse{(1+\gammavar*\wvar)}\pgfmathprintnumber{\pgfmathresult}$-cocoercive. \cref{thm:comp:more:gen} implies that the composition is of the form $\pgfmathprintnumber{\alphacomptwo}\id+\pgfmathprintnumber[fixed]{\betacomptwo} N$, 
where $N$ is nonexpansive, i.e., it is 
$\pgfmathparse{\alphacomptwo+\betacomptwo}\pgfmathprintnumber[fixed]{\pgfmathresult}$-Lipschitz and not conic, averaged, or contractive.

\ifx\showgraphics\one
\begin{center}
  \begin{tikzpicture}[scale=\compopscaling]
    \begin{scope}
      \composition{1}{\alphaone}{\betaone}{\alphatwo}{\betatwo}{0}{0}{0}
    \end{scope}
    \begin{scope}[xshift=3cm]
      \composition{1}{\alphaone}{\betaone}{\alphatwo}{\betatwo}{1}{0}{1}
    \end{scope}
  \end{tikzpicture}
\end{center}
\fi


    


\small
\section*{Acknowledgment}
PG was partially supported by the Swedish Research Council.
WMM was partially supported by the Natural Sciences and 
Engineering Research Council of Canada.

\bibliographystyle{plain}
\bibliography{references}

\begin{thebibliography}{10}

\bibitem{BDP19}
S.~Bartz, M.~N. Dao, and H.~M. Phan.
\newblock Conical averagedness and convergence analysis of fixed point
  algorithms.
\newblock {\em arXiv preprint arXiv:1910.14185}, 2019.

\bibitem{BC2017}
H.~H. Bauschke and P.~L. Combettes.
\newblock {\em Convex Analysis and Monotone Operator Theory in Hilbert Spaces}.
\newblock Springer New York, 2nd edition, 2017.

\bibitem{BMW19}
H.~H. Bauschke, W.~M. Moursi, and X.~Wang.
\newblock Generalized monotone operators and their averaged resolvents.
\newblock {\em arXiv preprint arXiv:1902.09827}, 2019.

\bibitem{BurIus}
R.~S. Burachik and A.~N. Iusem.
\newblock {\em Set-Valued Mappings and Enlargements of Monotone Operators},
  volume~8.
\newblock Springer-Verlag, 2007.

\bibitem{Clarke90}
F.~H. Clarke.
\newblock {\em Optimization and Nonsmooth Analysis}, volume~5.
\newblock SIAM, 1990.

\bibitem{Comb04}
P.~L. Combettes.
\newblock Solving monotone inclusions via compositions of nonexpansive averaged
  operators.
\newblock {\em Optimization}, 53(5-6):475--504, 2004.

\bibitem{CombPenn04}
P.~L. Combettes and T.~Pennanen.
\newblock Proximal methods for cohypomonotone operators.
\newblock {\em SIAM Journal on Control and Optimization}, 43(2):731--742, 2004.

\bibitem{Combettes2015Compositions}
P.~L. Combettes and I.~Yamada.
\newblock Compositions and convex combinations of averaged nonexpansive
  operators.
\newblock {\em Journal of Mathematical Analysis and Applications},
  425(1):55--70, 2015.

\bibitem{DP18}
M.~N. Dao and H.~M. Phan.
\newblock Adaptive {D}ouglas--{R}achford splitting algorithm for the sum of two
  operators.
\newblock {\em SIAM Journal on Optimization}, 29(4):2697--2724, 2019.

\bibitem{Eckstein1989Splitting}
J.~Eckstein.
\newblock {\em Splitting methods for monotone operators with applications to
  parallel optimization}.
\newblock PhD thesis, MIT, 1989.

\bibitem{Eckstein1992OnTheDouglas}
J.~Eckstein and D.~P. Bertsekas.
\newblock On the {D}ouglas-{R}achford splitting method and the proximal point
  algorithm for maximal monotone operators.
\newblock {\em Mathematical Programming}, 55(1):293--318, 1992.

\bibitem{Giselsson2015Lecture}
P.~Giselsson.
\newblock Lecture notes on \emph{{L}arge-{S}cale {C}onvex {O}ptimization}.
\newblock
  \url{http://control.lth.se/education/doctorate-program/large-scale-convex-optimization/},
  2015.

\bibitem{Giselsson2017Tight}
P.~Giselsson.
\newblock Tight global linear convergence rate bounds for {D}ouglas?{R}achford
  splitting.
\newblock {\em Journal of Fixed Point Theory and Applications}, pages 1--30,
  2017.
\newblock doi: 10.1007/s11784-017-0417-1.

\bibitem{Giselsson2019Nonlinear}
P.~Giselsson.
\newblock Nonlinear forward-backward splitting with projection correction.
\newblock {\em arXiv:1908.07449}, 2019.

\bibitem{Giselsson2014Linear}
P.~Giselsson and S.~Boyd.
\newblock Linear convergence and metric selection for {D}ouglas-{R}achford
  splitting and {ADMM}.
\newblock {\em IEEE Transactions on Automatic Control}, 62(2):532--544, 2017.

\bibitem{GH17}
K.~Guo and D.~Han.
\newblock A note on the {D}ouglas--{R}achford splitting method for optimization
  problems involving hypoconvex functions.
\newblock {\em Journal of Global Optimization}, 72(3):431--441, 2018.

\bibitem{GHY17}
K.~Guo, D.~Han, and X.~Yuan.
\newblock Convergence analysis of {D}ouglas--{R}achford splitting method for
  ?strongly + weakly? convex programming.
\newblock {\em SIAM Journal on Numerical Analysis}, 55(4):1549--1577, 2017.

\bibitem{Latafat2017Asymmetric}
P.~Latafat and P.~Patrinos.
\newblock Asymmetric forward--backward--adjoint splitting for solving monotone
  inclusions involving three operators.
\newblock {\em Computational Optimization and Applications}, 68(1):57--93, Sep
  2017.

\bibitem{Mord06}
B.~S. Mordukhovich.
\newblock {\em Variational analysis and generalized differentiation I: Basic
  theory}, volume 330.
\newblock Springer Science \& Business Media, 2006.

\bibitem{Mord18}
B.~S. Mordukhovich.
\newblock {\em Variational Analysis and Applications}.
\newblock Springer Monographs in Mathematics, 2018.

\bibitem{OguraYamada2002}
N.~Ogura and I.~Yamada.
\newblock Non-strictly convex minimization over the fixed point set of an
  asymptotically shrinking nonexpansive mapping.
\newblock {\em Numerical Functional Analysis and Optimization},
  22(1-2):113--137, 2002.

\bibitem{Reich81}
S.~Reich.
\newblock On the asymptotic behavior of nonlinear semigroups and the range of
  accretive operators.
\newblock {\em Journal of Mathematical Analysis and Applications},
  79(1):113--126, 1981.

\bibitem{Rock98}
R.~T. Rockafellar and R.~J-B. Wets.
\newblock {\em Variational Analysis}.
\newblock Springer, Berlin, 1998.
\newblock corrected 3rd printing.

\bibitem{Ryu2019Scaled}
E.~K. Ryu and W.~Hannah, R.~Yin.
\newblock Scaled relative graph: {N}onexpansive operators via {2D} {E}uclidean
  geometry.
\newblock {\em arXiv:1902.09788}, 2019.

\bibitem{Ryu2018Operator}
E.~K. Ryu, A.~B. Taylor, C.~Bergeling, and P.~Giselsson.
\newblock Operator splitting performance estimation: Tight contraction factors
  and optimal parameter selection.
\newblock {\em arXiv:1812.00146}, 2018.

\bibitem{Wang2010}
X.~Wang.
\newblock On {C}hebyshev functions and {K}lee functions.
\newblock {\em Journal of Mathematical Analysis and Applications},
  368(1):293--310, 2010.

\bibitem{WCP17}
B.~Wen, X.~Chen, and T.~K. Pong.
\newblock Linear convergence of proximal gradient algorithm with extrapolation
  for a class of nonconvex nonsmooth minimization problems.
\newblock {\em SIAM Journal on Optimization}, 27(1):124--145, 2017.

\end{thebibliography}


\begin{appendices}


	\crefalias{section}{appsec}
	\section{}
	\label{app:A}

\begin{proof}[Proof of \cref{lem:T:gra:AB}]
	Indeed, observe that
	\begin{equation}
		\label{eq:Tlamda}
		R_\lambda
		=(1-2\lambda)\Id +\lambda ((\Id+R_2)R_1-\Id-R_1)
	\end{equation}
	 and 
	 \begin{equation}
		\label{eq:Tlamda:disp}
		\Id-R_\lambda
		=\lambda (\Id+R_1-(\Id+R_2)R_1)=\lambda(\Id-R_2R_1).
	\end{equation}
	Set $T_i=\Id+R_i$, $i\in \{1,2\}$.
	In view of \cref{eq:Tlamda} and \cref{eq:Tlamda:disp} 
	we have 
	\begin{subequations}
		\begin{align}
		&\quad\scal{R_\lambda x-R_\lambda y}{
			(\Id-R_\lambda)x-(\Id-R_\lambda)y}
		\\
				&=(1-2\lambda)\scal{x-y}{2\lambda((T_1x-T_2R_1x)-(T_1y-T_2R_1y))}
		\nonumber
		\\
		&\quad
		+\lambda^2\scal{T_1x-T_1y}{(x-T_1x)-(y-T_1y)}
		\nonumber
		\\
		&\quad+\lambda^2\scal{T_2R_1x-T_2R_1y}{(T_1x-T_2R_1x)-(T_1y-T_2R_1y)}
		\nonumber
		\\
		&\quad-\lambda^2\scal{T_2R_1x-T_2R_1y}{(x-T_1x)-(y-T_1y)}
		\\
		&=(1-2\lambda)\scal{x-y}{(\Id-T_\lambda)x-(\Id-T_\lambda)y}
		\nonumber
		\\
		&\quad
		+\lambda^2\scal{T_1x-T_1y}{(x-T_1x)-(y-T_1y)}
		\nonumber
		\\
		&\quad+\lambda^2\scal{T_2R_1x-T_2R_1y}{(R_1x-T_2R_1x)-(R_1y-T_2R_1y)},
		\end{align}
	\end{subequations}
	and the conclusion follows.
\end{proof}

\crefalias{section}{appsec}
\section{}
\label{app:CD}
\begin{proof}[Proof of \cref{lem:T:Lips:to:coco}]
\ref{lem:T:Lips:to:coco:i}:
Because $\tfrac{1}{\beta} A$ is nonexpansive, 
we learn from \cite[Example~20.7]{BC2017}
that $\Id +\tfrac{1}{\beta} A$,
as is ${\beta}\Id+A$, is maximally monotone.
The conclusion now follows in view of 
e.g., \cite[Lemma~2.5]{BMW19}. 
\ref{lem:T:Lips:to:coco:ii}:
This is clear by observing that
$\tfrac{1}{2\beta}\big({\beta}\Id+A\big)
=\tfrac{1}{2}(\Id+\tfrac{1}{\beta} A)$.
\end{proof}

\crefalias{section}{appsec}
\section{}
\label{app:C}
\begin{proof}[Proof of \cref{lem:beta:delta:coco}]
	Indeed, by assumption,
	there exist nonexpansive mappings $N_1\colon X\to X$
	and $N_2\colon X\to X$ such that
	\begin{equation}
	T_1=\tfrac{\beta}{2}\Id+\tfrac{\beta}{2}N_1,
	\quad
	T_2=\tfrac{\delta}{2}\Id+\tfrac{\delta}{2}N_2.
	\end{equation}
	Now, 
	\begin{subequations}
		\begin{align}
		\tfrac{1}{\beta}(T_1-T_2)
		&=\tfrac{1}{\beta} T_1-\tfrac{1}{\beta} T_2
		=\tfrac{1}{2}\Id+\tfrac{1}{2}N_1
		-\tfrac{\delta}{2\beta}\Id-\tfrac{\delta}{2\beta}N_2
		\\
		&=\tfrac{\beta-\delta}{2\beta}\Id
		+\tfrac{1}{2}N_1-\tfrac{\delta}{2\beta}N_2.
		\end{align}
	\end{subequations}
	Using the triangle inequality, one can
	directly verify that 
	$\tfrac{1}{\beta}(T_1-T_2)$ is Lipschitz continuous with a constant
	$\tfrac{\beta-\delta}{2\beta}+\tfrac{1}{2}+\tfrac{\delta}{2\beta}
	=1$. The proof is complete.
\end{proof}

\crefalias{section}{appsec}
\section{}
\label{app:D}

\begin{proof}[Proof of \cref{cor:f1:f2:lips}]
	\ref{cor:f1:f2:lips:i}:
	It follows from \cref{fact:B:Haddad} 
	that 
	$\grad f_1$ (respectively $\grad f_2$)
	is $\tfrac{1}{\beta}$-cocoercive 
	(respectively $\tfrac{1}{\delta}$-cocoercive).
	Now apply \cref{lem:beta:delta:coco} 
	with $(T_1,T_2)$ replaced by 
	$(\grad f_1,\grad f_2)$.
	\ref{cor:f1:f2:lips:ii}:
	Combine \ref{cor:f1:f2:lips:i}
	with \cref{fact:B:Haddad}
	applied with $f$ replaced by 
	$f_1-f_2$.
\end{proof}

\crefalias{section}{appsec}
\section{}
\label{app:E}

\begin{proof}[Proof of \cref{lem:T:av:factor}]
	\ref{lem:T:av:factor:i}:
	Indeed, we have 
	$\delta T=(1-(1-\delta(1-\alpha)))\Id+\delta\alpha N=
	(1-(1-\delta(1-\alpha)))\Id+(1-\delta(1-\alpha))\widetilde{N}$,
	where $\widetilde{N}=((\delta\alpha)/(1-\delta(1-\alpha)))N$.
	Note that 
	$(\delta\alpha)/(1-\delta(1-\alpha)\le 1$, hence 
	$\widetilde{N}$ is nonexpansive and the conclusion follows.
	\ref{lem:T:av:factor:ii}: Clear.
\end{proof}

\end{appendices}

\end{document}